\def\@tocline#1#2#3#4#5#6#7{\relax
  \ifnum #1>\c@tocdepth % then omit
  \else
    \par \addpenalty\@secpenalty\addvspace{#2}%
    \begingroup \hyphenpenalty\@M
    \@ifempty{#4}{%
      \@tempdima\csname r@tocindent\number#1\endcsname\relax
    }{%
      \@tempdima#4\relax
    }%
    \parindent\z@ \leftskip#3\relax \advance\leftskip\@tempdima\relax
    \rightskip\@pnumwidth plus4em \parfillskip-\@pnumwidth
    #5\leavevmode\hskip-\@tempdima
      \ifcase #1
      \or\or \hskip 2em \or \hskip 2em \else \hskip 3em \fi%
      #6\nobreak\relax
    \dotfill\hbox to\@pnumwidth{\@tocpagenum{#7}}\par
    \nobreak
    \endgroup
  \fi}
\newcommand{\eq}[2]{\begin{equation}\label{#1}#2 \end{equation}}
\newcommand{\mlnl}[1]{\begin{multline*}#1 \end{multline*}}
\newcommand{\arir}{\ar@{^{(}->}}
\newcommand{\aril}{\ar@{_{(}->}}
\newcommand{\are}{\ar@{>>}}
\newcommand{\xr}[1] {\xrightarrow{#1}}
\newcommand{\lra}{\longrightarrow}
\newtheorem{lem}{Lemma}[section]
\newtheorem{thm}[lem]{Theorem}
\newtheorem{theorem}{Theorem}
\newtheorem{prop}[lem]{Proposition}
\newtheorem{cor}[lem]{Corollary}
\newtheorem{cor-intro}{Corollary}
\theoremstyle{definition}
\newtheorem{defn}[lem]{Definition}
\newtheorem{defn-prop}[lem]{Definition-Proposition}
\newtheorem{para}[lem]{}
\newtheorem*{para*}{}
\newtheorem*{acknowledgement}{Acknowledgement}
\theoremstyle{remark}
\newtheorem{remark}[lem]{Remark}
\newtheorem{rmk}[lem]{Remark}
\newtheorem{ex}[lem]{Example}
\newtheorem{exs}[lem]{Examples}
\newtheorem{exs-rmks}[lem]{Examples and Remarks}
\newtheorem{claim}{Claim}[lem]
\newtheorem*{claim*}{Claim}
\newcounter{zaehler} 
\numberwithin{equation}{lem}
\newcommand{\shuji}[1]{\begin{color}{green}{#1}\end{color}}
\newcommand{\kay}[1]{\begin{color}{teal}{#1}\end{color}}
\newcommand{\N}{\mathbb{N}}
\newcommand{\Q}{\mathbb{Q}}
\newcommand{\Z}{\mathbb{Z}}
\renewcommand{\P}{\mathbf{P}}
\newcommand{\A}{\mathbf{A}}
\newcommand{\G}{\mathbf{G}}
\newcommand{\sE}{\mathcal{E}}
\newcommand{\sG}{\mathcal{G}}
\newcommand{\sH}{\mathcal{H}}
\newcommand{\sO}{\mathcal{O}}
\newcommand{\fm}{\mathfrak{m}}
\newcommand{\Xb}{{\overline{X}}}
\newcommand{\Db}{{\overline{D}}}
\newcommand{\tF}{{\widetilde{F}}}
\newcommand{\ux}{{\underline{x}}}
\newcommand{\uy}{{\underline{y}}}
\newcommand{\uz}{{\underline{z}}}
\newcommand{\Cor}{\operatorname{\mathbf{Cor}}}
\newcommand{\RSC}{{\operatorname{\mathbf{RSC}}}}
\newcommand{\uMDM}{\operatorname{\mathbf{\underline{M}DM}}}
\newcommand{\Ext}{\operatorname{Ext}}
\newcommand{\ul}[1]{{\underline{#1}}}
\newcommand{\PST}{{\operatorname{\mathbf{PST}}}}
\newcommand{\NST}{\operatorname{\mathbf{NST}}}
\newcommand{\Hom}{\operatorname{Hom}}
\newcommand{\uHom}{\operatorname{\underline{Hom}}}
\newcommand{\Ker}{\operatorname{Ker}}
\renewcommand{\Im}{\operatorname{Im}}
\newcommand{\Tr}{\operatorname{Tr}}
\newcommand{\Nm}{\operatorname{Nm}}
\newcommand{\Div}{\operatorname{Div}}
\newcommand{\Spec}{\operatorname{Spec}}
\newcommand{\Proj}{\operatorname{Proj}}
\newcommand{\Sm}{\operatorname{\mathbf{Sm}}}
\newcommand{\Ab}{\operatorname{\mathbf{Ab}}}
\newcommand{\tr}{{\operatorname{tr}}}
\newcommand{\Ztr}{{\operatorname{\mathbb{Z}_{\tr}}}}
\newcommand{\red}{{\operatorname{red}}}
\newcommand{\Zar}{{\operatorname{Zar}}}
\newcommand{\Nis}{{\operatorname{Nis}}}
\newcommand{\et}{{\operatorname{\acute{e}t}}}
\newcommand{\inj}{\hookrightarrow}
\newcommand{\Res}{\operatorname{Res}}
\newcommand{\id}{{\operatorname{id}}}
\newcommand{\ch}{{\operatorname{ch}}}
\newcommand{\Sym}{{\operatorname{Sym}}}
\newcommand{\CH}{{\operatorname{CH}}}
\newcommand{\Frac}{{\operatorname{Frac}}}
\newcommand{\fil}{{\operatorname{fil}}}
\newcommand{\e}{{\epsilon}}
\renewcommand{\b}{{\rm b}}
\renewcommand{\c}{{\rm c}}
\newcommand{\mc}{{\rm mc}}
\newcommand{\AS}{{\rm AS}}
\newcommand{\FAS}{{F^{\rm AS}}}
\newcommand{\colim}{\operatornamewithlimits{\varinjlim}}
\newcommand{\ol}{\overline}
\renewcommand{\epsilon}{\varepsilon}
\renewcommand{\div}{\operatorname{div}}
\newcommand{\bcube}{{\ol{\square}}}
\newcommand{\uMNST}{\operatorname{\mathbf{\underline{M}NST}}}
\def\rmapo#1{\overset{#1}{\longrightarrow}}
\def\PXR{P^{(R)}_X}
\def\Db{\overline{D}}
\def\Db{\overline{D}}
\def\Db{\overline{D}}
\def\ShSmB{\mathrm{Sh}(\SmB)}
\def\otShSmB{\otimes_{\ShSmB}}
\def\ShSm#1{\mathrm{Sh}(\Sm_{#1})}
\def\Fad{F_{ad}}
\def\qwith{\;\text{ with }\;}
\def\isom{\overset{\sim}{\longrightarrow}}
\def\charr{{\mathrm{char}}}
\def\charFR{{\mathrm{char}^{(R)}_F}}
\def\ShSmB{\mathrm{Sh}_B}
\def\otShSmB{\otimes_{\Sh_B}}
\def\ShSm#1{\mathrm{Sh}_{#1}}
\def\Sh{\mathrm{Sh}}
\def\ad{\mathrm{ad}}
\title[Abbes-Saito Formula]{Ramification theory for reciprocity sheaves, III, \\Abbes-Saito formula}
\author{Kay R\"ulling \and Shuji Saito}
\address{Bergische Universit\"at Wuppertal\\ Gau\ss str. 20, D-42119 Wuppertal, Germany}
\email{ruelling@uni-wuppertal.de}
\address{Graduate School of Mathematical Sciences, University of Tokyo, 3-8-1 Komaba, Tokyo 153-8941, Japan}
\email{sshuji@msb.biglobe.ne.jp}
\thanks{S.S.\ is supported by the JSPS KAKENHI Grant (20H01791). }
\begin{document}
\begin{abstract}
We give a new geometric characterization of the motivic ramification filtration of reciprocity sheaves, by imitating
a method used by Abbes and (Takeshi) Saito to study the ramification of torsors under finite \'etale groups.
This new characterization is used to define  characteristic forms for reciprocity sheaves.
We obtain applications on pseudo-rational singularities and on questions regarding the
 representability of certain cohomology groups of reciprocity sheaves
in the triangulated category of motives with modulus introduced by Kahn-Miyazaki-Saito-Yamazaki.

\end{abstract}
\maketitle

\tableofcontents

\section*{Introduction}

In the study of geometric or arithmetic  questions on non-proper smooth  - or on proper but  singular varieties,
it is often essential that the objects of interest come with a measure of their complexity at infinity or along the singularities, 
such as pole order or degree of ramification.
A  motivic framework which provides this measure is the
theory of reciprocity sheaves as introduced by Kahn, Saito, and Yamazaki in \cite{KSY2} (see also \cite{KSY1}).
Examples include smooth commutative group schemes, $\A^1$-invariant sheaves with transfers, differential forms, the sheaf of rank $1$-connections, the Brauer group, and more generally
\'etale motivic cohomology sheaves with torsion coefficients.
A reciprocity sheaf $F$ is a Nisnevich sheaf with transfers on $\Sm$ the category of smooth separated schemes over a fixed perfect base field $k$, 
with the additional property that  for any  pair $(X, R)$  consisting of a  proper $k$-scheme $X$ and an effective Cartier divisor $R$
on $X$ with smooth complement $U=X\backslash R$, there is  a canonical  subgroup
\[\tF(X,R)\subset F(U),\]
which heuristically can be thought of as those sections over $U$ whose ramification or pole order at infinity  is bounded by $R$.
We say a section $a\in F(U)$ has {\em modulus $(X,R)$} if $a\in \tF(X,R)$.
The definition of these subgroups is  motivic and relies on  the transfer structure of $F$, see  \ref{para:rec}.
This has the advantage that we have good  functorial properties, i.e., $(X,R)\mapsto \tF(X,R)$ defines a
Nisnevich sheaf on the category of modulus correspondences as introduced in \cite{KMSY1}, \cite{KMSY2} and therefore
defines a birational invariant in the sense
\[\tag{1}\label{intro1} \tF(X,R)=\tF(X',f^*R),\]
for any proper dominant morphism $f: X'\to X$ which induces an isomorhism $X\setminus R\cong X'\setminus f^*R$.
Furthermore, the definition of $\tF(X,R)$ can be extended to the case where $X$ is not necessarily proper over $k$.
Thus we obtain  a sheaf $\tF_{(X,R)}$ on $X_{\Nis}$ defined by
\[V\mapsto \tF(V, R_{|V}).\]
If $X$ is projective and smooth and $R_{\red}$ is a divisor with simple normal crossings, many fundamental properties
of the sheaves $\tF_{(X,R)}$ and their cohomology are by now known. For example, 
the sheaves $\tF_{(X,R)}$ satisfy  Zariski-Nagata purity
(see \cite[Cor 8.6(3)]{S-purity} for $R_\red$ smooth, \cite[Th 2.3]{SaitologRSC}  for $R=R_\red$, and \cite[Th 1.6]{RS-ZNP} in general);
they admit pairings, called reciprocity pairings, generalizing the pairing used in geometric higher dimensional class field theory from \cite{KS-GCFT} (see \cite{RS-ZNP});
  there are projective bundle - and blow-up formulas for the cohomology groups of $\tF_{(X,R)}$ as well as Gysin sequences, see \cite{BRS}. 
 
However, a drawback of the motivic definition of $\tF(X,R)$ is that it is almost impossible to compute directly from its definition. There have been some computations of 
$\tF(X,R)$ in a henselian local situation in \cite{RS}, e.g.,  in characteristic zero for differential forms 
and in positive characteristic for  the Witt vectors of finite length and the torsion characters of the abelianized fundamental 
group. But these computations are very technical and rely on many specific results of several authors.
As a remedy of this drawback, we developed in \cite{RS-HLS} a theory
of higher local symbols which provides an effective tool to determine the modulus of a given section $a\in F(U)$ (see \ref{para:HLS} for its review). The theory plays a crucial role in the proof of the principal result of the present paper, which is a new characterization of $\tF(X, R)$ and a description of the quotient $\tF(X, R)/\tF(X, R-D')$, 
where $D'=(R- R_{\red})_{\red}$, i.e., $D'$ is a SNCD supported on the irreducible components of $R_\red$ which appear with multiplicity $\ge 2$ in $R$. 
It does not use the transfer structure of $F$ and imitates a method  used by 
Abbes and (Takeshi) Saito  in \cite{AbbesSaito11}, \cite{TakeshiSaito},
where they study the ramification of torsors under finite \'etale groups. 
The new characterization of $\tF(X, R)$ is geometric and hopefully more effective 
in applications. 
%\kayrem{I deleted  ``to geometry and arithmetic.''} 
In section \ref{TopOmega} (see also later in the introduction) we provide its applications on pseudo-rational singularities and representability of cohomology of the sheaf $\tF_{(X,R)}$ on $X_\Nis$ in the triangulated category of motives with modulus $\mathbf{MDM}$ defined  in \cite{KMSY3}.

% A deeper understanding of $\tF_{(X,R)}$ seems also to be necessary to study the  question, under which conditions the cohomology of $\tF_{(X,R)}$ is representable in the triangulated category of motives with modulus $\mathbf{MDM}$ defined  in \cite{KMSY3}. In fact as an application of our results presented here, we see that this is not always the case, at least  in positive characteristic.

\medskip

In order to explain the new characterization, we now assume $X$ is smooth over a perfect field $k$ and $R$ is an effective Cartier divisor with  simple normal crossing support.
 We let  $U=X\setminus R$ and $D'=(R- R_{\red})_{\red}$ be as above.
The main geometric construction needed for the new characterization is a certain dilatation. Dilatations were introduced and studied in more general situations 
in \cite[\S2]{AbbesSaito}, \cite[4.1]{AbbesSaito11}, and \cite[1.3]{TakeshiSaito} and originate from \cite[3.2]{BLR}.
For us the following special case is relevant:
The dilatation $P^{(R)}_X$ is the blow-up of $X\times X$ in the closed subscheme $R$ diagonally embedded into $X\times X$ and with the 
strict transforms of $X\times R$ and $R\times X$ removed. The two projections from $X\times X$ to $X$ induce projections from
$P^{(R)}_X$ to $X$ and the inverse image of  $D'$ under either projection is 
the same vector bundle
\[\tag{2}\label{intro2} P^{(R)}_X\stackrel[p_2]{p_1}{\rightrightarrows} X, \qquad 
P^{(R)}_X\times_{p_i, X} D'= \mathbb{V}(\Omega^1_X(R)_{|D'}),\]
see \ref{para:dil}. Here and in the following, differential forms are  relative to $k$
and $\Omega_X^1(R)=\Omega_X^1\otimes_{\sO_X}\sO_X(R)$.  We define
\[\tag{3}\label{intro2.5}
\FAS(X,R)=\{a\in F(U)\mid p_1^*a-p_2^*a\in F(P^{(R)}_X)\}, \]
where we use that $U\times U$ is an open dense subset of  $P^{(R)}_X$ and that the restriction $F(P^{(R)}_X)\to F(U\times U)$ is injective,
see Definition \ref{defn:FAS}.

\medskip

One of the  main results of the present paper is the following, see Theorem \ref{thm:ASII}:

\begin{theorem}\label{thm:intro1}
In the above situation  assume
\[(*)\qquad (X,R) \text{ has a projective SNC-compactification (see \ref{para:sm-cptf}).}\]
Then 
\[\tF(X,R)=\FAS(X,R).\]
\end{theorem}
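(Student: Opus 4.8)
The plan is to prove the two inclusions separately, after reducing to a local statement along the components of $D'$. Since both $\tF(X,R)$ and $\FAS(X,R)$ are subgroups of $F(U)$ cut out by conditions that are local on $X$ for the Nisnevich (indeed \'etale) topology, and since the Zariski--Nagata purity results cited above show that membership in $\tF(X,R)$ is detected in codimension one, I would first localize at a generic point $\xi$ of a component of $R_\red$. Only the components appearing with multiplicity $\ge 2$, i.e.\ those supported on $D'$, can cause a discrepancy; for reduced components both groups collapse to the tame case, which I would treat as the base case of an induction on the multiplicities occurring in $R$, the inductive step comparing $(X,R)$ with $(X,R-D')$. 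The engine for detecting membership in $\tF(X,R)$ is the theory of higher local symbols reviewed in \ref{para:HLS}: a section $a\in F(U)$ lies in $\tF(X,R)$ if and only if all of its higher local symbols at the generic points of $R_\red$ are bounded by the multiplicities prescribed by $R$.

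For the inclusion $\tF(X,R)\subseteq \FAS(X,R)$ I would argue by functoriality. Given $a$ with modulus $(X,R)$, the difference $p_1^*a-p_2^*a$ is a priori only a section over $U\times U$, and I must show it extends over all of $\PXR$. Here the geometry \eqref{intro2} is decisive: over $D'$ the two projections from $\PXR$ agree and the fibre is the vector bundle $\mathbb{V}(\Omega^1_X(R)_{|D'})$, so $p_1^*a-p_2^*a$ vanishes on the locus $p_1=p_2$. Concretely, I would view $\PXR$, together with a projective SNC-compactification built from the one for $(X,R)$ guaranteed by $(*)$, as the target of an admissible modulus correspondence whose associated operation on sections is $p_1^*-p_2^*$ and whose modulus kills $R$; combined with the birational invariance \eqref{intro1}, membership $a\in\tF(X,R)$ then forces $p_1^*a-p_2^*a$ to be everywhere regular on $\PXR$.

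The hard direction is $\FAS(X,R)\subseteq \tF(X,R)$, and this is where the Abbes--Saito method is genuinely imitated. I would argue by contraposition: if $a\notin\tF(X,R)$, then by the higher-local-symbol criterion there is a generic point $\xi$ of a component of $D'$ and test data at which the local symbol of $a$ exceeds the bound imposed by $R$. The goal is to realize this excess as an obstruction to extending $p_1^*a-p_2^*a$ across the exceptional locus of $\PXR$. Using \eqref{intro2} I would identify a neighbourhood of the fibre over $\xi$ in $\PXR$ with an open part of the vector bundle $\mathbb{V}(\Omega^1_X(R)_{|D'})$, and translate the residue pairing of $a$ against differential forms into the behaviour of $p_1^*a-p_2^*a$ along the fibre direction; a nonzero higher local symbol then produces a genuine pole, contradicting $p_1^*a-p_2^*a\in F(\PXR)$. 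Run in reverse, the same comparison computes the graded quotient $\tF(X,R)/\tF(X,R-D')$ in terms of sections of $F$ on $\mathbb{V}(\Omega^1_X(R)_{|D'})$, which is precisely the mechanism by which the characteristic form is to be defined.

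The main obstacle I anticipate is exactly this last translation: making the dictionary between the abstract extension condition $p_1^*a-p_2^*a\in F(\PXR)$ and the numerical bound on higher local symbols completely precise. It requires controlling how the symbols of \cite{RS-HLS} interact with the vector-bundle structure \eqref{intro2} of the dilatation near $D'$ --- in effect matching the differential-form side of the symbol, valued in $\Omega^1_X(R)_{|D'}$, with the fibres of $\mathbb{V}(\Omega^1_X(R)_{|D'})$ --- and it is here, rather than in the formal inclusions or the reductions, that the real content of the theorem lies.
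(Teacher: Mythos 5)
Your forward inclusion $\tF(X,R)\subseteq \FAS(X,R)$ rests on a step that fails: there is no ``admissible modulus correspondence whose associated operation on sections is $p_1^*-p_2^*$ and whose modulus kills $R$.'' Admissibility in the categories of modulus correspondences of \cite{KMSY1} is imposed on each prime component of a finite correspondence separately, so the correspondence $\Gamma_{p_1}-\Gamma_{p_2}$ is admissible exactly when each graph is, and each graph individually only yields $p_i^*a\in\tF(\PXR,p_i^*R)$ --- i.e.\ poles along $R_P$ of the full order permitted by $R$ --- with no mechanism in the formalism to record cancellation between the two pullbacks; yet that cancellation is precisely the content of the claim. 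Your auxiliary remark that $p_1^*a-p_2^*a$ ``vanishes on the locus $p_1=p_2$'' is circular, since $p_1^*a-p_2^*a$ is a priori only a section over $U\times U$ and has no restriction to $R_P$ until the extension you are trying to prove is established; and the birational invariance \eqref{intro1} compares $\tF$ on different models of the \emph{same} open $U$, so it says nothing about regularity of this difference on $\PXR$. In the paper this direction (Theorem \ref{thm;AS}, which indeed needs no hypothesis $(*)$) is itself symbol-theoretic: after purity reduces to $R=nD$ with $D$ smooth, one applies the regularity criterion of Proposition \ref{prop:HLS-reg} to $p_1^*a-p_2^*a$, pushes the symbols along the two \'etale projections $\pi_1,\pi_2$ to an affine bundle $Y$ over $X$ using the norm property \ref{HS5}, and reduces everything to the multiplicative congruence $\pi_{2,K}^*(u)/\pi_{1,K}^*(u)\in 1+t_1^n\sO^h_{P_K,\eta}$ of \eqref{claim;pi'}, proven by a coefficient-field computation in the completion (Claim \ref{claim;normgamma}). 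This arithmetic cancellation inside relative Milnor $K$-theory is exactly what a purely functorial argument cannot see.

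For the hard inclusion your skeleton does match the paper: descending induction on the multiplicity, with the graded quotient $\tF(X,nD)/\tF(X,(n-1)D)$ embedded into $F(\mathbb{V}(\Omega^1_X(nD)_{|D}))$ via $\psi_n$ (Lemma \ref{lem:ker-psi} and Theorem \ref{thm:psi}), starting from $a\in\tF(X,ND)$ for $N\gg 0$. But two caveats. First, the ``dictionary'' you defer to the end is the entire content of Theorem \ref{thm:psi}: one must show that the generators of $(V_{d,X_K|(n-1)D_K})_\xi/(V_{d,X_K|nD_K})_\xi$ are realized, modulo $(V_{d,X_K|nD_K})_\xi$, as differences $(\Nm^2_{\xi_Z/\xi}-\Nm^1_{\xi_Z/\xi})\circ\partial_Z$ of symbols computed along explicitly constructed multisections $Z_{\ul g}$ of the compactified dilatation $\ol{P}$ (Claim \ref{thm:psi9}), using \ref{HS2} and \ref{HS5} together with the projectivity of $\bar\pi_{V,Z}$; nothing in your sketch indicates how the ``excess pole'' is actually produced, and a vague residue heuristic will not deliver the precise congruences (note the case distinctions, e.g.\ $p\mid n$, that surface in the resulting characteristic forms). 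Second, the hypothesis $(*)$ enters not as a convenience but because the symbol criterion for membership in $\tF(X,R)$ (Theorem \ref{thm:HLS-mod}) is only available for pairs admitting a projective SNC-compactification; your contrapositive invokes exactly this criterion, so it is legitimate only under $(*)$. In short: the reverse-direction plan is aligned with the paper's but the decisive computation is missing, while the forward direction as proposed is structurally wrong and must be replaced by the symbol-theoretic argument sketched above.
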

The assumption $(*)$ is satisfied tautologically if $X$ is projective and also if $X$ is quasi-projective and $\ch(k)=0$ by resolution of singularities. Even if $X$ is not projective or $\ch(k)>0$, the assumption $(*)$ is not necessary under a certain extra condition (namely that $F$ has level $\le 3$).
In particular, the theorem holds without this extra condition  for $F=H^1_{\rm fppf}(-, G)$, where $G$ is a commutative finite $k$-group scheme, 
see \ref{ex:ASII} for this and more examples.  Thus  for $G$ an \'etale commutative $k$-group,  
a $G$-tosor over $U$ has ramification bounded by $R$ as defined in 
\cite{TakeshiSaito}\footnote{The notion of bounded ramification considered in \cite{AbbesSaito11} is a log-version of this.}
 if and only if  the $G$-torsor has modulus $(X,R)$ in the sense  of reciprocity sheaves.
Another remark is that by Theorem \ref{thm:intro1}, $(X,R)\mapsto \FAS(X,R)$ defines a Nisnevich sheaf on the category of modulus correspondences satisfying $(*)$, 
which seems non-trivial from the definition \eqref{intro2.5}.

The proof of Theorem \ref{thm:intro1} in the base case $R=nD$ with $D$ a smooth divisor 
relies heavily on the theory of higher local symbols along Par\v{s}in chains for reciprocity sheaves and a characterization 
of the modulus in terms of these symbols, which was proven in \cite{RS-HLS} and is recalled in section \ref{sec:1}. 
The general statement then follows from this base case and the purity statements from \cite{S-purity} and \cite{RS-ZNP}. 
The inclusion $\tF(X,R)\subset \FAS(X,R)$  holds without assuming $(*)$, and is proven in section \ref{sec:2}, see Theorem \ref{thm;AS}.
From this, we deduce a {\em Brylinski-Kato formula} for the motivic conductor defined by $F$, see Theorem \ref{thm:BK} and Remark \ref{rmk:BK} for details.
The other inclusion is proved assuming $(*)$ at the end of section \ref{sec:AS-ctd}.

\medskip

The definition of bounded ramification  along a divisor of a $G$-torsor 
for an \'etale commutative $k$-group $G$ in terms of  dilatations is used in 
\cite{TakeshiSaito} to define a non-log version of Kato's refined Swan conductor, the so called 
characteristic form of the torsor. We can use Theorem \ref{thm:intro1} to define a 
characteristic form for any reciprocity sheaf $F$, which for $R=nD$ with $D$ smooth on $X$ and $n\ge 2$
is  a map (see \ref{rmk:charF})
\[\charr^{(nD)}_F: \tF_{(X,nD)}\to \Omega_X^1(nD)\otimes_{\sO_X} i_*\uHom_{\Sh}(\sO, F)_{D_\Nis},\]
where $\sO$ denotes the structure sheaf on $\Sm$,
$i: D\inj X$ denotes the closed immersion, and  $\uHom_{\Sh}(\sO, F)_{D_\Nis}$ denotes the 
restriction to $D_{\Nis}$ of the internal hom in the category of abelian sheaves on $\Sm$, which has a natural structure of $\sO_{D}$-module.
%\shujirem{We could just remove `right' since $\sO_{D}$ is commutative.}

In case  $F=H^1:=\Hom(\pi_1^{\rm ab}(-),\Q/\Z)$ is the sheaf of torsion characters 
of the abelianized \'etale fundamental group, the Artin-Schreier-Witt sequences induce a natural morphism $\sO_D\to \uHom_{\Sh}(\sO, H^1)_{D_\Nis}$ 
%$\sO_D\to \uHom_{\Sh_D}(\sO_D, H^1_{|D})$ 
and the characteristic form factors over the more familiar
looking map
\[\widetilde{H^1}_{(X,nD)}\to \Omega_X^1(nD)\otimes_{\sO_X} i_*\sO_D,\]
which at the generic point of $D$ coincides with the map defined in \cite[Definition 1.18]{Yatagawa},
except that for $n=2$ in characteristic 2, our formula differs from the one in {\em loc. cit.} by a square root,
see \ref{para:CW}. Note however that this factorization is particular to the case $F=H^1$. 
For example, if $F$ is equal to $W_n$ the Witt vectors of length $n$,  or the differential forms $\Omega^j$, then this is not the case, 
see Corollary \ref{prop:charW} and Theorem \ref{prop:Omega-char}.

The characteristic form of  $a\in F(X,nD)$ is defined as follows: By Theorem \ref{thm:intro1}
we have $p_1^*a-p_2^*a\in F(P^{(nD)}_X)$ which we may restrict to $F(\mathbb{V}(\Omega^1_X(nD)_{|D}))$
to obtain an element $\psi_n(a)$, see \eqref{intro2}. In fact $\psi_n(a)$ is contained in the \emph{additive part}, on which an isomorphism
\[\chi_{F,\sO_D} :F(\mathbb{V}(\Omega^1_X(nD)_{|D}))_{\ad}\xr{\simeq} 
 \Gamma(X, \Omega^1_X(nD)\otimes_{\sO_X} i_*\uHom_{\Sh}(\sO, F)_{D_{\Nis}}),\]
 is constructed, see \eqref{eq;HomSmB}. 
 This defines the characteristic form. The definition can be extended to a general pair $(X,R)$, where $R_{\red}$ is not necessarily regular but a SNCD 
 and $D$ is replaced by $D'=(R- R_{\red})_{\red}$.

\medskip

The following is the sheaf version of Theorem \ref{thm;charform}, see \ref{para:charform-local}.
\begin{theorem}\label{thm:intro2}
Assume $(X,R)$ satisfies condition $(*)$ from Theorem \ref{thm:intro1}. Then the characteristic form 
$\charr^{(R)}_F$  induces an injection of Zariski sheaves on $X$
\[\tF_{(X, R)}/\tF_{(X, R-D')}\inj \Omega^1_{X}(R)\otimes_{\sO_X}i_*\uHom(\sO, F)_{D'_{\Zar}}.\]
\end{theorem}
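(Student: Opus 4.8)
The plan is to identify the kernel of the characteristic form $\charr^{(R)}_F$ on $\tF_{(X,R)}$ with the subsheaf $\tF_{(X,R-D')}$; this yields simultaneously that $\charr^{(R)}_F$ factors through the quotient and that the induced map is injective. Since all the objects are Zariski sheaves and the quotient is supported on $D'$, injectivity may be tested on stalks at points of $D'$, so I work locally on $X$. Recall that $\charr^{(R)}_F(a)=\chi_{F,\sO_{D'}}(\psi(a))$ with $\chi_{F,\sO_{D'}}$ an isomorphism onto the additive part; hence everything is governed by the vanishing of $\psi(a)$, the restriction of $b_a:=p_1^*a-p_2^*a\in F(\PXR)$ (which exists by Theorem \ref{thm:intro1}) to the fibre $\mathbb{V}(\Omega^1_X(R)_{|D'})$ over $D'$.

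\emph{The form kills $\tF_{(X,R-D')}$.} The key geometric input is that the inequality $R-D'\le R$ induces a morphism of dilatations $\pi\colon \PXR\to P^{(R-D')}_X$ which is an isomorphism over $U\times U$ and whose restriction over $D'$ collapses the fibre bundle $\mathbb{V}(\Omega^1_X(R)_{|D'})$ onto the zero section of $\mathbb{V}(\Omega^1_X(R-D')_{|D'})$: in local coordinates, if $t$ is a local equation of a component of $D'$, the dilatation coordinate for $R-D'$ pulls back to $t$ times the one for $R$, hence vanishes along $D'$. Now $(X,R-D')$ again satisfies $(*)$ (same reduced support and compactification as $(X,R)$), so for $a\in\tF(X,R-D')=\FAS(X,R-D')$ one has $b_a=\pi^*b_a'$ with $b_a'=p_1^*a-p_2^*a\in F(P^{(R-D')}_X)$, and the restriction of $b_a$ to $\mathbb{V}(\Omega^1_X(R)_{|D'})$ is pulled back from $D'$, i.e.\ constant along the fibres. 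Its additive part therefore vanishes, so $\psi(a)=0$ and $\charr^{(R)}_F(a)=0$; thus $\charr^{(R)}_F$ factors through $\tF_{(X,R)}/\tF_{(X,R-D')}$.

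\emph{Injectivity.} Conversely, let $a\in\tF(X,R)$ with $\charr^{(R)}_F(a)=0$, equivalently $\psi(a)=0$; since $\psi(a)$ lies in the additive part by construction, this says that $b_a$ restricts to $0$ on the fibre $\mathbb{V}(\Omega^1_X(R)_{|D'})$. I must deduce $a\in\tF(X,R-D')=\FAS(X,R-D')$, i.e.\ $b_a\in F(P^{(R-D')}_X)\subseteq F(U\times U)$, where via $\pi$ the group $F(P^{(R-D')}_X)$ is a subgroup of $F(\PXR)$ and $b_a$ is already known to lie in the latter. The bridge is a descent statement along $\pi$: a section of $F$ on $\PXR$ whose restriction to the $D'$-fibre is constant (here zero) descends to $F(P^{(R-D')}_X)$. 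I would establish this by reducing to the base case $R=nD$ with $D$ smooth and $n\ge 2$ (the case of multiplicity $1$ being vacuous, and the general SNC case following by localizing at the generic points of the components of $D'$ together with the purity statements of \cite{S-purity}, \cite{RS-ZNP}), and then translating $\psi(a)=0$ into the vanishing of the leading higher local symbol at level $nD$ along Par\v{s}in chains; by the characterization of the motivic conductor in terms of these symbols recalled in Section \ref{sec:1}, this vanishing forces $a$ to already have modulus $(X,(n-1)D)$.

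The main obstacle is precisely this descent in the injectivity step: neither the reciprocity structure nor elementary geometry directly yields that a section on $\PXR$ trivial on the collapsed $D'$-fibre comes from $P^{(R-D')}_X$, since $\pi$ is far from flat along $D'$. The robust route is to pass through the higher local symbol calculus of \cite{RS-HLS}, which requires verifying that $\chi_{F,\sO_{D'}}\circ\psi$ genuinely computes the leading symbol at level $R$ — a compatibility built into the construction of $\charr^{(R)}_F$ but which must be matched carefully with the symbol-theoretic description of $\tF(X,R)/\tF(X,R-D')$ in order to transfer the vanishing statement. Once this identification is in place, the characterization of the modulus from Section \ref{sec:1} closes the argument.
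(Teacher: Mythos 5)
Your overall architecture matches the paper's: the paper also proves the theorem by identifying $\Ker(\charr^{(R)}_F)=\tF_{(X,R-D')}$ (Theorem \ref{thm;charform}), reducing by Zariski--Nagata purity to $R=nD$ with $D$ smooth and $n\ge 2$, and settling injectivity through the higher local symbol characterization of the modulus (Theorem \ref{thm:HLS-mod}). Your kernel-containment argument is essentially the paper's Lemma \ref{lem:ker-psi} and is sound: the morphism $\pi\colon \PXR\to P^{(R-D')}_X$ collapses $V_R$ onto the zero section (in local coordinates the level-$(R-D')$ coordinate pulls back to $t$ times the level-$R$ one), so $\iota^*\varphi_R(a)$ is pulled back from $D'$, and an additive element pulled back from the base vanishes (the paper argues instead that $\Delta^*\varphi(a)=0$ directly, but both work).

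The genuine gap is in the injectivity step, precisely where you flag ``the main obstacle'' and then defer it. The implication that $\psi_n(a)=0$ forces the vanishing of the symbols $(a_K,\beta)_{X_K/K,\ux}$ for \emph{all} $\beta\in (V_{d,X_K|(n-1)D_K})_{x_{d-1}}$ is not a compatibility ``built into the construction'' of the characteristic form: it is the paper's central technical result (Theorem \ref{thm:psi}), whose proof occupies most of Section \ref{sec:AS-ctd}. The difficulty is that $\psi_n(a)=0$ is a statement about the restriction of $\varphi_n(a)$ to the single vector bundle $V_n$ over $D$, whereas the modulus condition at level $(n-1)D$ quantifies over relative Milnor $K$-classes at the generic points $\xi\in D_K^{(0)}$ for all function fields $K/k$. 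Bridging the two requires, concretely: (i) compactifying the dilatation to $\ol{P}$ and constructing, for each tuple $\ul{g}$, the auxiliary subschemes $Z_{\ul{g}}\subset\ol{P}_K$ whose two projections $p_{Z,1},p_{Z,2}$ to $X_K$ are \'etale at the generic point $\xi_Z$ of $\Theta=\ol{Z}\cap\ol{V}_{n,K}$ and differ by the substitution $z_j\mapsto z_j+(m_j/g_j)z_1^n$; (ii) the pushforward and sum identities \ref{HS2}, \ref{HS5} along the lifted Par\v{s}in chains, which is where the projectivity of $\bar{\pi}_{V,Z}$ --- hence the compactification hypothesis --- actually enters the argument, whereas your sketch invokes $(*)$ only through Theorem \ref{thm:HLS-mod}; and (iii) the generation statement (Claim \ref{thm:psi9}) that the differences $\Nm^2_{\xi_Z/\xi}(\partial_Z\gamma)-\Nm^1_{\xi_Z/\xi}(\partial_Z\gamma)$ exhaust $(V_{d,X_K|(n-1)D_K})_\xi$ modulo $(V_{d,X_K|nD_K})_\xi$ as $\ul{g}$ and $\gamma$ vary. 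Without (iii) in particular, the vanishing extracted from $\psi_n(a)=0$ controls only a subfamily of symbols and does not yield $a\in\tF(X,(n-1)D)$. Your alternative first idea --- descent along $\pi$ of sections trivial on the collapsed fibre --- would indeed suffice (using Theorem \ref{thm:intro1} at level $R-D'$ as a black box), but you correctly concede it is unproven, and no proof of it is known that avoids the symbol computation; so as written the proposal names the right ingredients but does not prove the decisive step.
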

We remark that for smooth $D$, the Gysin sequence from \cite{BRS} which generalizes Voevodsky's Gysin sequence for $\A^1$-invariant sheaves with transfers, yields an isomorphism
\[ \tF(X,D)/F(X)\xr{\simeq} \uHom_{\PST}(\G_m, F)(D).\]
\medbreak

In section \ref{sec:ex-grps} the examples of $F=W_n$ and $F=H^1$ are discussed (in positive characteristic). 
Computations from Abbes-Saito \cite[\S 12]{AbbesSaito},  Yatagawa \cite[\S 2]{Yatagawa}, and \cite{RS}  
 determine the characteristic form in these cases completely. 
In fact, this shows that the general definition of the characteristic form fits nicely into the picture of various refined Swan conductors which
were defined before by different methods in \cite[\S 5]{KatoSwan}, \cite[\S 3]{Matsuda}, and \cite[\S 4]{Kato-Russell}, 
see Remark \ref{rmk:KR} and \ref{para:CW}.
As another exemplary application, we  explain in \ref{para:app-Chow} how  Theorem \ref{thm:intro2} 
reveals an interesting structure of Chow groups of zero-cycles with modulus, as introduced in \cite{Kerz-Saito}.
In particular we obtain some new specialization maps.

\medbreak

In view of these explicit computations  and also the one from section \ref{sec:ex-diff} discussed below,
it is an intriguing  question, whether it is possible  to determine the image of the characteristic form
for a general reciprocity sheaf and a general pair $(X,R)$. 
For the moment the image seems to be rather mysterious and we do not have a guess for a general formula.

\medskip

Let $p={\rm char}(k)\ge 0$. In section \ref{sec:ex-diff} we use Theorems \ref{thm:intro1} and \ref{thm:intro2} to compute
$\widetilde{\Omega^j}_{(X, nD)}$, for $X$, $D$ smooth, and $j, n\ge 1$. We find 
\[\tag{4}\label{intro3}
\widetilde{\Omega^j}_{(X,nD)}=
\begin{cases} \Omega^j_X(\log D)((n-1)D) & \text{if } p=0 \text{ or } (p\neq 0 \text{ and } p\nmid n)\\
                     \Omega^j_X(nD)& \text{if } p\neq 0 \text{ and } p\mid n,   \end{cases}
\]
see Theorem \ref{prop:Omega-char} and Corollary \ref{cor:tOmega}.
The case $p=0$ was proved by a different method in \cite[Theorem 6.4]{RS} and the case $p>0$ is new.
We  also compute the characteristic form. In particular, the formula for $\charr^{(nD)}_{\Omega^j}$ in the case $p|n$, came as a surprise,
see Theorem \ref{prop:Omega-char}\ref{prop:Omega-charIII}, \ref{prop:Omega-chariv} and \ref{prop:Omega-charv}.

In \cite[Question 2]{KMSY1} it is asked whether the isomorphism \eqref{intro1} extends to an isomorphism 
$H^i(X, \tF_{(X,R)})\cong H^i(X', \tF_{(X',f^*R)})$, at least under the additional assumption that $X$ is smooth, $R_\red$ is a SNCD, and 
$f:X'\to X$ is the blow-up in a smooth closed subscheme contained in $R_{\red}$. The formula 
\eqref{intro3} shows that in positive characteristic the answer to this question is  in general negative.\shuji{\footnote{In his forthcoming paper, Shane Kelly gives a positive answer to the same question for $F=\Omega^j$ in case $\ch(k)=0$.}}  
Indeed, this follows from the blow-up formula by considering  $\widetilde{\Omega^j}_{(X,pD)}$, see  \ref{para:Q}. 
 
However top differentials have a very good behavior even in positive characteristic. More precisely, 
we deduce from \eqref{intro3} that on a smooth scheme $X$ of dimension $d$, we have for any effective Cartier divisor $R$ on $X$ 
(without $R_{\red}$ assumed to be a SNCD)
\[\widetilde{\Omega^d}_{(X,R)}= \Omega^d_X(R),\]
see Lemma \ref{lem:tomega}. This leads to the following new characterization of pseudo-rational singularities, see \ref{para:pr} for a reminder of the definition
and Corollary \ref{cor:pr} for the result.

\begin{theorem}\label{thm:intro3}
Let $Y$ be an integral, normal, Cohen-Macaulay scheme of finite type over $k$ and dimension $d$.
Assume that the sheaf  $\widetilde{\Omega^d}_{(Y,R)}$  is $S2$, 
for each  effective Cartier divisor $R$ on $Y$ with $Y\setminus R$ smooth. 
Then $Y$  has pseudo-rational singularities. 
If there exits a proper and birational morphism $f:Z\to Y$ with $Z\in \Sm$,
then the inverse implication holds. 
In both cases  \eqref{eq1;para:pr} induces
\[\widetilde{\Omega^d}_{(Y, R)}\simeq \omega_{Y/k}(R),\]
where    $\omega_{Y/k}$ denotes the relative dualizing sheaf of $Y/k$.
\end{theorem}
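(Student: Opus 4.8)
The plan is to compare $\widetilde{\Omega^d}_{(Y,R)}$ with the $S2$-sheaf $\omega_{Y/k}(R)$ via the canonical map \eqref{eq1;para:pr}, and to match this comparison with the Lipman--Teissier trace that governs pseudo-rationality. First I would set up the comparison map and its $S2$-criterion. Since $Y$ is integral, normal and of finite type over the perfect field $k$, it is generically smooth, so $Y_{\rm sm}\subseteq Y$ is dense with complement $Y_{\rm sing}$ of codimension $\ge 2$; write $j\colon Y_{\rm sm}\hookrightarrow Y$. For an effective Cartier divisor $R$ with $U=Y\setminus R$ smooth we have $U\subseteq Y_{\rm sm}$, and restricting to the smooth scheme $Y_{\rm sm}$, Lemma~\ref{lem:tomega} gives a canonical identification $\widetilde{\Omega^d}_{(Y,R)}|_{Y_{\rm sm}}=\Omega^d_{Y_{\rm sm}}(R)=\omega_{Y/k}(R)|_{Y_{\rm sm}}$. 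As $\omega_{Y/k}$ is maximal Cohen--Macaulay (hence $S2$) on the Cohen--Macaulay scheme $Y$ and $\sO_Y(R)$ is invertible, $\omega_{Y/k}(R)=j_*(\omega_{Y/k}(R)|_{Y_{\rm sm}})$. The adjunction unit thus yields the comparison map $c_R\colon\widetilde{\Omega^d}_{(Y,R)}\to\omega_{Y/k}(R)$ of \eqref{eq1;para:pr}; it is injective (both sides embed in $j_*\Omega^d_U$) and is the identity on $Y_{\rm sm}$. Because $Y_{\rm sing}$ has codimension $\ge 2$, $c_R$ is an isomorphism if and only if $\widetilde{\Omega^d}_{(Y,R)}$ is $S2$, and in that case it produces the asserted isomorphism $\widetilde{\Omega^d}_{(Y,R)}\simeq\omega_{Y/k}(R)$.

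The heart of the argument is the compatibility of $c_R$ with trace maps along proper birational morphisms. Let $g\colon W\to Y$ be proper and birational with $W$ normal and integral. By the birational invariance \eqref{intro1} of the modulus sheaf one obtains an isomorphism $\beta_g\colon g_*\widetilde{\Omega^d}_{(W,g^*R)}\xrightarrow{\sim}\widetilde{\Omega^d}_{(Y,R)}$, while Grothendieck duality furnishes the trace $\tr_g\colon g_*\omega_{W/k}\to\omega_{Y/k}$. I would show that the square relating $\beta_g$, $g_*c_{g^*R}$, $c_R$ and $\tr_g\otimes\sO_Y(R)$ commutes. Since the target $\omega_{Y/k}(R)$ is $S2$ and equals $j_*$ of its restriction, any two maps into it from $\widetilde{\Omega^d}_{(Y,R)}$ coincide once they agree on $Y_{\rm sm}$; over $Y_{\rm sm}$ the claim reduces to the statement that on smooth schemes the isomorphism $\beta_g$ for $\widetilde{\Omega^d}=\Omega^d(R)$ is induced by the classical trace of top differential forms. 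Using that $W\setminus W_{\rm sm}$ has codimension $\ge 2$ and that $c_{g^*R}$ is an isomorphism there by Lemma~\ref{lem:tomega}, the commuting square identifies $c_R$ with $(\tr_g\otimes\sO_Y(R))\circ g_*(c_{g^*R})\circ\beta_g^{-1}$; in particular $\Im(c_R)\subseteq\Im(\tr_g\otimes\sO_Y(R))$.

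With this in hand both implications follow. Suppose first that $\widetilde{\Omega^d}_{(Y,R)}$ is $S2$ for every admissible $R$. Then $c_R$ is surjective, so the previous step forces $\tr_g\otimes\sO_Y(R)$ to be surjective for every proper birational $g\colon W\to Y$ with $W$ normal; untwisting by the line bundle $\sO_Y(-R)$ shows $\tr_g$ is surjective. As the non-isomorphism locus of such a $g$ has codimension $\ge 2$ in $Y$, surjectivity of all these traces is precisely the Lipman--Teissier criterion for pseudo-rationality recalled in \ref{para:pr} (dual, via local and Grothendieck duality, to the injectivity of the maps $H^d_{\mathfrak m}(\sO_Y)\to H^d_{g^{-1}(\mathfrak m)}(\sO_W)$); hence $Y$ is pseudo-rational, and $c_R$ being an isomorphism gives $\widetilde{\Omega^d}_{(Y,R)}\simeq\omega_{Y/k}(R)$. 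Conversely, assume $Y$ is pseudo-rational and a resolution $f\colon Z\to Y$ with $Z\in\Sm$ exists. Taking $g=f$ above and using that $c_{f^*R}$ is an isomorphism (Lemma~\ref{lem:tomega}, $Z$ smooth) while $\tr_f$ is an isomorphism by pseudo-rationality, we conclude $c_R$ is an isomorphism, so $\widetilde{\Omega^d}_{(Y,R)}\simeq\omega_{Y/k}(R)$ is $S2$.

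The \emph{main obstacle} is the smooth-locus identification in the second step: that the motivic birational-invariance isomorphism \eqref{intro1} for the reciprocity sheaf $\Omega^d$ coincides, on smooth schemes, with the Grothendieck trace on top-degree forms, and that this is compatible with the duality trace defining pseudo-rationality. This rests on the compatibility of the transfer structure of $\Omega^d$ with pushforward of top differential forms. Everything else is routine: the $S2$-bookkeeping in codimension $\ge 2$, the codimension bound for the exceptional locus of a proper birational morphism of normal integral schemes, and the standard duality translation between the injectivity of the local cohomology maps and the surjectivity of the traces.
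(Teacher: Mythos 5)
Your overall architecture coincides with the paper's actual proof (Corollary \ref{cor:pr}): identify $\widetilde{\Omega^d}$ with $\omega_{Y/k}(R)$ over the smooth locus via Lemma \ref{lem:tomega}, form the injective comparison map into the $S2$ sheaf $\omega_{Y/k}(R)$, fit it into a commutative square whose other sides are the birational-invariance isomorphism $\widetilde{\Omega^d}_{(Y,R)}\simeq f_*\widetilde{\Omega^d}_{(Z,f^*R)}$ and the twisted trace $\Tr_f\otimes\id$ (this is exactly the paper's diagram \eqref{cor:pr1}), and then read off both implications from the square precisely as you do. The one genuine gap is in how you establish commutativity of that square. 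You propose to verify it over all of $Y_{\rm sm}$, which forces you to know that the motivic birational-invariance isomorphism for $\Omega^d$ agrees, on smooth schemes (where $g$ may still be a nontrivial modification, e.g.\ a blow-up of a smooth point), with the Grothendieck duality trace on top-degree forms. You correctly flag this as the main obstacle, but you leave it unproved, resting on an appeal to ``compatibility of the transfer structure with pushforward''; as written this is a substantial missing input, and your argument does not close without it.

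The fix is already latent in your own reduction, and it is how the paper argues: since $\omega_{Y/k}(R)$ restricts injectively to \emph{any} dense open of the integral scheme $Y$ (the paper records this via \eqref{eq1;para:pr}), two maps into it agree as soon as they agree on some dense open --- there is no need to work on all of $Y_{\rm sm}$. Take the dense open $V\subseteq Y\setminus R$ over which $g$ is an isomorphism; there $\beta_g$ is pullback along an isomorphism and $\Tr_g$ is its inverse (a formal property of the duality trace for isomorphisms), so both composites agree with $c_R$ on $V$ and commutativity follows everywhere, with no transfer-versus-trace compatibility on blow-ups needed. With this substitution your proof becomes complete and is essentially the paper's. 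Two smaller points to record: your $\beta_g$ is not a direct instance of \eqref{intro1}, which concerns morphisms that are isomorphisms off the divisor --- the paper instead invokes, \'etale-locally over $Y$, the isomorphism of modulus pairs from \cite{KMSY1}, and chooses $R$ adapted to $f$ so that both $Y\setminus R$ and $Z\setminus f^*R$ are smooth; and in the forward direction you should note that $\Tr_g$ is automatically injective (same torsion-freeness argument), so that surjectivity of the twisted trace, after untwisting by $\sO_Y(-R)$, really yields the isomorphism $g_*\omega_{W/k}\simeq\omega_{Y/k}$ demanded by the definition in \ref{para:pr}.
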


A version of the above theorem for rational singularities in characteristic 0, was given in \cite[7.1]{RS-ZNP}.
The sheaves $\widetilde{\Omega^j}_{(Y, R)}$, $j\ge 0$, are defined for any reduced $k$-scheme $Y$ with effective Cartier divisor $R$,
as long as $R$ contains the singular locus and their formation is functorial in the pair $(Y,R)$ (in contrast to reflexive differentials, which are also used in
the study of singular varieties). This together with Theorem \ref{thm:intro3} suggests that these sheaves might be useful in the study of singular varieties.

\medskip

Combining Theorem \ref{thm:intro3} with
some of the main results from \cite{Kovacs} and \cite{KMSY1}, we obtain (see Corollary \ref{cor:Ext-MNST}):
\begin{cor-intro}\label{cor:intro1}
Let $Y$ have pseudo-rational singularities and assume
for any proper and birational morphism $Z\to Y$, there exits a proper and birational morphism 
	$Z'\to Z$ with $Z'$ smooth (e.g. $\ch(k)=0$ or $d\le 3$ and $Y$ quasi-projective by \cite{Cossart-PiltantI}, \cite{Cossart-PiltantII}).

Then, for any effective Cartier divisor  $R$ on $Y$ such that $Y\setminus R$ is smooth, we have
\[\tag{5}\label{intro4} H^i(Y_\Zar, \widetilde{\Omega^d}_{(Y,R)})=\Ext^i_{\uMNST}(\Ztr(Y,R), \widetilde{\Omega^d}),\]
where $\uMNST$ denotes the category of modulus Nisnevich sheaves with transfers introduced in \cite{KMSY1}
and $\Ztr(Y,R)\in \uMNST$ is the object represented by $(Y,R)$.
\end{cor-intro}
Under some stronger assumptions on resolutions of singularities, the cohomology group on the left in \eqref{intro4} becomes representable 
in the triangulated category of motives with modulus $\uMDM$ introduced in \cite{KMSY3}, see Remark \ref{rmk:MDM}.

\begin{acknowledgement}
The second named author thanks Ahmed Abbes and Yuri Yatagawa for their kindly  answering questions on ramification theory and Takeshi Saito for his helpful suggestion on Lemma \ref{lem4;AS}. The first named author thanks Fei Ren for a careful reading of a preliminary version of this article. 
\end{acknowledgement}

\addtocontents{toc}{\protect\setcounter{tocdepth}{1}}
\subsection{Notations}\label{nota}
In this article $k$ denotes a {\em perfect} field and $\Sm$ the category of separated schemes 
which are smooth and of finite type over $k$. For $k$-schemes $X$ and $Y$ we write $X\times Y:= X\times_k Y$.
For $n\ge 0$ we write $\P^n:=\P^n_k$, $\A^n:=\A^n_k$.
%We say a field $L$ is a {\em henselian dvr of geometric type over $k$}, if there exists $U\in \Sm$ and  $x\in U$ a 
%1-codimensional point, such that $L={\rm Frac}(\sO_{U,x}^h)$; we denote the ring of integers of $L$ by $\sO_L$.

For a scheme $X$ we denote by $X_{(i)}$ (resp. $X^{(i)}$) 
the set of $i$-dimensional (resp. $i$-codimensional) points of $X$.
If $(R,\fm)$ is a local ring, then we denote by 
\[R\{x_1,\ldots, x_n\}\]
the henselization of  the polynomial ring $R[x_1,\ldots, x_n]$ at the ideal 
$\fm R[x_1,\ldots, x_n]+ (x_1,\ldots, x_n)$.

Let $F$ be a Nisnevich sheaf on a scheme $X$ and $x\in X$ a point.
Then we denote by $F_x$ its Zariski stalk and by $F_x^h=\varinjlim F(U)$ the Nisnevich stalk, 
where the limit is over all Nisnevich neighborhoods $U\to X$ of $x$.
If $X$ is reduced, we denote by
\[K^h_{X,x}=\Frac(\sO_{X,x}^h)\]
the total fraction ring of $\sO_{X,x}^h$.

\section{Review of higher local symbols of reciprocity sheaves}\label{sec:1}
In this section we recall the definition of reciprocity sheaves from \cite{KSY2}
and the construction and main properties of higher local symbols from \cite{RS-HLS},
which we will use repeatedly in the next sections.

\begin{para}\label{para:rec}
We recall the notion of reciprocity sheaf from \cite{KSY2}.
Let $F\in\NST$ be a Nisnevich sheaf with transfers on $\Sm$ in the sense of Voevodsky. 
Let $U\in \Sm$ and $a\in F(U)$. A pair $(X,D)$ consisting of a proper $k$-scheme $X$ and an effective (possibly empty) Cartier divisor $D$ with
$X\setminus D=U$ is called a {\em modulus} for $a$ if for each $S\in \Sm$  and each finite
integral correspondence $Z\in \Cor(\A^1_S, U)$ satisfying $\{\infty_S\}_{|\tilde{Z}}\ge D_{|\tilde{Z}}$, where 
$\tilde{Z}\to \P^1_S\times X$ is the normalization of the closure of $Z$, we have  $Z_0^*a=Z_1^*a$,
where $Z_\e$ denotes the restriction of $Z$ along $\e_S\inj  \A^1_S$, $\e\in \{0,1\}$. 
Following \cite{KSY2}  we say $F$ is a  {\em sheaf with $SC$-reciprocity} (or just {\em reciprocity sheaf}), if
for any $U\in\Sm$  any section $a\in F(U)$ has a modulus.
If $(X,D)$ is any modulus pair, i.e.,  $X$ is a separated finite type $k$-scheme (not necessarily proper) 
and $D$ is an effective Cartier divisor on $X$ such that $U=X\setminus D$ is smooth, then
we denote by 
\[\tF(X,D)\]
the subgroup of $F(U)$ consisting of all $a\in F(U)$ which have a modulus of the form $(\Xb, \ol{D}+B)$, where
$\ol{X}$ is proper, $\ol{D}$ and $B$ are effective Cartier divisors on $\ol{X}$, and $X=\ol{X}\setminus B$ and 
$\ol{D}_{|X}=D$.
We can extend the definition of $\tF$ to pro-modulus pairs in an obvious way.
By \cite[Theorem 0.1]{S-purity} the category of reciprocity sheaves $\RSC_\Nis$ is abelian, and by \cite[Corollary 2.4.2]{KSY2} it is even Grothendieck.
\end{para}

\begin{para}\label{para:chain}
Let  $X$ be a reduced, separated, and noetherian scheme of finite dimension $d$ and assume $X_{(0)}=X^{(d)}$. 
A specialization chain (or just chain) on $X$ is a sequence of points 
\[\ux=(x_0,x_1,\ldots, x_n)\quad \text{with } x_i\in X \text{ and } x_{i-1}\in \ol{\{x_i\}}, \quad \text{for all } i=1,\ldots, n,\]
where $\ol{\{x_i\}}$ denotes the closure of the point $x_i$ in $X$; $\ux$ is a {\em maximal} chain or Par\v{s}in chain 
if $n=d$ and $x_i\in X_{(i)}$, for all $i$. We set
\[\mc(X)=\{\text{maximal chains on } X\}.\]
A {\em  maximal chain with break at $r\in\{0,\ldots,d\}$} is a chain $\ux$ as above with $n=d-1$, 
$x_i\in X_{(i)}$, for $i<r$, and $x_{i}\in X_{i+1}$, for $i\ge r$. We write
\[\mc_r(X)=\{\text{maximal chains with break at $r$ on } X\}.\]
For $\ux=(x_0,\ldots, x_{r-1}, x_{r+1},\ldots, x_d)\in \mc_r(X)$ we denote by $\b(\ux)$ the set of breaks of $\ux$, i.e.,
the points $y\in X$ such that 
\[\ux(y)=(x_0,\ldots, x_{r-1},y,x_{r+1}, \ldots, x_d)\in \mc(X).\] 
\end{para}

\begin{para}\label{para:HLS}
Let $F\in \RSC_\Nis$. We denote by $K^M_r$ the Nisnevich sheafification of the improved Milnor $K$-theory sheaf from
\cite{Kerz}; we denote by $K^M_{r,X}$ its restriction to $X$.
Let $K$ be a function field over $k$ and $X$ an integral scheme of finite type over $K$ of dimension $d$.
By \cite[(5.1.3)]{RS-HLS} we have  a pairing, called {\em higher local symbol},   
\eq{para:HLS1}{(-,-)_{X/K,\ux}: F(K(X))\otimes_{\Z} K^M_d(K^h_{X,x_{d-1}})\to F(K),}
for each maximal chain $\ux=(x_0,\ldots,x_{d-1}, x_d)\in \mc(X)$, 
which %by Proposition 8.3 and Corollary 8.5 in {\em loc. cit.} 
satisfies  the following properties for all $a \in F(K(X))$:
\begin{enumerate}[label= ({HS\arabic*}$'$)]
\item\label{HS1}
Let $X\hookrightarrow X'$ be an open immersion, where $X'$ is an integral $K$-scheme of dimension $d$. Then 
\[(a,\beta)_{X/K,\ux}=(a,\beta)_{X'/K,\ux},\quad \text{for all } \beta\in K^M_d(K^h_{X, x_{d-1}}).\]

\item\label{HS2} Let $X_{d-1}\subset X$ be the closure 
of $x_{d-1}$, and set $\ux'=(x_0,\ldots, x_{d-1})\in \mc(X_{d-1})$.
Then for all $\beta\in K^M_d(K^h_{X,x_{d-1}})$
\[(a, \beta)_{X/K, \ux} =\begin{cases} 
      \beta\cdot \Tr_{K(X)/K}(a), & \text{if }d=0,\\
     (a(x_{d-1}), \partial_{x_{d-1}}\beta)_{X_{d-1}/K, \ux'}, & \text{if } d\ge 1 \text{ and } a\in F(\sO_{X, x_{d-1}}),
\end{cases}\]
where 
$a(x_{d-1})\in F(K(X_{d-1}))$ is the restriction of $a$, in case $d=0$, the map $\Tr_{K(X)/K}: F(K(X))\to F(K)$ is the trace induced by the transpose of the graph 
of $\Spec K(X)\to \Spec K$, and $\partial_{x_{d-1}}=\sum_{v/x_{d-1}} \Nm_{K(v)/ K(x_{d-1})}\circ \partial_v$, where the sum is over all 
discrete valuations on $K(X)$ dominating $x_{d-1}$, $\Nm_{K(v)/ K(x_{d-1})}$ is the norm on Milnor $K$-theory, and $\partial_v$  is the tame symbol at $v$.

\item\label{HS3} Let $D\subset X$ be an effective Cartier divisor such that $X\setminus D$ is regular
and $a\in \tF(X,D)$. Then
\[(a, \beta)_{X/K,\ux}=0,\quad \text{for all }  \beta\in  (V_{d, X|D})^h_{x_{d-1}},\]
where $V_{1, X|D}=\Ker(\sO_X^\times\to \sO^\times_D)$ and $V_{r, X|D}=\Im(V_{1,X|D}\otimes_\Z K^M_{r-1,X}\to K^M_{r,X})$.

\item\label{HS4} 
Let $\ux'\in \mc_r(X)$ with $0\leq r\leq d-1$. Then for all $\beta\in K^M_d(K(X))$
\[(a, \beta)_{X/K,  \ux'(y)}=0,\quad \text{for almost all } y\in \b(\ux').\]
If either $r\ge 1$ or $r=0$, $X$ is quasi-projective, and the closure of $x_1$ in $X$ 
is projective over $K$, where $\ux'=(x_1,\ldots, x_d)$, then
\[\sum_{y\in b(\ux')} (a, \beta)_{X/K,  \ux'(y)}=0. \]

\item\label{HS5}
Let $f:Y\to X$ be a dominant and quasi-projective $K$-morphism between integral $K$-schemes 
of the same dimension $d$. Set $u:=x_{d-1}$ and let $y\in Y^{(1)}$ with $f(y)=u$.
We assume that $f$ induces a projective  morphism 
between the closures of the points $y$ and $u$. 
Then $K^h_{Y, y}$ is finite over $K^h_{X, u}$ 
and  for all  $\beta\in K^M_d(K^h_{Y, y})$
we have
\[\sum_{\substack{\uz \in \mc_{d-1}(Y) \text{ with} \\ y\in\b(\uz)\text{ and } f(\uz(y))=\ux}} (f^*a, \beta)_{Y/K,\uz(y)}
=\big(a, \Nm_{y/u}(\beta)\big)_{X/K,\ux},\]
where  $\Nm_{y/u}: K^M_d(K^h_{Y, y})\to K^M_d(K^h_{X, u})$ is the norm map. 
\end{enumerate}
Here \ref{HS5} is \cite[Corollary 5.5]{RS-HLS}, the properties \ref{HS1}-\ref{HS4} are slight variants of the (stronger) properties (HS1)-(HS4) in 
\cite[Proposition 5.3]{RS-HLS}, where they are stated for $\beta\in K^M_{d}(K^h_{X,\ux})$, with $K^h_{X,\ux}$ an iterated henselization along the chain $\ux$.
The version stated here follows easily using the natural maps $K(X)\to K^h_{X, x_{d-1}}\to K^h_{X,\ux}$.
To deduce \ref{HS2} from (HS2) in {\em loc. cit.}, use the fact that 
$K^h_{X,\ux}$ is a finite product of henselian dvr's unramified over $K^h_{X, x_{d-1}}$ together with
%any discrete valuations $w$ of $K^h_{X,\ux}$ is unramified over its restriction to $K^h_{X, x_{d-1}}$ and 
standard formulas for the tame symbol and the norm on Milnor $K$-theory,  
namely  \cite[{\bf R1c}, {\bf R3a}]{Rost}.

As already done in \ref{HS4} we usually also write
\eq{para:HLS2}{(-,-)_{X/K,\ux}: F(K(X))\otimes_{\Z} K^M_d(K(X))\to F(K)}
for the pairing obtained from \eqref{para:HLS1} by precomposing with the natural map
\[K^M_d(K(X))\to K^M(K^h_{X, x_{d-1}}).\] 
\end{para}

If $X$ is a smooth $k$-scheme and $D$ is supported on a simple normal crossing divisor, then we can use 
the higher local symbols to decide whether $a\in F(X\setminus D)$ is regular on $X$,
 or, under some extra assumption, whether it has modulus $D$.
This  is the content of the next two results from  \cite{RS-HLS}.

\begin{prop}[{\cite[Proposition 7.3(1)]{RS-HLS}}]\label{prop:HLS-reg}
Let $X\in \Sm$  have pure dimension $d$ and let $D$ be  a SNCD on $X$. Let $W\subset X$ be an open subscheme containing all generic points of $D$.
Assume $a\in F(X\setminus D)$ satisfies for all function fields $K/k$ and all $\ux=(x_0,\ldots, x_d)\in\mc(W_K)$
with $x_{d-1}\in D_K^{(0)}$
\[(a_K, \beta)_{X_K/K,\ux}=0,\quad \text{for all }\beta\in K^M_d(\sO_{X_K,x_{d-1}}),\]
where $X_K=X\otimes_k K$ and $a_K\in F(X_K\setminus D_K)$ is the restriction of $a$. Then $a\in F(X)$.
\end{prop}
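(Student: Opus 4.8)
The plan is to establish the contrapositive after first reducing to codimension one. Since $F\in\RSC_\Nis$ is semi-pure (see \cite{S-purity}), the restriction $F(X)\to F(k(X))$ is injective and any section which is regular away from a closed subset of codimension $\ge 2$ extends uniquely across it. Hence, to prove $a\in F(X)$ it suffices to show that $a$ extends to the generic point $\eta$ of each irreducible component $Z$ of $D$, i.e. that $a\in F(A)$ where $A:=\sO_{X,\eta}$ is a discrete valuation ring with residue field $k(Z)$. Each such $\eta$ lies in $W$, so the chains in the hypothesis are available along $Z$; localizing $X$ at $\eta$, where the SNCD $D$ is smooth and irreducible, I would thus reduce the whole statement to detecting regularity at the single discrete valuation ring $A$.

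It remains to detect $a\in F(A)$ by the vanishing of the unit symbols, and this is the heart of the matter. One direction is essentially \ref{HS3}: if $a$ has modulus along $Z$, then the pairing \eqref{para:HLS1} annihilates the congruence part $V_{d,X|Z}$. The proposition requires the stronger vanishing on all of $K^M_d(\sO_{X_K,\eta_K})$, which should be equivalent to $a$ having \emph{no} pole along $Z$. To prove this converse I would argue by induction on $d=\dim X$ and by contradiction: assuming $a\notin F(A)$, there is a smallest $n\ge 1$ for which, locally at $\eta$, $a$ lies in $\tF$ of modulus $nZ$ but not $(n-1)Z$, so $a$ has nonzero image in the leading graded piece of the pole-order filtration at $A$.

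The key point is then the nondegeneracy of the higher local symbol on this leading graded piece. Using the reciprocity structure of $F$ at $A$, I would choose, after base change to a suitable function field $K/k$, a unit symbol $\beta=\{u_1,\ldots,u_d\}\in K^M_d(\sO_{X_K,\eta_K})$ whose residue $\partial_{\eta_K}\beta$ isolates the leading term of the pole; the resulting pairing with $a_K$ descends to a higher local symbol on the $(d-1)$-dimensional component $Z_K$ evaluated against the residue of $a$. The regular sub-chains are governed by \ref{HS2}, and the induction hypothesis applied on $Z_K$ over the base field $K$ (to the reciprocity sheaf controlling the residue) produces a completing chain $\ux=(x_0,\ldots,x_{d-1},x_d)\in\mc(W_K)$ with $x_{d-1}=\eta_K$ for which $(a_K,\beta)_{X_K/K,\ux}\neq 0$. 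This contradicts the hypothesis and forces $n=0$, i.e. $a\in F(A)$.

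The main obstacle is precisely this last step: showing that a genuine pole of $a$ along $Z$ forces some \emph{unit} symbol to be nonzero, that is, the nondegeneracy of the pairing on the leading graded piece of the conductor filtration at $A$. This is where the full strength of the reciprocity structure of $F$ and the fine construction of the symbols in \cite{RS-HLS} must be used, together with the bookkeeping needed to transport nonvanishing through the iterated residues down to $F(K)$; the base changes to arbitrary function fields $K$ in the hypothesis are exactly what make the dimension induction on the residue field $k(Z)$ go through.
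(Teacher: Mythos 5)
You should first note that the paper contains no proof of this proposition at all: it is imported verbatim from \cite[Proposition 7.3(1)]{RS-HLS}, so the only honest comparison is between your sketch and the content it would need to supply. Your opening reductions are sound: purity for $F\in\RSC_\Nis$ (\cite{S-purity}) does reduce regularity of $a$ to regularity at the generic points $\eta$ of the components $Z$ of $D$, and finiteness of the motivic conductor gives a minimal $n\ge 1$ with $a$ of modulus $nZ$ but not $(n-1)Z$ locally at $\eta$. But everything after that is a statement of intent rather than an argument, and you say so yourself: the ``nondegeneracy of the higher local symbol on the leading graded piece'' that you defer to ``the full strength of the reciprocity structure'' is not a technical verification left to the reader --- it \emph{is} the proposition. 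A proof that reduces the claim to exactly the claim, and flags it as the main obstacle, has a genuine gap.

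Moreover, the mechanism you propose for closing that gap conflicts with the tools actually available. The descent you describe --- that $(a_K,\beta)_{X_K/K,\ux}$ ``descends to a higher local symbol on $Z_K$ evaluated against the residue of $a$'' --- has no support among \ref{HS1}--\ref{HS5}: property \ref{HS2} computes the symbol through $\partial_{x_{d-1}}\beta$ only under the hypothesis $a\in F(\sO_{X,x_{d-1}})$, which is precisely what is being denied in your contradiction argument; and for $\beta\in K^M_d(\sO_{X_K,\eta_K})$ the tame symbol $\partial_{\eta_K}\beta$ vanishes (it is a symbol of units), so no naive residue pairing on $Z_K$ can carry the nonvanishing you need --- the value of the symbol on singular $a$ against unit symbols is governed by finer structure that must be built by hand. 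Separately, your induction invokes ``the reciprocity sheaf controlling the residue'' on $Z_K$, i.e.\ an identification of the graded piece $\tF(nZ)/\tF((n-1)Z)$ with sections of a sheaf on $Z$; but such identifications (the Gysin isomorphism $\tF(X,D)/F(X)\cong\uHom_{\PST}(\G_m,F)(D)$ of \cite{BRS}, or the characteristic form of Theorem \ref{thm;charform}) are theorems proved in the present paper \emph{using} Proposition \ref{prop:HLS-reg}, so founding the induction on them would be circular. What \cite{RS-HLS} actually supplies, and what your sketch omits, is a direct construction of the symbol together with explicitly computed values on enough test data to force nonvanishing in the presence of a pole; without that computation the contrapositive cannot be run.
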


\begin{para}\label{para:sm-cptf}
Let $X\in \Sm$ and let $D$ be an effective Cartier divisor on $X$ whose support has simple normal crossings.
We say that $(X,D)$ has a {\em projective SNC-compactification}, if there exists a dense open immersion $j: X\inj \Xb$, such that 
$\Xb$ is smooth and projective over $k$ and $\Xb\setminus (X\setminus D)$ is the support of a SNCD on $\Xb$.

We note that in characteristic $0$ any pair $(X,D)$ as above has a  projective SNC-compactification by Hironaka;
 the same holds in positive characteristic if $\dim X\le 3$, by \cite[Theorem on p. 1839]{Cossart-PiltantII}
together with \cite[Proposition 4.1]{Cossart-PiltantI}.
\end{para}

\begin{thm}[{\cite[Theorem 6.1]{RS-HLS}}]\label{thm:HLS-mod}
Let $X\in \Sm$ have pure dimension $d$ and let $D$ be an effective Cartier divisor whose support has simple normal crossings.
Assume that $(X, D)$ has a projective SNC-compactification.
Set  $U=X\setminus D$. Let $W\subset X$ be an open subscheme containing all generic points of $D$. 
Then for $a\in F(U)$ the following two statements are equivalent:
\begin{enumerate}
\item $a\in \tF(X,D)$. 
\item For all function fields $K/k$ and  all $\ux=(x_0,\ldots, x_{d-1},x_d)\in \mc(W_K)$ with $x_{d-1}\in D_K$  we have 
\[(a_K,\beta)_{X_K/K,\ux}=0,\quad \text{for all } \beta\in (V_{d, X_K|D_K})_{x_{d-1}}.\]
\end{enumerate}
\end{thm}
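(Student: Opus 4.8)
The plan is to prove the two implications separately: $(1)\Rightarrow(2)$ is formal and follows directly from \ref{HS3}, whereas $(2)\Rightarrow(1)$ carries all the content. For $(1)\Rightarrow(2)$, I would first use that the motivic modulus is stable under base change along a function field $K/k$ (a standard property of the theory), so that $a\in\tF(X,D)$ forces $a_K\in\tF(X_K,D_K)$; here $X_K=X\otimes_k K$ is again smooth over $K$ since $k$ is perfect, and $D_K$ is again a SNCD with regular complement. Then, given $\ux\in\mc(W_K)$ with $x_{d-1}\in D_K$ and $\beta\in(V_{d,X_K|D_K})_{x_{d-1}}$, I would push $\beta$ into the henselian stalk along the canonical map $(V_{d,X_K|D_K})_{x_{d-1}}\to(V_{d,X_K|D_K})^h_{x_{d-1}}$ and apply property \ref{HS3} to the pair $(X_K,D_K)$, obtaining $(a_K,\beta)_{X_K/K,\ux}=0$ at once. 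This direction uses neither the compactification hypothesis nor the reciprocity laws.

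For $(2)\Rightarrow(1)$ my strategy would be to descend the global modulus condition to codimension one and then to a one-dimensional symbol computation. Since every $a\in F(U)$ admits \emph{some} modulus by \ref{para:rec}, only the bound by $D$ is at issue, and by \ref{HS1} I may shrink $W$ to a neighborhood of the generic points of $D$. I would then invoke the Zariski--Nagata purity for the modulus sheaves $\tF_{(X,D)}$ (\cite{S-purity}, \cite{RS-ZNP}) to reduce the claim $a\in\tF(X,D)$ to the assertion that, for each generic point $\eta$ of a component $D_i$ of $D$ of multiplicity $m_i$, the restriction of $a$ has modulus $m_iD_i$ on $\Spec K^h_{X,\eta}$. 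This converts the statement into a family of codimension-one assertions, one along each component of $D$.

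The heart of the matter is then a local equivalence at $\eta$ between this modulus bound and the vanishing of the local symbol on the subgroup $V_d$. Here I would use property \ref{HS2} to peel off the top step of the symbol along a Parshin chain with $x_{d-1}=\eta$, identifying it with the residue at $\eta$, so that vanishing on $(V_{d,X_K|D_K})_\eta$ translates into the induced one-dimensional symbol on the henselian local field at $\eta$ annihilating precisely the unit filtration that defines modulus $m_iD_i$; this is where the explicit shape of $V_{r,X|D}$, built from $V_{1,X|D}=\Ker(\sO_X^\times\to\sO^\times_D)$, must be matched to the conductor filtration, using the construction of the symbol and the motivic conductor in \cite{RS-HLS}. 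To recombine the codimension-one data globally and to guarantee independence of the auxiliary chain, I would use the reciprocity (sum-to-zero) law \ref{HS4} together with the norm compatibility \ref{HS5}; it is exactly the second part of \ref{HS4} that forces the projective SNC-compactification hypothesis, since the relation $\sum_{y\in\b(\ux')}(a,\beta)=0$ requires the relevant closures to be projective over $K$.

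I expect this local equivalence to be the main obstacle. One must show that testing only on the small group $V_d$---rather than on all of $K^M_d(\sO_{X_K,\eta})$, as in the regularity criterion \ref{prop:HLS-reg}---already pins down the modulus, i.e.\ that the coarser pairing loses no information about the conductor along $D_i$. Matching the residue of the higher local symbol to the modulus filtration is the technical core, and the global reciprocity of \ref{HS4} (hence the hypothesis $(*)$) is precisely what allows this local information to be assembled into the single global conclusion $a\in\tF(X,D)$.
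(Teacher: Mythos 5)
First, a point of order: this theorem is not proved in the present paper at all --- it is recalled verbatim from \cite[Theorem 6.1]{RS-HLS}, so there is no in-paper proof to compare against; your proposal has to be judged against what a complete proof would require. Your direction $(1)\Rightarrow(2)$ is fine and is indeed the formal argument: base change the modulus along $K/k$ (legitimate since $k$ is perfect and $\tF$ is functorial), map the Zariski stalk into the henselian stalk, and apply \ref{HS3}; this uses neither the compactification hypothesis nor reciprocity, as you say.

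The hard direction $(2)\Rightarrow(1)$, however, has genuine gaps. First, your plan to ``peel off the top step'' with \ref{HS2} is inapplicable: \ref{HS2} computes the symbol only when $a\in F(\sO_{X,x_{d-1}})$, and at $x_{d-1}\in D_K$ this is exactly what fails --- $a$ may have poles along $D$ --- so the symbol at such chains is not a residue of $a$, and there is no residue formula translating vanishing on $(V_{d,X_K|D_K})_{x_{d-1}}$ into a one-dimensional unit-filtration statement. Second, the ``local equivalence at $\eta$'' between symbol vanishing on $V_d$ and the conductor bound, which you yourself flag as the technical core, is precisely the content of the theorem and is asserted rather than proved; nothing in \ref{HS1}--\ref{HS5} yields it formally (note that even the weaker regularity criterion, Proposition \ref{prop:HLS-reg}, tests against all of $K^M_d(\sO_{X_K,x_{d-1}})$, not just $V_d$). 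Third, the architecture of your reduction is off: Zariski--Nagata purity in the SNCD case (\cite{RS-ZNP}) itself consumes the projective SNC-compactification hypothesis, and once it reduces you to statements over the henselian local rings $\sO^h_{X,\eta}$ there is nothing left for the sum law \ref{HS4} to ``recombine''; conversely, \ref{HS4} only applies on global quasi-projective models, so to exploit it one must re-globalize --- concretely, produce on a projective model explicit auxiliary subvarieties and maximal chains, together with explicit Milnor $K$-theory representatives of classes of $(V_{d,X_K|D_K})_{x_{d-1}}$ modulo deeper filtration steps, and play \ref{HS2}, \ref{HS4}, \ref{HS5} against one another. The proof of Theorem \ref{thm:psi} in this paper shows exactly what such an argument looks like (the surjections from $\Omega^{d-1}_{K(\xi)}$ and $\Omega^{d-2}_{K(\xi)}$ onto graded pieces of the $V_\bullet$-filtration, the subvarieties $Z_{\ul{g}}$ with their chains, and the norm comparison $\Nm^2_{\xi_Z/\xi}-\Nm^1_{\xi_Z/\xi}$); that constructive step, entirely absent from your sketch, is where all the work lies.
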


\section{Abbes-Saito formula}\label{sec:2}
In this section $X$ is a smooth separated $k$-scheme and $R$ is an effective  Cartier divisor on $X$.
We assume that $D=R_{\red}$ is a SNCD.

\begin{para}\label{para:dil}
We recall the construction and basic properties of a dilatation. This is discussed in a more general situation 
in \cite[1.3]{TakeshiSaito}, see also  \cite[2.]{AbbesSaito}, \cite[3.2]{BLR} for similar discussions.
Let $\rho:{\rm Bl}_R(X\times X)\to X\times X$ be  the blow-up of $X\times X$ in $R$, 
which we embed diagonally in $X\times X$, i.e., we identify $R$ with 
$\Delta_X\cap (X\times R\cup R\times X)$. We set 
\[P^{(R)}_X= {\rm Bl}_R(X\times X)\setminus \widetilde{R\times X}\cup \widetilde{X\times R},\]
where $\widetilde{R\times X}$ denotes the strict transform of $R\times X$ in ${\rm Bl}_R(X\times X)$ and similar with $\widetilde{X\times R}$.
For $R=\emptyset$  the empty divisor, we have  $P^{(\emptyset)}_X=X\times X$.
Let $p_1, p_2: P^{(R)}_X\to X$ be the two morphisms induced by the composition of $\rho$ with the two projections $q_1, q_2: X\times X\to X$.

By construction we have an equality of closed subschemes of $P_X^{(R)}$ 
\[p_1^{-1}(R)= p_2^{-1}(R)=P^{(R)}_X\times_{X\times X, \Delta} R:=R_P.\]
In fact  $R_P$ is the restriction of the exceptional divisor of $\rho$ to $P^{(R)}_X$ and is hence 
also an effective Cartier divisor, furthermore
\[p_{1|R_P}=p_{2|R_P}: R_P\inj P^{(R)}_X\to X.\]
The triple $(P^{(R)}_X, p_1, p_2)$ is the universal example of such a structure, more precisely:
Let $Y$ be a $k$-scheme  and let $f,g: Y\to X$ be two $k$-morphisms satisfying
\begin{enumerate}[label=(\arabic*)]
\item\label{para:dil1} $f(Y)$, $g(Y)\not\subset R_\red$;
\item\label{para:dil2} $f^{-1}(R)=g^{-1}(R)=:Z$ and $Z$ is an effective Cartier divisor;
\item\label{para:dil3} $f_{|Z}=g_{|Z}: Z\to X$.
\end{enumerate}
Then there exits a unique morphism $h: Y\to P^{(R)}_X$ such that $f=p_1\circ h$ and $g=p_2\circ h$.
This follows easily from the universal properties of the product and the blow-up.
\end{para}

The following local description of $P^{(R)}_X$ will be useful.

\begin{para}\label{para:dil-loc}
Assume $X=\Spec A$, with $A$ a smooth $k$-algebra of dimension $d$, and $f\in A$ is an equation defining $R$.
Let $I_\Delta\subset A\otimes_k A$ be the ideal of the diagonal $X\inj X\times X$.
For any  point $y\in X\times X$ which lies in the image of the diagonal we find an affine open neighborhood 
$V=\Spec B\subset X\times X$ of $y$, such that $I_{\Delta}B$ is generated by a regular sequence $\theta_1,\ldots, \theta_d$.
We thus find $b_i\in B$ such that 
\[1\otimes f= f\otimes 1+ \sum_{i=1}^d b_i \theta_i.\]
Then a direct computation shows 
\eq{para:dil-loc1}{(P^{(R)}_X)_{|V}= \Spec \frac{B[\tau_1,\ldots, \tau_d, \tfrac{1}{1+\sum_i b_i \tau_i}]}{(\theta_i-\tau_i (f\otimes 1),\, i=1,\ldots, d)},}
where $(P^{(R)}_X)_{|V}= P^{(R)}_X\times_{X\times X} V$.

More specifically, assume $A$ is \'etale over $k[z_1,\ldots, z_d]$ and $R$ is defined by $z_1^n$.
Set
\[t_i=z_i\otimes 1 \quad \text{and}\quad   s_i=1\otimes z_i\in A\otimes_k A.\]
Then the diagonal $\Delta_X$ is open and closed in $X\times_{\A^n} X= V(s_1-t_1, \ldots, s_d-t_d)$  and hence we find
$V=\Spec B\subset X\times X$ an open neighborhood of $\Delta_X$,  
such that $I_{\Delta}B$ is generated by the regular sequence $\theta_1:=s_1-t_1,\ldots, \theta_d:=s_d-t_d$.
Since 
\[s^n_1= (t_1+\theta_1)^n= t_1^n +\theta_1 b_1,\quad \text{with }b_1=\sum_{i=1}^n \binom{n}{i} t_1^{n-i}\theta_1^{i-1}\]
we get 
\eq{para:dil-loc2}{(P^{(R)}_X)_{|V}%= \Spec \frac{B[\tau_1,\ldots, \tau_d, \tfrac{1}{1+b_1 \tau_1}]}{(\theta_i-\tau_i t_1^n,\, i=1,\ldots, d)}
=\Spec \frac{B[\tau_1,\ldots, \tau_d, \tfrac{1}{1+t_1^{n-1} \tau_1}]}{(\theta_i-\tau_i t_1^n,\, i=1,\ldots, d)},}
where we use 
\[1+b_1 \tau_1= (1+t_1^{n-1}\tau_1)^n\quad \text{in}\quad \frac{B[\tau_1,\ldots, \tau_d]}{(\theta_i-\tau_i t_1^n,\, i=1,\ldots, d)}.\]
\end{para}

\begin{lem}\label{lem:dil}
Let the assumptions and notations be as in \ref{para:dil}.
\begin{enumerate}[label=(\arabic*)]
\item\label{lem:dil1} 
Set $D'=(R-D)_{\red}$.  Then
\[R_P\times_X D'=\mathbb{V}(\Omega^1_{X/k}(R)_{|D'}):=\Spec ({\rm Sym}^\bullet \Omega^1_{X/k}(R)_{|D'}),\]
where $\Omega^1_{X/k}(R)=\Omega^1_{X/k}\otimes_{\sO_X} \sO_X(R)$.
\item\label{lem:dil2} $P^{(R)}_X\setminus R_P=U\times U$ and the restrictions of $p_1$,  $p_2$ to $U\times U$ are the
projections to the first and second factor, where $U=X\setminus R$.
\item\label{lem:dil3} $P^{(R)}_X$ is smooth.
%\item\label{lem:dil3.5} Let $u:Y\to X$ be an \'etale morphism. The unique morphism 
%$u_P: P^{(u^*R)}_Y\to P^{(R)}_X$ with $p_{i}\circ u_p= p_{i, (Y, u^*R)}$, $i=1,2$, 
%which exists by the universal property of $(P^{(R)}_X, p_1,p_2)$, is \'etale and we have 
%$u_P^*(R_P)=(u^*R)_P$.
\item\label{lem:dil3.5} The diagonal $X\inj X\times X$ lifts by the universal property 
of $(P^{(R)}_X, p_1, p_2)$ to a closed immersion $X\inj P^{(R)}_X$.
\item\label{lem:dil4} There exists a smooth map 
\[ \mu: P_X^{(R)} \times_{p_2,X, p_1} P^{(R)}_X \to P^{(R)}_X\]
which makes the following diagram commutative:
\eq{eq1;lem,AS}{\xymatrix{
P^{(R)}_X \times_{p_2,X, p_1} P^{(R)}_X \ar[r]^-{\mu} \ar[d] &  P^{(R)}_X \ar[d] \\
(X\times X)\times_{p_2,X, p_1} (X\times X) =X\times X\times X \ar[r]^-{p_{13}} & X\times X.}}
Moreover,  the pullback of $\mu$ via $D'=(R-D)_{\red}\hookrightarrow X\times X$ %(diagonally embedded) 
%\[ (T_X(-D)\times_X |D|)\times_{|D|} (T_X(-D)\times_X |D|) \to T_X(-D)\times_X |D|\]
\[ (R_P\times_X D')\times_{D'} (R_P\times_X D') \to R_P\times_X D'\]
is the addition of the vector bundle over $D'$ from \ref{lem:dil1}. 
\item\label{lem:dil5} Let $S$ be another effective Cartier divisor with $(R+S)_\red$ SNCD. Then there exists a morphism 
\[\pi: P_X^{(R+S)}\to P_X^{(R)},\]
which identifies $P_X^{(R+S)}$ with the complement of the strict transforms of $p_1^*S$ and $p_2^*S$
in the blow-up of $P_X^{(R)}$ in $S$, where we embed $S$ into $P^{(R)}_X$ via  \ref{lem:dil3.5}.
\end{enumerate}
\end{lem}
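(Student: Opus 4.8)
The plan is to deduce all five assertions from the two explicit local presentations \eqref{para:dil-loc1} and \eqref{para:dil-loc2} together with the universal property of the triple $(P^{(R)}_X,p_1,p_2)$ recorded in \ref{para:dil}. Two of the parts are essentially formal. For \ref{lem:dil2}, on the locus where $f\otimes 1$ is invertible the relations $\theta_i=\tau_i(f\otimes 1)$ determine the $\tau_i$, so the chart \eqref{para:dil-loc1} reduces to the localization of $B$ in which $f$ is inverted; globally this expresses that the blow-up is an isomorphism away from its center, and after deleting the strict transforms of $R\times X$ and $X\times R$ what remains over $(X\times X)\setminus R_P$ is exactly $(X\times X)\setminus(R\times X\cup X\times R)=U\times U$, with $p_1,p_2$ the two projections. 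For \ref{lem:dil3.5} I would apply the universal property to $f=g=\id_X$, whose hypotheses \ref{para:dil1}--\ref{para:dil3} hold trivially because $R$ is Cartier on $X$; this produces a section of $p_1$, and a section of a separated morphism is a closed immersion.

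For \ref{lem:dil3} and \ref{lem:dil1} I would compute in \eqref{para:dil-loc2}. Since $(t_i,\theta_i)_i$ is a system of étale coordinates on $B$ near the diagonal and the relations $\theta_i=\tau_i t_1^n$ eliminate the $\theta_i$, the chart becomes étale over $k[t_1,\dots,t_d,\tau_1,\dots,\tau_d]$ after inverting the unit $1+t_1^{n-1}\tau_1$; hence $P^{(R)}_X$ is smooth of dimension $2d$ in a neighborhood of $R_P$, and together with \ref{lem:dil2} this gives \ref{lem:dil3}. For \ref{lem:dil1}, passing to $R_P$ forces $\theta_i=0$ and $f\otimes 1=0$, so $R_P=\Spec \sO_R[\tau_1,\dots,\tau_d][\text{unit}^{-1}]$, and further restricting along $D'\inj X$ sets $t_1=0$, kills the inverted unit, and yields $\Spec \sO_{D'}[\tau_1,\dots,\tau_d]$. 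To make the identification canonical I would use the conormal isomorphism $I_\Delta/I_\Delta^2\cong\Omega^1_{X/k}$, under which $\theta_i=s_i-t_i\mapsto dz_i$, so that $\tau_i=\theta_i/f$ corresponds to the local frame $dz_i/f$ of $\Omega^1_{X/k}(R)_{|D'}$; this shows $R_P\times_X D'=\mathbb{V}(\Omega^1_{X/k}(R)_{|D'})$ independently of the chosen coordinates.

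Part \ref{lem:dil4} I would construct from the universal property. On $W:=P^{(R)}_X\times_{p_2,X,p_1}P^{(R)}_X$ set $f=p_1\circ\mathrm{pr}_1$ and $g=p_2\circ\mathrm{pr}_2$; writing $q=p_2\circ\mathrm{pr}_1=p_1\circ\mathrm{pr}_2$ for the common middle map, the identities $p_1^{-1}(R)=p_2^{-1}(R)=R_P$ give $f^{-1}(R)=\mathrm{pr}_1^{-1}(R_P)=q^{-1}(R)=\mathrm{pr}_2^{-1}(R_P)=g^{-1}(R)$ as Cartier divisors, and $f,g$ coincide there because both factor through $R_P$, where $p_{1|R_P}=p_{2|R_P}$; this yields $\mu$, and the square \eqref{eq1;lem,AS} commutes because the structure map $P^{(R)}_X\to X\times X$ and $p_{13}$ both record the outer pair $(x_1,x_3)$. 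For smoothness and the addition statement I would compute in \eqref{para:dil-loc2}: with the middle point having coordinates $s_i=t_i+\tau_i t_1^n$ and the second arrow $u_i=s_i+\tau_i' s_1^n$, the relation $s_1^n=t_1^n(1+\tau_1 t_1^{n-1})^n$ gives $\mu^*\tau_i=\tau_i+\tau_i'(1+\tau_1 t_1^{n-1})^n$; the coefficient of $\tau_i'$ is a unit, so $(t_i,\tau_i,\mu^*\tau_i)$ are coordinates on $W$ and $\mu$ is smooth of relative dimension $d$. Restricting to $D'$ imposes $t_1=0$, the unit degenerates to $1$, and $\mu^*\tau_i=\tau_i+\tau_i'$, which is precisely fibrewise addition of the bundle from \ref{lem:dil1}.

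For \ref{lem:dil5} I would again produce $\pi$ via the universal property of $P^{(R)}_X$ applied to $p_1,p_2\colon P^{(R+S)}_X\to X$. Since $R\le R+S$, the support of $p_i^{-1}(R)$ lies in $R_P^{(R+S)}=p_1^{-1}(R+S)=p_2^{-1}(R+S)$, where $p_1=p_2$, so hypothesis \ref{para:dil3} for $R$ is automatic; a local check in \eqref{para:dil-loc1} (the ratio $s_1/t_1$ being a unit on $P^{(R+S)}_X$) gives the scheme-theoretic equality $p_1^{-1}(R)=p_2^{-1}(R)$ needed for \ref{para:dil2}, whence $\pi$ exists with $p_i^{(R)}\circ\pi=p_i^{(R+S)}$. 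It then remains to realize $\pi$ as the asserted open subscheme: using $\iota(S)=\iota(X)\cap p_1^{-1}(S)$ for the diagonal embedding $\iota$ from \ref{lem:dil3.5}, one has $\pi^*I_{\iota(S)}=\pi^{*}I_{\iota(X)}+p_1^{(R+S)*}I_S$, and the local charts show this ideal is invertible, so $\pi$ factors through $\mathrm{Bl}_{S}P^{(R)}_X$; comparing the normal forms \eqref{para:dil-loc1} for $R+S$ with those of the iterated blow-up then identifies the factorization with the complement of the strict transforms of $p_1^*S$ and $p_2^*S$. I expect \ref{lem:dil5} to be the main obstacle, because this last comparison must be run separately according to whether $R$ and $S$ share components of their supports and one has to track precisely which two strict transforms are deleted; by contrast \ref{lem:dil1}--\ref{lem:dil4} all reduce to the single normal form \eqref{para:dil-loc2}.
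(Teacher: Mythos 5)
Your overall strategy, namely the universal property of $(P^{(R)}_X,p_1,p_2)$ combined with the explicit charts of \ref{para:dil-loc}, is exactly the paper's. Parts \ref{lem:dil2} and \ref{lem:dil3.5} are argued as in the paper (which dismisses them as obvious/immediate), the construction of $\mu$ and the commutativity of \eqref{eq1;lem,AS} in \ref{lem:dil4} match the paper's use of the universal property, and for \ref{lem:dil5} the paper itself only sketches what you sketch; your explicit check that $p_1^{*}R=p_2^{*}R$ scheme-theoretically on $P^{(R+S)}_X$ is a worthwhile supplement, though for SNC support it should be run for the full monomial: with $R=\sum_j a_jD_j$ one needs $\prod_j(s_j/t_j)^{a_j}$ to be a unit, which follows since each ratio $s_j/t_j$ is regular on the dilatation and the product with the total multiplicities of $R+S$ is the inverted unit $1+\sum_ib_i\tau_i$.

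The genuine gap is your closing claim that \ref{lem:dil1}--\ref{lem:dil4} ``all reduce to the single normal form \eqref{para:dil-loc2}''. That chart exists only near points of $X$ lying on a single component of $D=R_\red$ (it presumes $R$ is cut out by $z_1^n$), so for general $R$ with SNC support it misses exactly the points of $D'$ that lie on further components of $D$: for $R=D_1+2D_2$, say, your computation says nothing about $R_P\times_X D'$ over $D_1\cap D_2$, and there the step ``restricting along $D'$ sets $t_1=0$ and kills the inverted unit'' has no meaning; canonicity of the frame $\tau_i\mapsto dz_i/f$ only lets you glue charts where they are defined. The missing input, which is the actual content of the paper's proof of \ref{lem:dil1}, is this: in the general chart \eqref{para:dil-loc1} the local equation $f$ of $R$ lies in $J^2$, where $J\subset A$ is the ideal of $D'$ (every component of $D'$ occurs in $R$ with multiplicity $\ge 2$), hence $1\otimes f-f\otimes 1\in I_\Delta\cdot(A\otimes J)+I_\Delta^2$ and the coefficients satisfy $b_i\in I_\Delta B+(A\otimes J)B$; therefore the unit $1+\sum_ib_i\tau_i$ restricts to $1$ on $R_P\times_X D'$ and the fibre over all of $D'$ becomes $\Spec\,(A/J)[\tau_1,\dots,\tau_d]$, identified canonically with the bundle via $\tau_i\mapsto\theta_i/f$ as in \eqref{lem:dil7}. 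The same membership $b_i\in I_\Delta B+(A\otimes J)B$ is what makes the unit appearing in \ref{lem:dil4} (in general $1+\sum_i b_i\tau^{(1)}_i$, not your special $(1+\tau_1t_1^{n-1})^n$) degenerate to $1$ over $D'$, giving fibrewise addition on all of the bundle, where your one-branch computation again covers only part of $D'$. For \ref{lem:dil3} the defect is merely cosmetic, since your elimination of the $\theta_i$ works verbatim with $t_1^n$ replaced by an arbitrary $f$, which is how the paper argues after reducing to $X$ \'etale over $\A^d$.
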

\begin{proof}
Versions of the  statements can be found in \cite[1., 2.]{TakeshiSaito}. Since we also need an explicit description 
of the maps involved we sketch the proofs.
\ref{lem:dil1}. We use the description \eqref{para:dil-loc1}.
Let $J\subset A$ be the ideal of $D'$ in $X$.
We have $f\in J^2$ and thus  
$1\otimes f-f\otimes 1\in I_{\Delta} \cdot (A\otimes J)+I_{\Delta}^2$.
Hence the 
$b_i$ in \eqref{para:dil-loc1} are in $I_{\Delta}B+ (A\otimes J)B$
and 
\[P^{(R)}_X\times_{X\times X}(V\cap D')= \Spec( (A/J)[\tau_1, \ldots, \tau_d]).\]
It is straightforward to check that
\eq{lem:dil7}{\bigoplus_{i=1}^d (A/J)\cdot \tau_i \lra \Omega^1_{A}\cdot \frac{1}{f}\otimes_{A} A/J
= \frac{I_{\Delta}}{I_{\Delta}^2}\cdot\frac{1}{f}\otimes_A A/J,\quad  \tau_i\mapsto \frac{\theta_i}{f},\, i=1,\ldots, d,}
defines an $A/J$-linear isomorphism,
which is independent of the choice of the local equation $f$ 
and the local generators $\theta_1, \ldots, \theta_d$ of $I_{\Delta}$.
\ref{lem:dil2} is obvious. 
\ref{lem:dil3} is local. We may therefore assume that there is an affine and \'etale map $X\to \A^d=\Spec k[z_1,\ldots, z_d]$,
an open subset $V=\Spec B\subset X\times X$, such that $V\times_{\A^d\times\A^d} \Delta_{\A^d}= \Delta_X$, 
and a function $f\in k[z_1, \ldots, z_d]$ whose pullback to $X$ defines $R$.
 In this case
\[P^{(R)}_{X}\times_{X\times X} V= P^{(\Div(f))}_{\A^d}\times_{\A^d\times\A^d} V \]
is \'etale over $P^{(\Div(f))}_{\A^d}$. 
The formula \eqref{para:dil-loc1} for  $P^{(\Div(f))}_{\A^d}$, with $\theta_i=s_i-t_i$, 
directly implies that the latter scheme is smooth, hence so is $P^{(R)}_X$.
\ref{lem:dil3.5} is immediate.
\ref{lem:dil4}. Composing the map $P^{(R)}_X\times_X P^{(R)}_X\to X\times X$ defined by the lower part of \eqref{eq1;lem,AS}
with the two projections gives two maps $f,g: P^{(R)}_X\times_X P^{(R)}_X\to X$, which satisfy the properties
\ref{para:dil1}-\ref{para:dil3} of \ref{para:dil}. Hence the map $\mu$ from the statement exists by the universal property of $P^{(R)}_X\to X$.
Using the local description \eqref{para:dil-loc1} we see that it is induced by the map
\[B[\tau_1,\ldots, \tau_d]\to C[\tau^{(1)}_1,\ldots, \tau^{(1)}_d, \tau^{(2)}_1,\ldots, \tau^{(2)}_d],\]
with $C$ a localization of $B\otimes_A B$, which on the coefficients is given by 
a localization of the  map $A\otimes A\to B\otimes_A B$,  $a_1\otimes a_2\mapsto a_1\otimes 1\otimes 1\otimes a_2$, for $a_i\in A$, and on the 
$\tau_i$ by the assignment
\[\tau_i\mapsto \tau^{(1)}_i+ (1+\sum_i (b_i\otimes 1\otimes 1)\tau^{(1)}_i)\tau^{(2)}_i, \quad i=1,\ldots, d,\]
where $b_i$ are from \eqref{para:dil-loc1}. 
From this description and the fact that $b_i\in I_{\Delta}B+(A\otimes J)B$ 
the second statement of \ref{lem:dil4} is immediate. Finally, the existence of a morphism $\pi$ as in \ref{lem:dil5}
follows from the universal property of $(P^{(R)}_X, p_1, p_2)$, and the explicit 
description can be for example deduced from the local descriptions as in \ref{para:dil-loc} or 
the universal property of $(P^{(R+S)}_X, p_1, p_2)$.
\end{proof}

\begin{defn}\label{defn:FAS}
Let $F\in \RSC_\Nis$.
We define
\[F^\AS(X,R):=\{a\in F(X\setminus R)\mid p_1^*a-p_2^*a\in F(P^{(R)}_X) \}.\]
Here we use the fact that the restriction $F(P^{(R)}_X)\inj F(P^{(R)}_X\setminus R_P)$ is injective by 
\cite[Theorem 3.1(2)]{S-purity}. 
\end{defn}
The group $F^\AS(X,R)$ defined above is a central object of study in this paper. 
The definition is inspired by \cite[Definition 2.12]{TakeshiSaito},
where  for a   Galois-torsor the notion of having  ramification bounded by $R$ is defined using
the schemes $P^{(R)}_X$, as a non-logarithmic variant of \cite[Definition 7.3]{AbbesSaito11}. 
More precisely, if $G$ is a commutative finite \'etale $k$-group-scheme and $F=H^1_{\et}(-, G)$,
 then $F\in\RSC_\Nis$ (see \cite[Theorem 9.12]{RS}) and  with the above definition $F^\AS(X,R)$ coincides
with the group of $G$-torsors over $U=X\setminus R$ with ramification bounded by $R$
in the sense of \cite[Definition 2.12]{TakeshiSaito}. We note that in {\em loc. cit.} $G$ 
does not need to be commutative and $R$ may have rational coefficients.
On the other hand, in view of \cite[9.]{RS}, we can consider  here fppf-tosors under any commutative finite $k$-group scheme, not just the \'etale ones.
The superscript ``AS'' stands for {\em Abbes-(Takeshi) Saito}.

\begin{lem}\label{lem:FAS}
Let $F\in \RSC_\Nis$.
\begin{enumerate}[label=(\arabic*)]
\item\label{lem:FAS1} $F^\AS(X, \emptyset)=F(X)$.
\item\label{lem:FAS2} Let $f:Y\to X$ be a morphism in $\Sm$ with $f(Y)\not\subset R_\red$ 
       and assume there is an effective Cartier divisor  $S$ with $S_\red$ SNCD and $S\ge f^*R$. 
       Then $f^*: F(X\setminus R)\to F(Y\setminus S)$ induces a morphism
\[f^* : F^\AS(X,R)\to F^\AS(Y, S).\]
In particular with $f=\id_X$, we obtain $F^\AS(X,R)\subset F^\AS(X,S)$.
\end{enumerate}
\end{lem}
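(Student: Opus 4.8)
Statement \ref{lem:FAS1} should be immediate from unwinding the definition. The plan is simply to observe that for $R=\emptyset$ one has $P^{(\emptyset)}_X=X\times X$, $R_P=\emptyset$ and $U=X$, so that $P^{(\emptyset)}_X=U\times U$. Consequently the defining condition $p_1^*a-p_2^*a\in F(P^{(\emptyset)}_X)$ holds vacuously for every $a\in F(X)$, giving $F^\AS(X,\emptyset)=F(X)$.

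For \ref{lem:FAS2} the plan is to reduce everything to the construction of a single morphism of dilatations $\tilde f\colon P^{(S)}_Y\to P^{(R)}_X$ satisfying $p_i\circ\tilde f=f\circ p_i'$, where $p_1,p_2$ (resp.\ $p_1',p_2'$) denote the projections of $P^{(R)}_X$ (resp.\ $P^{(S)}_Y$); I write $U'=Y\setminus S$ and $S_P=(p_1')^{-1}(S)=(p_2')^{-1}(S)$ for the associated divisor. First I would record that $S\ge f^*R$ forces $f^{-1}(R)=\Supp(f^*R)\subseteq S_\red$, so $f$ restricts to $Y\setminus S\to X\setminus R$ and $f^*$ is defined on sections. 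To build $\tilde f$ I would invoke the universal property of $(P^{(R)}_X,p_1,p_2)$ from \ref{para:dil}, applied to the scheme $W=P^{(S)}_Y$ together with the two morphisms $g:=f\circ p_1'$ and $h:=f\circ p_2'$. Condition \ref{para:dil1} is clear because $p_i'$ is surjective (it splits via the diagonal section of Lemma \ref{lem:dil}\ref{lem:dil3.5}), so $g(W)=h(W)=f(Y)\not\subseteq R_\red$.

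The heart of the argument, and the step I expect to be the main obstacle, is condition \ref{para:dil2}: the equality of effective Cartier divisors $g^*R=(p_1')^*f^*R$ and $h^*R=(p_2')^*f^*R$ on $P^{(S)}_Y$. Writing $f^*R=\sum_i r_i S_i$, where the $S_i$ are the smooth components of the SNCD $S_\red$ and $r_i\ge 0$, this reduces to the per-component identity $(p_1')^*S_i=(p_2')^*S_i$. The plan is to extract this from the explicit local description \eqref{para:dil-loc1}--\eqref{para:dil-loc2}: if $w_i$ is a local equation of $S_i$, then on $P^{(S)}_Y$ one has $(p_2')^*w_i=u\cdot(p_1')^*w_i$ for a unit $u$, because $(p_2')^*w_i-(p_1')^*w_i$ is a $\tau_i$-multiple of $(p_1')^*f_S$ (with $f_S$ a local equation of $S$), which is divisible by $(p_1')^*w_i$ precisely because $S_i$ occurs in $S$, and the resulting cofactor $u$ is one of the factors of the element inverted in the local model. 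This is the only place where the explicit structure of the dilatation (and the regularity of the components) enters. Granting it, $g^*R=h^*R=:Z$ is an effective Cartier divisor, which is condition \ref{para:dil2}. For condition \ref{para:dil3} I would use the multiplicity bound $f^*R\le S$ to upgrade the support containment to $Z=(p_1')^*f^*R\le S_P$, so that $Z\subseteq S_P$ as closed subschemes; since $p_1'$ and $p_2'$ agree on $S_P$ by \ref{para:dil}, they agree on $Z$, i.e.\ $g_{|Z}=h_{|Z}$. It is here that one genuinely needs $S\ge f^*R$ rather than mere support containment.

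With $\tilde f$ in hand the conclusion is formal. For $a\in F^\AS(X,R)$ set $\omega:=p_1^*a-p_2^*a\in F(P^{(R)}_X)$. Pulling back along $\tilde f$ and using $p_i\circ\tilde f=f\circ p_i'$, its restriction to $U'\times U'=P^{(S)}_Y\setminus S_P$, where $p_i'$ is the projection and $\tilde f$ restricts to $f\times f$, equals
\[\tilde f^*\omega=(p_1')^*(f^*a)-(p_2')^*(f^*a).\]
Thus this difference extends to the section $\tilde f^*\omega\in F(P^{(S)}_Y)$; since $P^{(S)}_Y$ is smooth by Lemma \ref{lem:dil}\ref{lem:dil3} and $F(P^{(S)}_Y)\inj F(U'\times U')$ is injective by \cite[Theorem 3.1(2)]{S-purity}, this is exactly the statement $f^*a\in F^\AS(Y,S)$. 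Specializing to $f=\id_X$ (so that $S\ge R$ and $f^*=\id$) yields the final assertion $F^\AS(X,R)\subseteq F^\AS(X,S)$.
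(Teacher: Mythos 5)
Your proof is correct, and part \ref{lem:FAS1} agrees with the paper, which simply records that it holds by definition (the condition $p_1^*a-p_2^*a\in F(X\times X)$ is automatically, rather than vacuously, satisfied for $a\in F(X)$ since $P^{(\emptyset)}_X=X\times X=U\times U$). For part \ref{lem:FAS2} you take a genuinely more direct route than the paper. The paper factors the construction in two steps: first, the universal property of $(P^{(R)}_X,p_1,p_2)$ applied to $P^{(f^*R)}_Y$ yields a morphism $P^{(f^*R)}_Y\to P^{(R)}_X$, and here conditions \ref{para:dil2} and \ref{para:dil3} of \ref{para:dil} are automatic, because on $P^{(f^*R)}_Y$ the two pullbacks of exactly the divisor $f^*R$ coincide and the two projections agree on $(f^*R)_P$ by construction (after noting, as the paper does, that $(f^*R)_\red$ is a union of components of $S_\red$ and hence SNCD); second, the inclusion $F^\AS(Y,f^*R)\subset F^\AS(Y,S)$ is extracted from the already-proved Lemma \ref{lem:dil}\ref{lem:dil5}, which supplies $P^{(S)}_Y\to P^{(f^*R)}_Y$. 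You instead build a single morphism $P^{(S)}_Y\to P^{(R)}_X$ by one application of the universal property, which forces you to prove the per-component identity $(p_1')^*S_i=(p_2')^*S_i$ on $P^{(S)}_Y$ --- a statement appearing nowhere in the paper, but true, and your reduction to the local model is sound: since $(p_2')^*w_i-(p_1')^*w_i\in I_\Delta\sO_P=(p_1')^*(f_S)\,(\tau_1,\dots,\tau_d)\sO_P$ and $w_i\mid f_S$, one gets $(p_2')^*w_i=e_i\,(p_1')^*w_i$ with $e_i\in\sO_P$, and writing locally $f_S=\prod_i w_i^{m_i}$ each $e_i$ is a unit because $\prod_i e_i^{m_i}$ equals the inverted element $1+\sum_j b_j\tau_j$, so each $e_i$ divides a unit --- this symmetry-plus-product argument is slightly cleaner than your phrase ``one of the factors of the element inverted,'' which is literally accurate only in the one-component model \eqref{para:dil-loc2}. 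Your observation that the full inequality $S\ge f^*R$ (not mere support containment) is what gives $Z\subseteq S_P$ for condition \ref{para:dil3} is exactly right, and is the same place where the paper instead leans on Lemma \ref{lem:dil}\ref{lem:dil5}. What each approach buys: the paper's factorization keeps every verification formal and reuses an existing lemma, while yours is self-contained and exhibits the comparison map on $P^{(S)}_Y$ directly, at the cost of one honest divisor computation; the endgame --- pulling back $\varphi_R(a)$ and invoking the injectivity $F(P^{(S)}_Y)\inj F(U'\times U')$ from \cite[Theorem 3.1(2)]{S-purity} --- is identical in both.
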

\begin{proof}
\ref{lem:FAS1} holds by definition.  Note that the assumptions in \ref{lem:FAS2} imply that $(f^*R)_{\red}$ is SNCD.
By the universal property of $(P^{(R)}_X, p_1, p_2)$ the map
$f$ induces a natural morphism  
$f_P: (P^{(f^*R)}_Y, p_1^{(f^*R)}, p_2^{(f^*R)})\to (P^{(R)}_X, p_1^{(R)}, p^{(R)}_2)$ 
and hence a natural map $F^\AS(X, R)\to F^\AS(Y, f^*R)$. 
By Lemma \ref{lem:dil}\ref{lem:dil5} we have  $F^\AS(Y, f^*R)\subset F^\AS(Y, S)$.
This yields \ref{lem:FAS2}.
\end{proof}

\begin{thm}\label{thm;AS}
Let $F\in \RSC_\Nis$. Then
\[\tF(X,R)\subset F^\AS(X,R).\]
In particular, for $U=X\setminus R$ we have $F(U)=\cup_{S} F^\AS(X,S)$, where the union is over 
all effective Cartier divisors $S$ on $X$ with $S_\red\subset D$.
\end{thm}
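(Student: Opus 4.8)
The plan is to prove the displayed inclusion $\tF(X,R)\subset \FAS(X,R)$ first, and then to deduce the exhaustion statement formally. Throughout, write $U=X\setminus R$ and $d=\dim X$. Fix $a\in \tF(X,R)$ and set $b:=p_1^*a-p_2^*a$. By Lemma~\ref{lem:dil}\ref{lem:dil2} we have $\PXR\setminus R_P=U\times U$, so $b\in F(U\times U)$ is defined; and since the restriction $F(\PXR)\inj F(U\times U)$ is injective by \cite[Theorem 3.1(2)]{S-purity} (this is what makes Definition~\ref{defn:FAS} meaningful), proving $a\in \FAS(X,R)$ is exactly the assertion that $b$ extends to a section of $F$ over all of $\PXR$, i.e.\ that $b$ is regular along $R_P$.

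To establish regularity I would apply the higher-local-symbol criterion of Proposition~\ref{prop:HLS-reg}. This is legitimate: $\PXR$ is smooth of pure dimension $2d$ by Lemma~\ref{lem:dil}\ref{lem:dil3}, and the explicit local descriptions \eqref{para:dil-loc1}, \eqref{para:dil-loc2} show that $(R_P)_{\red}$ is a SNCD on $\PXR$ whose components correspond to those of $D=R_{\red}$. I emphasize that this route uses only the regularity criterion and property \ref{HS3}, \emph{neither of which invokes the compactification hypothesis} $(*)$ of Theorem~\ref{thm:HLS-mod} — consistent with the fact that this inclusion holds unconditionally. Thus it suffices to show: for every function field $K/k$, every open $W$ containing the generic points of $R_P$, every maximal chain $\ux=(x_0,\dots,x_{2d})\in \mc(W_K)$ with $x_{2d-1}\in (R_P)_K^{(0)}$, and every $\beta\in K^M_{2d}(\sO_{\PXR_K,\,x_{2d-1}})$, one has $(b_K,\beta)_{\PXR_K/K,\ux}=0$.

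Fix such a generic point $\eta:=x_{2d-1}$. Since $p_1=p_2$ on $R_P$ (see \ref{para:dil}), $\eta$ lies over a single generic point $\xi$ of a component $D_1$ of $D$, say of multiplicity $n$ in $R$. By \ref{HS1} I may pass to the local model \eqref{para:dil-loc2}, in which $\PXR$ is étale over $\Spec k[t_1,\dots,t_d,\tau_1,\dots,\tau_d]$, with $(R_P)_{\red}=V(t_1)$, the projection $p_1$ given by $z_i\mapsto t_i$, and $p_2$ by $z_i\mapsto s_i:=t_i+\tau_i t_1^{\,n}$; in particular $p_2\equiv p_1\bmod t_1^{\,n}$. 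Writing $(b_K,\beta)_{\PXR_K}=(p_1^*a_K,\beta)_{\PXR_K}-(p_2^*a_K,\beta)_{\PXR_K}$ by additivity, the plan is to eliminate the $d$ superfluous fibre coordinates $\tau_i$ and transport each term to a higher local symbol on $X_K$ by means of the functoriality and reciprocity properties \ref{HS2}, \ref{HS4}, \ref{HS5} of the symbol, and then to use the congruence $s_i\equiv t_i\bmod t_1^{\,n}$ to recognise the resulting difference as a symbol $(a_K,\gamma)_{X_K/K,\uy}$ whose $\G_m$-arguments are units congruent to $1$ modulo $R$, that is, $\gamma\in (V_{d,\,X_K|R_K})_\xi$. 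As $a\in \tF(X,R)$, property \ref{HS3} then forces $(a_K,\gamma)_{X_K/K,\uy}=0$, whence $(b_K,\beta)_{\PXR_K}=0$ and $b$ is regular. I expect the decisive obstacle to be exactly this transport step: expressing the $2d$-dimensional symbol of $b$ along $R_P$ through $d$-dimensional symbols of $a$ on $X$, and verifying that the congruence $p_2\equiv p_1\bmod t_1^{\,n}$ places the $\G_m$-part in $V_{d,\,X|R}$. This is where the explicit local-symbol calculus of \cite{RS-HLS} must be deployed in earnest, and it is the only genuinely computational part of the argument; the rest is bookkeeping with the local model.

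For the final assertion, observe that $\FAS(X,S)\subset F(U)$ for every effective Cartier divisor $S$ with $S_{\red}\subseteq D$, while the reciprocity of $F$ (see \ref{para:rec}) gives $F(U)=\bigcup_S \tF(X,S)$, the union over all such $S$: every section of $F$ over $U=X\setminus D$ admits a modulus, necessarily supported on $D$, and one may take $S$ with $S_{\red}=D$ so that $S_{\red}$ is SNC. Combining this exhaustion with the inclusion $\tF(X,S)\subset \FAS(X,S)$ just proved yields $F(U)=\bigcup_S \FAS(X,S)$, as claimed.
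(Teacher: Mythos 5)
Your framework is the right one --- reduce to the local model \eqref{para:dil-loc2} at a generic point $\eta$ of $R_P$, verify the hypothesis of Proposition~\ref{prop:HLS-reg} using \ref{HS3}, and note that nothing here requires the compactification hypothesis $(*)$; the final exhaustion paragraph is also fine. But the step you yourself flag as ``the decisive obstacle'' is in fact a genuine gap, and the route you sketch for it is blocked. You propose to eliminate the fibre coordinates $\tau_i$ and transport each term $(p_i^*a_K,\beta)_{P_K/K,\ux}$ to a $d$-dimensional symbol on $X_K$, landing in $(V_{d,X_K|R_K})_\xi$. Neither of the two available tools does this: \ref{HS5} applies only to dominant quasi-projective morphisms between integral schemes of the \emph{same} dimension, whereas $p_i:\PXR\to X$ drops dimension by $d$; and \ref{HS2}, which would strip coordinates by taking residues along a chain, requires $p_i^*a\in F(\sO_{P,\eta})$ at the point where it is applied --- precisely the regularity along $R_P$ you are trying to prove, so its use at $\eta$ is circular ($p_i^*a$ genuinely has poles along $R_P$).

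The paper's proof supplies the missing idea, and it stays in dimension $2d$ rather than descending to $X$: one introduces the auxiliary scheme $Y=\Spec A[\tau_1,\dots,\tau_d]$ (a trivial bundle over $X$, with $\rho:Y\to X$ the projection, so $\rho^*a\in\tF(Y,R_Y)$ by functoriality) together with \emph{two} morphisms $\pi_1,\pi_2:P\to Y$ lifting $p_1,p_2$ and fixing the $\tau_j$. Both $\pi_i$ are \'etale near $R_P$ and agree on $R_P$, so they induce isomorphisms $\sO^h_{Y_K,\xi}\simeq\sO^h_{P_K,\eta}$ of dvr's; applying \ref{HS5} to each $\pi_i$ converts the difference of symbols on $P_K$ into the single symbol $(\phi_Y^*\rho^*a,\;\Nm^1_{\eta/\xi}(\beta)-\Nm^2_{\eta/\xi}(\beta))_{Y_K/K,\uy}$, and \ref{HS3} is then applied \emph{on $Y_K$}, with the $V$-group $V_{2d,Y_K|R_{Y,K}}$, not $V_{d,X_K|R_K}$. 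Moreover, your appeal to the congruence $s_i\equiv t_i \bmod t_1^n$ is only the coordinate case of what is actually needed: Claim~\ref{claim;normgamma} requires $\pi_{2,K}^*(u)/\pi_{1,K}^*(u)\in 1+t_1^n\sO^h_{P_K,\eta}$ for \emph{arbitrary} units $u\in(\sO^h_{Y_K,\xi})^\times$, which the paper proves via a coefficient-field and power-series argument (the congruence $\sigma_2(c)-\sigma_1(c)\in I_\Delta\sO^h\subset t_1^n\sO^h$ propagated through $E[[t]]$), after first using the fact that the norms are isomorphisms to rewrite $\Nm^1(\beta)-\Nm^2(\beta)$ as $\pi_{2,K}^*\alpha-\pi_{1,K}^*\alpha$. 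Without the pair $(\pi_1,\pi_2)$ and this unit congruence, your plan cannot be completed as stated; this is the core of the proof, not bookkeeping.
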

\begin{proof}
Let $a\in \tF(X,R)$. Note $p_i^*(a)\in F(\PXR\setminus R_P)$ for $i=1,2$.
We will  use Proposition \ref{prop:HLS-reg} to show $p_1^*(a)-p_2^*(a)\in F(\PXR)$.
First we need some preparation. By the purity of $F$ (\cite[Theorem 0.2]{S-purity}), 
the question is local at the generic points of $R$ and hence we may assume the following:
\begin{enumerate}[label=(\alph*)]
\item\label{thm;AS1}
$R= n D$, with $D$ irreducible smooth and $n\ge 1$, and 
\item\label{thm;AS2}
$X=\Spec A$ is affine and \'etale  over $k[z_1,\ldots, z_d]$ with $D=\{z_1=0\}$.
\end{enumerate}
Let $V=\Spec B\subset X\times_k X$ be an open neighborhood of $\Delta_X$ as in \eqref{para:dil-loc2}
 and set $t_i:=z_i\otimes 1$, $s_i:=1\otimes z_i\in B$, $i=1,\ldots, d$, so that
\eq{thm:AS2.5}{I_{\Delta}B=(\theta_1,\dots,\theta_d)B\subset B\qwith \theta_i=s_i-t_i}
and 
\eq{Pexplicit}{P:=(\PXR)_{|V} 
=\Spec \frac{B[\tau_1,\ldots, \tau_d, \frac{1}{1+t_1^{n-1} \tau_1}]}{(\theta_i-\tau_i t_1^n,\, i=1,\ldots, d)}.}
We have $R_P\subset P$ and 
\eq{DPexplicit}{R_P= \Spec (A/z_1^n)[\tau_1,\dots,\tau_d,\tfrac{1}{1+z_1^{n-1}\tau_1}].}
For $i=1,2$, we denote by  $p_i: P\to X$ also the restriction of $p_i: \PXR \to X$.
Set  
\[Y:=\begin{cases}
\Spec A[\tau_1, \ldots, \tau_d, \frac{1}{1+\tau_1}] & \text{if }n=1,\\
\Spec A[\tau_1, \ldots, \tau_d] &\text{if } n\ge 2.
\end{cases}\]
and denote by
\[ \pi_i: P\to Y\]
the unique morphism which composed with the projection $\rho:Y\to X$ is equal to $p_i$ 
and satisfies $\pi_i^*(\tau_j)= \tau_j$, for $j=1,\dots,d$. 
Note that the maps $\pi_1$ and $\pi_2$ induce the same isomorphism
\eq{DDQ}{(\pi_1)_{|R_P}=(\pi_2)_{|R_P} : R_P \xr{\simeq} R_Y:=\rho^*R=\begin{cases}
                  \Spec (A/z_1)[\tau_1,\ldots, \tau_d,\frac{1}{1+\tau_1}] & \text{if }n=1,\\
                   \Spec (A/z_1^n)[\tau_1,\ldots, \tau_d] & \text{if } n\ge 2.
                   \end{cases}}
Furthermore, $\pi_2$ is \'etale on $P\setminus V(1+n\tau_1t_1^{n-1})$ and $\pi_1$ is \'etale,
in particular both are \'etale in a neighborhood of $R_P$.
Indeed, by a similar argument as in the proof of \ref{lem:dil3} in Lemma  \ref{lem:dil}, it
suffices to show this in the case $A=k[z_1,\ldots, z_d]$, where it follows directly from a
differential  criterion for \'etaleness. 
Let $K$ be  a function field over $k$ and denote by $Y_K=Y\otimes_k K$ and $P_K=P\otimes_k K$ 
the base changes. We obtain  commutative diagrams, $i=1,2$,
\eq{thm:AS3}{\xymatrix{
P_K\ar[d]_{\pi_{i,K}}\ar[r]^{\phi_P} & P\ar[d]_{\pi_i}\ar[dr]^{p_i}\\
Y_K\ar[r]^{\phi_Y} & Y\ar[r]^{\rho} & X,
}}
in which $\phi_Y$ is the projection and the square is cartesian.
Let $\eta\in (R_{P,K})_\red^{(0)}$ be a generic point. 
By the above  we have $\pi_1(\eta)=\pi_2(\eta)=:\xi\in (R_{Y,K})_\red^{(0)}$ and the maps
\eq{piidvr}{\pi_{i,K}^*: \sO_{Y_K, \xi}^h\to \sO_{P_K,\eta}^h, \quad i=1,2,}
are isomorphisms of dvr's so that the induced local norm maps
\eq{thm:AS4}{\Nm_{\eta/\xi}^i: K_{2d}^M(\sO_{P_K,\eta}^h)\to K_{2d}^M(\sO_{Y_K,\xi}^h)}
are isomorphisms. Let $\ux=(x_0,\ldots,  x_{2d})\in \mc(P_K)$ with $x_{2d-1}=\eta$.
Then $\pi_1(\ux)=\pi_2(\ux):=\uy=(y_0,\ldots, y_{2d})\in \mc(Y_K)$ with $y_{2d-1}=\xi$.
Let $\beta\in K^M_{2d}(\sO^h_{P_K, \eta})$. We have 
\begin{align*}
(p_i^*(a)_K, \beta)_{P_K/K,\ux}& = (\phi_P^*p_i^* a, \beta)_{P_K/K,\ux}\\
                                                &= (\pi_{i,K}^*\phi_Y^*\rho^*a,\beta)_{P_K/K,\ux} & & \text{by } \eqref{thm:AS3}\\
                                                &= (\phi_Y^*\rho^*a, \Nm_{\eta/\xi}^i(\beta))_{Y_K/K, \uy} & & 
                                                                         \text{by  \ref{HS5}  and  \eqref{DDQ}.}                                                 
\end{align*}
Set
\[ \omega= \Nm_{\eta/\xi}^1(\beta) - \Nm_{\eta/\xi}^2(\beta)\in K_{2d}^M(\sO_{Y_K,\xi}^h).\]
\begin{claim}\label{claim;normgamma}
$\omega\in (V_{2d, Y_K|R_{Y,K}})^h_\xi$, see \ref{HS3} for notation.
\end{claim}
Since $\phi_Y^*\rho^*a\in \tF(Y_{K}, R_{Y,K})$, the claim together with \ref{HS3} implies
\[((p_1^*(a)-p_2^*(a))_K, \beta)_{P_K/K,\ux}= (\phi_Y^*\rho^*a, \omega)_{Y_K/K,\uy}=0.\]
Since $K/k$, $\ux=(x_0,\ldots, x_{2d})\in \mc(P_K)$ with $x_{2d-1}=\eta \in (R_{P,K})_\red^0$, and 
$\beta\in K^M_{2d}(\sO_{P_K,\eta})$ were taken arbitrarily, Proposition \ref{prop:HLS-reg} implies
$a\in F(P)$ and hence also $a\in F(P^{(R)}_X)$. Thus it remains to prove the claim.

{\em Proof of Claim \ref{claim;normgamma}.}
Since the norm \eqref{thm:AS4} is an isomorphism with inverse induced by $\pi_{i,K}^*$,
we find for $\beta\in K_{2d}^M(\sO_{P_K,\eta}^h)$
\begin{multline*}
 \pi_{1,K}^*(\Nm_{\eta/\xi}^1(\beta) -  \Nm_{\eta/\xi}^2(\beta) )=
\beta -   \pi_{1,K}^*\Nm_{\eta/\xi}^2(\beta) \\
= \pi_{2,K}^*\Nm_{\eta/\xi}^2(\beta) -   \pi_{1,K}^* \Nm_{\eta/\xi}^2(\beta)\in 
K^M_{2d}( \sO_{Y_{K},\xi}^h).
\end{multline*}
where $\pi_{i,K}^*$ are induced by \eqref{piidvr}.
Thus it suffices to show
\[  \pi_{2,K}^*(\alpha) - \pi_{1,K}^*(\alpha) \in (V_{2d, P_K|R_{P,K}})_{\eta}^h,
\quad \text{for all }\alpha\in  K^M_{2d}( \sO_{Y_K,\xi}^h).\]
This follows from the following claim: 
\eq{claim;pi'}{
\pi_{2,K}^*(u)/\pi_{1,K}^*(u) \in 1+ t_1^n \sO_{P_K,\eta}^h,
\quad \text{for all } u\in (\sO_{Y_K,\xi}^h)^\times.}
Write $t=t_1$, $z=z_1$ and $\tau=\tau_1$ for simplicity. 
Let $E$ be the residue field of $\sO_{Y_K,\xi}^h$ identified with that of $\sO_{P_K,\eta}^h$ 
via either of $\pi_{i,K}^*$, see \eqref{DDQ}.
Choose a coefficient field $\sigma: E \to \sO_{Y_K,\xi}^h$. 
We obtain two induced coefficient fields
\[\sigma_i=\pi_{i,K}^*\circ \sigma:  E \to \sO_{P_K,\eta}^h, \quad i=1,2.\]
By \eqref{thm:AS2.5}  and \eqref{Pexplicit} we have 
\eq{coefficient}{
\sigma_2(c) -\sigma_1(c) \in I_\Delta\sO_{P_K, \eta}^h\subset   t^n\sO_{P_K,\eta}^h, \quad \text{for all } c\in E.}
These coefficient fields induce isomorphisms of dvr's:
\[ \sigma: E[[z]] \isom \widehat{\sO_{Y_K,\xi} },\quad
\sum_{m\geq 0} a_m z^m \to \sum_{m\geq 0} \sigma(a_m) z^m,\]
\[\sigma_i :E[[t]]\isom \widehat{\sO_{P_K,\eta}},\quad 
\sum_{m\geq 0} a_m t^m \to \sum_{m\geq 0} \sigma_i(a_m) t^m,\]
where $\widehat{\sO_{Y_K,\xi} }$ denotes the $z$-adic completion of $\sO_{Y_K,\xi}$
and $\widehat{\sO_{P_K,\eta}}$ denotes the $t$-adic completion of $\sO_{P_K,\eta}$
(which is the same as the $s=(t+t^n\tau)$-adic completion.)
By definition we have commutative diagrams, for $i=1,2$
\[\xymatrix{
E[[z]]  \ar[r]^{\nu_i} \ar[d]^{\sigma}&  E[[t]]\ar[d]^{\sigma_i}\\
\widehat{\sO_{Y_K,\xi}} \ar[r]^{\pi_{i,K}^*} & \widehat{\sO_{P_K,\eta}} \\ 
}\]
where 
\[\nu_1(\sum_{m\geq 0} a_m z^m) = \sum_{m\geq 0} a_m t^m\quad \text{and}\quad
\nu_2(\sum_{m\geq 0} a_m z^m) = \sum_{m\geq 0} a_m (t+t^n\tau)^m.\]
Together with \eqref{coefficient} this yields for  $u\in E[[z]]^\times$
\[\frac{\pi_{2,K}^*(\sigma(u))}{\pi_{1,K}^*(\sigma(u))} =
\frac{\sigma_2(\nu_2(u))}{\sigma_1(\nu_1(u))} \equiv 1 \; \text{mod } t^nE[[t]].
\]
This proves \eqref{claim;pi'} and completes the proof of Claim \ref{claim;normgamma} and the theorem.
\end{proof}

\begin{para}\label{para:cond}
Let $L$ be a henselian discrete valuation ring of geometric type over $k$, 
i.e., there is an isomorphism of $k$-algebras $L\cong \Frac(\sO_{U,u}^h)$, 
for some $U\in \Sm$ and $u\in U^{(1)}$.
Denote by $\sO_L$ its valuation ring and set 
$S_L=\Spec \sO_L$ and denote by $s_L$ the closed point of $S_L$. Following \cite{RS}
we define the motivic conductor $c^F_L:F(L)\to \N_0$ of $F$ on $L$ by
\[c_L^F(a)=\min\{n\in \N_0\mid a\in\tF(S_L, n\cdot s_L)\}.\]
For $Y$ a $k$-scheme, $y\in Y^{(1)}\cap Y_{\rm sm}$  a 1-codimensional point in the smooth locus of $Y$, 
and $a\in F(\Frac(\sO_{Y,y}))$ we set 
\[c_{Y,y}^F(a):= c^F_{K^h_{Y,y}}(\rho^*a),\]
where $\rho: \Spec K^h_{Y,y}\to \Spec \Frac(\sO_{Y,y})$ is the natural map.
\end{para}

The following theorem  is a version of the {\em Brylinski-Kato formula} for reciprocity sheaves, see Remark \ref{rmk:BK} 
below for  more detailed references to the literature.

\begin{thm}\label{thm:BK}
Let $X$ be a smooth separated $k$-scheme and $D\subset X$ an integral divisor with generic point $\eta$.
Let $x\in D_{\rm sm}$ be a smooth point of $D$ and let $Z,Z'\subset X$  be integral closed subschemes such that 
\eq{thm:BK1}{(Z\cap D)_x= x= (Z'\cap D)_x,}
where we use the notation $Y_x=\Spec \sO_{Y,x}$.
Let $V\subset X$ be some open containing the generic points of $Z$ and $Z'$.
Then $Z$ and $Z'$ are smooth around $x$ and  for $a\in F(V)$ the following implication holds
\eq{thm:BK2}{\text{length}_{\sO_{X,x}}(\sO_{Z\cap Z',x})\ge c^F_{X,\eta}(a)\Longrightarrow
c^F_{Z,x}(a_{Z})=c^F_{Z',x}(a_{Z'}),}
where $a_Z$ and $a_{Z'}$ denote the pullback of $a$ to $Z\cap V$ and $Z'\cap V$, respectively.
\end{thm}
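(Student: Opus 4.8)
The plan is to deduce the statement from the Abbes--Saito inclusion of Theorem \ref{thm;AS} by feeding the germs of $Z$ and $Z'$ at $x$ into the dilatation $\PXDD{n}$, where $n:=c^F_{X,\eta}(a)$. First I would record the easy points. Since $(Z\cap D)_x=x$ and $D$ is smooth at $x$, a local equation $t$ of $D$ restricts to a uniformizer of $\sO_{Z,x}$, so $\sO_{Z,x}$ is a discrete valuation ring; hence $Z$ is smooth at $x$, $Z^*D=[x]$ as Cartier divisors on $Z$, and $\kappa_Z(x)=\kappa(x)$, and likewise for $Z'$. After localizing $X$ at $x$ I would reduce to the case where $a$ is regular on $X\setminus D$, so that $D$ carries the whole polar locus of $a$ near $x$; then $a\in\tF(\Spec\sO_{X,\eta}^h,n\eta)$ by definition of the conductor, and by Theorem \ref{thm;AS}, applied at the generic point $\eta$ of $D$ (which lies in every neighbourhood of $x$) together with the purity of $F$ (\cite[Theorem 0.2]{S-purity}) on the smooth scheme $\PXDD{n}$, one obtains the key input $p_1^*a-p_2^*a\in F(\PXDD{n})$.

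Next I would build a single test trait carrying both germs. Choosing coordinates so that $X$ is étale over $\A^d$ with $D=\{z_1=0\}$ and $t=z_1$, transversality lets me write the germs of $Z$ and $Z'$ at $x$ as $z_i=\phi_i(t)$ and $z_i=\psi_i(t)$ for $i\ge 2$. I set $Y=\Spec\sO_{Z,x}^h$ with uniformizer $t$ and define two $k$-morphisms $f,g\colon Y\to X$ by $f^*z_i=\phi_i(t)$, $g^*z_i=\psi_i(t)$ (and $f^*z_1=g^*z_1=t$); here $f$ is the inclusion of $Z$, while $g$ realizes the germ of $Z'$ through the canonical isomorphism $\sO_{Z',x}^h\isom\sO_Y$, so that $f^*a=a_Z$ and $g^*a=a_{Z'}$ as sections over $K^h_{Z,x}$. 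The crucial point is that conditions \ref{para:dil1}--\ref{para:dil3} for the universal property of $(\PXDD{n},p_1,p_2)$ from \ref{para:dil} hold for $(f,g)$ with $R=nD$: both maps avoid $D$, both satisfy $f^{-1}(nD)=g^{-1}(nD)=n[x]$, and the compatibility $f|_{n[x]}=g|_{n[x]}$ amounts precisely to $\phi_i\equiv\psi_i\bmod t^n$ for all $i$, which (since $\mathrm{length}_{\sO_{X,x}}(\sO_{Z\cap Z',x})=\min_i\operatorname{ord}_t(\phi_i-\psi_i)$) is exactly the hypothesis $\mathrm{length}_{\sO_{X,x}}(\sO_{Z\cap Z',x})\ge n$. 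Thus I obtain $h\colon Y\to\PXDD{n}$ with $p_1\circ h=f$ and $p_2\circ h=g$.

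Pulling back the key input along $h$ then finishes the argument: since $p_1^*a-p_2^*a\in F(\PXDD{n})$ and $h$ is defined on all of $Y$, the section $h^*(p_1^*a-p_2^*a)$ lies in $F(Y)$, hence is regular at $x$; restricting to the generic point identifies it with $f^*a-g^*a=a_Z-g^*a$, so that $c^F_{Z,x}(a_Z-g^*a)=0$. Because the motivic conductor is ultrametric on the trait $Y$ (the groups $\tF(S_L,m\cdot s_L)$ form an increasing filtration by subgroups, so $c^F(u+v)\le\max(c^F(u),c^F(v))$), I conclude $c^F_{Z,x}(a_Z)=c^F_{Z,x}(g^*a)$; and $c^F_{Z,x}(g^*a)=c^F_{Z',x}(a_{Z'})$ since $g$ factors through the isomorphism $\sO_{Z',x}^h\isom\sO_Y$ and an isomorphism of henselian traits preserves the conductor. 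This gives $c^F_{Z,x}(a_Z)=c^F_{Z',x}(a_{Z'})$, as desired.

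The main obstacle is the geometric bookkeeping around the universal property: realizing $g$ as an honest $k$-morphism $Y\to X$ out of the germ of $Z'$ (this uses a coefficient field of $\sO_{X,x}^h$ to match the two embeddings on constants, together with the universal property of the henselization since $X$ is étale over $\A^d$), and checking that the scheme-theoretic equality $f|_{n[x]}=g|_{n[x]}$ is equivalent to the length inequality. A secondary but genuine difficulty is the reduction to $a$ regular on $X\setminus D$: one must arrange that after localizing at $x$ no horizontal component of the polar locus of $a$ passes through $x$, since it is only then that purity upgrades the regularity of $p_1^*a-p_2^*a$ at the generic point of $R_P$ supplied by Theorem \ref{thm;AS} to regularity on all of $\PXDD{n}$.
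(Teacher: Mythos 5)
Your proposal is correct and follows essentially the same route as the paper's proof: purity (\cite[Corollary 8.6(2)]{S-purity}) gives $a\in\tF(X,nD)$ for $n=c^F_{X,\eta}(a)$, Theorem \ref{thm;AS} gives $p_1^*a-p_2^*a\in F(P^{(nD)}_X)$, and the universal property of the dilatation applied to the two germs through $x$ (the length hypothesis being exactly the scheme-theoretic agreement $f_{|n[x]}=g_{|n[x]}$ required by condition \ref{para:dil3} of \ref{para:dil}) shows that $a_{Z^h_x}$ and the transport of $a_{(Z')^h_x}$ under a comparison isomorphism of henselian traits differ by an element of $F(Z^h_x)$, whence the conductors agree. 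The only point where you diverge is the construction of that comparison isomorphism: instead of choosing coefficient fields and writing the germs as graphs $z_i=\phi_i(t)$, $z_i=\psi_i(t)$ (the bookkeeping you rightly flag, which requires care when $x$ is not a closed point and $\kappa(x)$ is transcendental over $k$), the paper invokes \cite[Lemma 7.13]{BRS} to produce an affine morphism $X\to S$ with $\ol{\{x\}}\iso S$ and the morphism $X\to \A^1_S$ defined by a local equation of $D$, so that both projections from $Z^h_x\times_{(\A^1_S)^h_{x_0}}(Z')^h_x$ are isomorphisms---this packages in one step exactly the coefficient-field/residue-field matching you identify as the main obstacle.
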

\begin{proof}
If $Z\subset D$ or $Z'\subset D$, then condition \eqref{thm:BK1} implies that
$x$ is the generic point of $Z$ or $Z'$. Hence $x\in V$ and $a$ is regular at $x$.
Therefore the implication trivially holds in this case.

We consider the case $Z, Z'\not\subset D$. By \eqref{thm:BK1} $Z\cap D$ is a regular Cartier divisor around $x$ on $Z$, hence $Z$ is smooth around $x$,
similarly with $Z'$. We may assume $Z\neq Z'$ and $V$ does not contain $x$. 
We may shrink $X$ Zariski locally around $x$  and assume $X$, $D$, $Z$, $Z'$, and $T:=\ol{\{x\}}$ are smooth and integral,  $D=\Div(f)$, and 
\[Z\cap D=T=Z'\cap D.\]
Set $n=c^F_{X,\eta}(a)$. By \cite[Corollary 8.6(2)]{S-purity} 
\eq{thm:BK3}{a\in \tF(X,nD).}
The statement in \eqref{thm:BK2} is Nisnevich local around $x$.
By \cite[Lemma 7.13]{BRS} we may assume that there exists an 
affine $k$-morphism $X\to S=\Spec R$, such that the composition $T\inj X\to S$ is an isomorphism.
We obtain a natural morphism $X\to \A^1_S$, induced by $R[t]\to \sO_X(X)$, $t\mapsto f$, 
which composed with the closed immersion $Z\inj X$ yields a   map,
\eq{thm:BK4}{Z\to \A^1_S \quad \text{inducing an isomorphism } T=D\cap Z\xr{\simeq} 0_S,}
where $0_S$ denotes the zero-section of $\A^1_S$. Note that \eqref{thm:BK4}  is \'etale in a neighborhood of $T$.
The same reasoning applies to  $Z'$. 
We obtain isomorphisms
\eq{thm:BK5}{Z\times_X nD\xr{\simeq} \Spec R[t]/(t^n)\xleftarrow{\simeq} Z'\times_X nD.}
By \eqref{thm:BK1} and $Z'\neq Z$ we find  $x\in (Z\cap Z')^{(0)}$. 
The condition $\text{length}_{\sO_{X,x}}(\sO_{Z\cap Z',x})\ge n$ therefore implies the following equality of closed subschemes of $X$
\eq{thm:BK6}{Z\times_X nD=Z\times_X Z'\times_X nD=Z'\times_X nD.}
We consider the compositions
\[q: Z\times_{\A^1_S} Z'\to Z\inj X, \quad q':Z\times_{\A^1_S}Z'\to Z'\inj X.\]
By \eqref{thm:BK5} we have $q^{-1}(nD)= {q'}^{-1}(nD)$ and by \eqref{thm:BK6}
\[q_{|Z\cap Z'\cap (nD)}=q'_{|Z\cap Z'\cap (nD)}: Z\cap Z'\cap (nD)=q^{-1}(nD)= {q'}^{-1}(nD)\to nD.\]
Thus by the universal property of $P^{(nD)}_X$ (see \ref{para:dil}) we obtain a 
map $f:Z\times_{\A^1_S} Z'\to P^{(nD)}_X$ making the following diagram commutative
\eq{thm:BK7}{\xymatrix{
(Z')^h_x\ar[r]& Z'\ar[r] & X\\
Z^h_x\times_{(\A^1_S)_{x_0}^h} (Z')^h_x\ar[u]^{\simeq}_{\sigma'}\ar[d]_{\simeq}^{\sigma}\ar[r]^-\nu & 
                                                        Z\times_{\A^1_S}Z'\ar[u]\ar[d]\ar[r]^-f &     P^{(nD)}_X\ar[u]_{p_2}\ar[d]^{p_1}\\
Z^h_x\ar[r]\ar@/^4pc/[uu]^{\tau}  & Z\ar[r] & X.
}}
Here $x_0$ denotes the generic point of $0_S\subset \A^1_S$,  the horizontal maps are the natural ones, the vertical maps are induced by projections, and 
$\tau=\sigma'\circ {\sigma}^{-1}$. Note that $\sigma$ and $\sigma'$ are isomorphisms by \eqref{thm:BK4}.
We find for $r\ge 0$
\[a_{(Z')^h_x}\in \tF((Z')_x^h, r\cdot x)\Longleftrightarrow \tau^*(a_{(Z')^h_x}) \in \tF(Z^h_x, r\cdot x),\]
where $a_{(Z')^h_x}$ denotes the pullback of $a\in F(V)$ to $F((Z')^h_x\times_X V)=F((Z')^h_x\setminus\{x\})$.
Diagram \eqref{thm:BK7} restricted to $V$ (on both $X$) yields
\[\tau^*(a_{(Z')^h_x})= (\sigma^{-1})^*\nu^*f^*(p_2^*a)=
(\sigma^{-1})^*\nu^*f^*(p_1^*a+ (p_2^*(a)-p_1^*(a))).\]
By \eqref{thm:BK3} and Theorem \ref{thm;AS} we find 
\[\tau^*(a_{(Z')^h_x})\equiv (\sigma^{-1})^*\nu^*f^*(p_1^*a)=a_{Z^h_x}\quad \text{mod } F(Z^h_x).\]
Altogether
\[a_{(Z')^h_x}\in \tF((Z')_x^h, r\cdot x)\Longleftrightarrow a_{Z^h_x}\in \tF(Z^h_x, r\cdot x).\]
This yields the statement.
\end{proof}

\begin{rmk}\label{rmk:BK}
\begin{enumerate}
\item For $F=H^1_{\et}(-, \Q/\Z)$ and $X$ a smooth surface over a finite field   Brylinski proved a version 
of Theorem \ref{thm:BK} for the Artin conductor, see \cite[Proposition 4]{Brylinski}.
In fact since the motivic  conductor of $H^1_{\et}(-, \Q/\Z)$ is equal to the Artin conductor, by \cite[Theorem 8.8]{RS},
the above theorem reproves and largely generalizes Brylinsky's result. 
On the other hand Kato defined in  \cite{KatoSwan} a Swan conductor for $H^n=R^n\e_*\Q/\Z(n-1)$, $n\ge 1$, 
$\e: X_{\et}\to X_{\Nis}$,  and generalized Brylinsky's result to all $H^n$ and all  excellent regular  schemes using 
his Swan conductor. Since $F=H^n\in \RSC_\Nis$, Theorem \ref{thm:BK} is a version of the Brylinsky-Kato formula
for the motivic conductor of $H^n$. Note that  it is not expected that Kato's Swan conductor coincides with 
the motivic conductor for $H^n$, we rather expect $c^{H^n}_L(a)\ge {\rm Swan}_L(a)\ge c^{H^n}_L(a)-1$,
for $a\in H^n(L)$ and $L$ a henselian discrete valuation field over $k$. 
(For $n=1$ this holds by \cite[8.]{RS}.) 
\item A basic version of Theorem \ref{thm:BK}, namely for $X=\Spec K\{x,t\}$, 
with $K$ a function field over $k$, $D=\Div(t)$, $Z=V(x-t)$, and $Z'=(x-t-at^n)$, $a\in K$, 
was proven  in \cite[Corollary 3.3, Remark 3.4]{RS-ZNP} and was an essential ingredient in the construction
of the higher local symbol in \cite{RS-HLS}, in particular to obtain the properties \ref{HS3} and \ref{HS5}, 
which were essential in the proof of Theorem \ref{thm;AS}.
\end{enumerate}
\end{rmk}

%The following is one of the main results of this paper.
\begin{thm}\label{thm:ASII}
Let $(X,R)$ be as at the beginning of this section.
Assume $(X,R)$ has a projective SNC-compactification (see \ref{para:sm-cptf}).
Then 
\[\tF(X,R)=\FAS(X,R).\]
\end{thm}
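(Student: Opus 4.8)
The inclusion $\tF(X,R)\subset\FAS(X,R)$ is exactly Theorem \ref{thm;AS} and holds without hypothesis $(*)$; so the task is the reverse inclusion $\FAS(X,R)\subset\tF(X,R)$, and it is here that $(*)$ enters, through Theorem \ref{thm:HLS-mod}. The plan is as follows. Let $a\in\FAS(X,R)$ and set $b=p_1^*a-p_2^*a\in F(\PXR)$. By Theorem \ref{thm:HLS-mod}, applied to the pair $(X,R)$ which satisfies $(*)$, it suffices to prove that for every function field $K/k$, every maximal chain $\ux=(x_0,\dots,x_{d-1},x_d)\in\mc(W_K)$ with $\eta:=x_{d-1}\in D_K$, and every $\beta\in (V_{d,X_K|R_K})_\eta$, one has $(a_K,\beta)_{X_K/K,\ux}=0$. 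By the locality property \ref{HS1} of the symbol I may replace $X$ by an étale neighbourhood of $\eta$; since $D=R_\red$ is a SNCD and $\eta$ is a generic point of $R_K$, only one component $D_0$ of $D$ meets it, so there $R=nD_0$ with $D_0$ smooth. Thus I am reduced to the base case of a smooth divisor, while keeping the global hypothesis $b\in F(\PXR)$ intact.

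The key construction is the use of \emph{sections} of the dilatation. Let $\phi\colon X'\to X'$ be any morphism on a suitable étale neighbourhood $X'$ of $\eta$ satisfying the conditions \ref{para:dil1}--\ref{para:dil3} of \ref{para:dil} relative to $\id_{X'}$, i.e.\ $\phi^{-1}(R)=R$ as Cartier divisors and $\phi_{|R}=\id_R$. By the universal property of $(\PXR,p_1,p_2)$ there is a unique section $\sigma_\phi\colon X'\to \PXR$ with $p_1\circ\sigma_\phi=\id$ and $p_2\circ\sigma_\phi=\phi$, and pulling back $b$ gives
\[\sigma_\phi^*b=\sigma_\phi^*(p_1^*a-p_2^*a)=a-\phi^*a\in F(X'),\]
so that $a-\phi^*a$, a priori only a section over $U=X'\setminus R$, is in fact regular on all of $X'$; here one uses $b\in F(\PXR)$ and the injectivity $F(\PXR)\hookrightarrow F(U\times U)$. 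In particular $a-\phi^*a\in F(X')\subset\tF(X',R)$, and the modulus property \ref{HS3} yields, after base change to $K$ (the dilatation commutes with base change),
\[(a_K,\beta)_{X_K/K,\ux}=(\phi^*a_K,\beta)_{X_K/K,\ux}\qquad\text{for all }\beta\in (V_{d,X_K|R_K})_\eta.\]

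It remains to exploit the freedom in $\phi$ together with functoriality. Choosing $\phi$ to be an automorphism of the henselisation fixing $\eta$ and the chain $\ux$, property \ref{HS5} (whose hypotheses hold automatically, an isomorphism between the closures of $\eta$ being a projective morphism) gives $(\phi^*a_K,\beta)=(a_K,\Nm_\phi\beta)$, whence $(a_K,\Nm_\phi\beta-\beta)=0$. Taking coordinates $z_1,\dots,z_d$ with $D_0=\{z_1=0\}$ and, for a target unit $v\in 1+(z_1^n)$, the perturbation $\phi^*z_j=z_j v$, $\phi^*z_i=z_i$ $(i\neq j)$, one checks $\phi^{-1}(R)=R$ and $\phi_{|R}=\id_R$, and for $\beta=\{z_j,u_2,\dots,u_d\}$ with $\phi$-invariant units $u_i$ a direct computation gives $\Nm_\phi\beta-\beta=\{v',u_2,\dots,u_d\}$ with $v'\in 1+(z_1^n)$ ranging over all of $1+(z_1^n)$ as $v$ does. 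Hence $(a_K,\{v',u_2,\dots,u_d\})=0$, and since such symbols generate $(V_{d,X_K|R_K})_\eta$, the required vanishing follows and $a\in\tF(X,R)$.

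The main obstacle is this last step: one must make the perturbations $\phi$ rigorous on a finite-type (or henselian) model, arrange a $\phi$-invariant chain, and verify that the symbols $\{v,u_2,\dots,u_d\}$ produced in this way indeed generate $V_{d,X_K|R_K}$ near $\eta$ — the latter being a standard Milnor $K$-theory computation that reduces general units to coordinate symbols. An alternative to the norm manipulation, closer in spirit to the proof of Theorem \ref{thm;AS}, would be to transfer the symbol $(a_K,\beta)$ directly onto the dilatation and invoke the regularity of $b$ there; either way the reciprocity and functoriality properties \ref{HS3} and \ref{HS5} are what carry the argument.
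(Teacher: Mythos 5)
Your skeleton is in fact the paper's own: one inclusion via Theorem \ref{thm;AS}, the other via the symbol criterion of Theorem \ref{thm:HLS-mod} after localizing at the generic points of $R$, and your sections $\sigma_\phi$ of the dilatation associated to perturbations $\phi\equiv\id \bmod R$ are precisely what the paper builds as the auxiliary subschemes $Z_{\ul{g}}\subset \ol{P}_K$ in the proof of Theorem \ref{thm:psi} (the locus $\tau_i=m_i/g_i$, i.e.\ the graph of the perturbation $s_i=t_i+\tfrac{m_i}{g_i}t_1^n$, with the two projections $p_{Z,1},p_{Z,2}$ playing the roles of $\id$ and $\phi$). But your implementation has a genuine formal gap: the automorphisms $\phi^*z_j=z_jv$ with $v\in 1+(z_1^n)$ exist only on the henselization $\sO^h_{X,\eta}$, not on a finite-type model, whereas the pairing's first argument and the hypotheses of \ref{HS2}, \ref{HS4}, \ref{HS5} live on finite-type $K$-schemes and carry \emph{projectivity} conditions on closures of the relevant points. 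Fixing the chain is not the hard part (all chain points except $x_d$ lie in $D$ and are automatically fixed); the hard part is that applying \ref{HS5} and the reciprocity \ref{HS4} requires proper closures, which the paper engineers by compactifying the dilatation to $\ol{P}$, taking the closure $\ol{Z}$ of the section inside it, verifying the projectivity of $\bar{\pi}_{V,Z}$ in \eqref{thm:psi4}, and never applying \ref{HS5} to $\phi$ itself but to the two projections from $\ol{Z}$. This apparatus is not optional bookkeeping; without it the norm identity you invoke is not licensed by \ref{HS1}--\ref{HS5}.

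The second and more serious gap is the generation step, which as stated is circular. For genuinely arbitrary units $u_2,\dots,u_d$ there is no coordinate choice making them $\phi$-invariant, so $\Nm_\phi\beta-\beta$ is not the clean symbol $\{v',u_2,\dots,u_d\}$ but a sum containing cross terms of the form $\{z_{i_2},\dots,1+z_1^nc_i,\dots\}$ coming from the non-invariance of the entries (compare the sum appearing in Claim \ref{thm:psi9}); these error terms lie again in the filtration $V_{d,X_K|R_K}$ whose annihilation by $a_K$ is exactly what you are trying to prove. The paper's resolution is to prove only the \emph{graded} statement: representatives of $(V_{d,X_K|(n-1)D_K})_\xi$ modulo $(V_{d,X_K|nD_K})_\xi$ lie in the image of $(\Nm^2_{\xi_Z/\xi}-\Nm^1_{\xi_Z/\xi})\circ\partial_Z$, using the $\Omega^{d-1}_{K(\xi)}$- and $\Omega^{d-2}_{K(\xi)}$-presentations of the graded pieces and the commutator estimate \cite[Lemma 2.7]{RS18} to absorb the cross terms into $V_{d,X_K|nD_K}$. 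This forces a descending induction, and the induction needs an anchor which your one-shot argument lacks: the paper starts from some $N\gg 0$ with $a\in\tF(X,ND)$ (which exists since $F$ has reciprocity and supplies vanishing against $V_{d,X_K|ND_K}$ via \ref{HS3}), notes $a\in\FAS(X,(N-1)D)$ hence $\psi_N(a)=0$ by Lemmas \ref{lem:FAS} and \ref{lem:ker-psi}, and descends one multiplicity at a time through Theorem \ref{thm:psi}. You apply \ref{HS3} only to $a-\phi^*a$, never invoke any modulus for $a$ itself, and so have no vanishing on the deep part of the filtration with which to kill the error terms; as a result the claimed generation of all of $(V_{d,X_K|R_K})_\eta$ by the differences $\Nm_\phi\beta-\beta$ is unsubstantiated, and it is precisely at this point that the actual proof requires the level-by-level structure rather than a single ``standard Milnor $K$-theory computation.''
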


The proof will be given after the proof of Theorem \ref{thm:psi} in the next section.

\begin{cor}\label{cor:ASII}
Assume $F$ has level $n\le \infty$ (in the sense of \cite[Definition 1.3]{RS-ZNP}) and 
any pair $(Z,E)$ with $Z\in \Sm$ affine, $\dim (Z)\le n$, and $E$ an effective Cartier divisor with SNC support,
has a projective SNC-compactification (see \ref{para:sm-cptf}). 
Then
\eq{cor:ASII1}{\tF(X,R)=\FAS(X,R).}
\end{cor}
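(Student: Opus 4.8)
The plan is to run the argument of Theorem \ref{thm:ASII} almost verbatim, isolating the single point where the compactification hypothesis $(*)$ is used and replacing it by the level hypothesis. As in Theorem \ref{thm:ASII}, one inclusion is free: by Theorem \ref{thm;AS} we have $\tF(X,R)\subseteq \FAS(X,R)$ with no assumption on $(X,R)$ whatsoever. Thus \eqref{cor:ASII1} reduces to the reverse inclusion $\FAS(X,R)\subseteq \tF(X,R)$, i.e.\ to showing that each $a\in \FAS(X,R)$ has modulus $(X,R)$.

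Fix $a\in \FAS(X,R)$. By purity of $F$ the assertion $a\in\tF(X,R)$ is local at the generic points of $R$, and --- exactly as in the reduction at the start of the proof of Theorem \ref{thm;AS} --- it is governed by the higher local symbol criterion of Theorem \ref{thm:HLS-mod}: one must show the vanishing of $(a_K,\beta)_{X_K/K,\ux}$ for all function fields $K/k$, all maximal chains $\ux$ of $X_K$ with penultimate point on $D_K$, and all $\beta\in (V_{d,X_K|D_K})_{x_{d-1}}$. The only place where assumption $(*)$ enters the proof of Theorem \ref{thm:ASII} is precisely through this criterion.

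First I would unwind Definition 1.3 of \cite{RS-ZNP}: the level $\le n$ hypothesis is designed so that this symbol criterion is controlled by tests on affine smooth pairs $(Z,E)$ with $\dim Z\le n$ and $E=R_{|Z}$ (which is automatically SNC). Concretely, the bottom $d-n$ points of each chain should be absorbable into the base field, replacing $k$ by a larger function field and localizing $(X,R)$ to such a $(Z,E)$. For every such $(Z,E)$ the Corollary's hypothesis furnishes a projective SNC-compactification, so Theorem \ref{thm:ASII} applies to $(Z,E)$ and gives $\FAS(Z,E)=\tF(Z,E)$. Functoriality of $\FAS$ (Lemma \ref{lem:FAS}\ref{lem:FAS2}), together with the compatibility of the dilatation $P^{(R)}_X$ with base extension and localization, shows that the image of $a$ in the test situation lies in $\FAS(Z,E)$; hence it lies in $\tF(Z,E)$, and reassembling these tests via the level hypothesis yields the required vanishing, so $a\in\tF(X,R)$.

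The main obstacle is this third step: matching the abstract level-$\le n$ reduction of \cite{RS-ZNP}, which is phrased through the motivic conductor and the Par\v{s}in-chain symbols, with the geometric condition defining $\FAS$. One must check carefully that the defining relation $p_1^*a-p_2^*a\in F(P^{(R)}_X)$ is preserved under the base-field extension and the localization that cut the effective dimension down to $n$, so that $a\in\FAS(X,R)$ genuinely restricts to an element of $\FAS(Z,E)$ in each $\le n$-dimensional test. Granting this, the equality $\FAS(Z,E)=\tF(Z,E)$ from Theorem \ref{thm:ASII} and the level hypothesis close the argument, while the remaining points --- the SNC and affineness bookkeeping for $(Z,E)$ and the unconditional inclusion from Theorem \ref{thm;AS} --- are routine.
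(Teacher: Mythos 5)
Your overall architecture is in fact the paper's: restrict $a$ along smooth affine tests $f:Z\to X$ with $\dim Z\le n$, use functoriality of $\FAS$ (Lemma \ref{lem:FAS}\ref{lem:FAS2}) to get $f^*a\in \FAS(Z,f^*R)$, apply Theorem \ref{thm:ASII} to each test pair (legitimately, since the corollary's hypothesis supplies projective SNC-compactifications in dimension $\le n$), and let the level hypothesis close the argument. The paper does this as a purely formal sandwich
\[\tF(X,R)\subset \FAS(X,R)\subset (\FAS)^{\le n}(X,R)=(\tF)^{\le n}(X,R)=\tF(X,R),\]
where $G^{\le n}(X,R)$ denotes the set of $a\in F(U)$ with $f^*a\in G(Z,f^*R)$ for all such tests. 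However, the mechanism you propose for the concluding step has a genuine flaw: you want to verify $a\in\tF(X,R)$ through the higher local symbol criterion of Theorem \ref{thm:HLS-mod} applied to $(X,R)$ itself, together with purity. But the nontrivial direction of Theorem \ref{thm:HLS-mod} is stated, and proved in \cite{RS-HLS}, only under the assumption that $(X,D)$ admits a projective SNC-compactification --- exactly the hypothesis $(*)$ that this corollary is designed to drop. Routing the reduction through Par\v{s}in-chain symbols on the ambient $(X,R)$ is therefore circular: nothing available licenses that vanishing criterion without $(*)$.

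The missing ingredient, which you gesture at (\emph{``reassembling these tests via the level hypothesis''}) but never supply, is the statement that for a reciprocity sheaf of level $\le n$, membership in $\tF(X,R)$ is \emph{detected} by pullback along all $f:Z\to X$ with $Z\in\Sm$ affine, $\dim Z\le n$, $f(Z)\not\subset D$, and $(f^*D)_\red$ smooth; this is \cite[Corollary 4.18]{RS}, i.e.\ the equality $(\tF)^{\le n}(X,R)=\tF(X,R)$, and it is proved there via the motivic conductor with no compactification of $(X,R)$ required. Once you quote it, no symbols, no purity, and no base-field gymnastics (your proposal to ``absorb the bottom $d-n$ points of each chain into the base field'') are needed: the four steps above --- Theorem \ref{thm;AS}, Lemma \ref{lem:FAS}, Theorem \ref{thm:ASII} on the tests, and \cite[Corollary 4.18]{RS} --- finish the proof in one line. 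Note also that the base-change compatibility you flag as the ``main obstacle'' is moot in this formulation: the tests are $k$-morphisms, and the preservation of the defining relation $p_1^*a-p_2^*a\in F(P^{(R)}_X)$ under them is precisely Lemma \ref{lem:FAS}\ref{lem:FAS2}, which you already cite.
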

\begin{proof}
Let $U=X\setminus D$. For $G\in \{\tF, \FAS\}$ define 
\eq{cor:ASII2}{G^{\le n}(X,R)=\left\{a\in F(U)\,\bigg\vert\,f^*a\in G(Z, f^*R),\, 
                   \begin{minipage}{4.1cm}$\forall f:Z\to X$  \\ with $Z\in \Sm$  affine, \\ 
                                                  $\dim Z\le n$,  $f(Z)\not\subset D$, \\ and $(f^*D)_{\red}\in\Sm $   \end{minipage}\right\}.}
We have 
\[\tF(X,R)\stackrel{1}{\subset} \FAS(X,R)\stackrel{2}{\subset} (\FAS)^{\le n}(X,R)
\stackrel{3}{=} (\tF)^{\le n}(X,R)\stackrel{4}{=}\tF(X,R),\]
where $1$ holds by Theorem \ref{thm;AS}, $2$ by functoriality (see Lemma \ref{lem:FAS}),
$3$ by Theorem \ref{thm:ASII}, and $4$ by \cite[Corollary 4.18]{RS}.
\end{proof}

\begin{exs}\label{ex:ASII}
Since the assumption in Corollary \ref{cor:ASII} on the existence of projective SNC-compactifications
in dimension $\le n$ is satisfied for $n\le 3$ (or for all $n$ if ${\rm char}(k)=0$) we get the 
equality \eqref{cor:ASII1} unconditionally, for example, in the following cases: 
if ${\rm char}(k)=0$, for all $F\in\RSC_\Nis$;
if ${\rm char}(k)=p>0$, for 
\begin{enumerate}[label=(\arabic*)]
\item\label{ex:ASII1} $F=G$ a smooth commutative $k$-group (they have level 1 by \cite[Theorem 5.2]{RS}),
\item\label{ex:ASII2} $F=H^1_{\et}(-,\Q/\Z)$ or ${\rm Lisse}^1_\ell$  
(they have level 1 by \cite[Theorem 8.8, Corollary 8.10]{RS}),
\item $F=H^1_{\rm fppf}(-, G)$, with $G$ a finite $k$-group scheme 
($H^1_{\rm fppf}(-, G)$ has level $\le 2$ by \cite[Theorem 9.12]{RS}),
\item $\Omega^j$ for $j\le 2$ ($\Omega^j$ has level $j+1$ by Corollary \ref{cor:tOmega}).
\end{enumerate}
By \cite{RS-ZNP}
The equality \eqref{cor:ASII1} for $F=H^1_{\et}(-,\Q/\Z)$ was known before: 
Indeed since the motivic conductor of $F$ in this case is the Artin conductor defined by Matsuda's filtration \cite{Matsuda} 
(this holds by \cite[Theorem 8.8]{RS}), it follows from \cite[Corollary 2.8]{Kerz-Saito}, that 
\[\tF(X,R)=\{a\in F(U)\mid {\rm Art}_{X,\eta}(a)\le m_\eta,\,\forall\eta\in D^{(0)} \},\]
where we write $R=\sum_{\eta\in D^{(0)}} m_\eta \bar{\eta}$.
Now the expression on the right equals $\FAS(X,R)$ by \cite[Theorem 0.1]{Yatagawa}, 
\cite[Proposition 2.27]{TakeshiSaito}, and  Zariski-Nagata purity (cf. \cite[Lemma 2.3]{Yatagawa}).
Note that the proof of the equality in this way uses in an essential way the theory of higher ramification groups 
defined by Abbes-Saito. As far as we know the other examples mentioned above are new.
\end{exs}

\section{Abbes-Saito formula (continued)}\label{sec:AS-ctd}
In this section $X\in \Sm$, $D$ is a smooth divisor on $X$, and $n\ge 2$.
We fix $F\in \RSC$.

\begin{para}\label{para:psi}
Let $p_{1,n},p_{2,n}:P^{(nD)}_X\rightrightarrows X$ be as in \ref{para:dil}, i.e., $p_{i,n}$ is the composition of the natural map $P^{(nD)}_X\to X\times X$ 
with the projection to the $i$th factor. Set (see Lemma \ref{lem:dil}\ref{lem:dil1})
\eq{para:psi1}{V_n:=P^{(nD)}_X\times_{X\times X} D:= R_P\times_R D=\mathbb{V}(\Omega^1_{X/k}(nD)_{|D})}
and denote by $\iota_n: V_n\inj P^{(nD)}_X$ the closed immersion.
By definition of $\FAS$ (see Definition \ref{defn:FAS})  and the injectivity of the restriction $F(P_X^{(nD)})\inj  F(U\times U)$, where $U=X\setminus D$,
see \cite[Theorem 3.1(2)]{S-purity}, we have a unique morphism
\[\varphi_n:\FAS(X,nD)\lra F(P^{(nD)}_{X}),\]
such that $\varphi_n(a)_{|U\times U}= p_{2}^*(a_{|U})-p_1^*(a_{|U})\in F(U\times U)$.
We define the map $\psi_n$ as the composition
\eq{para:psi2}{\psi_n:=\iota_n^*\circ \varphi_n: \FAS(X,nD) \lra F(P^{(nD)}_X)\lra F(V_n).}
\end{para}

\begin{lem}\label{lem:ker-psi}
$\FAS(X,(n-1)D)\subset \Ker\psi_n$.
\end{lem}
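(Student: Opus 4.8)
The plan is to deduce the vanishing of $\psi_n$ on $\FAS(X,(n-1)D)$ from the comparison morphism $\pi\colon P^{(nD)}_X\to P^{((n-1)D)}_X$ of Lemma~\ref{lem:dil}\ref{lem:dil5} (applied with $R=(n-1)D$ and $S=D$, so that $(R+S)_\red=D$ is a SNCD), together with the fact that the diagonal annihilates $p_2^*a-p_1^*a$. Since $(n-1)D\le nD$, Lemma~\ref{lem:FAS}\ref{lem:FAS2} gives $\FAS(X,(n-1)D)\subset\FAS(X,nD)$, so for $a\in\FAS(X,(n-1)D)$ both $\varphi_{n-1}(a)\in F(P^{((n-1)D)}_X)$ and $\varphi_n(a)\in F(P^{(nD)}_X)$ are defined. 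I would first check $\varphi_n(a)=\pi^*\varphi_{n-1}(a)$: the morphism $\pi$ is compatible with the two projections to $X$ and hence restricts to the identity on the common open $P_X\setminus R_P=U\times U$ (using Lemma~\ref{lem:dil}\ref{lem:dil2}, with $U=X\setminus D$), so that $(\pi^*\varphi_{n-1}(a))|_{U\times U}=p_2^*a-p_1^*a=\varphi_n(a)|_{U\times U}$; the equality then follows from the injectivity $F(P^{(nD)}_X)\inj F(U\times U)$ invoked in Definition~\ref{defn:FAS}. Consequently
\[\psi_n(a)=\iota_n^*\varphi_n(a)=(\pi\circ\iota_n)^*\varphi_{n-1}(a).\]

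The core of the argument is to identify the composite $\pi\circ\iota_n\colon V_n\to P^{((n-1)D)}_X$, which I would do in the \'etale-local coordinates of \ref{para:dil-loc}: with $A$ \'etale over $k[z_1,\dots,z_d]$, $D=\{z_1=0\}$, and $\theta_i=s_i-t_i$, the scheme $P^{(nD)}_X$ is cut out by $\theta_i=\tau_i t_1^n$ and $P^{((n-1)D)}_X$ by $\theta_i=\tau_i' t_1^{n-1}$ (see \eqref{para:dil-loc2}), while $\pi$ is the $X\times X$-morphism determined by $\pi^*\tau_i'=\tau_i t_1$ (which sends the unit $1+t_1^{n-2}\tau_1'$ to $1+t_1^{n-1}\tau_1$). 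On $V_n=R_P\times_R D$ one has $t_1=s_1=0$ and $\theta_i=0$, hence $\pi^*\tau_i'=0$ and $\pi^*\theta_i=0$. Since the closed subscheme $\{\theta_i=0,\ \tau_i'=0\}$ of $P^{((n-1)D)}_X$ is precisely the image of the diagonal section $\delta_X\colon X\inj P^{((n-1)D)}_X$ of Lemma~\ref{lem:dil}\ref{lem:dil3.5}, these formulas show that $\pi\circ\iota_n$ factors as
\[V_n\xrightarrow{\bar\pi_n} D\xrightarrow{i_D} X\xrightarrow{\delta_X} P^{((n-1)D)}_X,\]
where $\bar\pi_n$ is the bundle projection of $V_n=\mathbb{V}(\Omega^1_{X/k}(nD)_{|D})$ and $i_D\colon D\inj X$; these local identifications are canonical and glue to a global factorization.

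Finally I would compute $\delta_X^*\varphi_{n-1}(a)$. As $\delta_X$ is a common section of $p_1$ and $p_2$ with $\delta_X(X)\cap(U\times U)=\Delta_U$, the element $\delta_X^*\varphi_{n-1}(a)\in F(X)$ restricts on the dense open $U$ to $(u\mapsto(u,u))^*(p_2^*a-p_1^*a)=a-a=0$; by the semipurity of reciprocity sheaves the restriction $F(X)\inj F(U)$ is injective (\cite[Theorem~3.1]{S-purity}), so $\delta_X^*\varphi_{n-1}(a)=0$. Combining this with the factorization yields
\[\psi_n(a)=\bar\pi_n^*\,i_D^*\,\delta_X^*\varphi_{n-1}(a)=0,\]
which gives the inclusion $\FAS(X,(n-1)D)\subset\Ker\psi_n$. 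I expect the main obstacle to be the middle step, namely verifying through the explicit equations that $\pi$ collapses $V_n$ onto the diagonal, equivalently that $\pi^*\tau_i'$ vanishes on $\{t_1=0\}$: this is the place where the hypothesis $n\ge 2$ and the precise geometry of the dilatation genuinely enter.
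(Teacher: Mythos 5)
Your proof is correct and takes essentially the same route as the paper: both arguments pull $\varphi_n(a)$ back from $\varphi_{n-1}(a)$ along the map $\pi\colon P^{(nD)}_X\to P^{((n-1)D)}_X$ of Lemma \ref{lem:dil}\ref{lem:dil5}, factor $\pi\circ\iota_n$ through the lifted diagonal $\Delta_{n-1}\circ i$ over $D$, and kill $\Delta_{n-1}^*\varphi_{n-1}(a)$ by restricting to $\Delta_U$ and invoking the injectivity of $F(X)\inj F(U)$ from \cite[Theorem 3.1(2)]{S-purity}. The only difference is cosmetic: you verify the key factorization in the explicit coordinates of \ref{para:dil-loc} (via $\pi^*\tau_i'=\tau_i t_1$ vanishing on $\{t_1=0\}$), whereas the paper encodes it in a commutative diagram with the bundle projection $q$ and zero section $s_0$, recording along the way the slightly finer identity $\psi_n(a)=\pi_V^*\psi_{n-1}(a)$.
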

\begin{proof}
We have a commutative diagram $(i=1,2)$
\[\xymatrix{
V_n\ar[r]^{\iota_{n}}\ar[d]_q\ar@/_2pc/[dd]_{\pi_V} &P^{(nD)}_X\ar[dd]^{\pi}\ar[dr]^{p_{i,n}}& \\
D\ar[dr]^-{\iota_D}\ar[d]_{s_0} & & X,\\
V_{n-1}\ar[r]^-{\iota_{n-1}}& P^{((n-1)D)}_X\ar[ur]_{p_{i,{n-1}}}& 
}\]
where $\iota_D = \Delta_{n-1}\circ i$ is the composition of the closed immersion $i:D\inj X$ followed by the lift of the diagonal 
$\Delta_{n-1}:X\inj P^{((n-1)D)}_X$ (see Lemma \ref{lem:dil}\ref{lem:dil3.5}), $\pi$ is the natural morphism obtained from interpreting 
$P^{(nD)}_X$ as the blow-up of $P^{((n-1)D)}_X$ in $\iota_D(D)$ with the strict transforms of $p_1^*D$ and $p_2^*D$ removed 
(see Lemma \ref{lem:dil}\ref{lem:dil5}), $\pi_V$ is the base change of $\pi$ along $\iota_{n-1}$, $q$ is the structure map of the vector bundle $V_n$, and 
$s_0$ is the zero section of the vector bundle $V_{n-1}$. Hence for $a\in \FAS(X,(n-1)D)$ we have 
$\psi_n(a)=\pi_V^*\psi_{n-1}(a)=q^*i^*\Delta_{n-1}^*\varphi_{n-1}(a)$.
Since 
\[(\Delta_{n-1}^*\varphi_{n-1}(a))_{|U}= \Delta_U^* (p_2^*(a_{|U})-p_1^*(a_{|U}))=0\quad \text{in } F(U),\]
with $\Delta_U:U\to U\times U$ the diagonal, the injectivity of the restriction $F(X)\to F(U)$ (see \cite[Theorem 3.1(2)]{S-purity}) yields
$\Delta_{n-1}^*\varphi_{n-1}(a)=0$ in $F(X)$, hence $\psi_n(a)=0$.
\end{proof}

\begin{thm}\label{thm:psi}
We keep the assumptions from the beginning of this section.
Assume $(X,D)$ admits a  projective SNC-compactification (see \ref{para:sm-cptf}). Then
the map \eqref{para:psi2} induces an injective morphism
\eq{thm:psi1}{\bar{\psi}_n:\frac{\tF(X,nD)}{\tF(X,(n-1)D)} \inj F(V_n).}
\end{thm}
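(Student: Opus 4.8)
The map $\psi_n=\iota_n^*\circ\varphi_n$ is defined on all of $\FAS(X,nD)$, which contains $\tF(X,nD)$ by Theorem \ref{thm;AS}. Since $\tF(X,(n-1)D)\subset\FAS(X,(n-1)D)$, again by Theorem \ref{thm;AS}, and $\FAS(X,(n-1)D)\subset\Ker\psi_n$ by Lemma \ref{lem:ker-psi}, the restriction of $\psi_n$ to $\tF(X,nD)$ annihilates the subgroup $\tF(X,(n-1)D)$ (which is contained in $\tF(X,nD)$ because $(n-1)D\le nD$); hence $\psi_n$ descends to the claimed $\bar\psi_n$. To prove injectivity I would take $a\in\tF(X,nD)$ with $\psi_n(a)=0$ and show $a\in\tF(X,(n-1)D)$. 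Note that $(X,(n-1)D)$ admits the same projective SNC-compactification as $(X,D)$, since the condition in \ref{para:sm-cptf} depends only on $\Xb$ and on $U=X\setminus D$. I may therefore invoke the modulus criterion Theorem \ref{thm:HLS-mod} for $(X,(n-1)D)$: it suffices to prove that for every function field $K/k$, every chain $\ux=(x_0,\dots,x_{d-1},x_d)\in\mc(X_K)$ with $x_{d-1}\in D_K$, and every $\beta\in(V_{d,X_K|(n-1)D_K})_{x_{d-1}}$, the higher local symbol $(a_K,\beta)_{X_K/K,\ux}$ vanishes.

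\textbf{Reduction to the graded piece.}
Because $a\in\tF(X,nD)$, the same Theorem \ref{thm:HLS-mod}, now applied to $(X,nD)$, already yields $(a_K,\beta)_{X_K/K,\ux}=0$ for all $\beta\in(V_{d,X_K|nD_K})_{x_{d-1}}$. As $V_{d,X_K|nD_K}\subset V_{d,X_K|(n-1)D_K}$, I am reduced to proving the vanishing for $\beta$ running through a set of representatives of the quotient $V_{d,X_K|(n-1)D_K,x_{d-1}}/V_{d,X_K|nD_K,x_{d-1}}$. With $t$ a local equation of $D$ at $x_{d-1}$, such classes are represented by symbols of the shape $\{1+t^{n-1}g,\,u_2,\dots,u_d\}$ with $g\in\sO^h_{X_K,x_{d-1}}$ and $u_j\in(\sO^h_{X_K,x_{d-1}})^\times$.

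\textbf{Computing the graded symbols from $\psi_n(a)$.}
The heart of the argument is to realize $(a_K,\beta)$ as a symbol of the regular section $\varphi_n(a)\in F(P^{(nD)}_X)$ and to show it depends only on its restriction $\iota_n^*\varphi_n(a)=\psi_n(a)$ to the special fibre $V_n=\mathbb{V}(\Omega^1_X(nD)_{|D})$. Working in the local model \eqref{Pexplicit} of $P=(P^{(nD)}_X)_{|V}$, with its coordinates $\tau_1,\dots,\tau_d$ identified with the frame $\theta_i/t^{n}$ of $\Omega^1_X(nD)_{|D}$ via \eqref{lem:dil7}, I would build a morphism from a local model of the chain into $P$ whose two structure maps $p_1,p_2$ differ exactly by the order-$(n-1)$ jet encoded by $(g,u_2,\dots,u_d)$, and transport the symbol along it using the functoriality property \ref{HS5} together with the norm compatibility, in precise analogy with the proof of Theorem \ref{thm;AS} and Claim \ref{claim;normgamma}. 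The computation \eqref{coefficient}--\eqref{claim;pi'} carried out there shows that along the two projections the relevant units agree modulo $t^{n}$; hence the difference symbol $(\,\varphi_n(a),-\,)$ is insensitive to all terms except the leading coefficient at $t^{n-1}$, and is therefore governed by the restriction of $\varphi_n(a)$ to $V_n$. Consequently $(a_K,\beta)_{X_K/K,\ux}$ is expressed through a pairing of $\psi_n(a)$ with a symbol on $V_n$ determined by the jet data, and the hypothesis $\psi_n(a)=0$ forces it to vanish.

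\textbf{Conclusion and the main obstacle.}
Combining the two reductions, $(a_K,\beta)_{X_K/K,\ux}=0$ for all $\beta\in(V_{d,X_K|(n-1)D_K})_{x_{d-1}}$, so Theorem \ref{thm:HLS-mod} gives $a\in\tF(X,(n-1)D)$, which proves injectivity of $\bar\psi_n$. The main obstacle is the third step: making precise the identification of the graded quotient $V_{d,(n-1)D}/V_{d,nD}$ of the motivic symbol filtration with symbol data on the vector bundle $V_n$. Concretely, this means constructing the auxiliary morphism into $P$ realizing a prescribed order-$(n-1)$ jet and verifying, through an explicit coefficient-field computation in the spirit of Claim \ref{claim;normgamma}, that the resulting symbol factors through $\iota_n^*\varphi_n(a)$. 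This is precisely where the index $n$ and the vector-bundle structure of Lemma \ref{lem:dil}\ref{lem:dil1} and \ref{lem:dil}\ref{lem:dil4} enter, and where the bookkeeping between the two projections $p_1,p_2$ and the norm maps on Milnor $K$-theory must be handled with care.
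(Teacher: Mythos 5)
Your first two steps coincide exactly with the paper's proof: the descent of $\psi_n$ to $\bar\psi_n$ via Theorem \ref{thm;AS} and Lemma \ref{lem:ker-psi}, and the reduction, through the modulus criterion of Theorem \ref{thm:HLS-mod} applied to $(X,(n-1)D)$, to the vanishing of the symbols $(a_K,\beta)_{X_K/K,\ux}$ for $\beta$ running through representatives of $(V_{d,X_K|(n-1)D_K})_{x_{d-1}}/(V_{d,X_K|nD_K})_{x_{d-1}}$. One inaccuracy already enters here: the paper computes this quotient through the intermediate group $U_{d,X_K|nD_K}$ and therefore needs \emph{two} families of representatives, $\{1+z_1^{n-1}\tfrac{m}{g},z_{i_2},\ldots,z_{i_c},\sigma\}$ and also $\{1+z_1^{n}\tfrac{m}{g},z_1,z_{i_3},\ldots\}$ with the non-unit $z_1$ among the entries; your list $\{1+t^{n-1}g,u_2,\ldots,u_d\}$ with all $u_j$ units omits the second family, which in any case reappears unavoidably as correction terms in the realization step below.

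The genuine gap is your third step, which you yourself flag as the main obstacle: it is the actual content of the theorem, and it cannot be obtained ``in precise analogy with Claim \ref{claim;normgamma}''. That computation proves $\pi_{2,K}^*(u)/\pi_{1,K}^*(u)\in 1+t_1^n\sO^h_{P_K,\eta}$, i.e.\ that differences of the two pullbacks land \emph{inside} the filtration level $n$ --- the containment direction used to prove $\tF(X,R)\subset\FAS(X,R)$. Injectivity of $\bar\psi_n$ requires the opposite, surjectivity-type statement: every generator of the graded quotient must be \emph{realized} in the image of $(\Nm^2_{\xi_Z/\xi}-\Nm^1_{\xi_Z/\xi})\circ\partial_Z$ applied to symbols on the dilatation. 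The paper achieves this by a construction absent from your sketch: after shrinking $X$ to an open $W$ on which chosen elements $\alpha_1,\ldots,\alpha_r$ (whose monomials generate $A/(z_1)$ over $k$) are invertible, it cuts out, for each tuple $\ul{g}=(g_i,m_i)$, a subscheme $Z_{\ul{g}}=\bigcap_i V_+\bigl(\bar p_1^*(g_i)T_i-\bar p_1^*(m_i)T_0\bigr)$ inside a projective compactification $\ol{P}$ of the local chart, so that on $Z_{\ul{g}}$ one has $(s_j)_{|Z}=p_{Z,1}^*(z_j+\tfrac{m_j}{g_j}z_1^n)$, i.e.\ $p_{Z,2}$ realizes a prescribed order-$n$ perturbation relative to $p_{Z,1}$; it then sums over \emph{all} maximal chains $\uy_\lambda$ lifting $\ux$ (the projectivity of $\bar\pi_{V,Z}\colon\Theta\to D_K$ being what legitimizes \ref{HS2} and \ref{HS5}), uses $\iota_n^*\varphi_n(a_K)=\psi_n(a_K)=0$ to kill the summed symbol of $\varphi_n(a_K)$, and finally verifies by an explicit tame-symbol and norm computation (Claim \ref{thm:psi9}) that the differences of norms hit $\{1+\tfrac{m_1}{g_1}z_1^{n-1},z_{i_2},\ldots\}$ modulo $(V_{d,X_K|nD_K})_\xi$ up to second-family correction terms $\{z_1,\ldots,1+\tfrac{m_{i_j}}{g_{i_j}z_{i_j}}z_1^n,\ldots\}$, using $n\ge 2$ and the filtration inclusion $\{V_{1,(n-1)D},V_{1,nD}\}\subset V_{2,nD}$. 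None of this summation, compactification, or realization bookkeeping follows formally from Theorem \ref{thm;AS}, so as written your proposal establishes that $\bar\psi_n$ is well defined, but not that it is injective.
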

\begin{proof}
We can assume that $X$ and $D$ are integral and $\dim X=d\ge 1$.
Note that by Theorem \ref{thm;AS} and Lemma \ref{lem:ker-psi}, the map $\psi_n$ 
induces a well-defined map as in the statement. Let $a\in \tF(X,nD)\cap \Ker(\psi_n)$. 
It remains to show $a\in \tF(X,(n-1)D)$, which by Theorem \ref{thm:HLS-mod}  is equivalent to show the 
following:
\begin{claim}\label{thm:psi-claim} There exists an open neighborhood  $W\subset X$ of the generic point of $D$, such that
for all function fields $K/k$ and all $\ux=(x_0,\ldots, x_{d-1},x_d)\in\mc(W_K)$ with $x_{d-1}\in D_K^{(0)}$ we have 
\eq{thm:psi2}{(a_K, \beta)_{X_K/K,\ux}=0,\quad \text{for all }\beta\in (V_{d, X_K|(n-1)D_K})_{x_{d-1}},}
where $a_K\in \tF(X_K, nD_K)\cap \Ker(\psi_{n,K})$ denotes the pullback of $a$ along $X_K\to X$.
\end{claim}
We prove the claim. Up to shrinking $X$ around the generic point of $D$, we may assume that $X=\Spec A$ is \'etale over
$k[z_1,\ldots, z_d]$ and $D=\{z_1=0\}$. 
Choose $\alpha_1,\ldots, \alpha_r\in A$ such that their residue classes $\bar{\alpha}_{i}\in A/(z_1)$ are not zero  and  the monomials  in the $\bar{\alpha}_i$
generate $A/(z_1)$ as a $k$-vector space. 
Set 
\[W:= X\setminus \Div_X(\alpha_1\cdots \alpha_r)\cup\Div_X(z_2\cdots z_n)\]
which is an open neighborhood of the generic point of $D$. 

Let $K$ be a function field over $k$ and $\ux=(x_0,\ldots, x_{d-1}, x_d)\in\mc(W_K)$ such that
$\xi:=x_{d-1}\in D^{(0)}_K$. By Theorem \ref{thm:HLS-mod} it suffices to show \eqref{thm:psi2} for $\beta$ running through a class of representatives of 
$(V_{d, X_K|(n-1)D_K})_\xi/(V_{d, X_K|nD_K})_\xi$.
Set 
\[U_{d, X_K|nD_K}:= \Im( V_{1, X_K|nD_K}\otimes j_*K^M_{d-1, U_K}\to  j_*K^M_{d, U_K}),\]
where $j:U_K=X_K\setminus D_K \inj X_K$ is the open immersion.
By, e.g., \cite[Lemma 2.3]{RS-cycle-map}, we have  surjective maps
\[ \Omega^{d-1}_{K(\xi)/\Z}\lra  \frac{(V_{d, X_K|(n-1)D_K})_\xi}{(U_{d, X_K|nD_K})_\xi},\quad 
 b\,\frac{d u_1}{u_1}\wedge\ldots\wedge \frac{d u_{d-1}}{u_{d-1}}\mapsto \{1+z_1^{n-1}\tilde{b}, \tilde{u_1},\ldots, \tilde{u}_{d-1}\},\]
 and 
 \[ \Omega^{d-2}_{K(\xi)/\Z}\lra  \frac{(U_{d, X_K|nD_K})_\xi}{(V_{d, X_K|nD_K})_\xi},\quad 
 b\,\frac{d u_1}{u_1}\wedge\ldots\wedge \frac{d u_{d-2}}{u_{d-2}}\mapsto \{1+z_1^n\tilde{b}, z_1,\tilde{u_1},\ldots, \tilde{u}_{d-2}\},\]
where $b\in K(\xi)$, $u_i\in K(\xi)^\times$ and $\tilde{b}, \tilde{u}_i$ are lifts to $\sO_{X_K,\xi}$. 
Hence it suffices to show \eqref{thm:psi2} for 
\eq{thm:psi3}{\beta=\{1+z_1^{n-1} \frac{m}{g}, z_{i_2}, \ldots, z_{i_c}, \sigma_{1},\ldots, \sigma_{d-c}\},}
or 
\eq{thm:psi3.1}{\beta=\{1+z_1^{n} \frac{m}{g}, z_1,  z_{i_3}, \ldots, z_{i_c}, \sigma_{1},\ldots, \sigma_{d-c}\},}
where $g\in \sO_{X,\xi}^\times\cap A$, $c\in \{1,\ldots, d\}$, $\sigma_1,\ldots, \sigma_{d-c}$ is part of a differential basis of $K$ 
over its prime field, $2\le i_2<\ldots<i_c\le d$, and  
\eq{thm:psi3.2}{m=\lambda \alpha_1^{n_1}\cdots \alpha_r^{n_r}\in \Gamma(W_K, \sO_{W_K}^\times),\quad \text{for some }\lambda\in K,\, (n_1,\ldots, n_r)\in\N_0^r.}
Let $V=\Spec B\subset X\times X$ be as in \eqref{para:dil-loc2} and  (with the notation from there) set
\[P:=(P_X^{(nD)})_{|V}:=\Spec \frac{B[\tau_1,\ldots, \tau_d,\frac{1}{1+t_1^{n-1}\tau_1}]}{(\theta_i-\tau_it_1^n, i=1,\ldots, d)}.\]
Let $p_i:P\to X$ be induced by the projection to the $i$th factor.
These extend to quasi-projective morphisms  
\[\bar{p}_i:\ol{P}:=\Proj\left(\frac{B[T_0,T_1,\ldots, T_d]}{(T_0\theta_i-T_it_1^n, i=1,\ldots, d)}\right)\to X.\]
Let $V_n\subset P$ be as in \eqref{para:psi1} be given by $\{t_1=0\}$ and denote by $\ol{V}_n\subset \ol{P}$ its closure,
which is defined by the ideal $(t_1, \theta_1,\ldots, \theta_d)=(s_1, \theta_1,\ldots, \theta_d)$.
Therefore $\bar{p}_1$ and $\bar{p}_2$ induce the same  isomorphism of $D$-schemes
\[\bar{\pi}_V:\ol{V}_n\xr{\simeq} \Proj A/(z_1)[T_0,\ldots, T_d].\]
Fix a tuple $\ul{g}=(g_1,\ldots, g_d, m_1,\ldots, m_d)$, where $g_i\in \sO_{X,\xi}^\times\cap A$  and the $m_i$ 
are monomial as in \eqref{thm:psi3.2}, and set
\eq{thm:psi3.3}{Z:=Z_{\ul{g}}:= \bigcap_{1\le i\le d}V_+\Big(\bar{p}_1^*(g_i) T_i- \bar{p}_1^*(m_i) T_0\Big)
\subset \bar{p}_1^{-1}(W_K)\subset \ol{P}_K.}
Here  we also denote by $\bar{p}_i: \ol{P}_K\to X_K$ the base change of $\bar{p}_i$ along $X_K\to X$. 
Denote by $\ol{Z}\subset \ol{P}_K$ the closure of $Z$  and let
\[\sigma: \ol{Z}\inj \ol{P}_K \quad \text{and}\quad p_{Z,i}:=\bar{p}_i\circ\sigma: \ol{Z}\to X_K,\]
where $\sigma$ is the closed immersion. We observe that  $\bar{\pi}_V$ induces a projective morphism
\eq{thm:psi4}{\bar{\pi}_{V,Z}: \Theta:= \ol{Z}\cap \ol{V}_{n,K} \to D_K}
which restricts to an isomorphism
\[ Z\cap \ol{V}_{n,K}\cap p_1^{-1}(W_K\setminus \Div_{X_K}(g_1\cdots g_d)) 
\xr{\simeq}  D_K\cap (W_K\setminus \Div_{X_K}(g_1\cdots g_d)).\]
In particular, $\bar{\pi}_{V,Z}$ is projective and birational, and hence is surjective.
%This follows from the explicit descriptions of $Z_{\ul{g}}$ and $\ol{V}_n$ above,  and 
%the equality $\Proj \sO_{D_{W_K}}[T_0]=D_{W_K}$.
The maps  $p_{Z,1}$ and $p_{Z,2}$ are \'etale  around the generic points of $\Theta$,
since the associated  ring maps $p_{Z,i}^*$ look  \'etale locally like 
\[K[z_1,\dots,z_d] \to B_1/(s_i-t_i-u_i t_1^n,i=1,\dots,d), \quad  \text{for }u_i\in B_1^\times,\]
with $B_1$ an \'etale $K[s_1,\dots,s_d,t_1,\dots,t_d]$-algebra, and where $p_{Z,i}^*$ are $K$-algebra maps such that $p_{Z,1}^*(z_i)=t_i$ and $p_{Z,2}^*(z_i)=s_i$.
%$A_0[z_1]\to B_{0}/(s_1-t_1-ut_1^n)$, where $B_0$ is \'etale over $A_{0}[s_1, t_1]$ and $u\in B^\times$, and $p_{Z,i}^*$ are $A_0$-algebra maps and satisfy $p_{Z,1}^*(z_1)=t_1$ and $p_{Z,2}^*(z_1)=s_1$.
Denote by $\xi_Z$ the generic point of $\Theta$ such that $\bar{\pi}_{V,Z}(\xi_Z)=\xi$.
%\kayrem{ I think $D_K$ might have several connected components and in this case $\Theta$ will have several generic points. We want $\xi_Z$ to be the one above $\xi$.}
By the above we obtain isomorphisms
\eq{thm:psi5}{p_{Z,i}^*: \sO_{X_K,\xi}^h\xr{\simeq} \sO_{Z,\xi_Z}^h, \quad i=1,2.}

Let $\uy_{Z,\lambda} = (y_{\lambda,0}, \ldots y_{\lambda, d-2}, \xi_Z)\in \mc(\Theta)$, $\lambda\in \Lambda$, be 
all the lifts of $(x_0,\ldots, x_{d-1}=\xi)\in \mc(W_K\cap D_K)$ under  \eqref{thm:psi4}.
Let $y_{2d}\in \ol{P}_K$ be the generic point of the  connected component which contains $\xi_Z$.
For $i=d,\ldots, 2d-1$ denote by 
\[y_i\in 
\bigcap_{i\le j\le 2d-1}V_+\bigg(\ol{p}_1^*(g_{2d-j})T_{2d-j}-\ol{p}_1^*(m_{2d-j})T_0\bigg)\cap \ol{\{y_{2d}\}}\]
the generic point. We obtain  maximal chains
\[\uy_\lambda:=(y_{\lambda,0},\ldots, y_{\lambda, d-2},\xi_{Z}, y_d, \ldots, y_{2d})\in \mc(\ol{P}_K), 
\quad \lambda\in \Lambda.\]
Note that $y_d\in \ol{Z}^{(0)}$ and $p_{Z,i}(y_d)=x_d$, $i=1,2$.
Let $a_K$ be as in Claim \ref{thm:psi-claim} and let $\gamma\in K^M_{2d}(K(P_K))$ be arbitrary.
Since $\bar{p}_i^*a_K\in F_{\ol{P}_K, y_d}$ we obtain
\begin{align}
\sum_{\lambda\in \Lambda} (\bar{p}_i^*a_K, \gamma)_{\ol{P}_K/K, \uy_\lambda} &
   = \sum_{\lambda\in \Lambda} 
   (p_{Z,i}^*a_K, \partial_Z\gamma)_{\ol{Z}/K, (y_{\lambda, 0},\ldots, y_{\lambda, d-2}, \xi_Z, y_d)}, & & 
\text{by  \ref{HS2}} \label{thm:psi6},\\
                                                        &=(a_K, \Nm^i_{\xi_Z/\xi}(\partial_Z\gamma))_{X_K/K, \ux}, & & \text{by \ref{HS5}},\notag
\end{align}
where we use the projectivity of $\bar{\pi}_{V,Z}$ to apply \ref{HS5} and  
\[\partial_Z=\partial_{y_d}\circ \partial_{y_{d+1}}\circ\ldots\circ \partial_{y_{2d-1}}: K^M_{2d}(K(P_K))\to K^M_d(K(Z)),\]
(see \ref{HS2}  for notation) and 
\[\Nm^i_{\xi_Z/\xi}: K^M_d(\Frac( \sO_{Z,\xi_Z}^h))\to K^M_d(\Frac(\sO_{X_K,\xi}^h)),\quad i=1,2,\] 
is the norm map induced by \eqref{thm:psi5} which is an isomorphism.

By Theorem \ref{thm;AS} we have $a\in \tF(X,nD)\subset \FAS(X,nD)$. Thus  
the  element $\varphi_n(a_K)\in F((P^{(nD)}_X)_K)\subset F(P_K)$ from  \ref{para:psi} is defined.
Consider the diagram
\eq{thm:psi7}{\xymatrix{
\Theta\ar[r]^-{\sigma_V}\ar[d]_{\bar{\iota}_{n,Z}} & \ol{V}_{n,K}\ar[d]^{\bar{\iota}_n} & V_{n,K}\ar[l]\ar[d]^{\iota_n}\\
\ol{Z}\ar[r]^-{\sigma}     &\ol{P}_K & P_K\ar[l]
}}
where both squares are cartesian, all vertical maps and the horizontal maps on the left are closed immersions and the horizontal maps on the right are open immersions.
We obtain 
\begin{align}
\sum_{\lambda\in \Lambda}(\bar{p}_2^*(a_K)-\bar{p}^*_1(a_K),\gamma)_{\ol{P}_K/K,\uy_\lambda} 
&= \sum_{\lambda\in \Lambda} (\varphi_n(a_K), \gamma)_{\ol{P}_K/K,\uy_\lambda} \label{thm:psi8}\\
&   =\sum_{\lambda\in \Lambda} 
   (\bar{\iota}_{n,Z}^*\sigma^*\varphi_n(a_K), 
   \partial_{\Theta}\gamma)_{\Theta/K, (y_{\lambda,0},\ldots, y_{\lambda,d-2}\xi_Z)}\notag\\
                                                            & =0,\notag
\end{align}
where $\partial_{\Theta}$ is the composite of $ \partial_Z$ and 
$\partial_{\xi_Z}: K^M_d(K(Z))\to K^M_{d-1}(K(\Theta))$,
the second equality follows from $\varphi_n(a)\in F_{\ol{P}_K, \xi_Z}$ and \ref{HS2} and 
the last equality follows from \eqref{thm:psi7}, \ref{HS1}, 
and $\iota_n^*\varphi_n(a_K)=\psi_n(a_K)=0$ by the assumption on $a$.
Combining \eqref{thm:psi6} and \eqref{thm:psi8} yields the equality 
\[\left(a_K, \Nm^2_{\xi_Z/\xi}(\partial_Z\gamma)-\Nm^1_{\xi_Z/\xi}(\partial_Z\gamma)\right)_{X_K/K, \ux}=0.\]
Thus Claim \ref{thm:psi-claim}  for $\beta$ as in \eqref{thm:psi3} or in \eqref{thm:psi3.1}  (and hence also in general)
is a consequence of the following claim:
\begin{claim}\label{thm:psi9}
The elements $\eqref{thm:psi3}$ and  $\eqref{thm:psi3.1}$ modulo $(V_{d, X_K|nD_K})_{\xi}$ have representatives  in
\[\sum_{\ul{g}}
\Im\Big((\Nm^2_{\xi_{Z_{\ul{g}}}/\xi} -\Nm^1_{\xi_{Z_\ul{g}}/\xi})\circ \partial_{Z_{\ul{g}}} : K^M_{2d}(K(P_K))\to K^M_d(\Frac(\sO_{X_K,\xi}^h))\Big),\]
where $\ul{g}$ runs over all tuples $\ul{g}=(g_1,\ldots, g_d, m_1,\ldots m_d)$ as in \eqref{thm:psi3.3}.
\end{claim}
We prove the claim. Fix $\ul{g}$ and $Z=Z_{\ul{g}}$ as above.
Let $\sigma=(\sigma_1,\ldots, \sigma_{d-c})$, $c\in\{1,\ldots,d\}$, be part of a differential basis of $K$ over its prime field  and set
\[\gamma:=\{p_1^*(g_1)\tau_1-p_1^*(m_1), \ldots, p_1^*(g_d)\tau_d-p_1^*(m_d), s_1,s_{i_2}, \ldots, s_{i_c}, \sigma\}\in K^M_{2d}(K(P_K)),\]
where  $2\le i_2<\ldots<i_c\le d$. Then 
\[\partial_Z\gamma=\e\cdot \{(s_1)_{|Z},(s_{i_2})_{|Z},\ldots, (s_{i_c})_{|Z}, \sigma\}\in K^M_d(K(Z)),\]
where $\e\in \{\pm 1\}$ only depends on the choice of a sign in the definition of the tame symbol.
Since $\Nm^2_{\xi_Z/\xi}$ is an isomorphism  and $(s_i)_{|Z}=p_{Z,2}^*(z_i)$ we obtain
\[\Nm^2_{\xi_Z/\xi}(\partial_Z\gamma)=\e\cdot \{z_1,z_{i_2},\ldots, z_{i_c}, \sigma\}\in K^M_d(K^h_{X,\xi}).\]
On the other hand since $\theta_i=s_i-t_i=\tau_i t_1^n$ on $P$, we find in $K(Z)$
\[(s_j)_{|Z}= (t_j)_{|Z}+p_{Z,1}^*(\tfrac{m_j}{g_j}) (t_1)_{|Z}^n=p_{Z,1}^*(z_j+\tfrac{m_j}{g_j}z_1^n), \quad j=1,\ldots, d.\]
Since $\Nm^1_{\xi_Z/\xi}$ is an isomorphism  we obtain
\[\Nm^1_{\xi_Z/\xi}(\partial_Z\gamma)= 
\e\cdot \{z_1+\tfrac{m_1}{g_1} z_1^n, z_{i_2}+\tfrac{m_{i_2}}{g_{i_2}} z_1^n,\ldots, z_{i_c}+\tfrac{m_{i_c}}{g_{i_c}} z_1^n, 
\sigma_1,\ldots, \sigma_{d-c}\}.\]
Since $n\geq 2$ and we have 
\[\{(V_{1, X_K|(n-1)D_K})_{\xi}, (V_{1, X_K|nD_K})_{\xi}\}\subset (V_{2, X_K|(2n-2)D_K})_{\xi}\subset (V_{2, X_K|nD_K})_{\xi},\]
see, e.g., \cite[Lemma 2.7]{RS18}, we find 
\mlnl{
-\e\left(Nm^2_{\xi_Z/\xi}(\partial_Z\gamma)-\Nm^1_{\xi_Z/\xi}(\partial_Z\gamma) \right) \equiv 
\{1+\tfrac{m_1}{g_1} z_1^{n-1}, z_{i_2}, \ldots, z_{i_c}, \sigma\}\\
+\sum_{j=2}^c \{z_1, z_{i_2},\ldots, z_{i_{j-1}}, 1+\tfrac{m_{i_j}}{g_{i_j}z_{i_j}}z_1^n, z_{i_{j+1}}, \ldots, z_{i_c}, \sigma\}
\quad \text{mod } (V_{d, X_K|nD_K})_{\xi}.}
This  implies Claim \ref{thm:psi9} and  completes the proof of  the theorem.
\end{proof}

\begin{proof}[Proof of Theorem \ref{thm:ASII}.]
By Theorem \ref{thm;AS} it remains to show $\FAS(X,R)\subset \tF(X,R)$.
By the assumption on the existence of a projective SNC-compactification of $(X,R)$ and \cite[Theorem 6.8]{RS-ZNP}, it
suffices to show $\FAS(X,R)\subset \tF(X^h_\eta, R_{\eta}^h)$, for all $\eta\in R_\red^{(0)}$, where $X^h_\eta=\Spec \sO_{X,\eta}^h$ and 
$R^h_{\eta}=R\times_X X_{\eta}^h$. In particular we can shrink $X$ Zariski  locally  around the generic points of $R$ and hence assume
$R=nD$ with $D$ smooth and $n\ge 1$. Take  $a\in \FAS(X, nD)$. There exists an $N\ge n$ such that $a\in \tF(X, ND)$.
If $N>n$, then $a\in \FAS(X, (N-1)D)$ and  hence $\psi_N(a)=0$, by Lemma \ref{lem:ker-psi}.
Thus $a\in  \tF(X, (N-1)D)$ by Theorem \ref{thm:psi}.  This completes the proof.
\end{proof}

\section{Characteristic forms for reciprocity sheaves}\label{sec:charF}

\subsection{Preliminaries}\label{Preliminaries}

\begin{para}\label{para:RegB}
Fix  a scheme $B$. Let $\Sm_{B}$ be the category of smooth $B$-schemes, which are separated and of finite type.
Denote by $\ShSmB$  the category of Nisnevich sheaves of abelian groups on $\Sm_B$.
A morphism $h:B'\to B$  of schemes  gives rise to an adjoint pair 
\eq{para:RegB1}{h^*: \Sh_{B}\rightleftarrows \Sh_{B'}: h_*,}
where $(h_*G)(X)= G(X\times_B B')$ and $h^*F$ is given by the sheafification of 
\[ \Sm_{B'}\ni Y\mapsto \varinjlim_{Y/B'\to X/B} F(X).\]
If $Y$ is smooth over $B$,  then we have $(h^*F)(Y)= F(Y)$. In particular, if $h:B'\to B$ is smooth, 
then $\Sm_{B'}$ is a subcategory of $\Sm_B$ and $h^*F=F_{|\Sm_{B'}}$ is the restriction.
Let $\otShSmB$ be the tensor product in $\ShSmB$ given by the sheafification of
\[T\mapsto G_1(T)\otimes_\Z G_2(T).\]
For $F,G\in \Sh_B$  the internal hom
$\uHom_{\ShSmB}(G,F)$ in $\ShSmB$ is given by
\[ \uHom_{\ShSmB}(G,F)(T)=\Hom_{\ShSm T}(G_{|\Sm_T}, F_{|\Sm_T}), \quad \text{for } T\in \Sm_B.\]
Denote by 
\eq{adjunctionSmB}{\nu : \Hom_{\ShSmB}\big(G_1,\uHom_{\ShSmB}(G_2,F)\big) \xr{\simeq}
\Hom_{\ShSmB}(G_1\otShSmB G_2,F).}
the adjunction isomorphism. 
%For  $f\in \Hom_{\ShSmB}\big(G_1,\uHom_{\ShSmB}(G_2,F)\big)$ 
%the map $\nu(f)$ is uniquely characterized by
%\[ \nu(f)(g_1\otimes g_2) =f(g_1)(g_2)\in F(T),\]
%for $g_i \in G_i(T)$ and  $T\in \Sm_B$.
%The inverse $\nu^{-1}$ sends $h\in \Hom_{\ShSmB}\big(G_1\otShSmB G_2,F)$ 
%to the map $\nu^{-1}(h)$ given by
%\[ \nu^{-1}(h)(g_1)(g_2) =h((g_1)_{|S}\otimes g_2)\in F(S), \]
%where $g_1 \in G_1(T)$, $g_2\in G(S)$ and  $T\in \Sm_B$, $S\in \Sm_T$.
%For $G,F\in \NST$, there is a natural injective map
%\eq{restCorSm}{
%\pi: \uHom_{\NST}(G,F)_{|\SmB}\to \uHom_{\ShSmB}(G_{|\SmB},F_{|\SmB})}
%induced by the natural inclusion functor $\SmB \to \Cor$: 
%For 
%\[ f\in \uHom_{\NST}(G,F)(T)=\Hom_{\NST}(G,\uHom_{\NST}(\Ztr(T),F))\;\;
%(T\in \SmB),\]
%we have
%\eq{restCorSm2}{\pi(f)(\alpha)=\Delta_{S/T}^* f_S(\alpha)\in F(S)\qfor S\in \Sm_T, \alpha\in G(S),}
%with $f_S:G(S)\to F(T\times S)$ induced by $f$ and $\Delta_{S/T}:S\to T\times S$ induced by the natural map $S\to T$ 
%and $id_S$.
Denote by $\sO_B$ the restriction of the structure sheaf to  $\Sm_B$ 
%, i.e., for $T\in \Sm_B$, we have $\sO_B(T)=\Gamma(T,\sO_T(T))$,
and  by $\Sh_{\sO_B}\subset \ShSmB$ the full subcategory of $\sO_B$-modules.
%The adjoint pair \eqref{para:RegB1} induces an adjoint pair
%\[h^*: \Sh_{\sO_B}\rightleftarrows \Sh_{\sO_{B'}}: h_*.\]
The tensor product $\otimes_{\sO_B}$ in $\Sh_{\sO_B}$ is the sheafification 
of $T\mapsto G_1(T)\otimes_{\sO(T)} G_2(T)$.
\end{para}

\begin{lem}\label{lem1;HomSmB}
For any $G_1,G_2\in \Sh_{\sO_B}$ and $F\in \Sh_B$, \eqref{adjunctionSmB} induces an isomorphism
%\eq{eq2;RArt}{  
%\[Hom_{\ShSmD}(\sTXnD,F)\simeq  \Hom_{\ShOSmD}\big(\sTXnD,\uHom_{\ShSmD}(\G_a,F)\big),\]
\[\Hom_{\Sh_{\sO_B}}\big(G_1,\uHom_{\ShSmB}(G_2,F)\big)\simeq
\Hom_{\ShSmB}(G_1\otimes_{\sO_B} G_2,F),\]
where $\uHom_{\ShSmB}(G_2,F)$ is endowed with an $\sO_B$-module structure via
that on $G_2$. 
\end{lem}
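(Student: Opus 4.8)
The plan is to reduce the statement to the level of presheaves, where both the internal hom and the two tensor products are encoded by explicit bilinear data, and then to match the two relevant conditions by unwinding the adjunction $\nu$ of \eqref{adjunctionSmB}. First I would use that sheafification is left adjoint to the inclusion of $\ShSmB$ into $\mathrm{PSh}(\Sm_B)$, so that for any presheaf $P$ and any $F\in \ShSmB$ one has $\Hom_{\ShSmB}(aP,F)=\Hom_{\mathrm{PSh}(\Sm_B)}(P,F)$. Since $G_1\otShSmB G_2$ and $G_1\otimes_{\sO_B}G_2$ are by definition the sheafifications of $T\mapsto G_1(T)\otimes_\Z G_2(T)$ and $T\mapsto G_1(T)\otimes_{\sO(T)}G_2(T)$, this identifies $\Hom_{\ShSmB}(G_1\otShSmB G_2,F)$ with the group of natural families of $\Z$-bilinear maps $h_T\colon G_1(T)\times G_2(T)\to F(T)$. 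Because the presheaf $T\mapsto G_1(T)\otimes_{\sO(T)}G_2(T)$ is the cokernel of the two module-action maps out of $T\mapsto G_1(T)\otimes_\Z\sO(T)\otimes_\Z G_2(T)$, the subgroup $\Hom_{\ShSmB}(G_1\otimes_{\sO_B}G_2,F)$ is cut out inside this by the factorisation condition
\[
\text{(F)}\qquad h_T(rg_1\otimes g_2)=h_T(g_1\otimes r g_2)\quad\text{for all }T\in\Sm_B,\ r\in\sO(T),\ g_i\in G_i(T).
\]

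Next I would unwind $\nu$ explicitly. For $\phi\in\Hom_{\ShSmB}(G_1,\uHom_{\ShSmB}(G_2,F))$, the associated bilinear family $h=\nu(\phi)$ is recovered by the standard formula $\phi_T(g_1)_{T'}(x)=h_{T'}(g_1|_{T'}\otimes x)$ for $T'\in\Sm_T$, $g_1\in G_1(T)$ and $x\in G_2(T')$, where $\phi_T(g_1)\in\uHom_{\ShSmB}(G_2,F)(T)=\Hom_{\ShSm{T}}(G_2{}_{|\Sm_T},F_{|\Sm_T})$. On the other hand, the $\sO_B$-module structure on $\uHom_{\ShSmB}(G_2,F)$ induced by that on $G_2$ is given by $(r\cdot\psi)_{T'}(x)=\psi_{T'}(r|_{T'}\,x)$. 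Combining the two, the $\sO_B$-linearity of $\phi$, i.e. the equalities $\phi_T(rg_1)=r\cdot\phi_T(g_1)$ in $\uHom_{\ShSmB}(G_2,F)(T)$ for all $T,r,g_1$, unwinds after evaluation on sections to the condition
\[
\text{(L)}\qquad h_{T'}\big((rg_1)|_{T'}\otimes x\big)=h_{T'}\big(g_1|_{T'}\otimes r|_{T'}x\big)\quad\text{for all }T'\in\Sm_T,\ x\in G_2(T').
\]

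Finally I would check that, under $\nu$, conditions (F) and (L) define the same subgroup. The implication $(\mathrm{F})\Rightarrow(\mathrm{L})$ follows by applying (F) at the object $T'$ to the elements $r|_{T'}$, $g_1|_{T'}$, $x$; the implication $(\mathrm{L})\Rightarrow(\mathrm{F})$ follows by taking $T'=T$ and $x=g_2$. Since $\nu$ is additive and an isomorphism of the ambient Hom-groups, its restriction to these matching subgroups is the desired isomorphism $\Hom_{\Sh_{\sO_B}}(G_1,\uHom_{\ShSmB}(G_2,F))\simeq\Hom_{\ShSmB}(G_1\otimes_{\sO_B}G_2,F)$.

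The only delicate point is precisely this last reconciliation: $\sO_B$-linearity of $\phi$ is a condition tested over all $T'\in\Sm_T$, whereas the factorisation (F) through $\otimes_{\sO_B}$ is tested only over $T$ itself, so one must invoke naturality—via the restriction formula for $\phi_T(g_1)_{T'}$ coming from the adjunction—to see that the two are equivalent. Beyond this the argument is a formal unwinding; one should only keep in mind that $F$ is merely a sheaf of abelian groups, which is why the $\sO_B$-module structure on $\uHom_{\ShSmB}(G_2,F)$ must be taken to come from $G_2$, exactly the structure used above.
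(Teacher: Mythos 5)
Your proof is correct and takes essentially the same route as the paper's: unwind the adjunction $\nu$ on sections and identify $\sO_B$-linearity of $\phi$ with the factorization of $\nu(\phi)$ through $\otimes_{\sO_B}$, exactly as the paper does by computing $\nu(f)(\lambda g_1\otimes g_2)=f(\lambda g_1)(g_2)$ and $\nu(f)(g_1\otimes\lambda g_2)=(\lambda f(g_1))(g_2)$. The one point you make explicit that the paper leaves implicit is the reconciliation of your conditions (F) and (L) --- linearity of $\phi$ is tested over all $T'\in\Sm_T$ while factorization is tested over $T$ --- and your resolution by restriction and naturality is the right (and standard) way to close that gap.
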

\begin{proof}
For $f\in \Hom_{\ShSmB}\big(G_1,\uHom_{\ShSmB}(G_2,F)\big)$, $\lambda\in \sO(T)$, and 
$g_i \in G_i(T)$, $i=1,2$, $T\in \Sm_B$, we have
\[ \nu(f)(\lambda g_1\otimes g_2)=f(\lambda g_1)(g_2),\quad
\nu(f)(g_1\otimes \lambda g_2)=f(g_1)(\lambda g_2)=(\lambda f(g_1))(g_2).\]
Hence $f$ belongs to $\Hom_{\Sh_{\sO_B}}\big(G_1,\uHom_{\ShSmB}(G_2,F)\big) $ if 
and only if $\nu(f)$ belongs to 
$\Hom_{\ShSmB}\big(G_1\otimes_{\sO_B} G_2,F).$
%=\Hom_{\ShSmD}\big(G,F).\]
\end{proof}
\medbreak

\begin{para}\label{para:rep}
Let $\Sh^{\rm set}_B$ be the category of Nisnevich sheaves of sets on $\Sm_B$.
For $Y\in \Sm_B$ we denote also by $Y$ the sheaf  it represents in $\Sh^{\rm set}_B$, i.e.,
\[Y(T)=\Hom_{\Sm_B}(T,Y).\]
We denote by $\Z(Y)$ the free abelian sheaf generated by $Y$, i.e., 
the sheaf associated to  $T\mapsto \oplus_{a\in Y(T)}\Z\cdot [a]$.
For $F\in \Sh_B$ we obtain canonical isomorphisms
\eq{para:Fad1}{\Hom_{\Sh_B}(\Z(Y), F)\cong \Hom_{\Sh^{\rm set}_B}(Y,F)\cong F(Y).}
\end{para}

\begin{para}\label{para:Fad}
Let $G$ be a  commutative group scheme over $B$, which is smooth over $B$,  with group law 
$m:G\times_B G\to G$. Denote by $\sG\in \Sh_B$ the sheaf of abelian groups defined by $G$.
Thus the forgetful functor $\Sh_B \to \Sh^{\rm set}_B$ sends $\sG$ to $G$ and hence 
the counit of the adjunction \eqref{para:Fad1} yields the map $\sigma: \Z(G)\to \sG$,
which is induced by
\[\bigoplus_{a\in G(T)} \Z [a]\to \sG(T)=G(T), \quad \sum n_a [a]\mapsto \sum n_a a.\]
We obtain the  exact sequence in $\Sh_B$
\[\Z(G\times_B G)\xr{q_1+q_2-m} \Z(G)\xr{\sigma} \sG\to 0,\]
where $q_i: G\times_B G\to G$ is the $i$th-projection. 
Applying $\Hom_{\Sh_B}(-, F)$ with $F\in \Sh_B$ yields the isomorphism
\eq{para:Fad2}{ \Hom_{\Sh_B}(\sG, F)\xr{\simeq}
 F(G)_\ad:=\Ker\left(F(G)\xr{q_1^*+q_2^*-m^*} F(G\times_B G)\right).}
We call $F(G)_\ad$ the group of {\em additive elements} of $F(G)$ and we denote by
\[\chi_F: F(G)_\ad\xr{\simeq} \Hom_{\Sh_B}(\sG, F)\]
the inverse map of \eqref{para:Fad2}. 

For a smooth 
%\shujirem{smoothness is not used?} \kayrem{I wanted to use the notation $F_{|\Sm_T}$ only when $\Sm_T$ is a subcategory of $\Sm_B$, this is why one needs here the smoothness. However, I realized  that  in 4.5 below I used the notation also in other situations. Now it is changed in 4.5 and below to $h^*$.}
morphism $h:T\to B$ we obtain a commutative square
\[\xymatrix{
F(G)_\ad\ar[d]\ar[r]^-{\chi_F} & \Hom_{\Sh_B}(\sG,F)\ar[d]\\
F(G_T)_\ad\ar[r]^-{\chi_F} & \Hom_{\Sh_{T}}(\sG_{|\Sm_T}, F_{|\Sm_T}),
}\]
where $G_{T}= G\times_B T$ and the vertical maps are induced by pullback and restriction, respectively.
The  square commutes since both vertical maps are in fact induced by the natural map $F\to h_*h^*F$.
In particular we obtain a  sheaf $\ul{F(G)_\ad}\in \Sh_B$ given by 
\[\ul{F(G)_\ad}(T)=F(G_{T})_\ad \qquad (T\in \Sm_B)\] 
and $\chi_F$ induces an  isomorphism  in $\Sh_B$
\eq{para:Fad3}{\chi_F: \ul{F(G)_\ad}\xr{\simeq} \uHom_{\Sh_B}(\sG, F). }
\end{para}

\begin{para}\label{para:FNC}
Let $D$ be a normal crossing variety of $k$, i.e., the irreducible components $D_1,\ldots, D_r$ of $D$ are smooth of the same dimension and intersect transversely.
We define a functor 
\[\Sh_D\to \Sh_D, \quad F\mapsto F_D\]
by setting
\[F_D:= \Ker\left(\bigoplus_{1\le i\le r} h_{i*}h_i^*F\xr{\partial} \bigoplus_{1\le i<j\le r}h_{ij*}  h_{ij}^*F\right),\]
where $h_i: D_i\inj D$ and $h_{ij}: D_{i}\cap D_j\inj D$ are the closed immersions and $\partial$ is the difference of the obvious restrictions.
If $F\in \Sh_k$ and $\pi:D\to \Spec k$ is the structure map, then we set
\eq{para:FNC1}{F_D:= (\pi^*F)_D= 
\Ker\left(\bigoplus_{1\le i\le r} h_{i*}(F_{|\Sm_{D_i}})\xr{\partial} \bigoplus_{1\le i<j\le r} h_{ij*}(F_{|\Sm_{D_i\cap D_j}})\right).}

Let $G$ be a commutative group scheme over $D$, which is smooth over $D$.  
With the above notations the isomorphism $\chi_F$ from \eqref{para:Fad3} induces  an isomorphism
\[\chi_F: (\ul{F(G)_\ad})_{D}\xr{\simeq} \uHom_{\Sh_D}(\sG, F)_D.\]
\end{para}

\def\chiFO{\chi_{F,\sO}}

\begin{para}\label{para:chiFO}
Let $D$ be a normal crossing variety  with structure map $\pi: D\to \Spec k$ and let $E$ be  a vector bundle over $D$.  
The sheaf of abelian groups $\sE$ represented by $E$ has the additional structure of an $\sO_D$-module.
Let $F\in \Sh_k$.
Note that 
\[E=\Spec({\rm Sym}^\bullet \sE^\vee), \quad \text{where } \sE^\vee=\uHom_{\Sh_{\sO_D}}(\sE, \sO_D).\] 
From \eqref{para:Fad3} and \ref{para:FNC} we obtain an isomorphism 
\eq{eq;HomSmB}{\chi_{F,\sO_D} :(\ul{F(E)_\ad})_D\xr{\simeq} 
 \sE^\vee\otimes_{\sO_D} \uHom_{\Sh_k}(\sO_k, F)_{D},}
 where on the right hand side we use the notation \eqref{para:FNC1},
as the composition
\begin{align*}
(\ul{F(E)_\ad})_D  \rmapo{\chi_F} \uHom_{\Sh_D}(\sE, \pi^*F)_D &\cong\uHom_{\Sh_D}(\sE\otimes_{\sO_D}\sO_D , \pi^*F)_D \\
           &\cong \uHom_{\Sh_{\sO_D}}\big(\sE,\uHom_{\Sh_D}(\sO_D, \pi^*F)\big)_D\\
           &\cong \left(\sE^\vee\otimes_{\sO_D} \pi^*\uHom_{\Sh_k}(\sO_k, F) \right)_{D}\\
           &\cong \sE^\vee\otimes_{\sO_D} \uHom_{\Sh_k}(\sO_k, F)_D,
\end{align*}
where the second isomorphism follows from  Lemma \ref{lem1;HomSmB}, the third from the isomorphism 
$\pi^*\uHom_{\Sh_k}(\sO_k, F)\simeq \uHom_{\Sh_D}(\sO_D, \pi^*F)$ and the fact that $\sE$ is a coherent locally free $\sO_D$-module, 
and the last isomorphism   from the projection formula.

Assume $D$ is smooth and $\sE$ is a free $\sO_D$-module with basis $e_1,\ldots, e_r$.
Then the  map $\chi_{F,\sO_D}$ is explicitly given as follows:
Under the identification 
\[\Hom_{\Sm_{D}}(E,E)=\sE(E)=\oplus_{i=1}^r \sO(E)e_i\]
%\shujirem{Do you mean $\Hom_{\Sh_{D}}(\sE,\sE)$? I don't understand the first equality.}
%\kayrem{I changed $\Sh_D$ to $\Sm_D$. Is this better now?}
the identity $\id_E$ corresponds to $\sum_{i=1}^r e_i^\vee e_i$,
where $e_1^\vee,\ldots, e_r^\vee$ denotes the dual basis of \kay{$\sE^\vee$}.
Let $h\in F(E)_\ad$ and  $g=\chi_F(h)$, i.e., $g:\sE\to F_{|\Sm_D}$ is the map which  on $E$ satisfies 
(under the above identification) $g(\sum_{i=1}^r e_i^\vee e_i)= h$. 
Then 
\eq{para:chiFO1}{\chi_{F,\sO_D}(h)= \sum_{i=1}^r e_i^\vee\otimes g((-)e_i),}
where  $g((-)e_i)\in \uHom_{\Sh_k}(\sO_k, F)_D$
is defined by $g((-)e_i)(\lambda)= g(\lambda\cdot e_{i|T})$, for $\lambda\in\sO(T)$ and $T\in \Sm_{D}$.
%If $h: B'\to B$ is a morphism in $\Sm$ we obtain a commutative square
%\[
%\xymatrix{
%\Fad(E)\ar[r]^-{\chi_{F,\sO}}\ar[d] & \Gamma(B, \sE^\vee\otimes_{\sO_B} \uHom_{\Sh_k}(\sO_k, F)_{|\Reg_B})\ar[d]\\
%\Fad(E_{B'})\ar[r]^-{\chi_{F,\sO}} & \Gamma(B', (\sE_{|B'})^\vee\otimes_{\sO_{B'}} \uHom_{\Sh_k}(\sO_k, F)_{|\Reg_{B'}}),
%}
%\]
%where $E_{B'}=E\times_B B'$ and $\sE_{|B'}= h^*\sE$  is its sheaf of sections and where the vertical maps are induced by pullback and restriction, respectively.
% The commutativity of this diagram follows directly from the commutativity of \eqref{para:Fad3}.
\end{para}

\subsection{The characteristic form}\label{subsec:charF}
In this subsection $X\in \Sm$ and $R$ is an effective  Cartier divisor on $X$ with $D=R_{\red}$  a SNCD.
Set $U=X\setminus D$ and $D'= (R-D)_\red$. Let $F\in \RSC_{\Nis}$. 

\def\Db{\overline{D}}
\def\FAS{F^{AS}}
\def\Fad{F_{ad}}
\def\FDad{(F_D)_{ad}}

\begin{para}\label{para:charF}
Let $p_{1}$, $p_{2}: \PXR \to X$ and $R_P=p_1^{-1}(R)=p_2^{-1}(R)$ be as in \ref{para:dil}. 
Set  
\[ V_R:=R_P\times_X D'=\mathbb{V}(\Omega^1_{X/k}(R)_{|D'}) \subset \PXR,\]
cf.  Lemma \ref{lem:dil}\ref{lem:dil1}. In particular, $V_R$ is a smooth group scheme over the normal crossing variety $D'$.
By definition of $\FAS(X,R)$ and the injectivity of the restriction $F(P^{(R)}_X)\inj F(U\times U)$ (see \cite[Theorem 3.1(2)]{S-purity}) 
we have a unique map
\eq{para:charF1}{\varphi_R: \FAS(X,R)\to F(P^{(R)}_X),}
which satisfies $\varphi_R(a)_{|U\times U}=p_{2}^*(a_{|U})-p_{1}^*(a_{|U})$ (cf. \ref{para:psi}).
Let $D_1,\dots,D_r$ be the irreducible components of $D'$, i.e., those components of $D$ which appear with multiplicity $\ge 2$ in $R$,
and set $V_{R,i}:= V_R\times_D D_i$.
With the notation from \ref{para:FNC} we get a natural  map
\eq{para:charF2}{\psi_R : \FAS(X,R) \to F_{D'}(V_R) \subset \underset{1\leq i\leq r}{\bigoplus}\;  F(V_{R,i}), \quad 
a \mapsto \psi_R(a):=(\iota_i^*\varphi_R(a))_{1\leq i\leq r},}
where $\iota_i: V_{R,i}\to \PXR$ is the closed immersion.
\end{para}

\begin{lem}\label{lem4;AS}
 $\Im(\psi_R)\subset F_{D'}(V_R)_{\ad}$  (see \ref{para:FNC} for notation).
% of additive elements, i.e. such elements $\alpha$ that $q_1^*\alpha + q_2^*\alpha=\mu_V^*\alpha$.
\end{lem}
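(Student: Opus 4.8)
The plan is to deduce the lemma from a cocycle identity for $\varphi_R(a)$ on the triple fibre product, together with the identification of the multiplication map $\mu$ of Lemma \ref{lem:dil}\ref{lem:dil4} with the bundle addition on $V_R$. Recall from \ref{para:Fad} that, writing $m\colon V_R\times_{D'}V_R\to V_R$ for the addition of the vector bundle $V_R/D'$ and $q_1,q_2$ for the two projections, an element $b\in F_{D'}(V_R)$ is additive precisely when $m^*b=q_1^*b+q_2^*b$ (checked componentwise over the components $D_1,\dots,D_r$ of $D'$). Since $\psi_R(a)=(\iota_i^*\varphi_R(a))_i$ already lies in $F_{D'}(V_R)$ by \ref{para:charF2}, it suffices to establish this last equality.

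First I would prove that, on $Q:=\PXR\times_{p_2,X,p_1}\PXR$ with its two projections $\mathrm{pr}_1,\mathrm{pr}_2\colon Q\to\PXR$,
\[\mu^*\varphi_R(a)=\mathrm{pr}_1^*\varphi_R(a)+\mathrm{pr}_2^*\varphi_R(a).\]
By Lemma \ref{lem:dil}\ref{lem:dil2} we have $p_i^{-1}(U)=\PXR\setminus R_P=U\times U$, so $U\times U\times U=\mathrm{pr}_1^{-1}(U\times U)\subseteq Q$; since $p_1$ is smooth, $\mathrm{pr}_1^*R_P$ is an effective Cartier divisor on the smooth scheme $Q$, so its complement $U\times U\times U$ is dense, and on it all three terms are genuine sections of $F$. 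Writing a point as $(u_1,u_2,u_3)$, the lower part of \eqref{eq1;lem,AS} gives $\mu(u_1,u_2,u_3)=(u_1,u_3)$, while $\mathrm{pr}_1(u_1,u_2,u_3)=(u_1,u_2)$ and $\mathrm{pr}_2(u_1,u_2,u_3)=(u_2,u_3)$; as $\varphi_R(a)_{|U\times U}=p_2^*(a_{|U})-p_1^*(a_{|U})$ takes $(v_1,v_2)$ to $a(v_2)-a(v_1)$, the displayed identity reduces on $U\times U\times U$ to the tautology $a(u_3)-a(u_1)=(a(u_2)-a(u_1))+(a(u_3)-a(u_2))$. As $Q$ is smooth over $k$ (being smooth over $\PXR$ via $\mu$) and $U\times U\times U$ is dense, the restriction $F(Q)\inj F(U\times U\times U)$ is injective by \cite[Theorem 3.1(2)]{S-purity}, so the identity holds on all of $Q$.

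Next I would restrict this identity along the closed immersion $V_R\times_{D'}V_R\inj Q$ (both factors of $V_R=R_P\times_XD'$ map to the common point $p_{1|R_P}=p_{2|R_P}$ of $X$, so this fibre product sits inside $Q$). By the second assertion of Lemma \ref{lem:dil}\ref{lem:dil4}, $\mu$ restricts to the addition $m$, while $\mathrm{pr}_1,\mathrm{pr}_2$ restrict to $q_1,q_2$; and $\varphi_R(a)$ restricts along $V_R\inj\PXR$ to $\psi_R(a)$. Pulling the global identity back therefore yields
\[m^*\psi_R(a)=q_1^*\psi_R(a)+q_2^*\psi_R(a)\quad\text{in }F_{D'}(V_R\times_{D'}V_R),\]
which is exactly the additivity of $\psi_R(a)$, so $\psi_R(a)\in F_{D'}(V_R)_{\ad}$.

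The main obstacle I anticipate is the geometric bookkeeping in the last step: one must verify carefully that the inclusion $V_R\times_{D'}V_R\inj Q$ is compatible with the two distinct $X$-structures $p_1,p_2$ entering the fibre product, and that under it $\mu$, $\mathrm{pr}_1$, $\mathrm{pr}_2$ restrict respectively to $m$, $q_1$, $q_2$ — this is precisely the content of Lemma \ref{lem:dil}\ref{lem:dil4}, but the matching of the projections with the addition law must be tracked componentwise over the $D_i$. The density and injectivity input in the first step, by contrast, is routine given the semipurity statement \cite[Theorem 3.1(2)]{S-purity}.
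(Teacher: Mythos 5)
Your proof is correct and takes essentially the same route as the paper's: there too one first verifies the cocycle identity $q_1^*\varphi_R(a)+q_2^*\varphi_R(a)=\mu^*\varphi_R(a)$ on $\PXR\times_{p_2,X,p_1}\PXR$ by restricting to the dense open $(U\times U)\times_U(U\times U)$, and then pulls it back along $\iota_i\times\iota_i$, invoking Lemma \ref{lem:dil}\ref{lem:dil4} to identify $\mu$ with the vector bundle addition. The only difference is presentational: you spell out the smoothness/density and semipurity injectivity justification and the telescoping computation on $U\times U\times U$ that the paper leaves as ``can be checked immediately,'' and you work on $V_R\times_{D'}V_R$ at once rather than componentwise over the $D_i$, which amounts to the same bookkeeping.
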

\begin{proof}
%First we prove $\Im(\psi_n)\subset \Fad(V_n)$.
It suffices to show $\iota_i^*\varphi_R(a)\in F(V_{R,i})_\ad$, for $i=1,\ldots, r$.
Denote by $q_1,q_2: \PXR\times_{p_2,X,p_1} \PXR \to \PXR$ 
and  $q'_1,q'_2: V_{R,i}\times_{D_i} V_{R,i}\to V_{R,i}$ the respective projections and by $\mu_V : V_{R,i}\times_{D_i} V_{R,i}\to V_{R,i}$ 
the addition of the vector bundle $V_{R,i}$ over $D_i$.
Let $\mu :\PXR\times_{p_2,X,p_1} \PXR \to \PXR$ be the map from Lemma \ref{lem:dil}\ref{lem:dil4}. For $a\in \FAS(X,R)$ we have 
by \eqref{eq1;lem,AS}
\eq{eq;lem4;AS}{ q_1^*(\varphi_R(a)) +  q_2^*(\varphi_R(a)) = \mu^* (\varphi_R(a)),}
as can be check immediately by restricting to $(U\times U)\times_U(U\times U)$.
Pulling back \eqref{eq;lem4;AS} via $\iota_i\times\iota_i$ and using Lemma \ref{lem:dil}\ref{lem:dil4}
yields the looked-for equality ${q'_1}^*(\iota_i^*\varphi_R(a)) + {q'_2}^*(\iota_i^*\varphi_R(a))=\mu_V^*(\iota_i^*\varphi_R(a))$.
\end{proof}

\begin{lem}\label{lem:add-Hom}
Let $F\in \RSC_{\Nis}$. 
Then  $\uHom_{\Sh_k}(\sO_k, F)\in \RSC_{\Nis}$.
\end{lem}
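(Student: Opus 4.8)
The plan is to identify $\uHom_{\Sh_k}(\sO_k,F)$ with the sheaf of additive sections of $F(\A^1\times-)$ and then to realize it as a sub-object in $\NST$ of a reciprocity sheaf. Applying the isomorphism $\chi_F$ of \eqref{para:Fad3} with $B=\Spec k$ and $G=\G_a$ (so that the represented sheaf $\sG$ is $\sO_k$) gives an isomorphism of Nisnevich sheaves
\[\uHom_{\Sh_k}(\sO_k,F)\cong\ul{F(\G_a)_{\ad}},\qquad \ul{F(\G_a)_{\ad}}(T)=F(\A^1_T)_{\ad},\]
and by \eqref{para:Fad2} the group on the right is $\Ker\big(q_1^*+q_2^*-m^*\colon F(\A^1_T)\to F(\A^1_T\times_T\A^1_T)\big)$, where $q_1,q_2,m\colon\A^1\times\A^1\to\A^1$ are the two projections and the addition. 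Thus it suffices to show that $\ul{F(\G_a)_{\ad}}$, with its natural transfer structure, lies in $\RSC_{\Nis}$.

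Next I would make the transfer structure explicit. For any $X\in\Sm$ the assignment $T\mapsto F(X\times T)$ is the presheaf with transfers $\uHom_{\PST}(\Ztr(X),F)$ (a correspondence $\alpha\in\Cor(T',T)$ acts through $\id_X\times\alpha$), and it is a Nisnevich sheaf because $F$ is; so $F(\A^1\times-),F(\A^2\times-)\in\NST$. Each of $q_1,q_2,m$ is a $k$-morphism $\A^1\times\A^1\to\A^1$, hence induces a morphism $F(\A^1\times-)\to F(\A^2\times-)$ in $\NST$ (pullback in the $\A^1$-factor commutes with the correspondence action in the $T$-factor). Therefore $\ul{F(\G_a)_{\ad}}=\Ker(q_1^*+q_2^*-m^*)$ is the kernel in $\NST$ of a morphism of Nisnevich sheaves with transfers; in particular the inclusion $\iota\colon\ul{F(\G_a)_{\ad}}\hookrightarrow F(\A^1\times-)$ is a monomorphism in $\NST$.

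Finally I would combine two facts. The elementary one is that a sub-object $G\hookrightarrow E$ in $\NST$ of a reciprocity sheaf $E$ is again a reciprocity sheaf: for $a\in G(U)$ a modulus $(Y,D)$ of $\iota(a)\in E(U)$ is also a modulus of $a$, since $Z_0^*a=Z_1^*a$ follows from $Z_0^*(\iota a)=Z_1^*(\iota a)$ using that $\iota$ is injective and compatible with the actions $Z_\e^*$ of the correspondences $Z_\e\in\Cor(S,U)$. The substantive input is that $F(\A^1\times-)=\uHom_{\PST}(\Ztr(\A^1),F)$ itself lies in $\RSC_{\Nis}$; this is the closure of $\RSC_{\Nis}$ under the internal-hom functors $\uHom_{\PST}(\Ztr(X),-)$ from the Kahn--Saito--Yamazaki theory (cf. \cite{KSY2}), where a modulus of a section $a\in F(\A^1\times U)$ is transported to a modulus of $a$ as a section of $F(\A^1\times-)$ over $U$ via the proper compactification $\P^1\supset\A^1$. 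Applying the elementary fact to $\iota$ then gives $\ul{F(\G_a)_{\ad}}\in\RSC_{\Nis}$, whence $\uHom_{\Sh_k}(\sO_k,F)\in\RSC_{\Nis}$. I expect the reciprocity of $F(\A^1\times-)$ to be the only real obstacle; the remaining steps are formal.
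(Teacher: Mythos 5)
Your proof is correct and is essentially the paper's own argument: both identify $\uHom_{\Sh_k}(\sO_k,F)$ with the additive part $\ul{F(\G_a)_{\ad}}$ via \eqref{para:Fad2} and then realize this sheaf as the kernel of $q_1^*+q_2^*-m^*\colon \uHom_{\PST}(\Ztr(\G_a),F)\to\uHom_{\PST}(\Ztr(\G_a\times\G_a),F)$ in $\NST$. The only difference is bookkeeping and attribution: where you combine the (correct) observation that a sub-object in $\NST$ of a reciprocity sheaf inherits reciprocity with the closure of $\RSC_{\Nis}$ under $\uHom_{\PST}(\Ztr(X),-)$, which you attribute loosely to \cite{KSY2}, the paper cites the precise reference \cite[Lemma 1.5]{MS} for this key input and concludes directly.
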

\begin{proof}
For $B\in \Sm$, let $\G_{a,B}=\G_a\times B$ be the restriction of $\G_a$ to $\Sm_B$ and denote by $q_i, m: G_{a,B}\times_B\times G_{a,B}\to G_{a,B}$
the projection to the $i$th factor and the multiplication, respectively. For $B\in\Sm_k$, we have
\[ \begin{aligned}
\uHom_{\Sh_k}(\sO_k, F)(B) &=\Hom_{\Sh_B}(\sO_B, F_{|\Sm_B}) \\
& = \Ker(F_{|\Sm_B}({\G_{a,B}})\xr{q_1^*+q_2^*-m^*} F_{|\Sm_B}({\G_{a,B}}\times_B {\G_{a,B}})) \\
& = \Ker(F({\G_{a,B}})\xr{q_1^*+q_2^*-m^*} F({\G_{a,B}}\times_B {\G_{a,B}})), \\
%& = \Ker\big(\Hom_{\PST}(\Ztr(\G_{a,B}),F) \to \Hom_{\PST}(\Ztr(\G_{a,B}\times_B\G_{a,B}),F)\big) \\
%& = \Ker\big(\uHom_{\PST}(\Ztr(\G_a),F) \to \uHom_{\PST}(\Ztr(\G_a\times\G_a),F)\big)(B), \\
\end{aligned}\]
where the second equality holds by \eqref{para:Fad2}. Since
\[F(Y\times B)=\uHom_{\PST}(\Ztr(Y),F)(B) \qquad (Y\in \Sm),\]
we find that the sheaf $\uHom_{\Sh_k}(\sO_k, F)$ is equal to
\[\Ker\left(\uHom_{\PST}(\Ztr(\G_a), F)\xr{q_1^*+q_2^*-m^*} \uHom_{\PST}(\Ztr(\G_a\times \G_a), F)\right),\]
which is a reciprocity sheaf by \cite[Lemma 1.5]{MS}. 
\end{proof}

The following definition is inspired by Takeshi Saito's definition of the characteristic form in \cite{TakeshiSaito}, see Remark \ref{rmk:charF} below
for some more comments on the background.

\begin{defn}\label{def;charform}
Let the assumptions be as at the beginning of subsection \ref{subsec:charF} and write $\Omega^1_X$ for $\Omega^1_{X/k}$. 
We define the \emph{characteristic form of $F$ at $(X,R)$}
\[\charFR : \tF(X,R) \to
\Gamma\left(D', \Omega^1_X(R)_{|D'}\otimes_{\sO_{D'}}\uHom_{\Sh_k}(\sO_k,F)_{D'}\right),\]
where we apply  the notation \eqref{para:FNC1} to the reciprocity sheaf $\uHom_{\Sh_k}(\sO_k,F)$,
to be the composition
\mlnl{\tF(X,R) \inj \FAS(X,R)\xr{\psi_R}  F_{D'} (V_R)_\ad\\
\xr{\chi_{F,\sO_{D'}}} \Gamma\left(D', \Omega^1_X(R)_{|D'}\otimes_{\sO_{D'}}\uHom_{\Sh_k}(\sO_k,F)_{D'}\right),}
where the first map is the inclusion from Theorem \ref{thm;AS}, the map $\psi_R$ is induced by \eqref{para:charF2}
 and Lemma \ref{lem4;AS}, and 
$\chi_{F,\sO}$ is an isomorphism induced by \eqref{eq;HomSmB}.
%We call $F$ tr-admissible if $\Im(\psi_n)\subset \Fadtr(V_n)$ (cf. Definition \ref{def;Fad}). By Lemma \ref{lem3;HomSmB}, the above map is then refined to 
%\[\charF : \tF(X,nD) \to\Gamma(D, \Omega_X(nD)_{|D}\otimes_{\sO}\uHom_{\NST}(\G_a,F)_{|\SmD})\]
\end{defn}

\begin{rmk}\label{rmk:charF}
For an element in $H^n(K, \Q/\Z(n-1))$, where $K$ is a henselian discrete valuation field, Kato defines in \cite[Definition (5.3)]{KatoSwan}
the {\em refined Swan conductor}. A non-logarithmic variant in the case $n=1$ and $p={\rm char}(K)>2$ was defined by Matsuda in \cite[Definition 3.2.5]{Matsuda}
and extended to the case $p=2$  by Yatagawa in \cite[Definition 1.18]{Yatagawa}.
By a similar method Kato-Russell define various versions  of  refined Swan conductors for Witt vectors  $W_n(K)$, see \cite[4.6, 4.7]{Kato-Russell}.
On the other hand, in \cite[Definition 2.19]{TakeshiSaito} Takeshi Saito defines the {\em characteristic form} of a torsor under an \'etale $k$-group scheme 
(not necessarily commutative) using dilatations. It is shown in \cite[Corollary 2.13]{Yatagawa} that the non-logarithmic refined Swan conductor
for $\Z/p^r\Z$-torsors defined by Matsuda-Yatagawa coincides with the characteristic form defined by Takeshi Saito. Thus one can think of the characteristic form
as a non-logarithmic version of the refined Swan conductor.
\end{rmk}

\begin{thm}\label{thm;charform}
Let the assumptions be as at the beginning of subsection \ref{subsec:charF}.
Assume that $(X,D)$ admits a projective SNC-compactification. Then we have
\[\tF(X,R-D') = \Ker(\charFR),\]
i.e., the characteristic form of $F$ at $(X,R)$ induces an injective map
\[\charFR : \frac{\tF(X,R)}{\tF(X, R-D')}\inj 
\Gamma\left(D', \Omega^1_X(R)_{|D'}\otimes_{\sO_{D'}}\uHom_{\Sh_k}(\sO_k,F)_{D'}\right).\]
\end{thm}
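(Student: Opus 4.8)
The plan is to reduce the statement, component by component, to the base situation of section \ref{sec:AS-ctd}, where the two key inputs Lemma \ref{lem:ker-psi} and Theorem \ref{thm:psi} are already available. First I would record that, since $\chi_{F,\sO_{D'}}$ in \eqref{eq;HomSmB} is an isomorphism, one has $\Ker(\charFR)=\tF(X,R)\cap\Ker(\psi_R)$, where by \eqref{para:charF2} $\psi_R(a)=(\iota_i^*\varphi_R(a))_{1\le i\le r}$ and $D_1,\dots,D_r$ are the components of $D'$, i.e.\ those components of $D$ occurring with multiplicity $m_i\ge 2$ in $R$. So it suffices to prove $\tF(X,R-D')=\tF(X,R)\cap\Ker(\psi_R)$. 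The reduction device I would use is, for each $i$, the open $X_i:=X\setminus\bigcup_{j\ne i}D_j$: on $X_i$ the divisor $D$ becomes the smooth irreducible $D_i\cap X_i$, while $R$, $R-D'$ restrict to $m_i(D_i\cap X_i)$ and $(m_i-1)(D_i\cap X_i)$ respectively. Crucially $X_i\setminus(D_i\cap X_i)=X\setminus D=U$ is unchanged, so $(X_i,D_i\cap X_i)$ inherits the projective SNC-compactification of $(X,D)$ (same $\bar X$, same open complement $U$). Since $P^{(R)}_X$, $V_R$, $\varphi_R$ are compatible with restriction to $X_i$ (Lemma \ref{lem:FAS} and its proof), the $i$-th component of $\psi_R$ restricts to $\iota_i^*\varphi_R(a)|_{X_i}=\psi_{m_i}(a|_{X_i})$, the map of \ref{para:psi} for the pair $(X_i,m_i(D_i\cap X_i))$.

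For the inclusion $\tF(X,R-D')\subset\Ker(\charFR)$ I would argue as follows, with no use of the compactification. Fix a component $D_i$ of $D'$. For $a\in\tF(X,R-D')$ the restriction $a|_{X_i}$ lies in $\tF(X_i,(m_i-1)(D_i\cap X_i))\subset\FAS(X_i,(m_i-1)(D_i\cap X_i))$ by Theorem \ref{thm;AS}; as $m_i\ge 2$, Lemma \ref{lem:ker-psi} with $n=m_i$ gives $\psi_{m_i}(a|_{X_i})=0$, that is $\iota_i^*\varphi_R(a)|_{X_i}=0$. Since $V_{R,i}|_{X_i}$ is a dense open in the smooth scheme $V_{R,i}=\mathbb{V}(\Omega^1_X(R)_{|D_i})$, injectivity of restriction to dense opens (\cite[Theorem 3.1(2)]{S-purity}) upgrades this to $\iota_i^*\varphi_R(a)=0$. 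Running over all $i$ yields $\psi_R(a)=0$.

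For the reverse inclusion I would take $a\in\tF(X,R)$ with $\psi_R(a)=0$ and aim at $a\in\tF(X,R-D')$. Because $(R-D')_\red=D$ and $(X,D)$ has a projective SNC-compactification, \cite[Theorem 6.8]{RS-ZNP} reduces this to the henselian-local conditions $a\in\tF(X^h_\eta,(R-D')^h_\eta)$ for $\eta\in D^{(0)}$, exactly as in the proof of Theorem \ref{thm:ASII}. At the generic point of a multiplicity-one component the condition is automatic, since there $R$ and $R-D'$ have the same multiplicity and $a\in\tF(X,R)$. At the generic point $\eta_i$ of a component $D_i$ of $D'$ (note $\eta_i\in X_i$) I would restrict to $X_i$: from $\psi_R(a)=0$ I get $\psi_{m_i}(a|_{X_i})=0$, and since $(X_i,D_i\cap X_i)$ admits a projective SNC-compactification with $m_i\ge2$, Theorem \ref{thm:psi} combined with Lemma \ref{lem:ker-psi} gives $\tF(X_i,m_i(D_i\cap X_i))\cap\Ker(\psi_{m_i})=\tF(X_i,(m_i-1)(D_i\cap X_i))$; hence $a|_{X_i}$, and therefore $a$ at $\eta_i$, has modulus $(m_i-1)D_i$. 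Collecting over all $\eta$ gives $a\in\tF(X,R-D')$.

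I expect the main obstacle to be the reverse inclusion and, inside it, keeping Theorem \ref{thm:psi} applicable: a naive Zariski localization at $\eta_i$ would destroy the compactification hypothesis. The whole point of localizing along $X_i=X\setminus\bigcup_{j\ne i}D_j$ rather than at a small neighbourhood of $\eta_i$ is that it isolates $D_i$ as a smooth divisor while leaving $U$, and hence the boundary $\bar X\setminus U$, untouched, so that the compactification survives verbatim. Everything else is routine bookkeeping: checking the compatibility of $P^{(R)}$, $V_R$, $\varphi_R$, $\psi_R$ with the open immersion $X_i\inj X$, identifying the $i$-th component of $\psi_R$ on $X_i$ with $\psi_{m_i}$, and verifying that the components $D_j$ with $m_j=1$ contribute nothing to either side of the asserted equality.
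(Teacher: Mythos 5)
Your proof is correct and is essentially the paper's argument: the paper likewise reduces, via Zariski--Nagata purity \cite[Theorem 6.8]{RS-ZNP}, to the case $R=nD$ with $D$ smooth irreducible and then concludes from Theorem \ref{thm:psi} (into which Lemma \ref{lem:ker-psi} is built) together with the isomorphism \eqref{eq;HomSmB}; your $X_i$-device merely makes explicit the point, left implicit in the paper's one-line localization, that this reduction preserves the projective SNC-compactification because $U$ is untouched. One correction is needed: you must define $X_i$ by deleting \emph{all} irreducible components of $D$ other than $D_i$, not only the components $D_j$ of $D'$ as you literally wrote --- under your literal definition the multiplicity-one components of $D$ survive in $X_i$, so $D\cap X_i$ is not irreducible, $R_{|X_i}\neq m_i(D_i\cap X_i)$, $X_i\setminus(D_i\cap X_i)=X\setminus D'\neq U$, and Theorem \ref{thm:psi} (which requires $R=nD$ with $D$ smooth) would not apply. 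Every subsequent assertion in your write-up is phrased for, and correct under, the amended definition, so nothing else needs changing.
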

\begin{proof}
By the Zariski-Nagata purity for $F$ (\cite[Theorem 6.8]{RS-ZNP}), the question is local at the generic points of $R$.
Hence we may assume $R= n D$ with $D$ irreducible smooth. If $n=1$, then $D'=\emptyset$ 
 and the assertion trivially holds.
If $n\ge 2$,  then the statement follows from Theorem \ref{thm:psi} and  \eqref{eq;HomSmB}.
\end{proof}

\begin{para}\label{para:charform-local}
Let the assumptions be as at the beginning of subsection \ref{subsec:charF}.
Denote by $\tF_{(X,R)}$ the sheaf on  $X_{\Nis}$ given by 
\[(u:V\to X)\mapsto \tF(V, u^*R).\]
Denote by $i: D'\inj X$ the closed immersion, then $\psi_R$ clearly induces a morphism of sheaves on $X_{\Nis}$
\[\tF_{(X,R)}\xr{\psi_R} i_*(( \ul{F(V_R)_{\ad}})_{D'_\Nis}),\]
where $( \ul{F(V_R)_{\ad}})_{D'_\Nis}$ denotes the restriction of $(\ul{F(V_R)_{\ad}})_{D'}\in \Sh_{D'}$ from \ref{para:FNC} to the small Nisnevich site on $D'$.

Thus we obtain a local version of the characteristic form 
\[\charFR: \tF_{(X,R)}\to \Omega^1_X(R)\otimes_{\sO_X} i_*(\uHom_{\Sh_k}(\sO_k, F)_{D'_{\Nis}}).\]
If $(X,D)$ admits a projective SNC-compactification, then $\charFR$ induces by Theorem \ref{thm;charform} an injection of sheaves on $X_{\Zar}$ 
%\shujirem{Why not $X_{\Nis}$ instead of $X_{\Zar}$?} \kayrem{I think if we want to write $X_{\Nis}$, then we have to assume that any scheme $Y$ which is \'etale over $X$ admits a projective SNC-compactification. In high dimension, one might have from the beginning a smooth projective variety $X$ and  an SNC divisor, but without assuming resolutions of singularities one does not have the SNC compactification for any \'etale scheme over $X$.}
\[\tF_{(X,R)}/ \tF_{(X, R-D')}\inj \Omega^1_X(R)\otimes_{\sO_X} i_*(\uHom_{\Sh_k}(\sO_k, F)_{D'_{\Zar}}).\]
\end{para}

\begin{rmk}
Let $D=\sum_{i=1}^r D_i$ be the irreducible components of $D$.
Set $X_i=X\setminus \cup_{j\neq i} D_j$ and denote the restriction of $D_i$ to $X_i$ again by $D_i$.
By the Gysin sequence in \cite[Theorem 7.15]{BRS} we have  isomorphisms
\[\frac{F(X_i,D_i)}{F(X_i)}\xr{\partial} \Gamma\left (D_i,\uHom_{\PST}(\G_m, F)\right),\quad i=1,\ldots, r,\]
where $\partial_i$ is induced by the connecting homomorphism in the Gysin sequence for 
$(D_i,\emptyset)\inj (X_i,\emptyset)$. 
By \cite[Corollary 8.6(1)]{S-purity} the natural map induced by restriction
\[\frac{F(X,D)}{F(X)}\to \bigoplus_{i=1}^r \frac{F(X_i,D_i)}{F(X_i)}\]
is injective.
Composing the two maps above yields an injection 
\[\frac{F(X,D)}{F(X)}\inj \bigoplus_{i=1}^r \Gamma(D_i,\uHom_{\PST}(\G_m, F)),\]
which can be viewed as a characteristic form for reduced $R$.
 \end{rmk}
 
 \begin{para}\label{para:app-Chow}
 As an exemplary application, we explain how  Theorem \ref{thm;charform}  and its local form from \ref{para:charform-local}
reveals an interesting structure of Chow groups of zero-cycles with modulus, as introduced in \cite{Kerz-Saito}.
Let $Y$ be a proper $k$-scheme with an effective Cartier divisor $E$, such that $V=Y\setminus E$ is smooth. 
By \cite[Corollary 2.3.5]{KSY2} and \cite[Theorem 0.1]{S-purity} there is a reciprocity sheaf $h_0(Y,E)_\Nis$ such that for any field $K$ over $k$  we have 
\[h_0(Y,E)_\Nis(K)=\CH_0(Y_K,E_K),\]
where the right hand side denotes the Chow group of zero-cycles with modulus and $Y_K=Y\otimes_k K$.

Assume $L$ is a henselian discrete valuation field of geometric type over $k$ with ring of integers $\sO_L$, maximal ideal $\fm_L$, and residue
field $K=\sO_L/\fm_L$. If we are in positive characteristic, we assume that the transcendence degree of $L/k$ is $\le 3$, so that all 
geometric models of $(\sO_L, \fm_L)$ have a projective SNC-compactification (see \ref{para:sm-cptf}).
Then ${\rm fil}_n\CH_0(Y_L, E_L):= h_0(Y, E)_{\Nis}(S, n s)$, where $S=\Spec \sO_L$ and $s\in S$ is the closed point, defines a filtration
\[{\rm fil}_0\CH_0(Y_L, E_L)\subset {\rm fil}_1\CH_0(Y_L, E_L)\subset\ldots\subset {\rm fil}_n\CH_0(Y_L, E_L)\subset \ldots\subset \CH_0(Y_L, E_L),\]
where ${\rm fil}_0\CH_0(Y_L, E_L)$ is the subgroup of $\CH_0(Y_L, E_L)$ generated by closed points in $V_L$ whose closure in $V\times_k S$ is finite over $S$.
By  Theorem \ref{thm;charform} we have an injection
\[\frac{{\rm fil}_n\CH_0(Y_L, E_L)}{{\rm fil}_{n-1}\CH_0(Y_L, E_L)}\hookrightarrow
\Omega^1_{\sO_L}\otimes_{\sO_L}\frac{\fm_L^{-n}}{\fm_L^{-n+1}}\otimes_{K} 
\uHom_{\Sh_k}(\sO_k, h_0(Y,E)_{\Nis})(K) \qquad (n\ge 2).\]
However the internal hom on the right is not well understood
and it would be interesting to describe the image of the map.

Observe that if ${\rm trdeg}(L/k)=1$, then $\fm_L/\fm_L^2 \to \Omega^1_{\sO_L}\otimes_{\sO_L} K$, $a \text{ mod } \fm_L^2\mapsto da\otimes 1$ is an isomorphism
(since $\Omega^1_K=\Omega^1_{K/k}=0$) and we obtain an isomorphism of $K$-vector spaces
\[\Omega^1_{\sO_L}\otimes \fm_L^{-n}/\fm_{L}^{-n+1}\xr{\simeq} \fm^{-n+1}_L/\fm_L^{-n+2}, \quad 
dz\otimes (\frac{1}{z^n}\text{ mod } \fm_L^{-n+1})\mapsto \frac{1}{z^{n-1}}\text{ mod } \fm_L^{-n+2},\]
where $z\in \fm_L$ is a local parameter. Hence the choice of a basis $e$ of the 1-dimensional $K$-vector space  $\fm_L^{-n+1}/\fm_L^{-n+2}$ 
induces a canonical map
\[s_{n,e}:\frac{{\rm fil}_n\CH_0(Y_L, E_L)}{{\rm fil}_{n-1}\CH_0(Y_L, E_L)}\lra \CH_0(Y_K,E_K)\qquad (n\ge 2),\]
as the composition 
\mlnl{\frac{{\rm fil}_n\CH_0(Y_L, E_L)}{{\rm fil}_{n-1}\CH_0(Y_L, E_L)}\inj 
\Omega^1_{\sO_L}\otimes_{\sO_L}\frac{\fm_L^{-n}}{\fm_L^{-n+1}}\otimes_{K} 
\uHom_{\Sh_k}(\sO_k, h_0(Y,E)_{\Nis})(K)\\
\cong  K\cdot e\otimes_K \uHom_{\Sh_k}(\sO_k, h_0(Y,E)_{\Nis})(K)\cong  \uHom_{\Sh_k}(\sO_k, h_0(Y,E)_{\Nis})(K)\\
\xr{{\rm ev}_1} h_0(Y,E)_{\Nis}(K)=\CH_0(Y_K,E_K),}
where the first inclusion is induced by the characteristic form and ${\rm ev}_1$ sends $\varphi\in \Hom_{\Sh_K}(\sO_K, h_0(Y,E)_{\Nis|\Spec K})$
to $\varphi(K)(1)$.
This is a new map and it is tempting to view it as a specialization map. 
It remains to study its properties  more closely,  e.g., if it happens to be injective for certain pairs $(Y,E)$.
\end{para}

%
%\begin{lemma}\label{lem1;tradm}
%\begin{itemize}
%\item[(1)]
%If $\phi:G\to F$ is a map in $\RSC$ such that 
%$\tG(X,nD)\to \tF(X,nD)$ is surjective for all $(X,nD)$ as above, then the tr-admissibility of $\tG$ implies that of $\tF$.
%\item[(2)]
%Let $p$ be the exponential characteristic of $k$.
%Assume that $F$ has no $p$-torsion and that for all dominant \'etale maps $U\to X$ in $\Sm$, the pullback $F(X)\to F(U)$ is injective,
%Then, $F$ is tr-admissible.
%\end{itemize} 
%\end{lemma}
%\begin{proof}
%(1) is obvious and (2) follows from \cite[Lem 1]{ms}.
%\end{proof}

\section{The characteristic form of Witt vectors and torsion characters of the fundamental group}\label{sec:ex-grps}
In this section we give a description of the characteristic form for the group of Witt vectors of finite length. From this, we
deduce that the characteristic form for torsion characters of the fundamental group
is a variant of the refined Swan conductor defined by Kato in \cite{KatoSwan}.
The necessary computations were actually already done in \cite{Yatagawa}.

Fix $X\in \Sm$ and $i:D\inj X$ a smooth connected divisor. 
Let $p={\rm char}(k)$.
For a henselian discrete valuation field $L$ of geometric type over $k$ (see \ref{para:cond})
we denote by  $\sO_L$ its ring of integers, by $\fm_L$ the maximal ideal,  and for $F\in \RSC_{\Nis}$ we write
\[\tilde{F}(\sO_L, \fm_L^{-j}):= \tilde{F}(S, j\cdot s), \quad j\ge 0,\]
where $S=\Spec \sO_L$ and $s\in S$ is the closed point. In the following $\G_a$ denotes the additive group
and, in positive characteristic,  $W_n$ the group scheme of $p$-typical Witt vectors of length $n$;
we have $\G_a$, $W_n\in \RSC_{\Nis}$, cf. Example \ref{ex:ASII}.
We write $\Omega^1_X$ for $\Omega^1_{X/k}$.

\begin{para}\label{para:matsuda}
Let $L$ be a henselian dvf of geometric type over $k$  with normalized valuation $v$.
If $p=0$, then we have by \cite[Theorem 6.4]{RS} 
\[\tilde{\G}_a(\sO_L,\fm_L^{-r})=\fm_L^{-r+1}, \quad r\ge 0.\]
Assume $p>0$.  We denote by ${\rm fil}_*W_n(L)$ the Matsuda filtration of $W_n(L)$
(see \cite{Matsuda} for the original definition, and e.g. \cite[7.3]{RS} for the shifted version used here).
Recall that for $a=(a_0,\ldots, a_{n-1})=\sum_{i=0}^{n-1} V^i ([a_i])\in W_n(L)$ and $r\ge 0$ we have 
\eq{eq:Matsuda}{ a\in {\rm fil}_r W_n(L)\Longleftrightarrow  
p^{n-1-i}v(a_i)\ge \begin{cases}  -r & \text{if } i\neq n-1-m\\  -r+1&\text{if } i=n-1-m, \end{cases}} 
where $m=\min\{n, {\rm ord}_p(r)\}$.
 By \cite[Theorem 7.20]{RS} we have 
\eq{eq:Matsuda2}{\widetilde{W_n}(\sO_L,\fm_L^{-r})= \sum_{j\ge 0}F^j({\rm fil}_r W_n(L)),}
where $F^j: W_n(L)\to W_n(L)$, $(a_0,\ldots, a_{n-1})\mapsto (a_0^{p^j},\ldots, a_{n-1}^{p^j})$ is the $j$th iterated absolute Frobenius.
\end{para}

\begin{para}\label{para:global-matsuda}
Assume $p>0$. 
We now study a global version of \eqref{eq:Matsuda2}. Let $U=X\setminus D$. 
We define  ${\rm fil}_{(X,rD)}W_n$ for $r\ge 0$ to be the subset of $W_n(U)$ consisting of those Witt vectors 
$(a_0,\ldots, a_{n-1})\in W_n(U)$
satisfying
\[F^{n-1-i}(a_i)\in \begin{cases} H^0(X,\sO_X(rD)) & \text{if } i\neq n-1-m\\ H^0(X,\sO_X((r-1)D)) &\text{if } i=n-1-m,\end{cases}\]
where $m=\min\{n, {\rm ord}_p(r)\}$.
Let $\eta\in X^{(1)}$ be the generic point of $D$ and $L=\Frac(K^h_{X,\eta})$. Then we have 
$H^0(X,\sO_X(rD))= H^0(U,\sO_U)\cap m_L^{-r}$ and hence also
\eq{para:global-matsuda1}{{\rm fil}_{(X,rD)}W_n= W_n(U)\cap {\rm fil}_rW_n(L).}
We set
\[{\rm fil}_{(X,rD)}^F W_n:=\sum_{j\ge 0} F^j({\rm fil}_{(X,rD)}W_n)\subset W_n(U).\]
\end{para}

\begin{prop}\label{prop:global-fil}
Assumptions and notations as in \ref{para:global-matsuda}.
Then
\[W_n^{AS}(X,rD)=\widetilde{W_n}(X,rD)={\rm fil}^F_{(X,rD)}W_n.\]
\end{prop}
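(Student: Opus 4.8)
The plan is to prove the two equalities separately, the first being a direct application of the Abbes--Saito formula and the second the genuinely new input. Since $W_n$ is a smooth commutative $k$-group scheme it has level $1$ by \cite[Theorem 5.2]{RS} (cf. Example \ref{ex:ASII}), and pairs $(Z,E)$ with $\dim Z\le 1$ always admit a projective SNC-compactification; hence the hypotheses of Corollary \ref{cor:ASII} are satisfied unconditionally for $F=W_n$ and we obtain $W_n^{AS}(X,rD)=\widetilde{W_n}(X,rD)$ with no assumption on $(X,D)$.

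It remains to identify $\widetilde{W_n}(X,rD)$ with ${\rm fil}^F_{(X,rD)}W_n$. By the Zariski--Nagata purity for reciprocity sheaves (\cite[Theorem 6.8]{RS-ZNP}) applied to the smooth irreducible divisor $D$, a section $a\in W_n(U)$ lies in $\widetilde{W_n}(X,rD)$ if and only if its image $a_\eta$ in $W_n(L)$ lies in $\widetilde{W_n}(\sO_L,\fm_L^{-r})$, where $\eta$ is the generic point of $D$ and $L=\Frac(K^h_{X,\eta})$. Combined with the local formula \eqref{eq:Matsuda2} and \eqref{para:global-matsuda1}, this reduces the assertion to the equality of subgroups of $W_n(U)$
\[ W_n(U)\cap \sum_{j\ge 0}F^j\big({\rm fil}_r W_n(L)\big) = \sum_{j\ge 0}F^j\big(W_n(U)\cap {\rm fil}_r W_n(L)\big), \]
the intersections being taken inside $W_n(L)$ via restriction to $\eta$. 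The inclusion $\supseteq$ is immediate, since $F^j$ preserves $W_n(U)$ and the right-hand side is visibly contained in both factors of the left-hand side.

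The content is the reverse inclusion: a global Witt vector admitting a \emph{local} decomposition as a sum of Frobenius twists of elements of the Matsuda filtration must admit such a decomposition with \emph{global} components. I would prove this by dévissage along the exact sequence $0\to \G_a\xr{V^{n-1}}W_n\xr{R}W_{n-1}\to 0$, which is compatible with $F$ and interacts well with the Matsuda filtration under the truncation $R$, reducing to the case $n=1$, i.e. to $\G_a$. For $\G_a$ the conceptual handle is that if $a=\sum_{j\ge 0}x_j^{p^j}$ then $da=dx_0$ in $\Omega^1_{L/k}$, so $a-x_0\in L^p$; shrinking $X$ Nisnevich-locally around $\eta$ (permissible by purity) I may assume $X$ is étale over $\A^d$ with $D=\{z_1=0\}$, so that $z_1,\dots,z_d$ give a $p$-basis and $\Gamma(U,\sO_U)=\bigoplus_{0\le e_i<p}\Gamma(U,\sO_U)^p\, z^{\underline e}$. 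Taking $x_0$ to be the non-$p$-th-power part of $a$ in this decomposition produces a \emph{global} representative with $a-x_0=w^p$, $w\in\Gamma(U,\sO_U)$, which lowers the Frobenius length by one; iterating gives a finite global decomposition $a=\sum_j x_j^{p^j}$.

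The delicate step, which I expect to be the main obstacle, is to verify that the global representatives produced in this way meet the required modulus bounds, i.e. that each $x_j$ lies in $\Gamma(U,\sO_U)\cap {\rm fil}_r\G_a(L)={\rm fil}_{(X,rD)}\G_a$ (and analogously for the Witt-vector components in the inductive step). This amounts to showing that the $p$-basis decomposition along $D$ is compatible with the pole/Matsuda filtration, so that passing to the non-$p$-th-power part does not worsen the pole order beyond $c_r=-r$ or $-r+1$, and that $w$ has pole order roughly $\tfrac1p$ that of $a-x_0$, which guarantees termination. Here one must invoke the precise numerics of \eqref{eq:Matsuda} together with the fact that $\widetilde{W_n}(\sO_L,\fm_L^{-r})$ is $F$-saturated of level $r$ (\cite[Theorem 7.20]{RS}); this is exactly the point at which the global argument parallels, but must refine, the local computation.
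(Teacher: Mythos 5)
Your treatment of the first equality (via Corollary \ref{cor:ASII} and the level-$1$ property of $W_n$) and the purity reduction to the inclusion $W_n(U)\cap\sum_{j}F^j\big({\rm fil}_rW_n(L)\big)\subset\sum_{j}F^j\big(W_n(U)\cap{\rm fil}_rW_n(L)\big)$ match the paper. But the rest contains a genuine gap: the step ``shrinking $X$ Nisnevich-locally around $\eta$ (permissible by purity)'' is not permissible here. Purity localizes the \emph{membership condition} defining $\widetilde{W_n}(X,rD)$ at the generic point of $D$, but the group ${\rm fil}^F_{(X,rD)}W_n$ in which you must produce the decomposition is built from the global sections $H^0(X,\sO_X(rD))$ and $W_n(U)$; shrinking $X$ enlarges this group, so a decomposition constructed on a Nisnevich neighbourhood $X'$ of $\eta$ has components regular only on $X'\setminus D$, possibly with poles on $X$ away from $D$. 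Concretely, your $\G_a$-argument needs the decomposition $\Gamma(U,\sO_U)=\bigoplus_{0\le e_i<p}\Gamma(U,\sO_U)^p\,z^{\underline e}$, i.e.\ a \emph{global} $p$-basis, which exists when $X$ is globally \'etale over $\A^d$ but not in general: $\sO_X^p\subset F_*\sO_X$ admits no canonical global complement, and choosing a representative of the non-$p$-th-power part of $a$ with poles only along $D$ and bounded by the Matsuda numerics is exactly a global, $H^1$-type problem. This, rather than the $\epsilon$-bookkeeping you flag at the end, is the real obstacle. A secondary weak point is the d\'evissage along $0\to\G_a\xr{V^{n-1}}W_n\xr{R}W_{n-1}\to0$: the truncation $R$ divides the Matsuda level roughly by $p$ and moves the shifted slot (since $m=\min\{n,{\rm ord}_p(r)\}$ changes), and after subtracting lifts of the $W_{n-1}$-components the difference lies in $V^{n-1}\G_a(U)$, but you must still show it lies in $\sum_jF^jV^{n-1}(-)$ with the correct bounds, which is not automatic because Witt-vector addition allows cancellations among the lower components of the Frobenius twists.

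The paper circumvents both problems by reducing in the geometric variable rather than in $n$: since $W_n$ has level $1$, $\widetilde{W_n}(X,rD)$ is detected by pullback along maps $f:C\to X$ from smooth affine curves with $(f^*D)_{\red}$ a point, and the heart of the proof (Claim \ref{prop:global-fil-claim}) shows that ${\rm fil}^F_{(X,rD)}W_n$ is detected by the same test objects. There shrinking \emph{is} legitimate, because one only needs to exhibit a single curve witnessing the failure of the filtration condition; the curve is chosen transversal to $D$ at a point avoiding the polar locus of the relevant coefficient $a_{s_0}$, so that its restriction remains a non-$p$-th power of maximal Frobenius depth. On a smooth affine curve with $D$ a point, the global-representative problem you face trivializes (the relevant $H^1$ vanishes), and the proof concludes by checking that the composite on graded pieces \eqref{prop:global-fil3} is an isomorphism for all $n$ at once, with no d\'evissage. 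To salvage your route you would either have to prove the global-representative statement for arbitrary $(X,D)$ — which essentially amounts to re-proving the proposition — or first reduce to curves, at which point you have rediscovered the paper's strategy.
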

\begin{proof}
The first equality holds by Corollary \ref{cor:ASII} and Example \ref{ex:ASII}\ref{ex:ASII1}.
We prove the second equality. By \cite[Corollary 8.6(2)]{S-purity} we have
\eq{prop:global-fil1}{\widetilde{W_n}(X, rD)= W_n(U)\cap \widetilde{W_n}(\sO_L,\fm_L^{-r}),\quad \text{for all }r\ge 0,}
where $L=K^h_{X,D}$. Thus by \eqref{eq:Matsuda2}  
\eq{prop:global-fil2}{{\rm fil}^F_{(X,rD)}W_n \subset \widetilde{W_n}(X,rD).}
For the other inclusion we have to show that given $\alpha_i\in {\rm fil}_rW_n(L)$ with $\sum_i F^i(\alpha_i)\in W_n(U)$, then we have
$\alpha_i\in W_n(U)$, for all $i$. To this end we reduce to the case of curves as follows.
Define $\widetilde{W_n}^{\le 1}(X,rD)$ and ${\rm fil}_{(X,rD)}^{F,\le 1}W_n$ in an analogous way as in
\eqref{cor:ASII2}, where the maps $f: Z\to X$  have $\dim Z\le 1$ and we additionally assume $(f^*D)_{\red}$ to be connected, i.e., it is a point.
Since $W_n$ has level $1$, we have $\widetilde{W_n}(X, rD)= \widetilde{W_n}^{\le 1}(X, rD)$.
\begin{claim}\label{prop:global-fil-claim}
\[{\rm fil}^F_{(X,rD)}W_n= {\rm fil}^{F, \le 1}_{(X,rD)}W_n \qquad (r\ge 0).\]
\end{claim}
We prove the claim. Obviously we have ${\rm fil}^F_{(X,rD)}W_n\subset {\rm fil}^{F, \le 1}_{(X,rD)}W_n$.
For the other inclusion let $\alpha=\sum_{s=0}^{n-1} V^s([a_s])\in W_n(U)\setminus {\rm fil}^F_{(X,rD)}W_n$, with $a_s\in \sO(U)$. 
We have to show, that there exists a smooth affine curve $C$ and a map  $f:C\to  X$ with $(f^*D)_\red$ a point, 
such that $f^*a\not\in {\rm fil}^F_{(C,rf^*D)}W_n$.
Let 
\[s_0:=\min\{s\mid V^s([a_s])\in W_{n}(U)\setminus {\rm fil}^F_{(X,rD)}W_{n}\}.\]
Then for any map $f:C\to X$ as above we have 
\[f^*(\alpha)\equiv V^{s_0}([f^*a_{s_0}])+\ldots+V^{n-1}([f^*a_{n-1}])\quad \text{mod } {\rm fil}^F_{(C, r f^*D)}W_n.\]
Therefore it  remains to  find a map $f:C\to X$ as above with $V^{s_0}(f^*[a_{s_0}])\not\in {\rm fil}^F_{(C,rf^*D)}W_{n}$.
To this end let $j$ be the maximal non-negative integer such that  $a_{s_0}=F^j(b)$, for some $b\in \sO(U)$. 
After shrinking $X$ around a closed point of $D_{\red}$, we may assume
$D=\div(z)$ and  $b=e/ z^\nu$, with  $e\in \sO(X)$, such that 
$E:=\div(e)$ and $D$ have no common irreducible component, and $\nu\ge 1$ satisfies
\eq{prop:global-fil2.5}{p^{n-1-s_0}\nu>\begin{cases} r & \text{if } s_0\neq n-1-m\\ r-1 &\text{if } s_0=n-1-m,\end{cases}}
where $m=\min\{n, {\rm ord}_p(r)\}$. Let $y\in D$ be a closed point which does not lie on $E$. 
For any regular sequence of parameters  $z_1=z, z_2,\ldots, z_d$ of $\sO_{X,y}$, the vanishing set $C=V(z_2,\ldots, z_d)$
defines a smooth affine integral curve in a neighborhood of $y$ intersecting $D$ transversally at $y$ and we may assume that
it intersects $D$ only at $y$ and does not intersect $E$, so that $e_{|C}\in \sO(C)^\times$. Furthermore,  
if $e$ is no $p$th-power in $\sO(X)$, then we may assume that  $e_{|C}$ is not a $p$th-power in $\sO(C)$.
This together with the choice of $j$ above and \eqref{prop:global-fil2.5} 
implies, that $b_{|C}\in \sO(C\setminus D_{|C})$ is not a $p$th-power and that  we have
\[p^{n-1-s_0}\div(b_{|C})< \begin{cases} -rD_{|C} & \text{if }s_0\neq n-1-m\\ -(r-1)D_{|C} & \text{if } s_0=n-1-m.\end{cases}\]
Hence $a_{s_0|C}= F^j(b_{|C})\not\in {\rm fil}^F_{C, rD_{|C}}W_n$, which proves Claim \ref{prop:global-fil-claim}.

Thus we are reduced to the case, where $X$ is smooth, affine,  1-dimensional, and $D$ is a point.
The inclusion \eqref{prop:global-fil2} is an isomorphism for $r=0$ and induces maps on the graded pieces for $r\ge 1$
\eq{prop:global-fil3}{\frac{{\rm fil}^F_{(X,rD)}W_n}{{\rm fil}^F_{(X,(r-1)D)}W_n}\to \frac{\widetilde{W_n}(X,rD)}{\widetilde{W_n}(X,(r-1)D)}\inj 
\frac{\widetilde{W_n}(\sO_L,\fm_L^{-r})}{\widetilde{W_n}(\sO_L,\fm_L^{-(r-1)})},}
where the last map is injective, by \eqref{prop:global-fil1}. 
Since $X$ is a smooth affine curve, it follows directly from the definitions that the composition of \eqref{prop:global-fil3} 
is an isomorphism. This implies the statement.
\end{proof}

\begin{ex}
Let $G$ be a commutative unipontent $k$-group.
Then we can embed $G$ into a finite direct sum of $p$-typical Witt group schemes $G\subset \oplus_{i} W_{n_i}$;
if $p=0$, then we can take $W_{n_i}=\G_a$.
Thus it follows from the   local form of the ramification filtration \eqref{eq:Matsuda2} that we have $\widetilde{G}(X, D)= \widetilde{G}(X)$ 
(recall that $D$ is assumed to be smooth).  The global  description from Proposition \ref{prop:global-fil} imposes further constraints.
For example, assume $p\ge 3$ and let $E\to S$ be a relative  elliptic curve over $S\in \Sm$ and let $D\cong S\subset E$ be a section.
Then it follows from Proposition \ref{prop:global-fil}, that any morphism of $k$-schemes
$E\setminus D\to G$, whose ramification is bounded by $2 D$, is constant in the sense that it factors as $E\setminus D\to S\to G$.
\end{ex}

\begin{para}\label{para:cgs}
Let $G$ be a commutative group scheme over $k$. With the notation from \ref{para:RegB} we have 
\[\uHom_{\Sh_k}(\sO_k, G)_D (T)=\Hom_{T-{\rm grp}}(\G_{a,T}, G_T)\qquad (T\in \Sm_D),\]
where the $G_T=G\times T$  and $\Hom_{T-{\rm grp}}$ denotes the homomorphisms  of $T$-group schemes.
It follows that for $p=0$, we have the following equality of Nisnevich sheaves on $D$
\[\uHom_{\Sh_k}(\sO_k, \G_{a})_{D_\Nis}=\sO_D.\]
For $p>0$, any  homomorphism $\G_{a, D}\to W_{n,D}$  of group schemes over $D$ 
has to factor via $V^{n-1}: \G_{a,D}\to W_{n,D}$ and hence we have 
\eq{eq:HomWn}{\uHom_{\Sh_k}(\sO_k, W_{n})_{D_\Nis}=\bigoplus_{j\ge 0} V^{n-1}\cdot \sO_D\cdot F^j,}
where $F^j: \G_{a,D}\to \G_{a,D}$ is induced by $\sO_D[t]\to \sO_D[t]$, $t\mapsto t^{p^j}$.
Under this identification, the $\sO_D$-module structure induced on the right hand side is given by 
\eq{eq:HomWnMod}{a \cdot (\sum_j V^{n-1} b_j  F^j)= \sum_j V^{n-1}   b_j a^{p^j}  F^j\qquad
 (a\in \sO_D).}
\end{para}

In view of Remark \ref{rmk:charF}, the following corollary can be deduced
from Yatagawa's computations in \cite{Yatagawa}.
\begin{cor}\label{prop:charW}
Let $X,D$ be as at the beginning of this section and assume $r\ge 2$. Denote by $\nu: U=X\setminus D\inj X$ the open immersion.
%Let $L$ be a henselian discrete valuation field of geometric type over $k$. Denote by $K=\sO_L/\fm_L$ the residue field.
%Let $r\ge 2$.
\begin{enumerate}[label=(\arabic*)]
\item\label{prop:charW1} Assume $p=0$. The characteristic form
\[\charr_{\G_a}^{(rD)}:  \tilde{\G}_{a, (X,rD)}=\sO_{X}((r-1)D)\to \Omega^1_{X}(rD)\otimes_{\sO_X} i_*\sO_D\]
is induced by the differential $d: \nu_*\sO_U\to \nu_*\Omega^1_{U}\otimes_{\sO_X} i_*\sO_D$, $a\mapsto da\otimes 1$.
\item\label{prop:charW2} Assume $p>0$ and $n\ge 1$.
The characteristic form
\[\charr_{W_n}^{(rD)}: \widetilde{W_n}_{(X,rD)}\to 
\Omega^{1}_{X}(rD)\otimes_{\sO_X } \bigoplus_{j\ge 0} V^{n-1}\cdot i_*\sO_D \cdot F^j\]
is  given as follows (using Proposition \ref{prop:global-fil}):
\[\charr_{W_n}^{(rD)}(F^j(a))= \begin{cases}
F^{n-1}d(a) \otimes V^{n-1} F^j & \text{if } (p,r)\neq (2,2),\\
F^{n-1}d(a) \otimes V^{n-1} F^j  +  \frac{dz}{z^2}\otimes V^{n-1} \alpha^{2^j} F^{j+1}& 
\text{if } (p,r)=(2,2),
\end{cases}\]
where $a=(a_{0},\ldots, a_{n-1})\in {\rm fil}_{(X,rD)} W_n$, $j\ge 0$, and  $F^{n-1}d: \nu_*W_n\sO_U\to \nu_*\Omega^1_{U}$ is given by 
\[F^{n-1}d(a)=\sum_{i=0}^{n-1} a_i^{p^{n-i-1}} da_i,\]
and in the second case $z$ denotes a local equation of $D$ and  $\alpha\in \sO_D$ is defined as the image of  $z^2a_{n-1}$ in $\sO_D$, where 
we use that in the case $(p,r)= (2,2)$ we have $\kay{z^2a_{n-1}}\in \sO_X$.
\end{enumerate}
\end{cor}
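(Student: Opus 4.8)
The plan is to reduce everything to one explicit computation of the Witt-vector difference on the dilatation and to match it with the evaluation of Takeshi Saito's characteristic form carried out by Yatagawa. The case $p=0$ (the additive group) is the degenerate instance $n=1$ of the positive-characteristic statement together with the identity $\uHom_{\Sh_k}(\sO_k,\G_a)_{D_\Nis}=\sO_D$ from \ref{para:cgs}: there $V^{n-1}=\id$ and $F^{n-1}d=d$, and the computation below collapses to a one-term Taylor expansion of $p_2^*a-p_1^*a$ producing $da\otimes 1$. So I concentrate on the case $p>0$. By Proposition \ref{prop:global-fil} one has $\widetilde{W_n}_{(X,rD)}={\rm fil}^F_{(X,rD)}W_n$, so every section is a sum of terms $F^j(a)$ with $a\in{\rm fil}_{(X,rD)}W_n$, and by additivity of the characteristic form it suffices to compute each $\charr_{W_n}^{(rD)}(F^j a)$.

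First I would dispose of the Frobenius twist. The absolute Frobenius $F^j$ is an additive endomorphism of $W_n$ commuting with the projections $p_i$ and with the restriction $\iota^*$, so $\psi_R(F^j a)=F^j\,\psi_R(a)$ in the notation of \ref{para:charF} and Lemma \ref{lem4;AS}. Post-composing an additive homomorphism with $F^j$ is, under the identification \eqref{eq:HomWn} and the $\sO_D$-module structure \eqref{eq:HomWnMod}, precisely the Frobenius twist on the target: if $\chi_{W_n,\sO_D}(\psi_R(a))=\sum_i e_i^\vee\otimes V^{n-1}c_iF^0$, then $\chi_{W_n,\sO_D}(\psi_R(F^j a))=\sum_i e_i^\vee\otimes V^{n-1}c_i^{p^j}F^j=\bigl(\sum_i c_i e_i^\vee\bigr)\otimes V^{n-1}F^j$, where the last step moves each $c_i^{p^j}$ back across the tensor by \eqref{eq:HomWnMod}. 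This reduces the whole statement to the untwisted identity $\sum_i c_i e_i^\vee=F^{n-1}d(a)$ (the $(p,r)=(2,2)$ anomaly propagating through the twist in the same way), and by the Zariski--Nagata purity argument of Theorem \ref{thm;charform} I may localize at the generic point of $D$ and work in the local model of \ref{para:dil-loc}.

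The heart of the argument is then the computation of $\iota^*\varphi_R(a)$. In the coordinates of \eqref{para:dil-loc2} one has $p_1^*z_i=t_i$, $p_2^*z_i=s_i=t_i+\tau_i t_1^{\,r}$, and $V_R=\{t_1=0\}$ has coordinates $\tau_i$ dual to the basis $e_i^\vee=dz_i/z_1^{\,r}$ of $\Omega^1_X(rD)_{|D}$; I would expand each Witt component of $a(s)\ominus a(t)$ and estimate its order in $t_1$. The Matsuda bounds defining ${\rm fil}_{(X,rD)}W_n$ are calibrated exactly so that the components $0,\dots,n-2$ are divisible by a positive power of $t_1$ and hence restrict to $0$ on $V_R$, while the top component is additive and linear in the $\tau_i$; this recovers concretely that $\psi_R(a)\in\Im(V^{n-1})$ (in accordance with Lemma \ref{lem4;AS} and \eqref{eq:HomWn}), and identifies its linear coefficient with the refined-Swan symbol $F^{n-1}d(a)$. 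Rather than redo the carry bookkeeping, I would install the dictionary of Remark \ref{rmk:charF}: the map $\iota^*\varphi_R$ is precisely the restriction to the special fibre of the dilatation difference $p_2^*-p_1^*$ used by Takeshi Saito in \cite{TakeshiSaito}, so for $W_n$ (equivalently, via Artin--Schreier--Witt, for $\Z/p^n$-torsors) its value is the one computed in \cite[\S2]{Yatagawa}; substituting this into the explicit formula \eqref{para:chiFO1} for $\chi_{W_n,\sO_D}$ in the basis $e_i^\vee=dz_i/z_1^{\,r}$ yields the stated expression.

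The exceptional $(p,r)=(2,2)$ term, which I expect to be the main obstacle, arises because for $p=2$, $r=2$ one contribution to the Witt carry governed by $S_1(u,v)=\tfrac1p\bigl((u+v)^p-u^p-v^p\bigr)$ has order exactly $0$ in $t_1$ and therefore does not vanish on $V_R$; since this carry squares its argument it lands one Frobenius degree higher, producing the additional term $\frac{dz}{z^2}\otimes V^{n-1}\alpha^{2^j}F^{j+1}$ with $\alpha=\overline{z^2 a_{n-1}}$. The two genuinely delicate points are (i) the uniform order-of-vanishing estimates forcing the lower components to restrict to zero and isolating the leading term of the top component, and (ii) pinning down this single anomalous carry when $p=r=2$. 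Both are exactly the calculations performed in \cite{Yatagawa}, so once the identification of $\iota^*\varphi_R$ with the dilatation difference of \cite{TakeshiSaito} is in place, the cleanest route is to quote \emph{loc.\ cit.} for the symbol and record the $(p,r)=(2,2)$ correction separately.
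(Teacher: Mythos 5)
Your proposal follows essentially the same route as the paper's proof: both make $\chi_{W_n,\sO_D}$ explicit via \eqref{para:chiFO1} in the local model of \ref{para:dil-loc} (with $e_i^\vee=dz_i/z^r$), reduce the statement to computing $\psi_r$ on the special fibre of the dilatation, and quote Yatagawa's computations for the delicate expansion including the $(p,r)=(2,2)$ term, your Frobenius-twist reduction being a correct way of making explicit what the paper absorbs into the citation of \cite{Yatagawa}. One cosmetic caveat, not affecting the argument since you defer the computation to \emph{loc.\ cit.}: the exceptional term is not a Witt-addition carry governed by $S_1$ --- it already occurs for $n=1$, where no carries exist --- but comes from the characteristic-$2$ binomial expansion of the top component itself, whose linear term in $\tau$ vanishes while the quadratic term $\tau^2=\tau^p$ has order exactly $0$ in $t$ precisely when $r=2$.
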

\begin{proof}
\ref{prop:charW2}. We make the map $\chi_{W_n,\sO_D}$ from \eqref{eq;HomSmB} explicit.
It suffices to consider the situation from   \eqref{para:dil-loc2}; with the notation from there we have
\[V_{rD}:=(P^{(rD)}_X)_{|\Spec B}\times_{X\times X}  D = \Spec (A/z)[\tau_1,\ldots, \tau_d],\]
which we view as a vector bundle over $D$ with sheaf of sections given by the dual of  \eqref{lem:dil7}. 
In the situation of \eqref{para:psi1}, the above comes from  a natural identification
\[  \Omega^1_X(rD)_{|D}\cong \underset{1\leq i\leq d}{\bigoplus}\sO_D\cdot \tau_i,\quad
\frac{dz_i}{z^r}\mapsto \tau_i, \]
and we are in the situation of \ref{para:chiFO} with $\sE^\vee=\Omega^1_X(rD)_{|D}$.
It follows from \eqref{para:Fad2}, that $W_n(V_{rD})_{\ad}$ is the abelian subgroup of $W_n((A/z)[\tau_1,\ldots,\tau_d])$ generated by the elements
\[ V^{n-1}(\tau_i^{p^j}a),\quad a\in A/z, \,i=1,\ldots, d, \, j\ge 0.\]
Thus  by \eqref{para:chiFO1} (with $e_i^\vee=\tau_i$ and $e_i=\tau_i^\vee$) and using \eqref{eq:HomWn}, the map
\[\chi_{W_n,\sO_D}: (\ul{W_n(V_{rD})_{\ad}})_D\to \Omega^1_X(rD)_{|D}  \otimes_{\sO_D} \bigoplus_{j\ge 0} V^{n-1}\cdot \sO_D\cdot F^j\]
is given on $D_{\Nis}$ by
\[\chi_{W_n,\sO_D}(V^{n-1}(\tau_i^{p^j}a))= \tau_i\otimes V^{n-1}\cdot a\cdot F^j=\frac{dz_i}{z^r}\otimes V^{n-1}\cdot a\cdot F^j.\]
In particular,
\[\chi_{W_n,\sO_D}(F^jV^{n-1}(\tau_i a))=\tilde{a}\frac{dz_i}{z^r}\otimes V^{n-1} F^j,\]
where $\tilde{a}\in A$ is a lift of $a\in A/z$.
Since $\charr^{(rD)}_{W_n}=\chi_{W_n, \sO_D}\circ \psi_r$ it remains to compute 
\[\psi_r: \widetilde{W_n}(X,rD)=W^{\rm AS}_n(X,rD)\to W_n(V_{rD})_{\ad}\]
from \eqref{para:psi2}. This computation can be deduced from similar computations as in
the proof of \cite[Proposition 2.12]{Yatagawa},  see also \cite[(1.21), Proposition 1.17, Definition 1.42]{Yatagawa}.
Finally, the proof of \ref{prop:charW1} is similar but much easier and therefore left to the reader.
\end{proof}

%\begin{lem}\label{lem:WittComp}
%Let $n\ge 1$. Then there exist polynomials $f_s\in\Z[x]$, $s=1,\ldots, n-1$, such that   
%\[[1+x]=[1]+\sum_{s=0}^{n-1} V^s([xf_s(x)])\quad \text{in } W_n(\Z[x]).\]
%Moreover, $f_0(x)=1$, $f_s(0)=1$, and $f_s$ depends only on $s$ and not on $n$.
%\end{lem}
%\begin{proof}
%See for example \cite[Lemma 3.4]{R}. There the condition $f_s(0)=1$ is not stated, but it is immediate from the proof.
%\end{proof}

\begin{rmk}\label{rmk:KR}
Let $L$ be a henselian dvf of geometric type over $k$ with residue field $K=\sO_L/\fm_L$ and assume $p>0$.
Denote by 
\[{\rm fil}^{\log}_rW_n(L)=\{(a_0,\ldots, a_{n-1})\in W_n(L)\mid p^{n-1-i}v(a_i)\ge -r\}\]
the Brylinski-Kato filtartion of $W_n(L)$ and by ${\rm fil}_r^{\log,F} W_n(L)$ its $F$-saturation.
Then by Corollary \ref{prop:charW} and in view of \eqref{eq:Matsuda2}, the characteristic form fits into the following commutative diagram
\[\xymatrix{
0\ar[r] & \frac{{\rm fil}^{\log,F}_{r-1}W_n(L)}{\widetilde{W_n}(\sO_L,\fm_L^{-r+1})}\ar[r]\ar@{^(->}[d] 
   & \frac{\widetilde{W_n}(\sO_L,\fm_L^{-r})}{ \widetilde{W_n}(\sO_L,\fm_L^{-r+1})}\ar[r]\ar[d]^{\charr^{(r)}_{W_n}} 
   &\frac{\widetilde{W_n}(\sO_L,\fm_L^{-r})}{{\rm fil}^{\log F}_{r-1} W_n(L)}\ar[r]\ar@{^(->}[d] &  0\\
0\ar[r]&\frac{\fm_L^{-r+1}}{\fm_L^{-r+2}}[F]\ar[r] 
  & \Omega^1_{\sO_L}\otimes_{\sO_L} \frac{\fm_L^{-r}}{\fm_L^{-r+1}}[F]\ar[r] 
  &\Omega^1_{K}\otimes_{K} \frac{\fm_L^{-r}}{\fm_L^{-r+1}}[F]\ar[r] & 0,
}\]
where the lower sequence is equal to $(-)\otimes_{\sO_L} \frac{\fm_L^{-r}}{\fm_L^{-r+1}}\otimes_K K[F]$ applied to the standard sequence
\[0\to \fm_L/\fm_L^2\to \Omega^1_{\sO_L}\otimes_{\sO_L} K\to \Omega^1_K\to 0\]
and  the two outer vertical maps are constructed and shown to be injective in \cite[4.7]{Kato-Russell}; they are induced by $F^{n-1}d$.
By Theorem \ref{thm;charform} we know that the middle vertical map is injective if ${\rm trdeg}(L/k)\le 3$.
On the other hand the above diagram and the injectivity of the outer vertical maps show that the middle map is always injective. 

Note that in \cite[4.6]{Kato-Russell} Kato-Russell  also construct an injective map (induced by $F^{n-1}d$)
\[\bar{\theta}_r: {\rm fil}^{\log, F}_rW_n(L)/{\rm fil}^{\log, F}_{r-1}W_n(L)\inj \Omega^1_{\sO_L}(\log)\otimes_{\sO_L} \frac{\fm_L^{-r}}{\fm_L^{-r+1}}[F],\]
which by \cite[Proposition 4.9]{Kato-Russell} is a lift of the refined Swan conductor of \cite[Definition 5.3]{KatoSwan}.
This is why the characteristic form can be viewed as  a non-log version of this $\bar{\theta}_r$.
\end{rmk}

\begin{para}\label{para:CW}
Assume $p>0$ and $r\ge 2$.
By the functoriality of the characteristic form we have a commutative diagram
\[\xymatrix{
\widetilde{W_{n+1}}_{(X, rD)}\ar[r]^-{\charr^{(rD)}_{W_{n+1}}} & 
\Omega^1_X(rD)\otimes_{\sO_X} \bigoplus_{j\ge 0}V^n\cdot i_*\sO_D\cdot F^j\\
\widetilde{W_n}_{(X, rD)}\ar[r]^-{\charr^{(rD)}_{W_n}}\ar[u]^{V} & 
\Omega^1_X(rD)\otimes_{\sO_X} \bigoplus_{j\ge 0}V^{n-1}\cdot i_*\sO_D\cdot F^j\ar[u]^{\id\otimes V\cdot}
}\]
Taking the direct limit over $V$, we obtain the characteristic form of the co-vectors $CW=\varinjlim_V W_n\sO_X$:
\[\charr^{(rD)}_{CW}: \widetilde{CW}_{(X,rD)}\to \Omega^1_X(rD)\otimes_{\sO_X} i_*\sO_D[F],\]
where $\sO_{D}[F]=\oplus_{j\ge 0} \sO_{D}\cdot F^j$ with $\sO_D$-module structure given by
$a\cdot(\sum_j b_j F^j)= \sum_{j} b_ja^{p^j}F^j$. 

The Artin-Schreier-Witt sequences  give in the direct limit rise to an exact sequence  
\[0\to \Q/\Z\to CW\xr{F-\id} CW\xr{\partial} H^1_{\et}\to 0,\]
of sheaves on $\Sm_{\Nis}$, where $H^1_{\et}$ is the reciprocity sheaf (cf. \cite[8.1]{RS})
\[X\mapsto H^1_{\et}(X):=H^1(X_{\et},\Q/\Z)= H^0(X_{\Nis}, R^1\e_*\Q/\Z)
=\Hom_{{\rm cont}}(\pi_1^{\rm ab}(X),\Q/\Z),\]
with  $\e: \Sm_{\et}\to\Sm_{\Nis}$ the change of sites map. We obtain the commutative diagram
\[\xymatrix{
\widetilde{CW}_{(X,rD)}\ar[rr]^-{\charr^{(rD)}_{CW}}\ar[d]^{\partial} & &
 \Omega^1_X(rD)\otimes_{\sO_X} i_*\sO_D[F]\ar[d]\\
 \widetilde{H^1_{\et}}_{(X,rD)}\ar[rr]^-{\charr^{(rD)}_{H^1_{\et}}}& &
 \Omega^1_X(rD)\otimes_{\sO_X} i_*\uHom_{\Sh_k}(\sO_k, H^1_{\et})_{D_\Nis},
}\]
where the vertical map on the right is induced by
\eq{para:CW1}{\sO_D[F]\to \varinjlim_V \uHom_{\Sh_k}(\sO_k, W_\bullet)\to 
%\varinjlim_V 
\uHom_{\Sh_k}(\sO_k, CW/(F-1))\to \uHom_{\Sh_k}(\sO_k, H^1_{\et}).}
Assume  $(X,D)$ admits a projective SNC-compactification (see \ref{para:sm-cptf}).
By Zariski-Nagta purity \cite[Corollary 6.10]{RS-ZNP} and the computation of 
$\widetilde{H^1_{\et}}(\sO_L,\fm_L^{-r})$ in \cite[Theorem 8.8]{RS}, we see that 
the vertical map $\partial$ is a surjection of Nisnevich sheaves on $X$
and thus $\charr^{(rD)}_{H^1_{\et}}$ is completely determined by Corollary \ref{prop:charW}.
In particular, since \eqref{para:CW1}  factors via $\sO_D[F]\to \sO_D$, $F\mapsto 1$, 
it follows from  Theorem \ref{thm;AS} and Lemma \ref{lem:ker-psi} that $\charr^{(rD)}_{H^1_\et}$ induces a map
%(which is injective if $\dim X\le 3$, by Theorem \ref{thm;charform})
\eq{para:CW2}{\widetilde{H^1_{\et}}_{(X,rD)}/\widetilde{H^1_{\et}}_{(X,(r-1)D)}
\kay{\to}  \Omega^1_X(rD)\otimes_{\sO_X} i_*\sO_D.}
We obtain that $\charr^{(rD)}_{H^1_{\et}}$ (on the quotient) is a global version
of the characteristic form (alias refined Swan conductor) of Matsuda-Yatagawa, see \cite[Definition 3.2.5]{Matsuda}
and \cite[Definition 1.18]{Yatagawa}, which is the non-log version of Kato's refined Swan conductor defined in 
\cite[\S 5]{KatoSwan}. 
However, note that in the case $(p,r)=(2,2)$  and  
$(X, D)= (\Spec \sO_L, \fm_L)$ as in \ref{para:matsuda},
the map \eqref{para:CW2} differs by a square root  from the formula of Yatagawa's characteristic form.
Indeed, let $p=2$ and let $\chi\in \widetilde{H^1_{\et}}(\sO_L,\fm_L^{-2})$ have a representative 
$V^{n-1}(a)\in {\rm fil}_2W_n(L)$ ,
%(see \eqref{eq:Matsuda} for notation), 
i.e., $a\in\fm_L^{-2}$. Then the characteristic form of $\chi$ defined  in \cite[Definition 1.18]{Yatagawa} is given by
\[da\otimes 1 + \tfrac{dz}{z^2}\otimes \sqrt{\alpha}\in \fm_L^{-2}\Omega^1_{\sO_L}\otimes_{\sO_L} K^{1/2},\]
where $K$ denotes the residue field of $\sO_L$, $K^{1/2}$ the field of square roots of elements of $K$ in an algebraic closure,
$z\in\fm_L$ is a local parameter, and $\alpha\in K$ is equal to $z^2a$ mod $\fm_L$. 
On the other hand the image of $\chi$ under the map \eqref{para:CW2} 
is given by 
\[da\otimes 1 + \tfrac{dz}{z^2}\otimes \alpha \in \fm_L^{-2}\Omega^1_{\sO_L}\otimes_{\sO_L} K,\]
as follows from Corollary \ref{prop:charW}\ref{prop:charW2} and the discussion preceding \eqref{para:CW2}.
\end{para}

\section{The characteristic form of differentials}\label{sec:ex-diff}

In this section we compute the characteristic form for the differential forms $\Omega^j$, 
$j\ge 1$, see Theorem \ref{prop:Omega-char}. As a consequence we obtain a negative answer
to a question posed in \cite{KMSY1}, see \ref{para:Q}. The computations will be used in the 
next section to give a new characterization of pseudo-rational singularities.

Throughout this section we assume $X\in \Sm$ with $\dim X=d$ and  $i:D\inj X$ is a smooth connected divisor.
Let ${\rm char}(k)=p\ge 0$ and $n\ge 1$. We write $\Omega^j_X$ for $\Omega^j_{X/k}$.

\begin{lem}\label{lem:mod-diag2}
Denote by $J^\bullet\subset \Omega^\bullet_{X\times X}$ the differential graded ideal  generated by $I_\Delta$ 
the ideal of the diagonal $\Delta: X\inj X\times X$ 
(i.e., $J^\bullet=I_\Delta \Omega^\bullet_{X\times X} + dI_{\Delta}\cdot \Omega^\bullet_{X\times X}$).
There is a short exact sequence  ($j\ge 0$)
\eq{lem:mod-diag20}{
0\to \frac{I_{\Delta}}{I^2_\Delta}\otimes_{\sO_X} \Omega^j_X \xr{a} \frac{J^j}{(J^\bullet\cdot J^{\bullet})^j}
\xr{b} \frac{I_{\Delta}}{I^2_\Delta}\otimes_{\sO_X} \Omega^{j-1}_X \to  0,}
where 
\[a(\theta\otimes \delta)= \theta p_1^*(\delta) \;{\rm  mod }\; (J^\bullet\cdot J^{\bullet})^j,\quad 
  b(\theta\e+\gamma \cdot d\theta )= \theta\otimes \Delta^*(\gamma),\]
where $\theta \in I_{\Delta}$, $p_1:X\times X\to X$ is the first projection, and $\delta\in \Omega^j_X$, 
$\gamma\in \Omega^{j-1}_{X\times X}$, $\e\in\Omega^j_{X\times X}$.

In particular, if $\theta_1, \ldots, \theta_d\in \sO_{X\times X}$ is a regular sequence generating $I_{\Delta}$, 
then for any $\beta\in J^{j}$ there exist {\em unique} forms $\gamma_i\in \Omega^{j-1}_X$ and $\delta_i\in\Omega^j_X$
such that 
\eq{lem:mod-diag21}{\beta= \sum_{i=1}^d p_1^*(\gamma_i)d\theta_i + \theta_i p_1^*(\delta_i)\quad 
\text{ in } \frac{\Omega^j_{X\times X}}{(J^\bullet\cdot J^\bullet)^j}.}
\end{lem}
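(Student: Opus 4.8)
The plan is to reduce the whole statement to a linear-algebra computation over $\sO_{X\times X}$ in an open neighborhood of the diagonal. First I would note that both $a$ and $b$ are described in the statement without reference to a choice of coordinates or of a generating sequence of $I_{\Delta}$, so it suffices to prove that they are well defined and that \eqref{lem:mod-diag20} is exact, and these are local questions on $X\times X$. Away from $\Delta$ one has $I_{\Delta}=\sO_{X\times X}$, hence $J^\bullet=\Omega^\bullet_{X\times X}=(J^\bullet\cdot J^\bullet)^\bullet$, so all three terms vanish there; thus I only need to work on an open $V\subset X\times X$ containing $\Delta$ on which $I_{\Delta}$ is generated by a regular sequence $\theta_1,\dots,\theta_d$ (e.g.\ $\theta_i=s_i-t_i$ as in \ref{para:dil-loc}). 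Since $\Delta$ is a regular immersion of codimension $d$ with $X$ smooth, the classes $\bar\theta_1,\dots,\bar\theta_d$ form an $\sO_X$-basis of $I_{\Delta}/I_{\Delta}^2$, and the conormal sequence $0\to I_{\Delta}/I_{\Delta}^2\to \Delta^*\Omega^1_{X\times X}\to\Omega^1_X\to 0$ is split exact; by Nakayama this shows that, after shrinking $V$, the forms $\omega_i:=p_1^*(dz_i)$ and $\eta_i:=d\theta_i$ together constitute an $\sO_V$-basis of $\Omega^1_{X\times X}$, so that $\Omega^j_{X\times X}$ is free on the $\omega_I\wedge\eta_K$ with $|I|+|K|=j$.

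The next step is to spell out $J^j$ and $(J^\bullet\cdot J^\bullet)^j$ in this basis. Since $dI_{\Delta}\subset I_{\Delta}\Omega^1_{X\times X}+\sum_i\sO_{X\times X}\,\eta_i$, one gets $J^j=I_{\Delta}\Omega^j_{X\times X}+\sum_i\eta_i\wedge\Omega^{j-1}_{X\times X}$, and multiplying two such expressions yields
\[
(J^\bullet\cdot J^\bullet)^j=I_{\Delta}^2\,\Omega^j_{X\times X}+\sum_i I_{\Delta}\,\eta_i\wedge\Omega^{j-1}_{X\times X}+\sum_{i,i'}\eta_i\wedge\eta_{i'}\wedge\Omega^{j-2}_{X\times X}.
\]
Reducing a general element of $J^j$ modulo $(J^\bullet\cdot J^\bullet)^j$ then kills every monomial with $|K|\ge 2$, makes the coefficients of the $\omega_I$ (the $|K|=0$ part) matter only modulo $I_{\Delta}^2$, and the coefficients of the $\omega_I\wedge\eta_i$ (the $|K|=1$ part) matter only modulo $I_{\Delta}$. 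This produces a normal form: every class in $J^j/(J^\bullet\cdot J^\bullet)^j$ is uniquely represented as $\sum_I\bar c_I\,\omega_I+\sum_{I,i}\bar d_{I,i}\,\omega_I\wedge\eta_i$ with $\bar c_I\in I_{\Delta}/I_{\Delta}^2$ and $\bar d_{I,i}\in\sO_X$. From this normal form the exactness follows: $a$ is exactly the identification of $I_{\Delta}/I_{\Delta}^2\otimes_{\sO_X}\Omega^j_X$ with the $|K|=0$ summand, hence injective; and $b$, which by $\Delta^*\eta_i=d(\Delta^*\theta_i)=0$ and $\Delta^*\omega_I=dz_I$ reads off the $|K|=1$ part, identifies the cokernel of $a$ with $I_{\Delta}/I_{\Delta}^2\otimes_{\sO_X}\Omega^{j-1}_X$. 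Thus $b\circ a=0$, $\Ker b=\Im a$, and $b$ is surjective. The degenerate case $j=0$ (where $J^0=I_{\Delta}$, $(J^\bullet\cdot J^\bullet)^0=I_{\Delta}^2$, $\Omega^{-1}_X=0$) I would check directly.

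Along the way I would record the two well-definedness points, both supplied by the same computation. For $a$: the assignment $\theta\otimes\delta\mapsto\theta\,p_1^*(\delta)$ is balanced over $\sO_X$ because for $f\in\sO_X$ with lift $\tilde f$ one has $\tilde f\theta-\theta\,p_1^*(f)\in I_{\Delta}^2$, so the ambiguity lies in $I_{\Delta}^2\Omega^j\subset(J^\bullet\cdot J^\bullet)^j$. For $b$: the recipe $\theta\epsilon+\gamma\,d\theta\mapsto\theta\otimes\Delta^*(\gamma)$ must be independent of the way one writes an element of $J^j$ and must kill $(J^\bullet\cdot J^\bullet)^j$, which is precisely what the uniqueness of the normal form verifies. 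The ``in particular'' clause is then a reassembly: writing $\bar c_I=\sum_i\delta_{i,I}\bar\theta_i$ and setting $\delta_i=\sum_I\delta_{i,I}\,dz_I$, $\gamma_i=\sum_I\bar d_{I,i}\,dz_I$ turns the normal form into $\beta\equiv\sum_i\theta_i\,p_1^*(\delta_i)+\sum_i p_1^*(\gamma_i)\wedge d\theta_i$, and the uniqueness of the $\delta_i\in\Omega^j_X$, $\gamma_i\in\Omega^{j-1}_X$ is exactly the injectivity of $a$ together with the isomorphism induced by $b$, using that the $\bar\theta_i$ form a basis of $I_{\Delta}/I_{\Delta}^2$.

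I expect the main obstacle to be the bookkeeping that produces the normal form, in particular computing $(J^\bullet\cdot J^\bullet)^j$ correctly and checking that $b$ genuinely descends to the quotient — that the two separate reductions (modulo $I_{\Delta}$ on the $\eta$-terms, modulo $I_{\Delta}^2$ on the pure-$\omega$ terms) are compatible and independent of how $\beta$ is expressed. The conceptual inputs (regularity of $\Delta$, the split conormal sequence, and the resulting basis $\{\omega_i,\eta_i\}$ of $\Omega^1_{X\times X}$ near $\Delta$) are standard, so the real work is organizing the graded monomials so that the $|K|=0$ and $|K|=1$ summands cleanly match $a$ and $b$.
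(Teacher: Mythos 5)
Your proof is correct, but it takes a more computational route than the paper's. The paper argues globally and coordinate-freely: it introduces the filtration ${\rm fil}^r\subset \Delta^*\Omega^j_{X\times X}$ coming from the conormal sequence, compares \eqref{lem:mod-diag20} with the evident exact sequence $0\to {\rm fil}^1/{\rm fil}^2\to \Delta^*\Omega^j_{X\times X}/{\rm fil}^2\to \Omega^j_X\to 0$ via the identification $\Delta^*\bigl(\Omega^j_{X\times X}/(J^\bullet\cdot J^\bullet)^j\bigr)\cong \Delta^*\Omega^j_{X\times X}/{\rm fil}^2$, and thereby reduces everything to showing that the natural map $I_\Delta/I^2_\Delta\otimes_{\sO_X}\Omega^j_X\to I_\Delta\Omega^j_{X\times X}/(J^\bullet\cdot J^\bullet)^j$ is an isomorphism; surjectivity uses the splitting $\Omega^j_{X\times X}\cong p_1^*\Omega^j_X\oplus J^j$ (which you also invoke), while injectivity is obtained by a small trick you do not need: composing with the de Rham differential into $\Omega^{j+1}_{X\times X}/(J^\bullet\cdot J^\bullet)^{j+1}$ and restricting to the diagonal recovers, up to sign, the injection ${\rm fil}^1/{\rm fil}^2\inj \Delta^*\Omega^{j+1}_{X\times X}/{\rm fil}^2$. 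You instead localize at the diagonal (correctly observing that all three sheaves vanish off $\Delta$), fix the basis $\{\omega_i=p_1^*dz_i,\ \eta_i=d\theta_i\}$, and compute $J^j$ and $(J^\bullet\cdot J^\bullet)^j$ monomial by monomial; your formula for $(J^\bullet\cdot J^\bullet)^j$ is right, and the resulting normal form ($\bar c_I\in I_\Delta/I^2_\Delta$ on the pure-$\omega$ part, $\bar d_{I,i}\in\sO_X$ on the $|K|=1$ part) does deliver well-definedness of $a$ and $b$, exactness, and the uniqueness in \eqref{lem:mod-diag21} in one stroke — the essential verifications being exactly the ones you flag, namely $\Delta^*(d\theta_i)=d(\Delta^*\theta_i)=0$ and the check that for a presentation $\gamma\, d\theta$ with $\theta=\sum_i a_i\theta_i$ the $|K|=1$ read-off is $\sum_i \bar a_i\bar\theta_i\otimes\Delta^*\gamma$ after the $I_\Delta$-coefficients are discarded. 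What each approach buys: the paper's avoids all basis bookkeeping and a separate localization step, at the price of the less obvious $d$-trick for injectivity; yours is elementary and makes the structure of $J^j/(J^\bullet\cdot J^\bullet)^j$ completely explicit, which hands you the ``in particular'' clause directly rather than deducing it from exactness. One small point of care: your normal form is tied to the specific generators $\theta_i=s_i-t_i$, whereas \eqref{lem:mod-diag21} is asserted for an arbitrary regular sequence generating $I_\Delta$; your closing remark (injectivity of $a$, the isomorphism induced by $b$, and the fact that the $\bar\theta_i$ form an $\sO_X$-basis of $I_\Delta/I^2_\Delta$) is precisely the coordinate-free deduction that covers the general case, so this is not a gap, but it is that deduction — not the normal form itself — that finishes the general statement.
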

\begin{proof}
Since $X$ is a smooth $k$-scheme we have a short exact sequence of locally free $\sO_X$-modules
\eq{lem:mod-diag22}{0\to \frac{I_\Delta}{I^2_{\Delta}}\xr{\bar{d}} 
\Delta^*\Omega^1_{X\times X}\xr{\Delta^*} \Omega^1_X\to 0.
}
Set
\[{\rm  fil}^r:={\rm image}\left(\bigwedge^{r}\frac{I_\Delta}{I^2_{\Delta}} \otimes \Delta^*\Omega^{j-r}_{X\times X}\to 
\Delta^*\Omega^{j}_{X\times X}\right), \quad 0\le r\le j.\]
Consider the diagram of $\sO_{X\times X}$-modules
\eq{lem:mod-diag22.5}{\xymatrix{
0\ar[r]& \frac{J^j}{(J^\bullet\cdot J^\bullet)^j}\ar[r]\ar[d] &
\frac{\Omega^j_{X\times X}}{(J^\bullet\cdot J^\bullet)^j}\ar[r]\ar[d] &
\Omega^j_X\ar[r]\ar@{=}[d] & 0\\
0\ar[r] & \frac{{\rm fil}^1}{{\rm fil}^2}\ar[r]^-{(*)} &
\frac{\Delta^*\Omega^j_{X\times X}}{{\rm fil}^2}\ar[r] & \Omega^j_X\ar[r] &0.
}}
The rows are clearly exact and  the vertical maps are the natural ones. 
The exact sequence \eqref{lem:mod-diag22} yields an isomorphism
\[\frac{I_\Delta}{I^2_{\Delta}} \otimes_{\sO_X} \Omega^{j-1}_{X}\xr{\simeq}\frac{{\rm fil}^1}{{\rm fil}^2}\]
and its composition with the map $(*)$ 
sends $\theta\otimes \gamma$ to $\tilde{\gamma}\cdot d\theta$, where $\gamma\in \Omega^{j-1}_X$
and $\tilde{\gamma}\in \Omega^{j-1}_{X\times X}$ is some lift along $\Delta$, in particular we may take
$\tilde{\gamma}=p_1^*\gamma$. 
We observe that 
\[\frac{\Delta^*\Omega^j_{X\times X}}{{\rm fil}^2}\cong 
\Delta^*\left(\frac{\Omega^j_{X\times X}}{(J^\bullet\cdot J^\bullet)^j}\right).\]
Hence the commutative diagram \eqref{lem:mod-diag22.5} gives a short exact sequence
\[0\to \frac{I_{\Delta}\Omega^j_{X\times X}}{(J^\bullet\cdot J^\bullet)^j}
\to \frac{J^j}{(J^\bullet\cdot J^{\bullet})^j}
\xr{b} \frac{I_{\Delta}}{I^2_\Delta}\otimes_{\sO_X} \Omega^{j-1}_X \to  0,\]
where $b$ is as in the statement.
Thus it remains to prove that the natural map
\eq{lem:mod-diag23}{\frac{I_\Delta}{I^2_\Delta}\otimes \Omega^j_X\to 
\frac{I_{\Delta}\Omega^j_{X\times X}}{(J^\bullet\cdot J^\bullet)^j}, \quad 
\theta\otimes \delta\mapsto \theta\cdot p_1^*(\delta) }
is an isomorphism. Since $\Delta$ is a section of $p_1$ we have a splitting
\eq{lem:mod-diag24}{\Omega^j_{X\times X}\cong p_1^*(\Omega^j_X)\oplus J^j,}
which yields the surjectivity of \eqref{lem:mod-diag23}.
Furthermore the composition
\[\frac{I_\Delta}{I^2_\Delta}\otimes \Omega^j_X\xr{\eqref{lem:mod-diag23}}
\frac{I_{\Delta}\Omega^j_{X\times X}}{(J^\bullet\cdot J^\bullet)^j}\xr{d} 
\frac{\Omega^{j+1}_{X\times X}}{(J^\bullet\cdot J^\bullet)^{j+1}}
\to \Delta^*\left(\frac{\Omega^{j+1}_{X\times X}}{(J^\bullet\cdot J^\bullet)^{j+1}}\right)
\cong \frac{\Delta^*\Omega^{j+1}_{X\times X}}{{\rm fil}^2} \]
is equal (up to sign) to the injection 
\[\frac{I_\Delta}{I^2_\Delta}\otimes \Omega^j_X\cong \frac{{\rm fil}^1}{{\rm fil}^2}\inj 
\frac{\Delta^*\Omega^{j+1}_{X\times X}}{{\rm fil}^2},\]
stemming from \eqref{lem:mod-diag22}. Hence \eqref{lem:mod-diag23} is injective as well.
This completes the proof of the exactness of \eqref{lem:mod-diag20}.

Finally, the existence of a presentation as in \eqref{lem:mod-diag21}  follows from 
the definition of $J^\bullet$ and the decomposition \eqref{lem:mod-diag24};
the uniqueness follows from the exact sequence using that $\theta_1,\ldots, \theta_d$
defines a basis of the $\sO_X$-module $I_\Delta/I^2_{\Delta}$.
\end{proof}

\begin{lem}\label{lem:non-van}
Assume $X\to \Spec k[z_1,\ldots, z_d]$ is \'etale.  Let $p_i: X\times X\to X$ be the projection to the $i$th factor
and set $t_i=p_1^*(z_i)$,  $s_i=p_2^*(z_i)$. Let $U\subset X\times X$ be an open subset, such that
the ideal $I_\Delta$ of the diagonal  restricted to $U$ is generated by $\theta_i:=s_i-t_i$, $i=1,\ldots, d$.
Let $j\ge 1$ and  $\beta\in \Omega^j(U)$ and write
\[p_2^*(\beta)-p_1^*(\beta)=\sum_{i=1}^d p_1^*(\gamma_i)d\theta_i + \theta_i p_1^*(\delta_i)\quad 
\text{ in } \Gamma(U,\Omega^j_{X\times X}/(J^\bullet\cdot J^\bullet)^j)\]
as in \eqref{lem:mod-diag21}. 
Then 
\[\beta=0 \Longleftrightarrow \gamma_i=0 \text{ for all }i=1,\ldots, d.\]
\end{lem}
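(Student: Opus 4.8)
The plan is to compute the coefficients $\gamma_i$ explicitly and to identify them, up to a global sign, with the interior products $\iota_{\partial_{z_i}}\beta$; the statement then reduces to the elementary fact that a $j$-form with $j\ge 1$ is recovered from its contractions against the coordinate vector fields. First I would use that $X\to\Spec k[z_1,\ldots,z_d]$ is \'etale, so $dz_1,\ldots,dz_d$ is an $\sO_X$-basis of $\Omega^1_X$ and $\{dz_I\}_{|I|=j}$ a basis of $\Omega^j_X$; writing $\beta=\sum_{|I|=j} f_I\,dz_I$ (reading $\beta$ as a $j$-form on the relevant open of $X$, pulled back to $U$ along $p_1$ and $p_2$), the element $p_2^*\beta-p_1^*\beta$ lies in $J^j$ since it is killed by $\Delta^*$, and the forms $\gamma_i\in\Omega^{j-1}_X$, $\delta_i\in\Omega^j_X$ are its \emph{unique} coefficients in the presentation \eqref{lem:mod-diag21} provided by Lemma \ref{lem:mod-diag2}.

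Next I would carry out the linearization. Since $s_i=t_i+\theta_i$ we have $ds_i=dt_i+d\theta_i$, and a first-order Taylor expansion gives $p_2^*(f_I)=f_I(t+\theta)\equiv p_1^*(f_I)+\sum_k p_1^*(\partial_{z_k}f_I)\,\theta_k \pmod{I_\Delta^2}$. Expanding $p_2^*\beta-p_1^*\beta=\sum_I\big(p_2^*(f_I)\,ds_I-p_1^*(f_I)\,dt_I\big)$ and discarding everything lying in $(J^\bullet\cdot J^\bullet)^j$ — namely all products of two factors $d\theta$ (in $(dI_\Delta)^2$), all mixed terms $\theta_k\,d\theta_l$ (in $I_\Delta\cdot dI_\Delta$), and all second-order $I_\Delta^2$-terms — the surviving expression is
\[
p_2^*\beta-p_1^*\beta\equiv \sum_{|I|=j} p_1^*(f_I)\sum_{l=1}^{j} dt_{i_1}\wedge\cdots\wedge d\theta_{i_l}\wedge\cdots\wedge dt_{i_j}\;+\;\sum_{I,k} p_1^*(\partial_{z_k}f_I)\,\theta_k\,dt_I
\]
modulo $(J^\bullet\cdot J^\bullet)^j$. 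Matching this with $\sum_i p_1^*(\gamma_i)\,d\theta_i+\theta_i\,p_1^*(\delta_i)$ and invoking the uniqueness in Lemma \ref{lem:mod-diag2}, I would move each $d\theta_{i_l}$ to the right in the first sum (the sign $(-1)^{j-l}$ combining with the contraction sign $(-1)^{l-1}$ into the $l$-independent $(-1)^{j+1}$) and read off $\gamma_i=(-1)^{j+1}\,\iota_{\partial_{z_i}}\beta$; the second sum contributes only to the $\delta_i$.

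Finally, the identification $\gamma_i=\pm\,\iota_{\partial_{z_i}}\beta=\pm\sum_{I\ni i}(\pm f_I)\,dz_{I\setminus\{i\}}$ settles both directions. If $\beta=0$ then $p_2^*\beta-p_1^*\beta=0$, so $\gamma_i=0$ for all $i$ by uniqueness. Conversely, for fixed $i$ the coefficient of $dz_J$ (with $|J|=j-1$, $i\notin J$) in $\iota_{\partial_{z_i}}\beta$ is $\pm f_{J\cup\{i\}}$, and since $\{dz_J\}$ is an $\sO_X$-basis of $\Omega^{j-1}_X$, the vanishing of $\gamma_i$ forces $f_{J\cup\{i\}}=0$ for all such $J$. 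As $j\ge 1$ every multi-index $I$ contains some index $i$, so ranging over $i$ yields $f_I=0$ for all $I$, i.e.\ $\beta=0$. The main obstacle is precisely the bookkeeping of the middle step: one must verify carefully which monomials survive modulo $(J^\bullet\cdot J^\bullet)^j$ and fix the sign so that $\gamma_i$ is exactly a unit multiple of the contraction $\iota_{\partial_{z_i}}\beta$. Once this identification is pinned down, the vanishing statement is immediate from the freeness of $\Omega^\bullet_X$ granted by \'etaleness.
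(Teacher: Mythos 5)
Your proof is correct and follows essentially the same route as the paper's: both expand $\beta=\sum_I f_I\,dz_I$ in the \'etale coordinates, compute $p_2^*\beta-p_1^*\beta$ modulo $(J^\bullet\cdot J^\bullet)^j$, and use the uniqueness from Lemma \ref{lem:mod-diag2} to identify $\gamma_i$ (up to the global sign $(-1)^{j+1}$) with the contraction $\iota_{\partial_{z_i}}\beta$, so that $\gamma_i=0$ for all $i$ forces $f_I=0$ for every $I$ because each multi-index contains some $i$ when $j\ge 1$. The only cosmetic difference is that the paper writes $p_2^*(f_I)=p_1^*(f_I)+e_I$ with $e_I\in I_\Delta$ and absorbs the term $e_I\,dt_I$ wholesale into the $\delta$-part, sidestepping your first-order Taylor expansion with partial derivatives, which only matters for the explicit form of the $\delta_i$ and not for the $\gamma_i$.
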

\begin{proof}
By uniqueness of the $\gamma_i$ and $\delta_i$ the question is local.
Thus we may assume $X=\Spec A$ and $U=\Spec B$ with $B$ a localization of $A\otimes_k A$.
We can write $\beta$ uniquely as
\[\beta=\sum_{K} f_{K} dz_{K},\]
where the sum is over all $K=(1\le k_1< \ldots< k_j\le d)$, $f_{K}\in A$, and 
$dz_K= dz_{k_1}\cdots dz_{k_j}$.
Fix $K$ as above and write 
\[p_2^*(f_{K})= p_1^*(f_{K})+ e_{K} 
\quad \text{for some } e_{K}\in \Gamma(U,I_\Delta).\]
Thus in $\Omega^j_B/ (J^\bullet\cdot J^\bullet)^j$ we have 
\begin{align*}
p_2^*(f_K dz_K)-p_1^*(f_K dz_K) & = (p_1^*(f_K)+ e_K)d(t+\theta)_{K}  -  p_1^*(f_K) dt_K\\
                                                   &= e_K d t_K 
 +\sum_{\nu=1}^j (-1)^{j-\nu}p_1^*(f_K) dt_{K(\nu)}\cdot d\theta_{k_\nu},
\end{align*}
where $K(\nu)=(k_1,\ldots k_{\nu-1}, k_{\nu+1},\ldots, k_j)$. 
Thus 
\[\gamma_i dz_i= \sum_{\nu=1}^j\sum_{K \text{ with } k_\nu=i} (-1)^{j-\nu} f_K dz_{K(\nu)}dz_i
=\sum_{\nu=1}^j\sum_{K \text{ with } k_\nu=i}f_K dz_K.\] 
Hence $\gamma_i=0$ implies  $f_K=0$, for all $K$ which contain $i$.
This yields the non-trivial implication of the statement.
\end{proof}

\begin{para}\label{para:OOmega}
Recall from \ref{para:RegB} that $\uHom_{\Sh_k}(\sO_k,\Omega^j)_D=\uHom_{\Sh_D}(\sO_D,\Omega^j_{|\Sm_D})$ 
denotes the sheaf of maps of abelian sheaves $\sO\to \Omega^j$ on $\Sm_D$.
In what follows, we simply write 
$\uHom_{\Sh_D}(\sO_D,\Omega^j_{|\Sm_D})=\uHom_{\Sh_D}(\sO,\Omega^j)$.
It is equipped with the $\sO_D$-module structure given by $a\cdot f:= f(a\cdot -)$.
A  section $\beta\in \Gamma(D, \Omega^j)$ defines a section of 
$\uHom_{\Sh_D}(\sO,\Omega^j)$ over $D$
(which we will again denote by $\beta$) via
\[\Gamma(T,\sO)\to \Gamma(T,\Omega^j), \quad  a\mapsto af^*(\beta), \quad\text{for }f:T\to D  \text{ smooth},\]
which gives a morphism of $\sO_D$-modules 
$\Omega^j_D\to \uHom_{\Sh_D}(\sO,\Omega^j)$.
A section $\alpha\in \Gamma(D,\Omega^{j-1})$ defines a section $(\alpha\cdot d)$ of $\uHom_{\Sh_D}(\sO,\Omega^j)$ over $D$
given by
\[ \Gamma(T,\sO)\to \Gamma(T,\Omega^j_T), \quad b\mapsto f^*(\alpha)\cdot d b=: (\alpha \cdot d)(b), \quad
 \text{for }f:T\to D \text{ smooth}.\]
By the above, we obtain the subsheaf
 \[\Omega^j_D\oplus \Omega^{j-1}_D\cdot d\subset \uHom_{\Sh_D}(\sO,\Omega^j).\]
%\shujirem{I erased $\Xi$ here since $\Xi$ is defined below in \eqref{para:OOmega2}. }
 If $p>0$, then we may precompose the above with arbitrary Frobenius powers and obtain 
 \[\bigoplus_{s\ge 0}\Omega^j_D\cdot {\rm Frob}^{s}\oplus \Omega^{j-1}_D\cdot d \subset \uHom_{\Sh_D}(\sO,\Omega^j).\]
By the $\sO_D$-module structure on $\uHom_{\Sh_D}(\sO,\Omega^j)$ defined above, we have 
\[x\cdot (\alpha\cdot d)=\alpha\cdot dx + (x\alpha\cdot d),\quad  \text{for }x\in \sO_D.\]
Using this, it is direct to check that the map
\eq{para:OOmega1}{ \xi: \Omega^j_D \oplus \Omega^{j-1}_D \to\uHom_{\Sh_D}(\sO,\Omega^j),}
\[\xi(\beta,\alpha):=  (\beta + d\alpha)+ (-1)^{j-1} (\alpha\cdot d).\]
is an injective morphism of $\sO_D$-modules. We denote its image by 
\eq{para:OOmega2}{ \Xi:=\xi(\Omega^j_D \oplus \Omega^{j-1}_D)\subset \uHom_{\Sh_D}(\sO,\Omega^j),}
which is an $\sO_D$-submodule. 
\end{para}

\begin{para}\label{para:kosz}
Recall that for any $\sO_X$-module  $\sH$ and any $j\ge 1$ we have an $\sO_X$-linear map
\eq{para:kosz1}{\bigwedge^j \sH\to \sH\otimes_{\sO_X}(\bigwedge^{j-1}\sH)}
given by
\[h_1\wedge\ldots\wedge h_j\mapsto \sum_{i=1}^j (-1)^{i-1}h_i\otimes h_1\wedge\ldots \wedge\widehat{h_i}
\wedge \ldots \wedge h_j,\]
where the hat $\widehat{\phantom{-}}$ indicates omission.
Tensoring \eqref{para:kosz1} for $\sH=\Omega^1_X$ with $\sO_X(nD)$ and composing with the map induced 
by $i^*: \Omega^{j-1}_X\to i_*\Omega^{j-1}_D$ yields the morphism of $\sO_{X}$-modules
\eq{para:kosz2}{
\partial_n^{j}: \Omega^j_X(nD)\to \Omega^1_X(nD)\otimes i_*\Omega^{j-1}_D.}
\end{para}

\begin{para}\label{para:CDX}
Let $I_D\subset \sO_X$ be the ideal sheaf of $i: D\inj X$ and set 
\eq{para:resnD1}{
C_{D/X}(nD):=\frac{I_D}{I_D^2}\otimes_{\sO_X}\sO_X(nD). }
Note that $C_{D/X}(nD)\cong \sO_X((n-1)D)_{|D}$ and  we have an exact sequence of $\sO_D$-modules
\eq{para:resnD1.5}{0\to C_{D/X}(nD)\xr{\bar{d}}  \Omega^1_X(nD)_{|D}\to \Omega^1_{D}(nD)\to 0,}
where 
$\Omega^1_{D}(nD) =\Omega^1_{D}\otimes_{\sO_D}\sO_{X}(nD)_{|D}$
% the twist ``$(nD)$'' means ``$\otimes_{\sO_D}\sO_{X}(nD)_{|D}$'' 
and if $z$ is a local equation for $D$ with its class $\bar{z}\in I_D/I_D^2$, then 
$\bar{d}(\bar{z}\otimes a/z^n )= (adz/z^n)_{|D}$ for $a\in \sO_X$. 
In what follows, we view $C_{D/X}(nD)$ as a sub-$\sO_D$-module of 
$\Omega^1_X(nD)_{|D}$ via $\bar{d}$.  

\end{para}

\begin{thm}\label{prop:Omega-char}
%Let $F=\Omega^j$, $j\ge 1$, and $n\ge 1$. 
Denote by $\FAS_{(X,nD)}$ the sheaf on $X_{\Nis}$ given by $V\mapsto \FAS(V, nD_{|V})$, for $F=\Omega^j$, see Definition \ref{defn:FAS}. 
Let $z\in \sO_X$ be a local equation of $D$. 
\begin{enumerate}[label=(\arabic*)]
\item\label{prop:Omega-charI}
We have 
\eq{prop:Omega-char1}{\FAS_{(X,nD)}=
\begin{cases} \Omega^j_X(\log D)((n-1)D) & \text{if } p=0 \text{ or } (p\neq 0 \text{ and } p\nmid n)\\
                     \Omega^j_X(nD)& \text{if } p\neq 0 \text{ and } p\mid n.   \end{cases}}
\item\label{prop:Omega-charII}
For $n\ge 2$  the characteristic form 
\[{\rm char}^{(nD)}_F: \FAS_{(X,nD)}\lra 
\Omega^1_X(nD)_{|D}\otimes_{\sO_D} \uHom_{\Sh_D}(\sO,\Omega^j)\]
is  given as follows (using the notation \ref{para:OOmega} and \ref{para:kosz})
\[\charr^{(nD)}_F(\omega)=
\begin{cases}
(\id\otimes\xi)(\partial^{j+1}_n(d\omega), \partial^j_n(\omega)) & \text{if } (n,p)\neq (2,2),\\
(\id\otimes\xi)(\partial^{j+1}_2(d\omega), \partial^j_2(\omega))+ \frac{dz}{z^2}\otimes (\ol{z^2\omega}\cdot {\rm Frob}) &
    \text{if } (n,p)=(2,2),
\end{cases}\]
where $\id$ is the identity on $\Omega^1_X(nD)$, $\xi$ is from \eqref{para:OOmega1}, $ \partial^j_n$ is from \eqref{para:kosz2},
$\omega$ is a local section of $\FAS_{(X,nD)}$. 
By \eqref{prop:Omega-char1}, we have $d\omega\in \Omega^{j+1}_X(nD)$,
and in the case $(n,p)=(2,2)$ the form $z^2\omega$ is regular along $D$ and 
$\ol{z^2\omega}$ denotes its image in $\Omega^{j}_D$. 

\item\label{prop:Omega-charIII} The characteristic form factors as follows, where  for $\alpha\in \Omega^s_X$, we write $\ol{\alpha}$ for its image in $\Omega^s_D$, and for the explicit description in \ref{prop:Omega-chariv} and
\ref{prop:Omega-charv}, we assume that $X$ is \'etale over $\Spec k[z, z_1,\ldots, z_{d-1}]$ and $D=\Div(z)$: 
\begin{enumerate}[label=(\roman*)]
%\item\label{prop:Omega-chari} for $n=1$
%\[\frac{\Omega^j_X(\log D)}{\Omega^j_X}\xrightarrow{(\Res_{D,1}\circ d,\, \Res_{D,1})}
%C_{D/X}(D)\otimes(\Omega^j_D\oplus \Omega^{j-1}_D)
%\xhookrightarrow{\bar{d}\otimes \xi} \Omega^1_X(D)_{|D} \otimes \Xi,\]
%i.e., locally we have for $\alpha\in \Omega^{j-1}_X$
%\[{\rm char}^{(nD)}_F(\bar{\alpha}\frac{dz}{z})= -\frac{dz}{z}\otimes \bar{\alpha}\cdot d;\]
%\end{enumerate}
%and for $n\ge 2$
%\begin{enumerate}[resume*]
\item\label{prop:Omega-charii}
if $p=0$ or $p>0$ and $p\nmid n(n-1)$, then $\charr^{(nD)}_F$  factors via the isomorphism
\[\frac{\Omega^j_X(\log D)((n-1)D)}{\Omega^j_X(\log D)((n-2)D)} \stackrel{\simeq}{\lra}
C_{D/X}(nD)\otimes_{\sO_D} \Xi,\]
 given by 
\[\charr^{(nD)}_F(\alpha\frac{dz}{z^n} + \frac{1}{z^{n-1}}\beta)
=\frac{dz}{z^n}\otimes \left(-(n-1)\bar{\beta} + (\bar{\alpha}\cdot d)\right),\quad \text{for }\alpha\in 
\Omega^j_X, \,\beta \in \Omega^{j-1}_X;\]
\item\label{prop:Omega-chariii} if $p>0$ and $p|n-1$, then $\charr^{(nD)}_F$ factors via the isomorphism
\[\charr^{(nD)}_F: \frac{\Omega^j_X(\log D)((n-1)D)}{\Omega^j_X((n-1)D)}\stackrel{\simeq}{\lra }
C_{D/X}(nD)\otimes_{\sO_D} (\Omega^{j-1}_D\cdot d),\]
 given by
\[\charr^{(nD)}_F(\alpha\frac{dz}{z^n})=\frac{dz}{z^n}\otimes (\bar{\alpha}\cdot d), \quad \text{for }\alpha\in \Omega^{j-1}_X,\]
where the $\sO_D$-module structure of $(\Omega^{j-1}_D\cdot d)$ is induced by the one of $\Omega^{j-1}_D$;
\item\label{prop:Omega-chariv} if $p>0$, $p|n$, and $n\neq 2$, then $\charr^{(nD)}_F$ factors via an injection 
\[\charr^{(nD)}_F: \frac{\Omega^j_X(nD)}{\Omega^j_X(\log D)((n-2)D)}\lra  
\Omega^1_X(nD)_{|D}\otimes_{\sO_D}\Xi,\]
which on the elements $\alpha\frac{dz}{z^n}+ \frac{1}{z^{n-1}}\beta$ with
$\alpha\in \Omega^{j-1}_X$ and $\beta\in \Omega^j_X$, is given by the same formula as in \ref{prop:Omega-charii}.  Furthermore, for $f\in \sO_X$ and $1\le \nu_1<\ldots<\nu_j\le d-1$, we have
\mlnl{\charr^{(nD)}_F(\frac{1}{z^{n}}fdz_{\nu_1}\cdots dz_{\nu_j})=\\
\frac{df}{z^n}\otimes d\bar{z}_{\nu_1}\cdots d\bar{z}_{\nu_j}+ 
\sum_{i=1}^j(-1)^{j-i}  \frac{dz_{\nu_i}}{z^n}\otimes (\ol{f}d\bar{z}_{\nu_1}\cdots \widehat{d\bar{z}_{\nu_i}}\cdots d\bar{z}_{\nu_j}\cdot d);}
\item\label{prop:Omega-charv} if $n=p=2$, then $\charr^{(2D)}_F$ factors via an injection
\[\charr^{(2D)}_F: \frac{\Omega^j_X(2D)}{\Omega^j_X(\log D)}\lra  \Omega^1_X(2D)_{|D}\otimes (\Xi\oplus \Omega^{j}_D\cdot {\rm Frob}),\]
which on the elements $\alpha\frac{dz}{z^2}+ \frac{1}{z}\beta$ with
$\alpha\in \Omega^{j-1}_X$ and $\beta\in \Omega^j_X$, is given by the same formula as in \ref{prop:Omega-charii}.  Furthermore for $f\in \sO_X$ and $1\le \nu_1<\ldots<\nu_j\le d-1$, we have 
\mlnl{
\charr^{(2D)}_F(\frac{1}{z^{2}}fdz_{\nu_1}\cdots dz_{\nu_j})  =\\
  \frac{df}{z^2}\otimes d\bar{z}_{\nu_1}\cdots d\bar{z}_{\nu_j} + 
\sum_{i=1}^j(-1)^{j-i} \frac{dz_{\nu_i}}{z^2}\otimes (\bar{f}d\bar{z}_{\nu_1}\cdots \widehat{d\bar{z}_{\nu_i}}\cdots d\bar{z}_{\nu_j}\cdot d) \\+\frac{dz}{z^2}\otimes (\bar{f} d\bar{z}_{\nu_1}\cdots d\bar{z}_{\nu_j}\cdot {\rm Frob}).
}
\end{enumerate}
\end{enumerate}
\end{thm}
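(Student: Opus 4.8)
The plan is to reduce the whole statement to an explicit computation on the dilatation $P^{(nD)}_X$ and to read off both the sheaf $\FAS_{(X,nD)}$ and the characteristic form from the leading terms of $p_2^*\omega-p_1^*\omega$. All sheaves in the statement are Nisnevich sheaves on $X$ that agree away from $D$ (where $P^{(nD)}_X=U\times U$), and $\Omega^\bullet$ satisfies purity, so regularity of a form on the smooth scheme $P^{(nD)}_X$ may be tested at the generic points of $R_P$. Hence I would first shrink $X$ and assume it is \'etale over $k[z_1,\dots,z_d]$ with $D=\{z=0\}$, $z=z_1$, so that the coordinate description \eqref{para:dil-loc2} applies: $P=(P^{(nD)}_X)_{|V}$ has coordinates $t_i,\tau_i$ with $s_i=t_i+\tau_i t^n$ and exceptional divisor $R_P=\{t=0\}$.

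For part \ref{prop:Omega-charI} the point is that $\omega\in\FAS(X,nD)$ if and only if $\Phi(\omega):=p_2^*\omega-p_1^*\omega$ is regular on $P$. I would compute $\Phi(\omega)$ by substituting $p_2^*(z_i)=t_i+\tau_i t^n$ and expanding in powers of $t$, using $s=t(1+\tau_1 t^{n-1})$, so that poles along $R_P$ have the same order under both pullbacks and the leading polar terms cancel. Lemma \ref{lem:mod-diag2} and Lemma \ref{lem:non-van} provide the clean bookkeeping: for $\omega=z^{-m}\eta$ the decomposition of $p_2^*\eta-p_1^*\eta$ combined with $d\theta_i=t^nd\tau_i+n\tau_i t^{n-1}dt$ shows that the coefficient of $d\tau_i$ produces a pole of order $m-n$, forcing $m\le n$, while at $m=n$ the surviving first-order pole in the directions transverse to $\tfrac{dz}{z}$ is proportional to $n$. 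This pole survives precisely when $p\nmid n$, which pins down $\FAS_{(X,nD)}=\Omega^j_X(\log D)((n-1)D)$; when $p\mid n$ it vanishes and all of $\Omega^j_X(nD)$ is admitted.

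For parts \ref{prop:Omega-charII} and \ref{prop:Omega-charIII}, by definition $\psi_n(\omega)=\iota_n^*\Phi(\omega)$ is the restriction of the now regular form $\Phi(\omega)$ to $V_n=\{t=0\}$, which by Lemma \ref{lem4;AS} lies in the additive part; applying $\chi_{F,\sO_D}$ from \eqref{eq;HomSmB} through the identification $\tau_i\leftrightarrow\tfrac{dz_i}{z^n}$ of \eqref{lem:dil7} computes the characteristic form. Concretely I would set $t=0$ in the expansion: the terms carrying $d\tau_i$ assemble into the ``$\alpha\cdot d$'' part and the terms linear in $\tau_i$ into the ``$\beta$'' part, matching $\partial_n^{j+1}(d\omega)$ and $\partial_n^{j}(\omega)$ under the Koszul map \eqref{para:kosz2} and the map $\xi$ of \eqref{para:OOmega1}. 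The factorizations of part \ref{prop:Omega-charIII} are then formal: by Lemma \ref{lem:ker-psi} the characteristic form kills $\FAS(X,(n-1)D)$, and part \ref{prop:Omega-charI} applied to $n-1$ identifies this kernel as $\Omega^j_X(\log D)((n-2)D)$ when $p\nmid n-1$ and as $\Omega^j_X((n-1)D)$ when $p\mid n-1$; this produces the four cases. Surjectivity onto $C_{D/X}(nD)\otimes\Xi$ in the first two cases uses invertibility of $n-1$ modulo $p$, and injectivity in the remaining cases is read off directly from the explicit formulas.

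The hard part will be the case $n=p=2$. Here the degree-$n$ obstruction vanishes modulo $p$, so regularity holds on all of $\Omega^j_X(2D)$, but one must expand to the next order: in characteristic $2$ one has $s^{-2}=t^{-2}(1+\tau_1 t)^{-2}=t^{-2}(1+\tau_1^2 t^2+\cdots)$, so the correction term is $\tau_1^2=\tau_1^{p}$, a Frobenius twist of the fibre coordinate. Restricting to $V_2$ this yields the extra summand $\tfrac{dz}{z^2}\otimes(\overline{z^2\omega}\cdot{\rm Frob})$, which is exactly the anomalous term in \ref{prop:Omega-charII} and \ref{prop:Omega-charv}. Correctly isolating this Frobenius contribution, and checking that $z^2\omega$ is regular along $D$ so that $\overline{z^2\omega}\in\Omega^j_D$ is well defined, is the delicate step; the remaining work is the routine but lengthy verification that the intrinsic operators $\partial_n^j$, $\xi$, and the isomorphisms of part \ref{prop:Omega-charIII} reproduce the coordinate formulas.
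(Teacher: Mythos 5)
Your proposal is correct and follows essentially the same route as the paper's proof: the same local chart \eqref{para:dil-loc2} for $P^{(nD)}_X$, the pole-order analysis of $p_2^*\omega-p_1^*\omega$ organized by Lemmas \ref{lem:mod-diag2} and \ref{lem:non-van} (with the expansion $u^i-1=-i\tau t^{n-1}+\cdots$ producing the $p\mid n$ dichotomy, and the $\tau^2=\tau^p$ correction giving the Frobenius summand when $(n,p)=(2,2)$), restriction to $V_n$ followed by $\chi_{F,\sO_D}$ via \eqref{para:chiFO1} for the explicit formulas, and a final comparison with the intrinsic operators $\partial^j_n$ and $\xi$ for part \ref{prop:Omega-charII}. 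The only organizational difference is that you derive the factorizations in part \ref{prop:Omega-charIII} from Lemma \ref{lem:ker-psi} together with part \ref{prop:Omega-charI} applied at level $n-1$, whereas the paper reads the kernels and the injectivity/isomorphism claims directly off the computed formulas; both justifications are sound, since the denominators in \ref{prop:Omega-charii}--\ref{prop:Omega-charv} are exactly $\FAS_{(X,(n-1)D)}$.
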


\def\Ab{{\overline{A}}}

\begin{proof}
We start by showing \eqref{prop:Omega-char1}, i.e.,
\mlnl{\{a\in\Omega^j(X\setminus D)\mid p_2^*a-p_1^*a\in \Omega^j(P^{(nD)}_X)\}\\
=
\begin{cases}
\Gamma(X, \Omega^j_X(\log D)((n-1) D))& \text{if } p=0 \text{ or } (p\neq 0 \text{ and } p\nmid n)\\
\Gamma(X, \Omega^j_X(nD))& \text{if } p\neq 0 \text{ and } p\mid n.
\end{cases}}
This is a local question and we can assume $X=\Spec A$ and $D=\Spec \Ab$ with $\Ab=A/zA$.
%\shujirem{Since $R$ has been used for a divisor, I changed it to $\Ab$.}
Shrinking $D$ further we can assume that we have an \'etale map 
\[k[z_1, \ldots, z_{d-1}]\to \Ab,\]
where $d=\dim X$.
Up to replacing  $X$ by a Nisnevich neighborhood of $D$ and we may furthermore
assume that we have a map $X\to D$ such that $D\inj X\to D$ is the identity, e.g. \cite[Lemma 7.13]{BRS}. 
%Note that $X\to D$ is smooth of relative dimension 1.
The induced map $\Ab\to A$ identifies the completion $\hat{A}$ of $A$ along $zA$ with $\Ab[[z]]$.
Since $\Omega^j_{A[1/z]}/\Omega^j_A$ only depends on $\hat{A}$ it suffices to show the following:
for any $q,r\ge 1$, and $\alpha_1,\ldots, \alpha_q\in \Omega^{j-1}_\Ab$, and 
$\beta_1,\ldots, \beta_r\in \Omega^j_\Ab$ with $\alpha_q\neq 0$ and $\beta_r\neq 0$ set 
\[\varphi_q:=\sum_{i=1}^q\alpha_i \frac{dz}{z^i},\quad \psi_r:=\sum_{i=1}^r \frac{1}{z^i}\beta_i\]
and 
\[\Phi_q=p_2^*(\varphi_q)-p_1^*(\varphi_q),\quad \Psi_r:= p_2^*(\psi_r)-p_1^*(\psi_r),\]
where $p_i: P^{(nD)}_X\to X$ are the projections from \ref{para:dil} for $R=nD$. 
Then we have to show
\eq{prop:Omega-char2}{\Phi_q+\Psi_r\in \Omega^j(P^{(nD)}_X) \Longleftrightarrow
\begin{cases} q\le n \text{ and } r\le n-1 & \text{if } p=0 \text{ or } (p> 0 \text{ and } p\nmid n)\\
                    q,r\le n & \text{if } p> 0 \text{ and } p\mid n.   \end{cases}}
We have a natural map $P^{(nD)}_X\to X\times X$  and it suffices to check  \eqref{prop:Omega-char2}
locally around the closed points of 
$D\inj X \overset{\Delta_X}{\to} X\times X$.
Let $B$ be the localization of $A\otimes_k A$ at such a closed point and set
\[t:= z\otimes 1,\quad  t_i:=z_i\otimes 1, \quad  s:=1\otimes z, \quad s_i:=1\otimes z_i\in B.\] 
Let $S$ be the localization of $\Ab\otimes_k \Ab$ at the same closed point (viewed as a point in $D\subset D\times D$).
Let $\theta_i=s_i-t_i\in S$, $i=1,\ldots, d-1$; it  is a regular sequence generating the ideal $I$ of the diagonal $D\inj D\times D$ in $S$.
%(Indeed $D\to T:=\Spec k[z_1,\ldots, z_{d-1}]$ is \'etale and hence so is $D\times D\to T\times T$;
%thus the pullback of the diagonal  $\Delta_T\subset T\times T$ to $D\times D$ 
%is a disjoint union of integral \'etale $\Delta_T$-schemes, with one of the components being $\Delta_D$ the diagonal in
%$D\times D$; hence in the localization around a closed point of $\Delta_D$ the pullback of the ideal of  $\Delta_{T}$
%is the ideal of $\Delta_D$.)
Similarly,  $I_1=(s-t, \theta_1,\ldots, \theta_{d-1})B$  is equal to the ideal of the diagonal 
$X\inj X\times X$ in $B$.  Now the pullback of $P^{(nD)}_X\to X\times X$ along $\Spec B\to X\times X$ is equal to
$\Spec C$ with 
\eq{prop:Omega-char4}{
C= \frac{B[\tau, \tau_1,\ldots, \tau_{d-1}][\frac{1}{1+\tau t^{n-1}}]}{(s-t-t^n\tau, \theta_1- t^n\tau_1, \ldots,
\theta_{d-1}- t^n\tau_{d-1})}.}
It suffices to show \eqref{prop:Omega-char2}  with $\Omega^j(P^{(nD)})$ replaced by 
$\Omega^j_C$. 
Set  
\[u:=(1+\tau t^{n-1})^{-1}\in C^\times.\]
We have in $C$
\[u=\frac{1}{1+\tau t^{n-1}}= 1-\tau t^{n-1}u=1-\tau t^{n-1}+ \tau^2 t^{2n-2}u,\quad s=t+t^n\tau=tu^{-1}.\]

We first consider the differentials $\alpha_i\frac{dz}{z^i}$.
As in Lemma \ref{lem:mod-diag2} denote by 
$J^\bullet\subset \Omega^\bullet_S$ the differential graded ideal generated by 
$I=(\theta_1,\ldots, \theta_{d-1})$.
We have in $\Omega^{j-1}_S$
\[q_2^*(\alpha_i)=q_1^*(\alpha_i)+\e_i,\quad \e_i\in J^{j-1},\]
where $q_i:D\times D \to D$ are the projections. 
We compute in $\Omega^j_{C[1/t]}$ (for $n\ge 1$)
\begin{align}
p_2^*(\alpha_i \frac{dz}{z^i})-p_1^*(\alpha_i \frac{dz}{z^i}) =&
p_2^*(\alpha_i) \frac{ds}{s^i}-p_1^*(\alpha_i) \frac{dt}{t^i}\label{prop:Omega-char6}\\
= &\frac{1}{t^{i-1}}u^{i-1}(p_1^*(\alpha_i)+\e_i)\left(\frac{dt}{t}+\frac{du^{-1}}{u^{-1}}\right)
-p_1^*(\alpha_i) \frac{dt}{t^i}\notag\\
 = &  p_1^*(\alpha_i) \frac{dt}{t^i}\left(u^i(1+ n\tau t^{n-1}) -1\right) 
+t^{n-i}u^ip_1^*(\alpha_i)d\tau \notag\\
& +\underbrace{\frac{1}{t^{i-1}} u^{i-1}\e_i\left(\frac{dt}{t}+\frac{du^{-1}}{u^{-1}}\right)}_{:=\e_{1,i}},\notag
\end{align}
where $\e_i$ are viewed as elements of $\Omega_C^{j-1}$ via the map $S \to B$ induced by $\Ab\to A$ and  we used $d u^{-1}= t^{n-1}(d\tau+(n-1)\tau dt/t)$.
We observe 
\begin{enumerate}[label=(\alph*)]
\item\label{prop:Omega-char7} $u^i(1+ n\tau t^{n-1}) -1= (n-i)\tau t^{n-1}+ t^{2n-2}c_i$, for some $c_i\in C$ 
(with $c_i=0$ if $n=i=1$);
\item\label{prop:Omega-char8} $\e_{1,i}\in t^{n-i}\Omega^{j-1}_C \cdot dt+ t^{2n-i}\Omega^j_C$;
\item\label{prop:Omega-char9} let $\bar{C}=C/(\tau_1,\ldots, \tau_{d-1})$, then $\e_{1,i}\mapsto 0$ 
under the natural map $\Omega^j_{C[1/t]}\to \Omega^j_{\bar{C}[1/t]}$.

\end{enumerate}
Here \ref{prop:Omega-char8} and \ref{prop:Omega-char9} follow from the fact that
the image of $J^\bullet$ in $\Omega^\bullet_C$  lies in the graded ideal generated by
$t^n\tau_i$ and $d(t^n\tau_i)$, $i=1,\ldots, d-1$.
From \ref{prop:Omega-char7}, \ref{prop:Omega-char8}, and \eqref{prop:Omega-char6} we find
\eq{prop:Omega-char15}{\Phi_q\in\Omega^{j}_C\Longleftarrow q\le n.}

Next we consider the differentials $\frac{1}{z^{i}}\beta_i$.
By Lemma \ref{lem:non-van} we have in $\Omega^{j}_S$
\[q_2^*(\beta_i)- q_1^*(\beta_i)= \e'_{i},\] 
where $q_i:D\times D \to D$ are the projections and 
\[\e'_{i}\equiv \sum_{\nu=1}^{d-1}   q_1^*(\gamma_{i,\nu})\cdot d\theta_\nu+ \theta_\nu \cdot q_1^{*}(\delta_{i,\nu})
\quad \text{mod } (J^\bullet\cdot J^\bullet)^j,\]  
for certain $\gamma_{i,\nu}\in \Omega^{j-1}_\Ab$ and $\delta_{i,\nu}\in \Omega^{j}_\Ab$, which satisfy
\eq{prop:Omega-char9.3}{\gamma_{i,\nu}=0,  \text{ for all }\nu\Longleftrightarrow 
\beta_i=0.}
We compute in $\Omega^j_{C[1/t]}$ (using the notation from above)
\begin{align}
p_2^*(\frac{1}{z^i} \beta_i)- p_1^*(\frac{1}{z^i} \beta_i) =& 
                 \frac{1}{s^i}p_2^*(\beta_i)- \frac{1}{t^i} p_1^*(\beta_i)\label{prop:Omega-char9.5}\\
=&  \frac{1}{t^i} p_1^*(\beta_i)(u^i-1) + \underbrace{\frac{1}{t^i}u^i\e'_{i}}_{=:\e_{2,i}}.\notag
\end{align}
We observe
\begin{enumerate}[resume*]
\item\label{prop:Omega-char72} $u^i-1= -i\tau t^{n-1} + t^{2n-2}e_i$, for some $e_i\in C$ (with $e_i=\tau u$ if $i=1$);
\item\label{prop:Omega-char82} $\e_{2,i}\in nt^{n-1-i}\Omega_C^{j-1}dt+ t^{n-i}\Omega^{j}_C$;
\item\label{prop:Omega-char92} $\e_{2,i}\mapsto 0$ under $\Omega^{j}_{C[1/t]}\to \Omega^j_{\bar{C}[1/t]}$.
\end{enumerate}
Here \ref{prop:Omega-char82} and \ref{prop:Omega-char92} follow from $\theta_i=t^n \tau_i$.
We find
\[\Psi_r\in \Omega^j_C \Longleftarrow
\begin{cases}  r\le n-1 & \text{if } p=0 \text{ or } (p> 0 \text{ and } p\nmid n)\\
                    r\le n & \text{if } p> 0 \text{ and } p\mid n.   \end{cases}\]
Thus we have shown this ``$\Longleftarrow$'' direction in \eqref{prop:Omega-char2}.

We show the other direction.
Observe that $\bar{C}=C/(\tau_1,\ldots, \tau_{d-1})$ is essentially \'etale over 
\[\Ab[s,t,\tau]/(s-t-t^n\tau)\cong \Ab[t,\tau]\]
and that the image  of $\Phi_q+\Psi_r$ in 
$\Omega^j_{\bar{C}[1/t]}=\bar{C}[1/t]\otimes_{\Ab[t,\tau]}\Omega^j_{\Ab[t,\tau]}$ 
is by \eqref{prop:Omega-char6}, \ref{prop:Omega-char9}, \eqref{prop:Omega-char9.5} and \ref{prop:Omega-char92},  equal to
\[\ol{\Phi_q+\Psi_r}= \sum_{i=1}^q \left( \alpha_i \frac{dt}{t^i}\left(\bar{u}^i(1+ n\tau t^{n-1}) -1\right) +
t^{n-i}\bar{u}^i \alpha_i d\tau\right)  +\sum_{i=1}^r \frac{1}{t^i}\beta_i(\bar{u}^i-1),\]
where $\bar{u}\in \bar{C}^\times$ is the image of $u$.
Considering the coefficient in front of $d\tau$, we see that $\ol{\Phi_q+\Psi_r}\in \Omega^j_{\bar{C}}$ 
is only possible if $q\le n$, which by \eqref{prop:Omega-char15} implies that $\Phi_q$ is regular at $t$. Hence
$\Phi_q+\Psi_r\in \Omega^j_C$ implies  $q\le n$ and $\Psi_r\in \Omega^j_C$.

We claim that $\Psi_r$ can be regular at $t$ only if $r\le n$. 
Since $\beta_r\neq 0$  we find  $\gamma_{r,\nu_0}\neq 0$, for some $\nu_0$, by \eqref{prop:Omega-char9.3}; 
up to renumbering we can assume $\nu_0=1$.
The image  of $\tau_1^{-1}\Psi_r$ under the composition
\[\Omega^{j}_{C[1/\tau_1 t]}\xr{\Res_{\tau_1}} \Omega^{j-1}_{(C/\tau_1)[1/t]}\xr{\rm nat.} 
\Omega^{j-1}_{\bar{C}[1/t]}=\bar{C}[1/t]\otimes_{\Ab[t,\tau]} \Omega^j_{\Ab[t,\tau]} \]
is equal to 
\[\sum_{i=1}^r \frac{1}{t^i}\bar{u}^i t^n \gamma_{i,1}.\]
Since $0\neq \gamma_{r,1}\in \Omega^{j-1}_\Ab\subset \Omega^{j-1}_{\Ab[t,\tau]}$ this last expression can be regular at $t$ only if $r\le n$. 
%Now assume $\gamma_\nu=0$ for all $\nu$. Then there exists an $\nu_0$ such that $\delta_{r,\nu_0}\neq 0$;
%we can assume $\nu_0=1$. In this case set  $\bar{C}_1:= C/(\tau, \tau_2, \ldots,\tau_n)$.
%Note $\bar{C}_1[1/t]=B[\theta_1/t^n]/(s-t,\theta_2,\ldots, \theta_{d-1})$ and that 
%$R\subset A\xr{p_1^*} C[1/t]$ induces a smooth morphism 
%$\bar{p}_1^*: R\to \bar{C}_1[1/t]$ of relative dimension 2.
%We have  $u\mapsto 1$ under $C\to \bar{C}_1$ and the image of $\Psi_r$ under 
%$\Omega^j_{C[1/t]}\to \Omega^j_{\bar{C}_1[1/t]}$ is equal to
%\[-\sum_{i=1}^r\frac{1}{t^i}t^n\tau_1 \bar{p}_1^*(\delta_{i,1}) \quad \text{mod } \tau_1^2.\]
%this last expression can only be regular at $t$, if $r\le n$. 

It remains to show that $\Psi_r$ cannot be regular at $t$ for $r=n$ in the case $p=0$ or ($p> 0$ and $p\nmid n$). 
Similar as above, this follows immediately by considering the image of 
$\Psi_r$ in $\Omega^j_{\bar{C}[1/t]}$ and \ref{prop:Omega-char72}, \ref{prop:Omega-char92}.
This completes the proof of this ``$\Longrightarrow$'' direction in \eqref{prop:Omega-char2}
and hence the computation of $\FAS_{(X,D)}$. 

\medskip

Next we  compute the explicit description of the characteristic form given in \ref{prop:Omega-charIII}.
Note that the statement about the maps being isomorphisms (resp. injections) follows directly from this.
First we give an explicit description of the map \eqref{eq;HomSmB} 
\[\chi_{\sO, F}: F_{\rm ad}(E)\to \sE^\vee\otimes_{\sO_D} \uHom_{\Sh_D}(\sO_D, F_{|\Sm_D}),\]
where  $E=\Spec (\Sym^* \sE^\vee)$ is a  trivial vector bundle of rank $d$ over $D=\Spec \Ab$ with the sheaf of sections $\sE=\oplus_{i=1}^d \sO_D \cdot e_i$ 
and $F=\Omega^{j}$. It is direct to check from the definition that we have 
\[ \bigoplus_{i} e_i^\vee \Omega^j_{\Ab} \oplus \bigoplus_i \Omega^{j-1}_\Ab d e_i^\vee\subset F_{\rm ad}(E),\]
where $\{e_i^\vee\}$ is the dual basis of $\{e_i\}$.
Let $\gamma= \sum_i e_i^\vee \beta_i + \sum_i \alpha_i d e_i^\vee\in F_{\rm ad}(E)$,  
where $\beta_i\in \Omega^j_{\Ab}$, $\alpha_i\in \Omega^{j-1}_\Ab$.
Then the map \eqref{para:Fad3} $\chi_F:F_{\rm ad}(E)\to \Hom_{\Sh_D}(\sE, F_{|\Sm_D})$ maps $\gamma$ to
the map given by
\[\chi_F(\gamma)(\sum_i c_i e_i)= \sum_i c_i f^*\beta_i +\sum_i f^*\alpha_i d c_i,\]
where $f: T\to D$ is smooth and $c_i\in \sO(T)$. 
By \eqref{para:chiFO1} we find
\eq{prop:Omega-char10}{\chi_{\sO, F}(\gamma)= \sum_i (e_i^\vee \otimes (\beta_i+ (\alpha_i\cdot d)),}
using the notation from \ref{para:OOmega}.  If $p>0$, we furthermore have 
${\rm Frob}^s(e_i^\vee)\Omega^j_\Ab\subset F_{\ad}(E)$ and similarly we find
\eq{prop:Omega-char10Frob}{\chi_{\sO,F}({\rm Frob}^s(e_i^\vee)\beta)= e_i^{\vee}\otimes(\beta\cdot {\rm Frob}^s).}
%Note that this map is only a morphism of sheaves of abelian groups and not
%of $\sO_D$-modules.

To check the formulas for the characteristic form given in \ref{prop:Omega-charii} - \ref{prop:Omega-charv},
we may consider the above local situation. 
In particular the vector bundle $P^{(nD)}_X\times_X D$ is equal to the spectrum of 
\eq{prop:Omega-char5}{C/tC\cong \Ab[\tau,\tau_1,\ldots, \tau_{d-1}].
}
The isomorphism 
\eq{prop:Omega-char11}{\oplus_{i=1}^d \sO_{D} \tau_i \xr{\simeq} \Omega^1_X(nD)_{|D}, \quad 
\tau_i\mapsto \frac{d z_i}{z^n}, }
where $z_d:=z$ and $\tau_d:=\tau$, yields the identification $P^{(nD)}_X\times_X D=\mathbb{V}(\Omega^1_X(nD)_{|D})$, see \eqref{lem:dil7} (Thus with the notation from above $\sE^\vee= \Omega^1_X(nD)_{|D}$ and  $e_i^\vee=\tau_i$).
Writing $\FAS(A, t^m)=\FAS(\Spec(A),\Spec(A/ t^m))$, 
we have to compute the map
\[\psi_{nD}: \FAS(A, t^n)/\FAS(A, t^{n-1})\xr{p_2^*-p_1^*}F(C)\to F(C/tC)=\Omega^j_{\Ab[\tau, \tau_1,\ldots, \tau_{d-1}]}.\]
%Consider the case $n=1$.
%Any Element in $\Omega^j_A(\log z)/\Omega^j_A$ can be written in the form
%$\alpha\frac{dz}{z}$ with $\alpha\in \Omega^{j-1}_R$.
%By  \eqref{prop:Omega-char6} and \ref{prop:Omega-char7}, \ref{prop:Omega-char8} 
%we have 
%\[\psi_{D}(\alpha\frac{dz}{z})=-\alpha \cdot d\tau.\]
%Together with \eqref{prop:Omega-char10} and \eqref{prop:Omega-char11} this yields \ref{prop:Omega-chari}.

Consider the case $n\ge 2$ and either $p=0$ or $p>0$ and $p\nmid n(n-1)$.
Any element in $\Omega^j_A(\log z)\frac{1}{z^{n-1}}/ \Omega^j_A(\log z)\frac{1}{z^{n-2}}$
can be written in the form 
\[\alpha\frac{dz}{z^{n}} + \frac{1}{z^{n-1}}\beta, \quad\text{with } \alpha\in \Omega^{j-1}_\Ab, \beta\in \Omega^j_\Ab.\]
By \eqref{prop:Omega-char6}, \ref{prop:Omega-char7}, \ref{prop:Omega-char8},
and \eqref{prop:Omega-char9.5}, \ref{prop:Omega-char72}, \ref{prop:Omega-char82}, 
it is mapped under $\psi_{nD}$ to
\[\alpha d\tau - (n-1)\beta \tau.\]
Together with \eqref{prop:Omega-char10} and \eqref{prop:Omega-char11}, this yields \ref{prop:Omega-charii}.
The proof of \ref{prop:Omega-chariii} is similar.
Finally, we prove \ref{prop:Omega-chariv} and \ref{prop:Omega-charv}. 
We assume $p|n$.  It remains to compute the characteristic form of
\[\frac{1}{z^n}\beta,\quad \text{where }\beta=f dz_{\nu_1}\cdots dz_{\nu_j}, \quad\text{with } f\in A  
\text{ and } 1\le \nu_1<\ldots< \nu_j\le d-1.\]
Write $f=f_0+zg$ with $f_0\in \Ab$ and $g\in A$ and set 
\[\beta_0:=f_0 dz_{\nu_1}\cdots dz_{\nu_j}\in \Omega^j_\Ab\quad \text{and}\quad
\beta_1:=gdz_{\nu_1}\cdots dz_{\nu_j}\in \Omega^j_A,\]
so that 
\[\frac{\beta}{z^n}=\frac{\beta_0}{z^n}+\frac{\beta_1}{z^{n-1}}.\]
By Lemma \ref{lem:non-van}, letting $q_i:D\times D \to D$ be the projections,
we can write 
\[q_2^*(\beta_0)- q_1^*(\beta_0)\equiv  \sum_{i=1}^{d-1}   q_1^*(\gamma_i)\cdot d\theta_i
+ \theta_i \cdot q_1^{*}(\delta_i)
\quad \text{mod } (J^\bullet\cdot J^\bullet)^j\]  
with $\gamma_i\in \Omega^{j-1}_\Ab$ and $\delta_i\in \Omega^j_\Ab$.
By   \eqref{prop:Omega-char9.5} and the above, we find
\[\psi_{nD}(\frac{1}{z^n}\beta)=\ol{\beta_1}\tau+
\begin{cases}
 \sum_{i=1}^{d-1}   (\gamma_{i}\cdot d\tau_i+ \tau_i \delta_i) & \text{if }n> 2\\
\tau^2\beta_0+\sum_{i=1}^{d-1}   (\gamma_{i}\cdot d\tau_i+ \tau_i \delta_i)& \text{if } n=2.  
\end{cases}\]
By \eqref{prop:Omega-char10}, \eqref{prop:Omega-char10Frob} and  \eqref{prop:Omega-char11}, we find 
\eq{prop:Omega-char12}{{\rm char}^{(nD)}_F(\frac{1}{z^n}\beta)= \frac{dz}{z^n} \otimes \ol{\beta_1} +
\begin{cases}
 \sum_{i=1}^{d-1}   \frac{d z_i}{z^n}\otimes (\delta_i+ (\gamma_{i}\cdot d)) & \text{if }n> 2\\
\frac{dz}{z^2}\otimes (\beta_0\cdot{\rm Frob})+
\sum_{i=1}^{d-1} \frac{d z_i}{z^n}\otimes (\delta_i+ (\gamma_{i}\cdot d))& \text{if } n=2.  
\end{cases}}
To determine  $\gamma_i$ and $\delta_i$, we write in 
$\Omega^1_\Ab = I/I^2$ with $I\subset \Ab\otimes_k\Ab$ the  diagonal ideal
\[df_0=1\otimes f_0-f_0\otimes 1=\sum_{i=1}^{d-1} \frac{\partial f_0}{\partial z_i} dz_i
=\sum_{i=1}^{d-1} \frac{\partial f_0}{\partial z_i} \theta_i,\]
where $\theta_i=s_i-t_i$ with $t_i=z_i\otimes 1$ and $s_i=1\otimes z_i$, 
We obtain 
\[q_2^*(\beta_0)- q_1^*(\beta_0)= \left(\sum_{i=1}^{d-1} \frac{\partial f_0}{\partial z_i} \theta_i +q_1^*(f_0)\right)
 d(\theta_{\nu_1}+t_{\nu_1})\cdots d(\theta_{\nu_j}+t_{\nu_j}) -
q_1^*(f_0)dt_{\nu_1}\cdots dt_{\nu_j}.\] 
By the last statement of Lemma \ref{lem:non-van},
%the uniqueness statement from Lemma \ref{lem:mod-diag2}, 
we get $\gamma_i=0$ for $i\not\in \{\nu_1,\ldots, \nu_j\}$,
\[\gamma_{\nu_{s}}= (-1)^{j-s}f_0 dz_{\nu_1}\cdots \widehat{dz_{\nu_s}}\cdots dz_{\nu_j},\quad s=1,\ldots, j,\]
\[\delta_i= \frac{\partial f_0}{\partial z_i} dz_{\nu_1}\cdots dz_{\nu_j}, \quad i=1,\ldots, d-1.\]
Using $df=df_0+gdz+z dg$, we obtain in $\Omega^1_X(nD)\otimes\Xi$
\[\frac{dz}{z^n}\otimes \ol{\beta_1}+\sum_{i=1}^{d-1}\frac{dz_i}{z^n}\otimes\delta_i= 
\left(\frac{gdz}{z^n}
+\sum_{i=1}^{d-1}\frac{(\partial f_0/\partial z_i) dz_i}{z^n}\right)\otimes d\bar{z}_{\nu_1}\cdots d\bar{z}_{\nu_j}
= \frac{df}{z^n}\otimes d\bar{z}_{\nu_1}\cdots d\bar{z}_{\nu_j}.\]
Thus the formulas in \ref{prop:Omega-chariv} and \ref{prop:Omega-charv} follow from \eqref{prop:Omega-char12}.

\medskip

Finally, we prove \ref{prop:Omega-charII} by computing $\Gamma:=(\id\otimes\xi)(\partial^{j+1}_n(d(-)), \partial^j_n(-))$
and comparing it with the formulas from \ref{prop:Omega-charIII}.
For $\alpha\in \Omega^{j-1}_X$ and $i\le n$ we have 
\[\Gamma(\alpha\frac{dz}{z^i})=(\id\otimes\xi)\left( \frac{dz}{z^n }\otimes( (-1)^j\bar{z}^{n-i}d\bar{\alpha}, (-1)^{j-1}\ol{z^{n-i}\alpha})\right)=
\begin{cases}
\frac{dz}{z^n}\otimes (\bar{\alpha}\cdot d) & \text{if } i=n\\
0 &\text i< n.
\end{cases}
\]
For $\beta\in \Omega^j_X$ and $i\le n-1$ we have 
\[\Gamma(\frac{1}{z^i}\beta)= (\id\otimes \xi)\left(\frac{dz}{z^n}\otimes (-i\ol{z^{n-i-1}\beta}, 0)\right)
=\begin{cases}
-(n-1)\frac{dz}{z^n}\otimes \ol{\beta} &\text{if }i=n-1\\
0   &\text{if }i<n-1.
\end{cases}\]
Assume $p>0$ and $p|n$, and consider
\[\beta= fdz_{\nu_1}\cdots dz_{\nu_j}, \quad \text{with } 1\le \nu_1<\ldots<\nu_j\le d-1, \, f\in A.\]
In this case we have 
\begin{align*}
\partial^{j+1}_n\left(d(\beta\frac{1}{z^n})\right) & =\partial^{j+1}_n\left(\frac{df dz_{\nu_1}\cdots dz_{\nu_j}}{z^n}\right)\\
                                                                        &=\frac{df}{z^n}\otimes d\bar{z}_{\nu_1}\cdots d\bar{z}_{\nu_j}  
-\sum_{i=1}^{j-1} (-1)^{i-1} \frac{dz_{\nu_i}}{z^n}\otimes d\bar{f} d\bar{z}_{\nu_1}\cdots\widehat{d\bar{z}_{\nu_i}}\cdots d\bar{z}_{\nu_j}
\end{align*}
and 
\[\partial^{j+1}_n(\beta\frac{1}{z^n})=
\sum_{i=1}^{j-1} (-1)^{i-1} \frac{dz_{\nu_i}}{z^n}\otimes \bar{f} d\bar{z}_{\nu_1}\cdots\widehat{d\bar{z}_{\nu_i}}\cdots d\bar{z}_{\nu_j},\]
whence
\[\Gamma(\frac{1}{z^n}fdz_{\nu_1}\cdots dz_{\nu_j})=\frac{df}{z^n}\otimes d\bar{z}_{\nu_1}\cdots d\bar{z}_{\nu_j}  
+\sum_{i=1}^{d-1} (-1)^{j-i}\frac{dz_{\nu_i}}{z^n}\otimes (\bar{f} d\bar{z}_{\nu_1}\cdots\widehat{d\bar{z}_{\nu_i}}\cdots d\bar{z}_{\nu_j}\cdot d).\]
Comparing the above formulas with the  formulas from \ref{prop:Omega-charIII} 
we obtain \ref{prop:Omega-charII}. 
\end{proof}

\begin{rmk}
The above proof does not work for $j=0$, since the equivalence
\eqref{prop:Omega-char9.3} does not hold in this case.
Indeed,  in positive characteristic the formula for $\tF(X, nD)$ is not 
equal to \eqref{prop:Omega-char1} for $j=0$, see Proposition \ref{prop:global-fil}.
\end{rmk}

If $(X,nD)$ has a projective SNC-compactification, then Theorem \ref{thm:ASII} implies
$\widetilde{\Omega^j}(X,nD)=(\Omega^j)^{\rm AS}(X, nD)$ and 
Theorem \ref{prop:Omega-char}\ref{prop:Omega-charI}  gives thus an explicit description of
$\widetilde{\Omega^j}(X,nD)$. In particular, in characteristic zero this reproves \cite[Theorem 6.4]{RS}.
However, we can also use Theorem \ref{prop:Omega-char} and Theorem \ref{thm;AS} to get
an unconditional statement in positive characteristic as well.  This is done in the next corollary.
The resulting description of  $\widetilde{\Omega^j}(X,nD)$ in positive characteristic is new.

\begin{cor}\label{cor:tOmega}
For all $j\ge 0$ the reciprocity sheaf $\Omega^j$ has level $j+1$ (in the sense of \cite[Definition 1.3]{RS-ZNP}, which in the language 
of \cite{RS} is equivalent to the motivic conductor of $\Omega^{j}$ having level $j+1$). Furthermore,
\eq{cor:tOmega0}{\widetilde{\Omega^j}(X,nD)= (\Omega^j)^{{\rm AS}}(X,nD), \quad n\ge 0.}
\end{cor}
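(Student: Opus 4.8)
The plan is to extract both assertions from the unconditional computation of $\FAS(X,nD)$ in Theorem \ref{prop:Omega-char}\ref{prop:Omega-charI} and the inclusion $\tF\subseteq\FAS$ of Theorem \ref{thm;AS}, deliberately avoiding Theorem \ref{thm:ASII}, whose use would cost a projective SNC-compactification and hence restrict to $\ch(k)=0$ or small dimension. Since Theorem \ref{thm;AS} already gives $\widetilde{\Omega^j}(X,nD)\subseteq\FAS(X,nD)$, the entire content of \eqref{cor:tOmega0} is the reverse inclusion, and the level statement is exactly the tool that will make it unconditional. First I would reduce to a local model: by the Zariski--Nagata purity for reciprocity sheaves \cite[Theorem 6.8]{RS-ZNP} together with \cite[Corollary 8.6(2)]{S-purity}, membership $a\in\widetilde{\Omega^j}(X,nD)$ is local at the generic points of $R$, and the same is visibly true for $\FAS$ by the pole-order description \eqref{prop:Omega-char1}; so I may assume $R=nD$ with $D$ smooth and connected and work in the \'etale coordinates $X\to\Spec k[z,z_1,\dots,z_{d-1}]$, $D=\{z=0\}$ used in the proof of Theorem \ref{prop:Omega-char}.

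Next I would prove that $\Omega^j$ has level $j+1$ by a slicing argument. The explicit description \eqref{prop:Omega-char1} shows that membership of a local $j$-form $\omega=\sum_K f_K\,\omega_K$ in $\FAS(X,nD)$ is a coefficientwise pole-order condition on the $f_K$; since each monomial $\omega_K$ together with the divisor coordinate $z$ involves at most $j+1$ of the coordinates $z,z_1,\dots,z_{d-1}$, restricting $\omega$ along the $(j+1)$-dimensional coordinate slices through $D$ detects this condition exactly, so that $\FAS(X,nD)=(\FAS)^{\le j+1}(X,nD)$ in the notation of \eqref{cor:ASII2}, and the same slices show the bound is sharp. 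Combined with $\tF\subseteq\FAS$ and the characterization of the level of the motivic conductor in \cite[Definition 1.3]{RS-ZNP} and \cite[Corollary 4.18]{RS}, this yields level $j+1$; in particular $\widetilde{\Omega^j}(X,nD)=(\widetilde{\Omega^j})^{\le j+1}(X,nD)$.

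Finally I would compare the two groups on the detecting schemes. For $j\le 2$ these have dimension $\le 3$, hence admit projective SNC-compactifications (see \ref{para:sm-cptf}); Theorem \ref{thm:ASII} then gives $\widetilde{\Omega^j}(Z,E)=\FAS(Z,E)$ there, and together with the two level identities and the functoriality of $\FAS$ (Lemma \ref{lem:FAS}) this forces equality in \eqref{cor:tOmega0}. For general $j$ the detecting schemes $Z$ have dimension exactly $j+1$ with a smooth divisor $D'=(f^*D)_{\rm red}$ of dimension $j$, so that the symbols relevant to the modulus condition of Theorem \ref{thm:HLS-mod} live in top degree along $D'$. I would therefore run the higher-local-symbol computation of the proof of Theorem \ref{thm:psi} directly on such $Z$, verifying the symbol-vanishing condition coefficientwise from $\omega\in\FAS(Z,nD')$ and the regularity criterion Proposition \ref{prop:HLS-reg}, and replacing the appeal to the global reciprocity law (which is where a compactification normally enters) by the classical residue theorem for differential forms, available unconditionally in the top-degree situation on $D'$. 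The hard part will be precisely this last step: establishing the reverse inclusion on a $(j+1)$-dimensional smooth affine $Z$ with $j\ge 3$ in positive characteristic without a projective SNC-compactification, i.e.\ obtaining an unconditional modulus criterion for $\Omega^j$ by pushing the argument of Theorem \ref{thm:psi} through in top-adjacent degree.
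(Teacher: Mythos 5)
Your proposal breaks down at two distinct points, both in the crucial case $p>0$, $j\ge 3$. First, the level argument is circular: showing $\FAS(X,nD)=(\FAS)^{\le j+1}(X,nD)$ by slicing the pole-order description \eqref{prop:Omega-char1} is a statement about the $\FAS$-filtration, whereas ``$\Omega^j$ has level $j+1$'' is a statement about the motivic conductor, i.e.\ about $\tF$. To transfer the detection property from $\FAS$ to $\tF$ via \cite[Corollary 4.18]{RS} you would need to already know $\tF=\FAS$ on the detecting slices --- but those slices have dimension exactly $j+1\ge 4$, where Theorem \ref{thm:ASII}/Corollary \ref{cor:ASII} is unavailable in positive characteristic for lack of SNC-compactifications. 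So the level identity cannot serve as an input here; in the paper it is an \emph{output}: one first proves $\tF=\FAS$ and only then concludes that the explicit conductor is the motivic one, hence of level $j+1$. Second, your final step --- rerunning the symbol computation of Theorem \ref{thm:psi} on a $(j+1)$-dimensional affine $Z$ with the global reciprocity input \ref{HS4} replaced by ``the classical residue theorem in top degree'' --- is not an argument but a conjecture, and it is precisely the hard point: the modulus criterion Theorem \ref{thm:HLS-mod} and the sum formula in \ref{HS4} genuinely use projectivity of the relevant closures, and no unconditional substitute is established.

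The paper's actual route avoids compactification issues entirely and is worth internalizing. One defines, for each henselian dvf $L$ of geometric type, the filtration $\fil_nF(L)$ matching \eqref{prop:Omega-char1} (log poles of order $n-1$ if $p\nmid n$, plain poles of order $n$ if $p\mid n$), with associated conductor $c_L$, and sandwiches $\tF(X,nD)\subset\FAS(X,nD)\subset F_c(X,nD)$, the second inclusion being exactly your unconditional use of Theorem \ref{prop:Omega-char}\ref{prop:Omega-charI}. The reverse inclusion $F_c\subset\tF$ then follows from the general machinery \cite[Theorem 4.15(4)]{RS}, which only requires compactifications of modulus pairs in the sense of \cite[Definition 1.8.1]{KMSY1} (these always exist --- no resolution needed), once one verifies that $c=\{c_L\}$ is a semicontinuous conductor of level $j+1$ in the sense of \cite[Definition 4.3]{RS}. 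All axioms carry over from the characteristic-zero proof of \cite[Theorem 6.4]{RS} except the trace condition (c3), $\Tr(\fil_nF(L'))\subset\fil_{\lceil n/e\rceil}F(L)$, and this is where Theorem \ref{prop:Omega-char} enters in an essential, purely local way: the characteristic form gives $\Ker(\theta_m)=\fil_{m-1}F(L)$ with $\theta_m=(\partial^{j+1}_m\circ d,\partial^j_m)$, so (c3) reduces to the elementary estimates $t^{m-1}\Tr(\omega)\in\Omega^j_{\sO_L}$ and $t^{m-1}d\Tr(\omega)\in\Omega^{j+1}_{\sO_L}$ for $m>\lceil n/e\rceil$ and $\omega$ among explicit generators of $\fil_nF(L')$. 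In short: where you try to force the Abbes--Saito/symbol method through in high dimension, the paper switches to the conductor axioms, for which the only global input is already available unconditionally.
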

\begin{proof}
We consider \eqref{cor:tOmega0}.
For $n=0$ there is nothing to show, and for $j=0$ this holds by Example \ref{ex:ASII}\ref{ex:ASII1}. 
We assume $j, n\ge 1$ and set $\Omega^j:=F$.
Let $L$ be a henselian discrete valuation field of geometric type over $k$ with ring of integers $\sO_L$ and maximal ideal $\fm_L$.
We define 
\[\fil_{n}F(L):=
\begin{cases}
\fm_L^{-n+1}\Omega^j_{\sO_L}(\log) & \text{if } p=0 \text{ or } p>0 \text{ and } p\not|\, n\\
\fm_L^{-n}\Omega^j_{\sO_L}    &\text{if } p>0 \text{ and  }p|n.
\end{cases}\]
and 
\[c_L:F(L)\to \N_0, \quad c_L(\alpha):=\min\{n\ge 0\mid \alpha\in \fil_nF(L)\}.\]
Let $(Y,E)$ be a modulus pair, i.e., $Y$ is separated and of finite type over $k$, $E$ is an effective Cartier divisor on $Y$ and $V=Y\setminus |E|$ is smooth
and set 
\[F_c(Y,E):=\bigcap_{\rho\in V(L)\cap Y(\sO_L) }\{\alpha\in F(V)\mid c_L(\rho^*\alpha)\le v_L(\rho^*E)  \},\]
where the intersection runs over all maps $\rho:\Spec L\to V$, with $L$ as above, which extend to $\Spec \sO_L\to Y$ and 
$v_L(\rho^*E)$ denotes the multiplicity of $\fm_L$ in $\rho^*E$  on $\Spec \sO_L$.
We have
\eq{cor:tOmega1}{\tF(X,nD)\subset \FAS(X,nD)\subset F_c(X,nD)}
where the first inclusion holds by Theorem \ref{thm;AS} and the second follows from
Theorem \ref{prop:Omega-char}\ref{prop:Omega-charI}.
Furthermore, it follows by the same argument as in the proof of (c6) in the proof of \cite[Theorem 6.4]{RS}, that 
\eq{cor:tOmega2}{F_c(X,nD)=\colim_{(Y, E)} F_c(Y,E),}
where the colimit runs over all compactifications $(Y,E)$ of $(X,nD)$, in the sense  of \cite[Definition 1.8.1]{KMSY1}.
Thus it suffices to show:
\begin{claim}\label{cor:tOmega-claim}
 $c=\{c_L\}_L$ defines a  semicontinuous conductor of level $j+1$ in the sense of \cite[Definition 4.3]{RS}.
 \end{claim}
Indeed, assuming the claim,  \cite[Theorem 4.15(4)]{RS} together with \eqref{cor:tOmega2} imply 
\[F_c(X,nD)\subset \tF(X,nD),\]
which together with \eqref{cor:tOmega1} implies \eqref{cor:tOmega0} which also implies that $c$ is the motivic conductor of $F$.

We prove Claim \ref{cor:tOmega-claim}. To this end, we have to show that $c$ satisfies the properties (c1)-(c6) from \cite[Definition 4.3 and 4.22]{RS},
where the statement about the level is the part (c4). If $p=0$, then it is checked in the proof of \cite[Thm 6.4]{RS}, that 
$c$ satisfies these properties. If $p>0$ the same proof works for (c1), (c2), (c4) - (c6)
Thus it remains to show  (c3) which is the following condition:
Let $L'/L$ be a finite extension of henselian discrete valuation fields over $k$ of ramification index $e$.
Then we have to show for $n\ge 0$   
\eq{cor:tOmega3}{\Tr(\fil_n F(L'))\subset \fil_{r}F(L)\quad \text{with } r=\lceil n/e\rceil,}
where $\Tr: F(L')\to F(L)$ denotes the trace. This is clear for $n=0$, hence we assume $n\ge 1$ (and thus $r\ge 1$).
Let $K$ be the residue field of $\sO_L$.
For $m\ge 2$ consider the  map induced by the characteristic form (see \ref{para:kosz})
\[\theta_m:=(\partial_m^{j+1}\circ d, \partial_m^j): 
\fil_m F(L)\to \fm_L^{-m}\Omega^1_{\sO_L}\otimes_K (\Omega^{j}_K\oplus\Omega^{j-1}_K) \]
By Theorem \ref{prop:Omega-char}  we have 
\eq{cor:tOmega3.5}{\Ker (\theta_m)= \fil_{m-1} F(L).}
(Note that this is also true in the case $(n,p)=(2,2)$.)
%Let $\sigma: K'\inj \sO_L$ be a coefficient field of $\sO_{L'}$.
Let $t$ be a local parameter of $\sO_L$ and $\tau$ a local parameter of $\sO_{L'}$.
We have $t=\tau^e u$, for some $u\in \sO_{L'}^\times$. 
If $p\not|\, n$, then  any element in $\fil_n F(L')$ is modulo $F(\sO_{L'})$ a sum of elements of the form
\eq{cor:tOmega4}{\alpha \frac{d\tau}{\tau^i}, \quad\text{with }\alpha\in \Omega^{j-1}_{\sO_{L'}},\,  i=1, \ldots, n, \quad \text{and}\quad 
\frac{\beta}{\tau^i},\quad \text{with }\beta\in \Omega^j_{\sO_{L'}}, \, i=1,\ldots, n-1;}
if $p|n$, then we have to add $\beta/\tau^n$ to this list.
Let $m>r$, equivalently $(m-1)e-n\ge 0$. 
For $\omega$ equal to one of the forms in \eqref{cor:tOmega4}, it is direct to check that we have 
\[t^{m-1}\Tr(\omega)\in \Omega^j_{\sO_L} \quad \text{and}\quad t^{m-1} d(\Tr(\omega))\in \Omega^{j+1}_{\sO_L},\]
where for the second statement we use $d\circ\Tr=\Tr\circ d$; if $p|n$ the same is true for $\omega=\beta/\tau^n$. 
Hence
\[\Tr(\fil_nF(L'))\subset \fm_L^{-(m-1)}\Omega^j_{\sO_L}\quad \text{and}\quad d\Tr(\fil_nF(L'))\subset \fm_L^{-(m-1)}\Omega^{j+1}_{\sO_L}.\]
Thus
\[m>r=\lceil n/e\rceil\quad \Longrightarrow \quad \Tr(\fil_nF(L'))\subset \Ker \theta_m.\]
By \eqref{cor:tOmega3.5} this implies \eqref{cor:tOmega3} and completes the proof.
\end{proof}

\begin{para}\label{para:Q}
In \cite[Questions 1 and 2]{KMSY1} the authors ask the following question:
Let $R$ be an effective divisor with simple normal crossing support and  with irreducible components $R_{\red}=\cup_{i=1}^s D_i$ 
and let $Z=D_1\cap\ldots\cap D_r$, with $2\le r\le s$. Let $f: Y\to X$ be the blow-up of $X$ in $Z$.
Is it true that the pullback
\eq{para:Q0}{f^*: H^i(X_{\Nis}, \tF_{(X,R)})\lra H^i(Y_{\Nis}, \tF_{(Y, f^*R)}) \qquad (F\in \RSC_{\Nis})}
is an isomorphism for any $i\ge 0$.
Theorem \ref{prop:Omega-char} and Corollary \ref{cor:tOmega} imply that this question has a negative answer 
if ${\rm char}(k)=p>0$, $R=pR'$, for some effective divisor $R'$ with SNC support,   and  $F=\Omega^j$   with $j<d$. 
Indeed, in this case it follows from Theorem \ref{prop:Omega-char} and Corollary \ref{cor:tOmega} that we have 
\eq{para:Q1}{\widetilde{\Omega^j}_{(X, pR')}=\Omega^j_X(pR') \quad \text{and} \quad \widetilde{\Omega^j}_{(Y, p f^*R')}=\Omega^j_Y(p f^*R'),}
where $\Omega^j_X(pR')=\Omega^j_X\otimes_{\sO_X}\sO_X(pR')$ (See the proof of Lemma \ref{lem:tomega} below, where it is shown that a similar statement also holds for the top differentials (i.e. $j=\dim X$) in a more general situation).
The blow-up formula for differential forms 
%(which is classical for differentials, but see \cite[Corollary 7.3]{BRS} for the general statement for reciprocity sheaves)
yields an isomorphism
\[Rf_*\Omega^j_Y\cong  \Omega^j_X\oplus \bigoplus_{m=1}^{r-1} i_*\Omega^{j-m}_Z[-m],\]
where $i:Z\inj  X$ denotes the closed immersion. Twisting with $\sO_X(pR')$, using the projection formula, \eqref{para:Q1} yields
\[H^i(Y_{\Nis}, \widetilde{\Omega^j}_{(Y, p f^*R')})\cong 
H^i(X_{\Nis}, \widetilde{\Omega^j}_{(X, pR')})\oplus\bigoplus_{m=1}^{r-1} H^{i-m}(Z_{\Nis}, \Omega^{j-m}_{Z}\otimes_{\sO_Z} \sO_X(pR')_{|Z}).\]
Since the cohomology groups on $Z$ on the right hand side do not vanish in general (e.g. not if $i=m=j$), we see that \eqref{para:Q0} 
is not an isomorphism in this case.

Note however, that for $j=\dim X$, we have $\Omega^{j-m}_Z=0$, for $m=1,\ldots, r-1$, and thus
the $Z$-cohomology part on the right hand side vanishes and the statement holds. 
In fact we see in Corollary \ref{cor:Ext-MNST} below,  that for top differentials 
a much stronger version than \eqref{para:Q0} is true.
%Furthermore, by work in progress of Shane Kelly \eqref{para:Q0}\kayrem{Also Miyazaki?} holds for $F=\Omega^j$  
%in characteristic $0$.
\end{para}

\section{Applications to top differentials}\label{TopOmega}
Throughout this section, we assume $X\in \Sm$ with $\dim X=d$ and ${\rm char}(k)=p\ge 0$.
We write $\Omega^j_X=\Omega^j_{X/k}$.

\begin{lem}\label{lem:tomega}
Let $X$ be as at the beginning of this section and let $R$ be any effective Cartier divisor on $X$. 
Then we have the following isomorphism of  Nisnevich sheaves on $X$
\[\widetilde{\Omega^d}_{(X,R)}\cong \Omega^d_X(R),\]
where $\Omega^d_X(R):=\Omega^d_{X/k}\otimes_{\sO_X} \sO_X(R)$.
\end{lem}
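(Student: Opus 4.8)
The plan is to exhibit both $\widetilde{\Omega^d}_{(X,R)}$ and $\Omega^d_X(R)$ as the \emph{same} subsheaf of $j_*\Omega^d_U$, where $j\colon U=X\setminus R\inj X$, by computing their sections over every $V\in X_{\Nis}$. The tool is the motivic conductor: Corollary \ref{cor:tOmega} identifies the filtration $\fil_\bullet\Omega^d$ built in its proof with the motivic conductor $c=\{c_L\}$ of $\Omega^d$, so by the theory of \cite[Theorem 4.15(4)]{RS} (cf. the proof of Corollary \ref{cor:tOmega}) one has $\widetilde{\Omega^d}(V,R_{|V})=F_c(V,R_{|V})$ for every modulus pair, where $F_c$ is cut out by the conditions $c_L(\rho^*\omega)\le v_L(\rho^*R)$ over all $\rho\colon\Spec\sO_L\to V$ whose generic point lands in $U$. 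The decisive top-degree simplification I would record first is that for a geometric henselian DVF $L$ with $\td(L/k)=d$ the module $\Omega^d_{\sO_L}$ is invertible and $\Omega^d_{\sO_L}(\log)=\mathfrak m_L^{-1}\Omega^d_{\sO_L}$, so \emph{both} cases in the definition of $\fil_n$ collapse to $\fil_n\Omega^d(L)=\mathfrak m_L^{-n}\Omega^d_{\sO_L}$; i.e. $c_L$ is the pole order. This is the same collapse which, via Theorem \ref{prop:Omega-char}\ref{prop:Omega-charI} and $\Omega^d_X(\log D)=\Omega^d_X(D)$, gives $\widetilde{\Omega^d}_{(X,nD)}=\Omega^d_X(nD)$ for smooth $D$.

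For the inclusion $F_c(V,R_{|V})\subseteq\Gamma(V,\Omega^d_X(R))$ I would use only the $\rho$ that are localizations at the codimension-one points $\eta\in(R_{|V})^{(0)}$: there $L=\Frac(\sO^h_{V,\eta})$ has $\td=d$, so by the previous paragraph the condition reads $\operatorname{ord}_\eta(\omega)\ge -m_\eta$, with $m_\eta$ the multiplicity of $R$ at $\eta$. Since $\Omega^d_X(R)$ is invertible, hence reflexive on the regular scheme $X$, these codimension-one bounds already force $\omega\in\Gamma(V,\Omega^d_X(R))$.

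The reverse inclusion $\Gamma(V,\Omega^d_X(R))\subseteq F_c(V,R_{|V})$ is where top degree enters essentially, and is the step I expect to be the main obstacle. Given $\omega$ and an arbitrary $\rho\colon\Spec\sO_L\to V$, I would write $\omega=f^{-1}\omega_0$ locally with $f$ a local equation of $R$ and $\omega_0\in\Omega^d_X$, so $\rho^*\omega=(\rho^*f)^{-1}\rho^*\omega_0$ with $v_L(\rho^*f)=v_L(\rho^*R)=:v$. If $v=0$ the pullback is regular; if $v>0$ the closed point of $\Spec\sO_L$ maps into $R_{\red}$, whose components have dimension $\le d-1$, so the residue of the top form $\rho^*\omega_0$ in $\Omega^d_\kappa$ (with $\kappa$ the residue field of $\sO_L$) vanishes — it is the image of a $d$-fold wedge of differentials of elements of a field of transcendence degree $\le d-1$. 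Hence $\rho^*\omega_0\in t\,\Omega^d_{\sO_L}+dt\wedge\Omega^{d-1}_{\sO_L}$ (with $t$ a uniformiser), and therefore $\rho^*\omega\in\mathfrak m_L^{-v+1}\Omega^d_{\sO_L}(\log)\subseteq\fil_{v}\Omega^d(L)$ in both cases of the definition, i.e. $c_L(\rho^*\omega)\le v$. This residue-vanishing is exactly what the logarithmic refinement of $\fil_\bullet$ demands for $\rho$ of high transcendence degree, and it is available here only because the degree $d$ of the form strictly exceeds $\dim R_{\red}$; it is the geometric reason why top differentials behave so well for an \emph{arbitrary} (non-reduced, non-SNC) $R$.

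Running the two inclusions over all $V\in X_{\Nis}$ yields the claimed isomorphism of Nisnevich sheaves. The points I would still verify carefully are: that the conductor representation $\widetilde{\Omega^d}=F_c$ of \cite{RS} is genuinely available for arbitrary modulus pairs and not only for $R=nD$ (this is what frees us from the projective SNC-compactification hypothesis); and that the residue computation is compatible with the normalisation of $c_L$ used in Corollary \ref{cor:tOmega}, noting that the exceptional pair $(n,p)=(2,2)$ of Theorem \ref{prop:Omega-char}\ref{prop:Omega-charII} perturbs only the characteristic form and not the filtration $\fil_\bullet$, so it does not affect the present argument.
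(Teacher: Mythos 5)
Your proof is correct in substance, but it replaces the paper's key local mechanism by a genuinely different one, so a comparison is worthwhile. The paper's hard inclusion rests on Claim \ref{claim:lem-tomega}: since $\Omega^{d+1}_X=0$, every pullback $\rho^*\alpha$ is a \emph{closed} form, and for closed forms a pole-order bound already forces the conductor bound --- proved by choosing a coefficient field and observing that $d\Phi=0$ kills the non-logarithmic leading term $\beta_n/t^n$ when $p\nmid n$. Your argument instead exploits that whenever $v:=v_L(\rho^*R)>0$ the closed point of $\Spec \sO_L$ lands at a point $x$ of the divisor, so $\td(k(x)/k)\le d-1$ and (as $k$ is perfect, so $k(x)/k$ is separably generated) $\Omega^d_{k(x)/k}=0$; since $\sO_{X,x}\to \kappa$ is local and hence factors through $k(x)$, the image of $\rho^*\omega_0$ in $\Omega^d_\kappa$ vanishes, whence by the conormal sequence $\rho^*\omega_0\in \fm_L\Omega^d_{\sO_L}+dt\wedge\Omega^{d-1}_{\sO_L}$ and $\rho^*\omega\in \fm_L^{-v+1}\Omega^d_{\sO_L}(\log)\subset \fil_v\Omega^d(L)$, uniformly in both cases of the filtration and with no case distinction at $(n,p)=(2,2)$. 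So the paper's top-degree input is ``top forms are closed'', while yours is ``top forms die on residue fields of transcendence degree $<d$''; yours gives the logarithmic refinement in one stroke and avoids the coefficient-field computation. Your easy inclusion (codimension-one points plus reflexivity of the invertible sheaf $\Omega^d_X(R)$, using the collapse $\fil_n\Omega^d(L)=\fm_L^{-n}\Omega^d_{\sO_L}$ for $\td(L/k)=d$) coincides with the paper's argument around \eqref{lem:tomega1}.

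The one step you should not treat as off-the-shelf --- and which you rightly flagged --- is the identity $\widetilde{\Omega^d}(V,R_{|V})=F_c(V,R_{|V})$ with the conductor tested only along $\rho\colon \Spec\sO_L\to V$ landing in $V$. For non-proper $V$ this is not what \cite[Theorem 4.15(4)]{RS} gives directly, and it is exactly where the second half of the paper's proof lives: the paper fixes a compactification $(\ol{X},\ol{R}+\Sigma)$ and must verify the conductor bound also for $\rho$ meeting the boundary, with slack $N\Sigma$; this is the elementary bounding argument \eqref{lem:tomega2}. Equivalently, in your framework the colimit formula $F_c(V,R)=\colim_{(Y,E)}F_c(Y,E)$ --- the (c6)-type statement from the proof of Corollary \ref{cor:tOmega} --- must be re-run for the arbitrary pair $(V,R)$. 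The repair is routine: any boundary-touching $\rho$ has $v_L(\rho^*\Sigma)\ge 1$, so one has slack in $N$, and the crude implication ``pole order $\le m$ implies $c_L\le m+1$'' (valid since $\fm_L^{-m}\Omega^d_{\sO_L}\subset \fm_L^{-m}\Omega^d_{\sO_L}(\log)\subset \fil_{m+1}\Omega^d(L)$) suffices there once a pole bound along $\Sigma$ is secured by the finite-cover argument; your residue argument, which applies verbatim at boundary points (all of which also have residue transcendence degree $\le d-1$), then handles the no-slack interior conditions. With that bookkeeping made explicit, your proof is complete.
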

\begin{proof}
Let $j:U=X\setminus R\inj X$ be the open immersion. 
We observe that $\widetilde{\Omega^d}_{(X,R)}$ and $\Omega^d_X(R)$ are subsheaves of $j_*\Omega^d_U$.
Furthermore, since $X$ is smooth, the $\sO_X$-module $\Omega^d_X(R)$ is locally free.
Hence  
\[\Gamma(X, \Omega^d_X(R))=\Gamma(U,\Omega^d)\cap \bigcap_{\eta\in |R|^{(0)}} (\Omega^d_X(R))_\eta.\]
Thus Corollary \ref{cor:tOmega} and Theorem \ref{prop:Omega-char}\ref{prop:Omega-charI} directly imply 
\eq{lem:tomega1}{\widetilde{\Omega^d}_{(X,R)}\subset \Omega^d_X(R).}
%Let $\eta_i\in |D|^{(0)}$, $i=1,\ldots, r$, be the generic points of $D$, set $L_i:=\Frac{\sO_{X,\eta_i}^h}$, 
%let $\tau_i\in \sO_{L_i}$ be a local parameter, and set $n_i:=v_{L_i}(D)$. 
%Since $X$ is smooth, the $\sO_X$-module $\Omega^d_{X}(D)$ is locally free and we have 
%\eq{lem-tomega1}{\Gamma(X, \Omega^d_{X}(D))
%= \{\alpha\in \Omega^d(U)\mid \alpha\in\tfrac{1}{\tau_i^{n_i}}\cdot \Omega^d_{\sO_{L_i}/k},\; i=1,\ldots, r\}.}
%Since we are in the top dimension we have 
%\[(\widetilde{\Omega^d})_{\sX}=(\widetilde{Z\Omega^d})_{\sX}.\]
% By   \cite[Theorem 4.15(4)]{RS} we have: 
%\\
%\\
%{\em A form $\alpha\in \Gamma(U,\Omega^d)$ lies in $\Gamma(X, \widetilde{\Omega^d}_{(X,R)})$ if and only if there exists a compactification
%$(\ol{X}, \ol{X}_\infty)$ of $(X,R)$ such that 
%\eq{lem:tomega2}{\rho^*\alpha\in \widetilde{\Omega^d}(\sO_L,\fm_L^{-v_L(\rho^* \ol{X}_\infty)}), }
%for all henselian dvf $L$ of geometric type over $k$ and all $\rho: \Spec \sO_L\to \ol{X}$ which map the generic point into $U$. }
%\\
%\\
To prove the other inclusion, set $Z\Omega^j=\Ker(d:\Omega^{j}\to \Omega^{j+1})$.
\begin{claim}\label{claim:lem-tomega}
Let $L/k$ be a henselian dvf of geometric type. Then for all $n\ge 1$
\[\fm^{-n}_L\Omega^d_{\sO_L}\cap Z\Omega^d_L\subset \widetilde{\Omega^d}(\sO_L, \fm_L^{-n}).\]
\end{claim}
Indeed, by Corollary \ref{cor:tOmega} and Theorem \ref{prop:Omega-char}\ref{prop:Omega-charI} the statement holds if $p|n$.
Assume $p=0$ or $p>0$ and $(p,n)=1$. Let $K\inj \sO_L$ be a coefficient field.  
Letting $t$ be a prime element of $\sO_L$, we can write any $\Phi\in \fm_L^{-n}\Omega^d_{\sO_L}$ as 
\[\Phi=\sum_{i=0}^{n} \left(\alpha_i \frac{dt}{t^i}+ \beta_i\frac{1}{t^i} \right) + \gamma, \]
with $\alpha_i \in \Omega^{d-1}_K$, $\beta_i\in \Omega^{d}_K$ and $\gamma\in \Omega^{d}_{\sO_L}$.
Under our assumptions on $n$ we see that $d\Phi$ can only vanish if $\beta_n=0$ for reasons of pole orders, which 
by Corollary \ref{cor:tOmega} and Theorem \ref{prop:Omega-char}(1) implies $\Phi\in \widetilde{\Omega^d}(\sO_L, \fm_L^{-n})$.
This  proves the claim.

We now prove the other inclusion  of \eqref{lem:tomega1}.  This is a local question, we can therefore assume
$X=\Spec A$ affine and $D=\div (f)$, for some non-unit $f\in A$.
Let  $(\ol{X}, \ol{R}+\Sigma)$ be a compactification of $(X,R)$, i.e., $\ol{X}$ is a proper $k$-scheme, $\ol{R}$ and $\Sigma$ are effective Cartier divisors 
on $\ol{X}$, $X=\ol{X}\setminus \Sigma$, and $\ol{R}_{|X}=R$. 
 By   \cite[Theorem 4.15(4)]{RS} it remains to show the following:
 
Let $\alpha= \frac{1}{f}\cdot \alpha_0$ with $\alpha_0\in \Omega^d_{A/k}$.
Then, for all henselian dvf $L/k$ of geometric type and all morphisms $\rho: \Spec \sO_L\to \ol{X}$, which map the generic point into $U=X\setminus R$, 
there exists a natural number $N\ge 1$ such that
\[\rho^*\alpha\in \widetilde{\Omega^d}(\sO_L,\fm_L^{-v_L(\rho^* \ol{R}+N\Sigma)}).\]
Since $d=\dim X$ and $\Omega^j_A=\Omega^j_{A/k}$ by our convention from the beginning of this section,
we have $\alpha\in Z\Omega^d_A$ and hence by Claim \ref{claim:lem-tomega}
it suffices to show
\eq{lem:tomega2}{\rho^*\alpha\in \fm_L^{-v_L(\rho^* \ol{R}+N\Sigma)}\Omega^d_{\sO_L}, \quad \text{for }N\gg 0.}

We prove  \eqref{lem:tomega2}. Let $\ol{X}=\cup_i V_i$ be a  finite open cover with $V_i=\Spec B_i$, 
$\Sigma_{|V_i}=\div(\sigma_i)$ and $\ol{R}_{|V_i}=\div(f_i)$,  where $\sigma_i, f_i\in B_i$ are non-zero divisors.
Since $\Spec B_i[1/\sigma_i]$ is open in  $X$, we can write 
$f=e_i f_i$ with $e_i\in (B_i[1/\sigma_i])^\times$ so we find $N_1\ge 1$ such that $\sigma_i^{N_1}e_i^{-1}\in B_i$, for all $i$. 
Furthermore, we find $N_2\ge 1$ such that $\sigma_i^{N_2}\cdot \alpha_0\in \Omega^d_{B_i}$.
Set $N:=N_1+N_2$. Let $\rho\in U(L)\cap V_i(\sO_L)$. 
Since $e_i f_i\sigma_i^{N_2}\cdot \alpha_{|V_i}\in \Omega^d_{B_i}$ we get 
\[\rho^*(e_if_i\sigma_i^{N_2})\cdot \rho^*\alpha\in \Omega^d_{\sO_L}.\]
By the choice of $N$ we have 
\[v_L(\rho^*(e_if_i\sigma_i^{N_2}))\le v_L(\rho^*(\ol{R}+N\cdot \Sigma) ).\]
This yields \eqref{lem:tomega2} and  completes the proof.
\end{proof}

\begin{para}\label{para:pr}
Let $Y$ be an integral, normal, Cohen-Macaulay scheme of finite type over $k$ and dimension $d$.
Recall from \cite{LT} (see also \cite[Definition 9.4]{Kovacs}) that $Y$ has
{\em pseudo-rational singularities} if it satisfies the following property:
for any normal scheme $Z$ with a proper and birational morphism  $f:Z\to Y$, the  trace along $f$ induces an isomorphism
\[\Tr_f: f_*\omega_{Z/k}\xr{\simeq} \omega_{Y/k},\] 
where $\omega_{Y/k}=H^{-d}(\pi_Y^!k)$ with $\pi_Y:Y\to \Spec k$ the structure morphism.
We remark that if $Y$ is normal  of dimension $d$ (but not necessarily is CM nor has pseudo-rational singularities) and
$j:U\inj Y$ is a smooth open subset whose complement has codimension $\ge 2$, then
the restriction along $j$ induces an isomorphism (e.g. \cite[Lemma 3.22]{Kovacs})
\eq{eq1;para:pr}{\omega_{Y/k}\simeq  j_*\omega_{U/k}\cong j_* \Omega^d_{U}.}  
%, \quad \text{where }d=\dim Y.\]
By \cite[Corollary 9.13(ii)]{Kovacs}, pseudo-rational singularities are the same as rational singularities.
In particular, toric singularities and tame quotient singularities are pseudo-rational. 
See  \cite{Kovacs} for more examples and the state of the art.  
\end{para}

Recall that a sheaf of abelian groups $H$ on $Y$ is called $S2$ if for any open dense immersion $j:U\inj Y$, 
the restriction $H\to j_*H_{|U}$ is injective, and it is bijective if $Y\setminus U$ has  codimension $\ge 2$.
The following corollary is for $p=0$  equivalent to \cite[Theorem 7.1]{RS-ZNP}, for $p>0$ it gives
a new characterization of pseudo-rational singularities.
\begin{cor}\label{cor:pr}
%Let $Y$ be an integral, normal, Cohen-Macaulay scheme of finite type over $k$ and dimension $d$.
Let $Y$ be as in the beginning of \ref{para:pr}.
Assume, for each  effective Cartier divisor $R$ on $Y$ such that $Y\setminus R$ is smooth,  
the sheaf  $\widetilde{\Omega^d}_{(Y,R)}$  is $S2$. 
Then $Y$  has pseudo-rational singularities. 
If there exits a proper and birational morphism $f:Z\to Y$ with $Z\in \Sm$,
then the inverse implication holds. 
In both cases  \eqref{eq1;para:pr} induces
\eq{cor:pr2}{\widetilde{\Omega^d}_{(Y, R)}\simeq \omega_{Y/k}(R).}   
\end{cor}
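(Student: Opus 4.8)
The plan is to reduce the statement to the isomorphism \eqref{cor:pr2} and to recognize the latter, via a resolution, as the assertion that the trace map $\Tr_f$ of \ref{para:pr} is an isomorphism. Write $j\colon Y_{\mathrm{sm}}\hookrightarrow Y$ for the inclusion of the smooth locus; since $Y$ is normal its complement has codimension $\ge 2$. For any $R$ with $Y\setminus R$ smooth we have $Y\setminus R\subseteq Y_{\mathrm{sm}}$, and Lemma \ref{lem:tomega} applied on $Y_{\mathrm{sm}}$ gives $\widetilde{\Omega^d}_{(Y,R)}|_{Y_{\mathrm{sm}}}\cong\Omega^d_{Y_{\mathrm{sm}}}(R|_{Y_{\mathrm{sm}}})$. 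By the projection formula (as $\sO_Y(R)$ is invertible) together with \eqref{eq1;para:pr} one gets $j_*\big(\Omega^d_{Y_{\mathrm{sm}}}(R|_{Y_{\mathrm{sm}}})\big)\cong\omega_{Y/k}(R)$, which is an $S2$ sheaf because $\omega_{Y/k}$ is (as $Y$ is CM). Moreover $\widetilde{\Omega^d}_{(Y,R)}$ is by definition a subsheaf of $\nu_*\Omega^d_{Y\setminus R}$ (with $\nu\colon Y\setminus R\hookrightarrow Y$), so the canonical map $\widetilde{\Omega^d}_{(Y,R)}\to\omega_{Y/k}(R)$ is always injective. Hence ``$\widetilde{\Omega^d}_{(Y,R)}$ is $S2$'' is equivalent to this canonical map being an isomorphism, i.e.\ to \eqref{cor:pr2}; this already proves the final clause once either implication is known.

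The bridge to pseudo-rationality is the following consequence of the birational invariance \eqref{intro1}. Let $f\colon Z\to Y$ be proper birational, inducing an isomorphism over $U:=Y\setminus R$. Running \eqref{intro1} over Nisnevich neighbourhoods $V\to Y$ (with $Z\times_Y V\to V$ still an isomorphism over $V\setminus R|_V$) yields an identification of Nisnevich sheaves $\widetilde{\Omega^d}_{(Y,R)}\cong f_*\widetilde{\Omega^d}_{(Z,f^*R)}$. Composing the tautological inclusion $\widetilde{\Omega^d}_{(Z,f^*R)}\hookrightarrow\omega_{Z/k}(f^*R)$ (again by Lemma \ref{lem:tomega} on $Z_{\mathrm{sm}}$ followed by passage to the $S2$-hull) with $f_*$ and the twisted trace $\Tr_f\otimes\id\colon (f_*\omega_{Z/k})(R)\to\omega_{Y/k}(R)$, one obtains a factorisation of the canonical map $\widetilde{\Omega^d}_{(Y,R)}\to\omega_{Y/k}(R)$ through $\Tr_f\otimes\id$. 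Since $\Tr_f$ is injective (both sides are torsion free and agree over the dense open where $f$ is an isomorphism), the canonical map is an isomorphism if and only if $\Tr_f$ is; when $Z$ is moreover smooth, Lemma \ref{lem:tomega} identifies the middle term outright and $\widetilde{\Omega^d}_{(Y,R)}\cong(f_*\omega_{Z/k})(R)$.

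For the implication ``$S2\Rightarrow$ pseudo-rational'' no resolution is available, so I verify the defining condition of \ref{para:pr} directly: given any normal $Z$ with $f\colon Z\to Y$ proper birational, I must show $\Tr_f$ is an isomorphism. Working Zariski-locally I may take $Y$ affine; as $Y$ is integral, I can pick a nonzero $g$ in the ideal of the closed union of $Y_{\mathrm{sing}}$ with the non-isomorphism locus of $f$ and set $R:=\Div(g)$, so that $Y\setminus R$ is smooth and $f$ is an isomorphism over it. The $S2$ hypothesis makes the canonical map an isomorphism, and by the factorisation above this forces $\Tr_f\otimes\id$, hence $\Tr_f$, to be an isomorphism. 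As $Z$ was arbitrary, $Y$ is pseudo-rational. For the converse, under the resolution hypothesis I choose for each $R$ a smooth resolution $f\colon Z\to Y$ that is an isomorphism over $Y\setminus R$; pseudo-rationality gives that $\Tr_f$ is an isomorphism, whence $\widetilde{\Omega^d}_{(Y,R)}\cong(f_*\omega_{Z/k})(R)\cong\omega_{Y/k}(R)$, i.e.\ \eqref{cor:pr2}, and thus $S2$.

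The main obstacle is the geometric input feeding the comparison, namely arranging a resolution that is an isomorphism over $Y\setminus R$ so that \eqref{intro1} applies. For the forward direction this is handled by the freedom to enlarge $R$, but for the converse one must upgrade the bare existence of a smooth resolution to one that is an isomorphism over the smooth locus $Y_{\mathrm{sm}}\supseteq Y\setminus R$. This is automatic in the situations where strong resolution is available (e.g.\ $\ch k=0$ or $d\le 3$), and more conceptually one would like a trace functoriality $f_*\widetilde{\Omega^d}_{(Z,f^*R)}\hookrightarrow\widetilde{\Omega^d}_{(Y,R)}$ for an arbitrary proper birational $f$, which would remove the isomorphism-over-$U$ hypothesis altogether; establishing such compatibility of the motivic conductor with proper pushforward is the delicate technical point.
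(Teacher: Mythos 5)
Your proposal is in substance the paper's own proof: the same reduction of the $S2$ condition to the isomorphism \eqref{cor:pr2} via Lemma \ref{lem:tomega} on the smooth locus, the projection formula, and \eqref{eq1;para:pr}; the same identification $\widetilde{\Omega^d}_{(Y,R)}\simeq f_*\widetilde{\Omega^d}_{(Z,f^*R)}$ obtained Nisnevich-locally from the invertibility of $(Z,f^*R)\to (Y,R)$ in the category of modulus pairs; and the same factorization of the canonical map $\widetilde{\Omega^d}_{(Y,R)}\to\omega_{Y/k}(R)$ through $\Tr_f\otimes\id$, i.e.\ the diagram \eqref{cor:pr1}, with commutativity and injectivity checked on a dense open where $f$ is an isomorphism. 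The two implications are then read off exactly as in the paper: the $S2$ hypothesis makes the canonical map bijective, forcing $\Tr_f\otimes\id$ (being injective) to be bijective; while for smooth $Z$ the tautological inclusion $\widetilde{\Omega^d}_{(Z,f^*R)}\hookrightarrow\omega_{Z/k}(f^*R)$ is an equality by Lemma \ref{lem:tomega}, so pseudo-rationality yields \eqref{cor:pr2} and hence $S2$.

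Two caveats on your execution. First, in the forward direction you pass to affine $Y$ to manufacture $R=\Div(g)$ through $Y_{\rm sing}$ and the non-isomorphism locus of $f$; but the $S2$ hypothesis quantifies over Cartier divisors on $Y$, and a divisor on an affine open need not extend to $Y$, so the hypothesis does not formally localize. (Restriction of an $S2$ sheaf to an open is again $S2$, so this is harmless whenever $R_0$ extends, e.g.\ for $Y$ quasi-projective; the paper sidesteps the issue by choosing $R$ globally, which raises the analogous existence question for non-divisorial $Y$.) Second, and more substantively: you are right that the identification $\widetilde{\Omega^d}_{(Y,R)}\simeq f_*\widetilde{\Omega^d}_{(Z,f^*R)}$ needs $f$ to be an isomorphism over $Y\setminus R$ --- the class of morphisms inverted in \cite{KMSY1} consists of proper morphisms with the divisor pulled back and an \emph{isomorphism of interiors} --- and consequently your converse is proved only with a resolution adapted to the given $R$, which does not follow from the bare existence of one smooth resolution. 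Note, however, that the paper's proof applies this modulus-pair isomorphism to a single fixed resolution and an arbitrary $R$, so it tacitly presupposes the very same condition; your explicit flag does not put you below the paper's level of rigor, it surfaces a point the paper glosses over. If one wants the converse exactly as stated, the cleaner repair is not to improve the resolution but to rerun the valuative argument from the proof of Lemma \ref{lem:tomega}: a section of $\omega_{Y/k}(R)$ corresponds under $\Tr_f^{-1}$ to a section of $\Omega^d_Z(f^*R)$, and any test arc $\Spec \sO_L\to \ol{Y}$ with generic point in $Y\setminus R$ lifts by properness to a compactification of $Z$, where the smooth case bounds the conductor; membership in $\widetilde{\Omega^d}(Y,R)$ then follows from \cite[Theorem 4.15(4)]{RS} as in that proof.
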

\begin{proof}
Let $f:Z\to Y$ be as in \ref{para:pr}. 
Let $j:U\inj Y$ (resp. $j':V\inj Z$) be  an embedding of a smooth open  with complement of codimension $\ge 2$.
Let $R$ be an effective Cartier divisor of $Y$ such that $Y\setminus R$ and $Z\setminus f^*R$ are smooth.
We denote by $R_U$ the restriction of $R$ to $U$.
We have 
\eq{eq2;para:pr}{j_*(\widetilde{\Omega^d}_{(Y,R)})_{|U}=j_*\widetilde{\Omega^d}_{(U,R_U)}= j_*\Omega^d_U(R_U)= (j_*\Omega^d_U)\otimes \sO_{Y}(R)= \omega_{Y/k}(R),}
where the second (resp. third, resp. last) equality follows from Lemma \ref{lem:tomega} (resp. the projection formula, resp. \eqref{eq1;para:pr}). 
Similarly we find
\eq{eq3;para:pr}{j'_*(\widetilde{\Omega^d}_{(Z,f^*R)})_{|V}= \omega_{Z/k}(f^*R).}
We obtain a commutative diagram 
\eq{cor:pr1}{\xymatrix{
f_*(\widetilde{\Omega^d}_{(Z, f^*R)})\ar[r] & f_*(\omega_{Z/k})\otimes\sO_Y(R)\ar[d]^{\Tr_f\otimes\id}\\
\widetilde{\Omega^d}_{(Y, R)}\ar[u]^{\simeq}\ar[r] & \omega_{Y/k}(R),
}}
where the lower horizontal map is 
\[\widetilde{\Omega^d}_{(Y, R)}\to j_*j^*\widetilde{\Omega^d}_{(Y, R)}
\overset{\eqref{eq2;para:pr}}{\simeq} \omega_{Y/k}(R),\]
and the upper horizontal map is obtained by applying $f_*$ to 
\[\widetilde{\Omega^d}_{(Z, f^*R)}\to j'_*{j'}^* \widetilde{\Omega^d}_{(Z, f^*R)}
\overset{\eqref{eq3;para:pr}}{\simeq}  \omega_{Z/k}(f^*R).\] 
The left vertical map is an isomorphism since for any
\'etale map $Y'\to Y$ the  pairs $(Y',R_{|Y'})$ and $(Z'=Z\times_Y Y', R_{|Z'})$ are isomorphic in the category of modulus pairs, see \cite{KMSY1}.
The isomorphism \eqref{eq1;para:pr} implies that
the map $ \omega_{Y/k}(R)\to h_*h^* \omega_{Y/k}(R)$ is injective for any dense smooth open immersion $h: W\inj Y$, and similarly for
$ \omega_{Z/k}(f^*R)$ instead of $ \omega_{Y/k}(R)$. 
Therefore, the diagram commutes since on an open subset of $Y\setminus R$, over which  $f$ becomes an isomorphism, the vertical map on the left is the pullback
(which is an isomorphism) and the  vertical  map on the right is its inverse.
The last argument also shows that all the maps in the diagram are injective. 
If $\widetilde{\Omega^d}_{(Y, R)}$ is $S2$, then the lower horizontal map in \eqref{cor:pr1} is an isomorphism and hence so is the right vertical map, 
which proves that $Y$ has pseudo-rational singularities. 
On the other hand, if $Z$ is smooth, 
Lemma \ref{lem:tomega} implies that the top horizontal map is an isomorphism.
If $Y$ furthermore has pseudo-rational singularities so that the right vertical map is an isomorphism, we get the isomorphism \eqref{cor:pr2},
which implies that $\widetilde{\Omega^d}_{(Y, R)}$ is $S2$. This proves the second statement.
\end{proof}

\begin{rmk}
We remark that Corollary \ref{cor:pr} together with Lemma \ref{lem:tomega} implies that any smooth $Y$ has pseudo-rational singularities.
This is well-known and holds by \cite[4.]{LT} more generally for regular schemes.
%\shujirem{Would you add a remark on some advantage (which you talked about the last time) 
%of $\tilde{\Omega}^d_{(Y,R)}$
 %over other invariants (say reflexive differential forms) concerning functoriality?}
\end{rmk}

Combining the above with some of the main results from \cite{Kovacs} and \cite{KMSY1}
we obtain:

\begin{cor}\label{cor:Ext-MNST}
%Let $Y$ be an integral, normal, Cohen-Macaulay scheme of finite type over $k$ and dimension $d$.
Let $Y$ be as in the beginning of \ref{para:pr}.
Assume:
\begin{enumerate}[label=(\arabic*)]
\item\label{cor:Ext-MNST1} 
For any proper and birational morphism $Z\to Y$, there exits a proper and birational morphism 
$Z'\to Z$ with $Z'$ smooth (e.g. $d\le 3$ and $Y$ quasi-projective by \cite{Cossart-PiltantI}, \cite{Cossart-PiltantII}).
\item\label{cor:Ext-MNST2}
 $Y$ has pseudo-rational singularities.  
\end{enumerate}
Then, for any effective Cartier divisor  $R$ on $Y$ such that $Y\setminus R$ is smooth, we have
\eq{cor:Ext-MNST3}{H^i(Y_\Zar, \widetilde{\Omega^d}_{(Y,R)})=\Ext^i_{\uMNST}(\Ztr(Y,R), \widetilde{\Omega^d}),}
where $\uMNST$ denotes the category of modulus Nisnevich sheaves with transfers introduced in \cite{KMSY1}
and $\Ztr(Y,R)\in \uMNST$ is the object represented by $(Y,R)$.
\end{cor}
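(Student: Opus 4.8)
The plan is to compare both sides of \eqref{cor:Ext-MNST3} with the corresponding quantities on a smooth resolution and to match them term by term. By assumption \ref{cor:Ext-MNST1} (applied to $\id_Y$) there is a proper birational morphism $f\colon Z\to Y$ with $Z\in\Sm$; since $f$ is an isomorphism over the smooth locus $Y\setminus R$, the pullback $f^*R$ is an effective Cartier divisor with $Z\setminus f^*R\cong Y\setminus R$, so $(Z,f^*R)$ is a smooth modulus pair and $f$ defines a morphism $(Z,f^*R)\to(Y,R)$ that is an isomorphism away from $R$. I would then establish the chain
\[
\Ext^i_{\uMNST}(\Ztr(Y,R),\widetilde{\Omega^d})
\cong \Ext^i_{\uMNST}(\Ztr(Z,f^*R),\widetilde{\Omega^d})
\cong H^i(Z_\Nis,\widetilde{\Omega^d}_{(Z,f^*R)})
\cong H^i(Y_\Zar,\widetilde{\Omega^d}_{(Y,R)}),
\]
the three isomorphisms being, respectively, proper birational invariance in $\uMNST$, the smooth representability of \cite{KMSY1}, and a cohomological descent coming from pseudo-rationality.

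For the rightmost isomorphism, which I regard as the geometric heart of the statement, I would argue as follows. By Lemma \ref{lem:tomega} applied to the smooth scheme $Z$ we have $\widetilde{\Omega^d}_{(Z,f^*R)}\cong\Omega^d_Z(f^*R)=\omega_{Z/k}\otimes_{\sO_Z}f^*\sO_Y(R)$, so the projection formula gives $Rf_*\widetilde{\Omega^d}_{(Z,f^*R)}\cong(Rf_*\omega_{Z/k})\otimes_{\sO_Y}\sO_Y(R)$. Now assumption \ref{cor:Ext-MNST2} together with \cite[Corollary 9.13(ii)]{Kovacs} says that $Y$ has rational singularities, whence $Rf_*\omega_{Z/k}\cong\omega_{Y/k}$ (the higher direct images vanish by Grauert--Riemenschneider and the trace is the isomorphism recalled in \ref{para:pr}; here $Y$ being Cohen--Macaulay ensures $\omega_{Y/k}$ sits in degree $0$). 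Combined with Corollary \ref{cor:pr}, i.e.\ the identification $\widetilde{\Omega^d}_{(Y,R)}\cong\omega_{Y/k}(R)$ of \eqref{cor:pr2}, this yields $Rf_*\widetilde{\Omega^d}_{(Z,f^*R)}\cong\widetilde{\Omega^d}_{(Y,R)}$. The Leray spectral sequence then gives $H^i(Z,\widetilde{\Omega^d}_{(Z,f^*R)})\cong H^i(Y,\widetilde{\Omega^d}_{(Y,R)})$, and since all the sheaves involved are coherent $\sO$-modules their Zariski and Nisnevich cohomologies agree, reconciling the $\Zar$ on $Y$ with the $\Nis$ on $Z$.

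The two $\Ext$-isomorphisms rest on \cite{KMSY1}. The middle one is the representability of the Nisnevich cohomology of a reciprocity sheaf by $\Ext$ in $\uMNST$ in the smooth case, applied to $(Z,f^*R)$ and the object attached to $\Omega^d$; one checks that hypothesis \ref{cor:Ext-MNST1} is precisely what allows the KMSY machinery --- which requires resolving the auxiliary singular pairs entering the construction --- to be invoked. The leftmost isomorphism encodes the proper birational invariance \eqref{intro1}: since $\widetilde{F}(Y,R)=\widetilde{F}(Z,f^*R)$ for every $F\in\RSC_\Nis$ and this $f$, the representable objects $\Ztr(Y,R)$ and $\Ztr(Z,f^*R)$ are identified in $\uMNST$, so all $\Ext$-groups out of them coincide. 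I expect the main obstacle to lie exactly here, namely in pinning down the correct form of the smooth representability and of the invariance of $\Ztr(Y,R)$ under the modification $f$, and in verifying that assumption \ref{cor:Ext-MNST1} supplies every resolution these results demand; once these are granted, the geometric input from pseudo-rationality and Corollary \ref{cor:pr} assembles the four quantities as displayed above.
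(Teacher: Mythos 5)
Your proposal has a genuine gap, and it sits exactly where you suspected it might: the middle isomorphism $\Ext^i_{\uMNST}(\Ztr(Z,f^*R),\widetilde{\Omega^d})\cong H^i(Z_\Nis,\widetilde{\Omega^d}_{(Z,f^*R)})$. There is no ``smooth representability'' theorem in \cite{KMSY1} to quote here. What \cite[Theorem 2(2)]{KMSY1} gives, for an arbitrary modulus pair, is only the colimit formula
$\Ext^i_{\uMNST}(\Ztr(Y,R),\tF)=\varinjlim_f H^i(Z_\Nis,\tF_{(Z,f^*R)})$,
the colimit running over \emph{all} proper $f:Z\to Y$ inducing an isomorphism off $R$. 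Whether this colimit is already attained at a single smooth model is precisely \cite[Question 2]{KMSY1}, and this very paper shows in \ref{para:Q} that the answer is negative in positive characteristic for $\Omega^j$ with $j<d$; so for $\Omega^d$ the step is true but is the heart of what must be proved, not an input. Your leftmost isomorphism is likewise misjustified: the birational invariance \eqref{intro1} identifies sections $\tF(Y,R)=\tF(Z,f^*R)$, i.e.\ degree-zero Homs out of the two representables into reciprocity sheaves, but it does not make $\Ztr(Y,R)$ and $\Ztr(Z,f^*R)$ isomorphic objects of $\uMNST$ (the map of sheaves is not an isomorphism; the Ext-groups happen to agree, but by cofinality of the two index systems in the colimit formula, not by an isomorphism of objects). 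The paper's actual proof bypasses both problematic steps by working inside the colimit: by assumption (1) the models with pseudo-rational singularities (including $Y$ itself and all smooth models) are cofinal; Corollary \ref{cor:pr} identifies $\widetilde{\Omega^d}_{(Z,f^*R)}\cong\omega_{Z/k}(f^*R)$ for every such model; and Kov\'acs' vanishing $Rg_*\omega_{Z'/k}=\omega_{Z/k}$ \cite[Theorem 1.11]{Kovacs} together with the projection formula shows that \emph{every transition map} of the system is an isomorphism, so the colimit is constant and equals $H^i(Y_\Zar,\omega_{Y/k}(R))=H^i(Y_\Zar,\widetilde{\Omega^d}_{(Y,R)})$. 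Your step computing $Rf_*\widetilde{\Omega^d}_{(Z,f^*R)}\cong\widetilde{\Omega^d}_{(Y,R)}$ is exactly this computation, but you apply it only to the single map $Z\to Y$ rather than to all transition maps, which is what the colimit formula demands.

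A second, smaller error: you invoke Grauert--Riemenschneider to kill $R^{>0}f_*\omega_{Z/k}$, but GR vanishing fails in positive characteristic, and the paper works over a perfect field of any characteristic. The correct input is again Kov\'acs' theorem \cite[Theorem 1.11]{Kovacs}, which proves $Rf_*\omega_{Z/k}\cong\omega_{Y/k}$ for pseudo-rational $Y$ without GR; pseudo-rationality itself (via \ref{para:pr}) only gives the degree-zero statement $f_*\omega_{Z/k}\cong\omega_{Y/k}$. The remaining geometric ingredients of your step 4 --- Lemma \ref{lem:tomega} on the smooth model, the projection formula, and the agreement of Zariski and Nisnevich cohomology for coherent sheaves --- are correct and coincide with the paper's.
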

\begin{proof}
By \cite[Theorem 2(2)]{KMSY1} we have 
\eq{cor:Ext-MNST4}{
\Ext^i_{\uMNST}(\Ztr(Y,R), \widetilde{\Omega^d})=\varinjlim_f H^i(Z_\Nis, \widetilde{\Omega^d}_{(Z, f^*R)}),
}
where the limit is over all proper morphisms $f: Z\to Y$ inducing an isomorphism 
$Z\setminus f^*R\cong Y\setminus R$. By assumption \ref{cor:Ext-MNST1}, we may
assume additionally that $Z$ has pseudo-rational singularities (we could even assume that $Z$ is smooth, 
but we  want $Y$ to be in the limit as well).
Thus the  isomorphism \eqref{cor:pr2} holds replacing $(Y,R)$ by any pair $(Z, f^*R)$ appearing in the direct limit.
Therefore we can replace the Nisnevich cohomology with the Zariski cohomology
and the direct limit in \eqref{cor:Ext-MNST4} is constant and equal to 
$H^i(Y_\Zar, \omega_{Y/k})$ since $Rf_*\omega_{Z/k}=\omega_{Y/k}$ by Kovacs' vanishing \cite[Theorem 1.11]{Kovacs}.
\end{proof}

\begin{remark}\label{rmk:MDM}
Assume the  following condition holds over $k$:

\begin{enumerate}[label=(1')]
\item\label{rmk:MDM1} For any integral separated $k$-scheme  of finite type $S$ and effective Cartier divisor $E$ on $S$ with $S\setminus E\in \Sm$, there exists a 
proper and birational morphism $f: S'\to S$ such that $S'\in \Sm$, $(f^*E)_{\red}$ is SNCD, and $f$ induces an isomorphism $S'\setminus f^*E\cong S\setminus E$.
\end{enumerate}
(Note that since there is no restriction on the dimension of $S$, this is completely unknown in positive characteristic; but it  holds if ${\rm char}(k)=0$.)

Then, in the situation of Corollary \ref{cor:Ext-MNST} we have 
\eq{para:MDM2}{H^i(Y_\Zar, \omega_{Y/k}(R))=\Hom_{\uMDM}(\ul{M}(Y,R), \widetilde{\Omega^d}[i]),}
where $\uMDM$ is the triangulated category of cube invariant modulus Nisnevich sheaves with transfers constructed in \cite[Definition 3.2.4]{KMSY3} and 
$\ul{M}(Y,R)$ denotes the motive of the modulus pair $(Y, R)$ in $\uMDM$, see \cite[Theorem 3.3.1]{KMSY3}.

Indeed, by \eqref{cor:Ext-MNST3}, \eqref{cor:Ext-MNST4} and \cite[Theorem 5.2.4 and Theorem B.6.4, b)]{KMSY3}, it suffices
to show that $\widetilde{F}$ is $\bcube$-local in the sense of \cite[Definition B.2.4, a)]{KMSY3}, for $F=\Omega^d$.
The latter is equivalent to the pullback map
\[\Ext^i_{\uMNST}(\Ztr(S,E), \tF)\to \Ext^i_{\uMNST}(\Ztr((S,E)\otimes\bcube), \tF)\]
being an isomorphism for all $i\in \Z$ and all pairs $(S, E)$ as above.
By \cite[Theorem 2(2)]{KMSY1} and the assumption \ref{rmk:MDM1}, this follows from the fact
that the pullback 
\[H^i(S_{\Nis}, \tF_{(S,E)})\xr{\simeq} H^i((S\times \P^1)_{\Nis}, \tF_{(S, E)\otimes \bcube})\]
is an isomorphism for all $(S,E)$ with $S\in \Sm$ and $E_\red$ SNCD, which holds by \cite[Theorem 9.3]{S-purity}. 
\end{remark}

%\begin{remark}
%Let $f: Z\to Y$ be a proper and birational morphism between separated schemes of  finite type over $k$ and assume there exists
%and effective Cartier divisor $R$ on $Y$, whose support contains the singular locus of $Y$, such that 
%$Y\setminus R=Z\setminus f^*R$.
%If $Y$ is CM,  we have a natural pushforward (induced by the trace from duality theory)
%\[H^i(Z_{\Zar}, \omega_{Z/k}(f^*R))\to H^i(Y_{\Zar},\omega_{Y/k}(R)).\]
%By \cite[Theorem 1.4]{Kovacs}, this map is an isomorphism if we additionally assume that $Y$ has pseudo-rational singularities,
%$Z$ is CM, and $f$ is locally projective. On the  other hand without any additional assumption 
%there is an isomorphism induced by pullback
%\[\Hom_{\uMDM}(\ul{M}(Y,R), \widetilde{\Omega^d}[i])\xr{\simeq} 
%\Hom_{\uMDM}(\ul{M}(Z,f^*R), \widetilde{\Omega^d}[i]).\]
%Thus these latter groups define interesting birational invariants on singular schemes, which generalize
%the more classical invariants defined using the dualizing sheaf on pseudo-rational $k$-schemes.
%\end{remark}
%

%\bibliographystyle{amsalpha}
%\bibliography{AbbesSaito}

\begin{thebibliography}{KMSY21b}

\bibitem[AS09]{AbbesSaito}
Ahmed Abbes and Takeshi Saito, \emph{Analyse micro-locale {$l$}-adique en
  caract\'{e}ristique {$p>0$}: le cas d'un trait}, Publ. Res. Inst. Math. Sci.
  \textbf{45} (2009), no.~1, 25--74.

\bibitem[AS11]{AbbesSaito11}
\bysame, \emph{Ramification and cleanliness}, Tohoku Math. J. (2) \textbf{63}
  (2011), no.~4, 775--853.

\bibitem[BLR90]{BLR}
Siegfried Bosch, Werner L\"{u}tkebohmert, and Michel Raynaud, \emph{N\'{e}ron
  models}, Ergebnisse der Mathematik und ihrer Grenzgebiete (3) [Results in
  Mathematics and Related Areas (3)], vol.~21, Springer-Verlag, Berlin, 1990.

\bibitem[BRS]{BRS}
Federico Binda, Kay R{\"u}lling, and Shuji Saito, \emph{On the cohomology of
  reciprocity sheaves}, Preprint 2020 \url{https://arxiv.org/abs/2010.03301}.

\bibitem[Bry83]{Brylinski}
Jean-Luc Brylinski, \emph{Th\'{e}orie du corps de classes de {K}ato et
  rev\^{e}tements ab\'{e}liens de surfaces}, Ann. Inst. Fourier (Grenoble)
  \textbf{33} (1983), no.~3, 23--38. \MR{723946}

\bibitem[CP08]{Cossart-PiltantI}
Vincent Cossart and Olivier Piltant, \emph{Resolution of singularities of
  threefolds in positive characteristic. {I}. {R}eduction to local
  uniformization on {A}rtin-{S}chreier and purely inseparable coverings}, J.
  Algebra \textbf{320} (2008), no.~3, 1051--1082.

\bibitem[CP09]{Cossart-PiltantII}
\bysame, \emph{Resolution of singularities of threefolds in positive
  characteristic. {II}}, J. Algebra \textbf{321} (2009), no.~7, 1836--1976.

\bibitem[Kat89]{KatoSwan}
Kazuya Kato, \emph{Swan conductors for characters of degree one in the
  imperfect residue field case}, Algebraic {$K$}-theory and algebraic number
  theory ({H}onolulu, {HI}, 1987), Contemp. Math., vol.~83, Amer. Math. Soc.,
  Providence, RI, 1989, pp.~101--131. \MR{991978}

\bibitem[Ker10]{Kerz}
Moritz Kerz, \emph{Milnor {$K$}-theory of local rings with finite residue
  fields}, J. Algebraic Geom. \textbf{19} (2010), no.~1, 173--191.

\bibitem[KMSY]{KMSY3}
Bruno Kahn, Hiroyasu Miyazaki, Shuji Saito, and Takao Yamazaki, \emph{Motives
  with modulus, {III}: The categories of motives}, To appear in Ann. of
  K-theory \url{https://arxiv.org/abs/2011.11859}.

\bibitem[KMSY21a]{KMSY1}
\bysame, \emph{Motives with modulus, {I}: {M}odulus sheaves with transfers for
  non-proper modulus pairs}, \'{E}pijournal G\'{e}om. Alg\'{e}brique \textbf{5}
  (2021), Art. 1, 46.

\bibitem[KMSY21b]{KMSY2}
\bysame, \emph{Motives with modulus, {II}: {M}odulus sheaves with transfers for
  proper modulus pairs}, \'{E}pijournal G\'{e}om. Alg\'{e}brique \textbf{5}
  (2021), Art. 2, 31.

\bibitem[Kov]{Kovacs}
S{\'a}ndor Kov{\'a}cs, \emph{Rational {S}ingularities}, Preprint 2017
  \url{https://arxiv.org/abs/1703.02269}.

\bibitem[KR10]{Kato-Russell}
Kazuya Kato and Henrik Russell, \emph{Modulus of a rational map into a
  commutative algebraic group}, Kyoto Journal of Mathematics \textbf{50}
  (2010), no.~3, 607--622.

\bibitem[KS86]{KS-GCFT}
Kazuya Kato and Shuji Saito, \emph{Global class field theory of arithmetic
  schemes}, Applications of algebraic {$K$}-theory to algebraic geometry and
  number theory, {P}art {I}, {II} ({B}oulder, {C}olo., 1983), Contemp. Math.,
  vol.~55, Amer. Math. Soc., Providence, RI, 1986, pp.~255--331. \MR{862639}

\bibitem[KS16]{Kerz-Saito}
Moritz Kerz and Shuji Saito, \emph{Chow group of 0-cycles with modulus and
  higher-dimensional class field theory}, Duke Math. J. \textbf{165} (2016),
  no.~15, 2811--2897. \MR{3557274}

\bibitem[KSY16]{KSY1}
Bruno Kahn, Shuji Saito, and Takao Yamazaki, \emph{Reciprocity sheaves},
  Compos. Math. \textbf{152} (2016), no.~9, 1851--1898, With two appendices by
  Kay R\"{u}lling.

\bibitem[KSY22]{KSY2}
\bysame, \emph{Reciprocity sheaves, {II}}, Homol. Homotopy Appl. \textbf{24}
  (2022), no.~1.

\bibitem[LT81]{LT}
Joseph Lipman and Bernard Teissier, \emph{Pseudorational local rings and a
  theorem of {B}rian\c{c}on-{S}koda about integral closures of ideals},
  Michigan Math. J. \textbf{28} (1981), no.~1, 97--116.

\bibitem[Mat97]{Matsuda}
Shigeki Matsuda, \emph{On the {S}wan conductor in positive characteristic},
  Amer. J. Math. \textbf{119} (1997), no.~4, 705--739.

\bibitem[MS]{MS}
Alberto Merici and Shuji Saito, \emph{Cancellation theorems for reciprocity
  sheaves}, Preprint 2020 \url{https://arxiv.org/abs/2001.07902}.

\bibitem[Ros96]{Rost}
Markus Rost, \emph{Chow groups with coefficients}, Doc. Math. \textbf{1}
  (1996), No. 16, 319--393.

\bibitem[RSa]{RS-cycle-map}
Kay Rülling and Shuji Saito, \emph{Cycle class maps for {C}how groups of
  zero-cycles with modulus}, Preprint
  2021,\url{https://arxiv.org/abs/2111.00214}.

\bibitem[RSb]{RS-ZNP}
\bysame, \emph{Ramification theory of reciprocity sheaves, {I},
  {Zariski-Nagata} purity}, Preprint 2021,
  \url{https://arxiv.org/abs/2111.01459}.

\bibitem[RSc]{RS-HLS}
\bysame, \emph{Ramification theory of reciprocity sheaves, {II}, {Higher local
  symbols}}, Preprint 2021, \url{https://arxiv.org/abs/2111.13373}.

\bibitem[RS18]{RS18}
Kay R\"{u}lling and Shuji Saito, \emph{Higher {C}how groups with modulus and
  relative {M}ilnor {$K$}-theory}, Trans. Amer. Math. Soc. \textbf{370} (2018),
  no.~2, 987--1043.

\bibitem[RS21]{RS}
Kay Rülling and Shuji Saito, \emph{Reciprocity sheaves and their ramification
  filtrations}, Journal of the Institute of Mathematics of Jussieu (2021),
  1--74.

\bibitem[Sai]{SaitologRSC}
Shuji Saito, \emph{Reciprocity sheaves and logarithmic motives}, Preprint 2021,
  \url{https://arxiv.org/abs/2107.00381}.

\bibitem[Sai17]{TakeshiSaito}
Takeshi Saito, \emph{Wild ramification and the cotangent bundle}, J. Algebraic
  Geom. \textbf{26} (2017), no.~3, 399--473.

\bibitem[Sai20]{S-purity}
Shuji Saito, \emph{Purity of reciprocity sheaves}, Adv. Math. \textbf{366}
  (2020), 107067, 70.

\bibitem[Yat17]{Yatagawa}
Yuri Yatagawa, \emph{Equality of two non-logarithmic ramification filtrations
  of abelianized {G}alois group in positive characteristic}, Doc. Math.
  \textbf{22} (2017), 917--952.

\end{thebibliography}

\providecommand{\bysame}{\leavevmode\hbox to3em{\hrulefill}\thinspace}
\providecommand{\MR}{\relax\ifhmode\unskip\space\fi MR }
% \MRhref is called by the amsart/book/proc definition of \MR.
\providecommand{\MRhref}[2]{%
  \href{http://www.ams.org/mathscinet-getitem?mr=#1}{#2}
}
\providecommand{\href}[2]{#2}

\end{document}